\theoremstyle{plain}
\newtheorem{thm}{Theorem}[section]
\newtheorem{cor}[thm]{Corollary}
\newtheorem{prop}[thm]{Proposition}
\newtheorem{lem}[thm]{Lemma}
\newtheorem{hyp}[thm]{Hypothesis}
\newtheorem{mainthm}{Theorem}
\theoremstyle{definition}
\newtheorem{defn}[thm]{Definition}
\theoremstyle{remark}
\newtheorem{exmp}[thm]{Example}
\newtheorem{exmps}[thm]{Examples}
\newtheorem{notn}[thm]{Notation}
\newtheorem{notns}[thm]{Notations}
\newtheorem{rem}[thm]{Remark}
\newtheorem{rems}[thm]{Remarks}
\newcommand{\Z}{\mathbb{Z}}
\newcommand{\Q}{\mathbb{Q}}
\newcommand{\R}{\mathbb{R}}
\newcommand{\RP}{\mathbb{R}\mathrm{P}}
\newcommand{\catname}[1]{{\normalfont\textbf{#1}}}
\newcommand{\op}{{\mathrm{op}}}
\newcommand{\circles}{\langle S^1 \rangle}
\newcommand{\spheres}[1]{\langle S^#1 \rangle}
\newcommand{\mset}{\catname{MSet}}
\newcommand{\finset}{\catname{FinSet}_*}
\newcommand{\epi}{\catname{Epi}}
\newcommand{\injfin}{\catname{FI}}
\newcommand{\sset}{\catname{sSet}}
\newcommand{\spaces}{\catname{S}_*}
\newcommand{\unpointed}{\catname{S}}
\newcommand{\connected}{\spaces^{1}}
\newcommand{\orbit}{\catname{O}}
\newcommand{\spectra}[1][X]{\ifthenelse{\equal{#1}{X}}{\catname{Sp}}{\catname{Sp}^{\geq #1}}}
\newcommand{\idem}{\catname{Idem}}
\newcommand{\fr}{\allfr^{\mathrm{fg}}}
\newcommand{\allfr}{\catname{Fr}}
\newcommand{\sfr}{\catname{sFr}}
\newcommand{\ab}{\catname{Ab}}
\newcommand{\ch}{\catname{Ch}}
\newcommand{\presh}{\mathcal{P}}
\newcommand{\comod}{\catname{comod}}
\newcommand{\DP}{\operatorname{DP}}
\newcommand{\module}{\catname{mod}}
\newcommand{\com}{\mathtt{Com}}
\newcommand{\Lie}{\mathtt{Lie}}
\newcommand{\aut}{\operatorname{aut}}
\newcommand{\sgr}{\catname{sGr}}
\newcommand{\ob}{\operatorname{ob}}
\newcommand{\classifying}{\operatorname{B}}
\newcommand{\DK}{L^H}
\newcommand{\cofibrant}[1]{\operatorname{q}#1}
\newcommand{\lkan}{\mathrm{L}}
\newcommand{\rkan}{\mathrm{R}}
\newcommand{\free}[1]{\operatorname{F}\,\langle #1\rangle}
\newcommand{\const}{\mathrm{c}}
\newcommand{\HH}{\operatorname{H}} 
\newcommand{\abelianization}{\mathrm{ab}}
\newcommand{\tensor}{\mathrm{T}}
\newcommand{\symm}{\mathrm{S}}
\newcommand{\Passi}{\mathrm{Pa}}
\newcommand{\ideal}{\operatorname{I}}
\newcommand{\power}{\mathrm{P}}
\DeclareMathOperator{\ext}{Ext}
\DeclareMathOperator{\map}{Map}
\DeclareMathOperator{\pointedmaps}{\map_*}
\DeclareMathOperator{\spectralmaps}{\underline{\underline{\map}}}
\DeclareMathOperator{\nat}{Nat}
\DeclareMathOperator{\spectralNat}{\underline{\underline{\nat}}}
\DeclareMathOperator{\sur}{Sur}
\DeclareMathOperator{\Part}{\rho}
\DeclareMathOperator{\Stirling}{Str}
\DeclareMathOperator{\Comp}{C}
\DeclareMathOperator{\Bell}{B}
\DeclareMathOperator{\Nerve}{N}
\DeclareMathOperator{\fun}{Fun}
\DeclareMathOperator{\poly}{Poly}
\DeclareMathOperator{\exc}{Exc}
\DeclareMathOperator{\splitcube}{Split}
\DeclareMathOperator{\ce}{cr}
\DeclareMathOperator{\colim}{colim \,}
\DeclareMathOperator{\hocolim}{hocolim \,}
\DeclareMathOperator{\ho}{Ho}
\DeclareMathOperator{\id}{id}
\title{Polynomial functors from free groups to a stable $\infty$-category}
\author{Gregory Arone}
\begin{document}
\begin{abstract}
We study the category of polynomial functors from finitely generated free groups to a stable $\infty$-category $\catname{D}$. We show that this category is equivalent to the category of excisive functors from pointed spaces to $\catname{D}$, and also to truncated right comodules over the commutative operad with values in $\catname{D}$. The latter formulation generalizes a result of Geoffrey Powell in characteristic zero. 

We use the equivalence of categories to calculate $\ext$ in the category of polynomial functors from free groups to abelian groups. We reproduce previous results of Christine Vespa and others, and do many new calculations. Using the work of Aurelien Djament, we give applications to stable cohomology of the group of automorphisms of free groups with coefficients in a polynomial functor.
\end{abstract}
\maketitle

\tableofcontents

\section{Introduction} 
Let $\fr$ be the category of finitely generated free groups, and $\ab$ the category of abelian groups. This paper was motivated by desire to calculate Ext groups in the category of functors $[\fr, \ab]$. Our approach involves translating the algebraic problem about groups into a more homotopy-theoretical one involving spaces. It gradually transpired that the language of $\infty$-categories provides a convenient framework for the sort of manipulations we need to do. Moreover, it turns out that our general result on the functor category $[\fr, \ch]$ (where $\ch$ denotes chain complexes) depends only on $\ch$ being a stable $\infty$-category. Therefore, the first half of the paper is written in the language of $\infty$-categories, and theorems~\ref{thm:main-general} and~\ref{mainthm-modules} are formulated for general stable $\infty$-categories rather than $\ch$ or $\ab$.
The second half of the paper is devoted to calculations. Readers who are primarily interested in the calculations can skip to Section~\ref{subsec:applications}, read from there to the end of the introduction, and then go straight to Part~\ref{part:calculations}, while treating Theorem~\ref{thm:fr-to-top_intro} as a black box.

With that said, let $\catname{D}$ be a stable $\infty$-category, such as $\ch$ or $\catname{Spectra}$. In this paper we study functors from $\fr$ to $\catname{D}$. To do this, we use the connection between the homotopy theories of groups and of spaces. 
Let $\spaces$ be the $\infty$-category of pointed spaces. The classifying space functor $\classifying\colon \fr \to \spaces$ induces a restriction on functor categories
\[
\rho_{\classifying}\colon \fun(\spaces, \catname{D}) \to \fun(\fr, \catname{D}).
\]
This functor is not an equivalence of categories, of course. But it restricts to an equivalence between categories of excisive and polynomial functors. Let $\exc_n(\spaces, \catname{D})$ denote the category of $n$-excisive functors from $\spaces$ to $\catname{D}$. Let $\poly_n(\fr, \catname{D})$ denote the category of degree $n$ polynomial functors from $\fr$ to $\catname{D}$ (defined in Section~\ref{sec:polynomial}). We prove the following  result. In the paper it is subsumed in Theorem~\ref{theorem:main general}.
\begin{mainthm}\label{thm:main-general}
Restriction along the classifying space functor induces, for each $n$, an equivalence of $\infty$-categories
\[
\rho_{\classifying}\colon \exc_n(\spaces, \catname{D}) \xrightarrow{\simeq} \poly_n(\fr, \catname{D}).
\]
The inverse to $\rho_{\classifying}$ is given by a composition of two functors, both of which are equivalences:
\[
\poly_n(\fr, \catname{D})\xrightarrow{\lkan_{\classifying}} \exc_n(\connected, \catname{D})\xrightarrow{\rkan} \exc_n(\spaces, \catname{D}).
\]
Here $\connected$ is the category of \emph{connected} pointed spaces, $\lkan_{\classifying}$ is left Kan extension along $\classifying$, and $\rkan$  is right Kan extension along the inclusion functor $\connected \hookrightarrow \spaces$.
\end{mainthm}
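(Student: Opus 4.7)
The plan is to prove that each arrow in the composition
\[
\poly_n(\fr, \catname{D}) \xrightarrow{\lkan_{\classifying}} \exc_n(\connected, \catname{D}) \xrightarrow{\rkan} \exc_n(\spaces, \catname{D}) \xrightarrow{\rho_{\classifying}} \poly_n(\fr, \catname{D})
\]
is an equivalence of $\infty$-categories, with the total composition equal to the identity. This factorization decouples two genuinely different phenomena: the comparison between $\fr$ and $\connected$ via the classifying space, and the passage from $\connected$ to all pointed spaces. As a preliminary step, I would check that $\rho_{\classifying}$ does land in polynomial functors. This rests on the observation that $\classifying\colon \fr \to \spaces$ sends the strongly cocartesian cubes of $\fr$ built from free-product inclusions to strongly cocartesian cubes of wedges of spaces in $\spaces$, so the defining cube conditions of $n$-excisiveness and polynomiality of degree $\leq n$ translate cleanly under restriction.

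The core step is the equivalence $\lkan_{\classifying}\colon \poly_n(\fr, \catname{D}) \xrightarrow{\simeq} \exc_n(\connected, \catname{D})$. The essential image of $\classifying$ in $\connected$ is the full subcategory $\circles$ of finite wedges of circles. The crucial input is that $\circles$ is dense in $\connected$ with respect to simplicial colimits: every connected pointed space $X$ is recovered as $X \simeq \classifying \loops X$, and $\loops X$ admits a resolution by free simplicial groups (Milnor's bar-type construction), exhibiting $X$ as a simplicial colimit in $\connected$ of wedges of circles. A standard Kan extension argument then shows that $\lkan_{\classifying}$ is fully faithful on polynomial functors, and identifying the essential image with $\exc_n(\connected, \catname{D})$ reduces to matching two cube-theoretic conditions, both of which descend by density to the same condition on strongly cocartesian cubes of wedges of circles, where polynomiality of degree $\leq n$ and $n$-excisiveness coincide tautologically.

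Finally, I would show that $\rkan\colon \exc_n(\connected, \catname{D}) \to \exc_n(\spaces, \catname{D})$ is an equivalence. The counit $(\rkan F)|_{\connected} \to F$ is automatic since $\connected \hookrightarrow \spaces$ is fully faithful, so the question is whether an $n$-excisive $F$ on $\spaces$ is determined by $F|_{\connected}$. For this, one uses that any pointed space admits a natural comparison to the connected space $\susp \loops X$, and that $n$-excisive functors behave additively with respect to the decomposition of a pointed space into its basepoint component and its remaining components, both connected. The main obstacle is the cube-matching in the middle step: verifying that the polynomial degree condition on $\fr$ and the $n$-excisive condition on $\connected$ really do cut out the same subcategory after left Kan extension, with the bookkeeping of simplicial resolutions and strongly cocartesian cubes all working out coherently across the equivalence. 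This is where the technical weight of the theorem lies, and is presumably the point at which the alternative description via truncated right comodules over the commutative operad enters the picture.
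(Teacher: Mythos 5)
Your factorization through $\exc_n(\connected,\catname{D})$ is the same skeleton the paper uses, and the easy direction (that $\rho_{\classifying}$ lands in polynomial functors, because $\classifying$ sends split strongly cocartesian cubes of free products to strongly cocartesian cubes of wedges) is fine. But both of your key steps have genuine gaps. For the middle step, the claim that after descending by density ``polynomiality of degree $\le n$ and $n$-excisiveness coincide tautologically'' on cubes of wedges of circles is not correct as stated: polynomiality of $F$ on $\fr$ only controls \emph{split} strongly cocartesian cubes (those built from wedge-summand inclusions, equivalently vanishing of the $(n+1)$-st cross-effect), whereas $n$-excisiveness on $\connected$ concerns \emph{all} strongly cocartesian cubes, whose initial maps are arbitrary; and a general strongly cocartesian cube in $\connected$ is not in any naive way a sifted colimit of split ones in $\circles$. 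Bridging exactly this is the technical heart of the paper: one must first know that $n$-excisive functors into a \emph{stable} $\catname{D}$ preserve sifted colimits (so that $\exc_n(\connected,\catname{D})$ really lies in the essential image of $\lkan_{\classifying}$, which a priori is only $\fun_{\Sigma}(\connected,\catname{D})$), and then rectify an arbitrary strongly cocartesian cube of simplicial groups, via almost-free cofibrant replacement, to one that is levelwise a split strongly cocartesian cube of free groups, and finally commute $\lkan F$ past the geometric realization. Your sketch never invokes stability of $\catname{D}$ or sifted-colimit preservation, yet without them the essential-surjectivity claim fails; fully faithfulness of $\lkan_{\classifying}$ is indeed standard, but that was never the issue.

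For the last step, two things are missing. You do not address why $\rkan$ of an $n$-excisive functor on $\connected$ is again $n$-excisive on $\spaces$, and your argument that an $n$-excisive $F$ on $\spaces$ is determined by $F|_{\connected}$ does not work: writing a pointed space as its basepoint component wedge the rest, the ``rest'' is $Y_+$, which is again disconnected, so the proposed induction over components never terminates, and $n$-excisive functors have no additivity over components that would drive it (already $F(S^0)$ is invisible to such a decomposition); likewise the comparison map $\susp\loops X \to X$ goes the wrong way and does not recover $F(X)$ from connected input. The paper's actual mechanism is Goodwillie's operator $T_n$ built from the joins $X * U$: for $U \neq \emptyset$ these are connected, so $T_nF$ makes sense as a functor $\widetilde T_n F$ on all of $\spaces$ given only $F|_{\connected}$, the map $F \to T_nF$ is an equivalence for excisive $F$, $\widetilde T_nF$ is again $n$-excisive, and $\widetilde T_n$ is then identified with right Kan extension on excisive functors. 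Some such device (or a citation of Brantner--Mathew's Theorem 3.36) is needed here; your sketch does not supply it. Finally, the comodule description plays no role in this proof in the paper --- it is a corollary of the theorem, not an ingredient.
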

Let us note that one can use Theorem~\ref{thm:main-general} to further relate the category of polynomial functors from $\fr$ to $\catname{D}$ to categories of right (co)modules over the commutative or the Lie operad. Let $\finset^{\le n}$ the category of pointed sets with at most $n$ non-basepoint elements. There is an inclusion of categories $\finset^{\le n}\hookrightarrow \spaces$, which induces a restriction functor
\[
\fun(\spaces, \catname{D})\to \fun(\finset^{\le n}, \catname{D}).
\]
By Lemma~\ref{lem: excisive finset}, this functor restricts to an equivalence of categories, for each $n$
\[
\exc_n(\spaces, \catname{D})\xrightarrow{\simeq} \fun(\finset^{\le n}, \catname{D}).
\]
Furthermore, let $\epi^{\le n}$ be the category of unpointed sets with at most $n$ elements, and surjective functions between them. By~\cite{Helmstutler, Walde} there is an equivalence
\[
\fun(\finset^{\le n}, \catname{D})\simeq \fun(\epi^{\le n}, \catname{D})
\]
The category $\fun(\epi^{\le n}, \catname{D})$ is the same as the category of $n$-truncated right comodules in $\catname{D}$ over the (non-unital) commutative operad. Thus we also denote this category by $\com\mathrm{-}\comod_{\le n}(\catname{D})$. Furthermore, there is an equivalence between the category of right comodules over $\com$ and \emph{divided power right modules} over $\Lie$, where $\Lie$ is the bar construction on $\com$, also known as the shifted spectral Lie operad. A version of this equivalence is proved in~\cite[Theorem 3.82]{Arone-Ching_Cross-effects}, albeit with a somewhat ad hoc notion of divided power module. We denote the category of $n$-truncated divided power modules by $\DP\Lie\mathrm{-}\module_{\le n}(\catname{D})$. Putting all these equivalences together, we obtain the  following theorem
\begin{mainthm}\label{mainthm-modules}
For all $n$, there are equivalences of $\infty$-categories
\[
\poly_n(\fr, \catname{D})\simeq \com\mathrm{-}\comod_{\le n}(\catname{D}) \simeq \DP\Lie\mathrm{-}\module_{\le n}(\catname{D}).
\]
\end{mainthm}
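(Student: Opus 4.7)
The plan is to assemble Theorem~\ref{thm:main-general} together with three further equivalences of $\infty$-categories, all either known from the literature or sketched already in the preceding paragraph of the excerpt, into a single chain. The first link
\[
\poly_n(\fr,\catname{D})\simeq \exc_n(\spaces,\catname{D})
\]
is precisely Theorem~\ref{thm:main-general}, so the remaining work is to identify $\exc_n(\spaces,\catname{D})$ successively with $\com\mathrm{-}\comod_{\le n}(\catname{D})$ and then with $\DP\Lie\mathrm{-}\module_{\le n}(\catname{D})$.

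For the second link I would appeal to Lemma~\ref{lem: excisive finset}, which says that restriction along $\finset^{\le n}\hookrightarrow\spaces$ gives an equivalence $\exc_n(\spaces,\catname{D})\xrightarrow{\simeq}\fun(\finset^{\le n},\catname{D})$. The underlying point is that every object of $\finset^{\le n}$ is a wedge of at most $n$ copies of $S^0$, and an $n$-excisive functor out of $\spaces$ is pinned down by its cross-effects of order $\le n$, which are encoded precisely by the values on such wedges; essential surjectivity is obtained by left Kan extension. Composing this with the Helmstutler--Walde equivalence $\fun(\finset^{\le n},\catname{D})\simeq\fun(\epi^{\le n},\catname{D})$, and then invoking the tautology $\fun(\epi^{\le n},\catname{D}) = \com\mathrm{-}\comod_{\le n}(\catname{D})$ (since the opposite of the surjection category is the arity-indexing category of the non-unital commutative operad), completes the first half of the chain. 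The Helmstutler--Walde step itself rests on the canonical factorization of a pointed-set map as a partial-basepoint-collapse followed by a surjection, which makes the extension from $\epi^{\le n}$ to $\finset^{\le n}$ a right Kan extension that is automatic in this combinatorial setting.

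The third link $\com\mathrm{-}\comod_{\le n}(\catname{D})\simeq\DP\Lie\mathrm{-}\module_{\le n}(\catname{D})$ is supplied by the bar-cobar duality between the commutative and shifted spectral Lie operads, in the form \cite[Theorem 3.82]{Arone-Ching_Cross-effects}. I expect this to be the main obstacle, and it is purely bookkeeping: one must verify that the $n$-truncated variant of that statement is legitimate, and that the notion of divided power $\Lie$-module used here coincides with the one (acknowledged in the introduction as ``somewhat ad hoc'') of \cite{Arone-Ching_Cross-effects}. Since every other equivalence in the chain is manifestly compatible with truncating at the degree $n$ part -- both polynomial functors and excisive functors come with a natural $n$-filtration, and the relevant combinatorial categories $\finset^{\le n}$ and $\epi^{\le n}$ are by construction $n$-truncated -- once this compatibility is confirmed, the chain assembles cleanly to yield both claimed equivalences.
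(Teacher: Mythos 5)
Your chain is exactly the one the paper uses: Theorem~\ref{thm:main-general} gives $\poly_n(\fr,\catname{D})\simeq\exc_n(\spaces,\catname{D})$, Lemma~\ref{lem: excisive finset} identifies this with $\fun(\finset^{\le n},\catname{D})$, Proposition~\ref{prop:Pirashvili-Helmstutler-Walde} passes to $\fun(\epi^{\le n},\catname{D})=\com\mathrm{-}\comod_{\le n}(\catname{D})$ (this is Corollary~\ref{cor:com comodule main}), and the final identification with divided power $\Lie$-modules is, as in the paper, quoted from the Koszul-duality result of \cite[Theorem 3.82]{Arone-Ching_Cross-effects}. So the proposal is correct and follows essentially the same route as the paper, including treating the last link as a citation rather than a new argument.
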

The first of these equivalences is proved in Corollary~\ref{cor:com comodule main}. We only mention the connection with divided power $\Lie$-modules in the introduction, for the sake of the broad picture. They are not needed for applications in this paper.
\begin{rem}
There is a forgetful functor from the category of divided power right modules over an operad to the category of ordinary right modules. When $\catname{D}=\ch_{\Q}$ is the category of rational chain complexes, this forgetful functor is an equivalence. Therefore, theorem~\ref{mainthm-modules} gives equivalences of categories
\begin{equation}\label{eq:Powell}
\poly_n({\fr}, \ch_{\Q})\simeq \com\!-\!\comod_{\le n}(\ch_{\Q}) \simeq \Lie\mathrm{-mod}_{\le n}(\ch_\Q).
\end{equation}
A very similar equivalence of categories (as well as a contravariant version of it) was proved by Geoffrey Powell~\cite{Powell_analytic}. We say very similar, because Powell actually proves an equivalence of categories of functors with values in $\Q$-vector spaces rather than functors with values in rational chain complexes. It would be good to understand precisely the relationship between~\eqref{eq:Powell} and the results of Powell in~\cite{Powell_analytic}. But putting this aside, Theorem~\ref{mainthm-modules} may be viewed as a generalisation of Powell's result in the sense that it works not just in characteristic zero, but for functors with values in any stable $\infty$-category, e.g., spectra. Furthermore, Theorems~\ref{thm:main-general} and~\ref{mainthm-modules} work for contravariant as well as covariant functors with values in, say, $\ch$ or $\catname{Spectra}$. In fact, one can write a version of Theorems~\ref{thm:main-general} and~\ref{mainthm-modules} for bifunctors, i.e., functors from $\fr\times{\fr}^{\op}$ to $\catname{D}$. See Proposition~\ref{prop:bivariant}.
\end{rem}
\subsubsection*{Idea of proof of the main theorems}
We prove Theorem~\ref{thm:main-general} by factoring the functor $\rho_B$ as a composition of three functors, and showing that each one of them is an equivalence. The factorization takes the following form
\begin{equation}\label{eq:factorization}
\exc_n(\spaces, \catname{D})\xrightarrow{\alpha} \exc_n(\connected, \catname{D}) \xrightarrow{\beta} \exc_n(\sgr, \catname{D})\xrightarrow{\gamma} \poly_n(\fr, \catname{D}).
\end{equation}
Explanation: Firstly, $\connected$ is the category of \emph{connected} pointed spaces, and $\alpha$ is restriction along the inclusion $i\colon\connected \hookrightarrow \spaces$. We show that restriction along $i$ induces an equivalence of categories of excisive functors. A version of this statement was proved by Brantner and Mathew~\cite[Theorem 3.36]{Brantner-Mathew}. We give a different proof, which works without requiring that $\catname{D}$ is stable (but in this paper we only use the result for stable $\catname{D}$). We also show that the inverse to $\alpha$ is given by right Kan extension along $i$ (Proposition~\ref{prop:right Kan}).

Secondly, $\sgr$ is the category of simplicial groups, and $\beta$ is restriction along the classifying space functor $\classifying\colon \sgr\to \connected$. That $\beta$ is an equivalence follows essentially from Kan's theorem that there is an equivalence of $\infty$-categories between $\sgr$ and $\connected$~\cite{KanLoop}.

Thirdly and finally, $\gamma$ is induced by restriction along the inclusion $\fr\hookrightarrow \sgr$. We show that $\gamma$ is an equivalence (Proposition~\ref{prop:poly to exc}). Its inverse is given by left Kan extension.

\subsection{Applications to Ext groups} \label{subsec:applications} Our main application of Theorem~\ref{thm:main-general} is to calculation of Ext groups in the category of functors from $\fr$ to $\ab$. These Ext groups were studied by a number of people, perhaps most prominently by Christine Vespa~\cite{Vespa2018}.

Let $\ch$ be the $\infty$-category of chain complexes or, equivalently, of $H\Z$-modules. For the application we specialize Theorem~\ref{thm:main-general} to the case $\catname{D}=\ch$. Functors from $\fr$ to $\ab$ can be viewed as functors from $\fr$ to $\ch$, and Ext in the category $\fun(\fr, \ab)$ can be interpreted as homotopy groups of mapping objects in $\fun(\fr, \ch)$.

Let $\spheres{1}\subset \spaces$ be the full subcategory of $\spaces$ (pointed spaces), spanned by finite wedges of $S^1$. Note that the fundamental group functor induces an equivalence of topological categories $\pi_1\colon \spheres{1}\xrightarrow{\simeq}\fr$. We prove the following result (an abbreviated version of Theorem~\ref{thm:fr-to-top})
\begin{mainthm}\label{thm:fr-to-top_intro}
Suppose $F\colon \fr\to \ab$ is a polynomial functor of degree $n$. Then there exists an $n$-excisive functor $\widehat F\colon \spaces\to \ch$, unique up to equivalence, for which there is an equivalence of functors from $\spheres{1}$ to $\ch$
\[
\widehat F|_{\spheres{1}} \cong F\circ \pi_1.
\]

Furthermore if $G\colon \fr \to \ab$ is another polynomial functor, then there is an isomorphism of graded groups
\begin{equation}\label{eq:Ext=Nat}
\ext^*_{\fun(\fr, \ab)}(F, G)\cong \pi_{-*}\left(\spectralNat\left(\widehat F, \widehat G\right)\right)
\end{equation}
where by $\spectralNat(\widehat F, \widehat G)$ we mean the spectral object of morphisms in the stable $\infty$-category $\fun(\spaces, \ch)$.
\end{mainthm}
\begin{notn}\label{notation:hat}
Throughout the paper, given a polynomial functor $F\colon\fr\to \ab$, we denote by $\widehat F$ the corresponding functor from $\spaces$ to $\ch$ that is given by Theorem~\ref{thm:fr-to-top_intro}.
\end{notn}
We refer to $\widehat F$ as the extension of $F$. But let us remind the reader that $\widehat F$ is neither a left nor a right Kan extension. Rather $\widehat F$ is a right Kan extension of a left Kan extension of $F$.
\begin{rem}\label{rem:little miracle} 
Theorem~\ref{thm:fr-to-top_intro} is useful for calculations 
because the right hand side of~\eqref{eq:Ext=Nat} is often easier to compute than the left hand side. The reader may wonder why this should be the case. We can offer a couple of heuristic reasons: 

First, we saw above that the right hand side of~\eqref{eq:Ext=Nat} can be identified with derived natural transformations between functors from $\epi$ to $\ch$, while on the left hand side we have functors from $\fr$ to $\ch$ (or $\ab$). It is clear that the category $\epi$ has simpler structure than $\fr$, so it stands to reason that the right hand side of~\eqref{eq:Ext=Nat} is easier to calculate than the left hand side.

The second, and perhaps deeper reason hinges on equivalence $\alpha$ on line~\eqref{eq:factorization}, i.e., on extending the domain category from connected spaces to all spaces. Consider for example the functor $P^n(X)=\widetilde\Z[X^n]$, which represents the homology of $X^n$. As a functor from connected spaces to $\ch$, $P^n$ is not a representable functor, and there is no easy way to calculate ${\spectralNat}_{\,\connected\!\!}(P^n, -)$. On the other hand, as a functor from  all spaces to $\ch$, $P^n$ is represented by the pointed set $n_+=\{0,1,\ldots, n\}$, and for any functor $G\in\fun(\spaces, \ch)$ the Yoneda lemma gives us the following equivalence, which amounts to a calculation of $\spectralNat_{\,\spaces\!\!}(P^n, G)$: 
\[
\spectralNat_{\,\spaces\!\!}(P^n, G)\simeq G(n_+).
\] 
It turns out that for many polynomial functors $F\colon\fr\to \ab$ that do not admit small projective resolutions in $\fun(\fr, \ab)$, their image $\widehat F \in \fun(\spaces, \ch)$ does admit a convenient presentation as a homotopy colimit of representable functors. This seems to be the ``little miracle'' that makes Theorem~\ref{thm:fr-to-top_intro} remarkably  useful for calculations.
\end{rem}

To illustrate the utility of Theorem~\ref{thm:fr-to-top_intro}, we list in Table~\ref{tab:functors} some commonly used polynomial functors from $\fr$ to $\ab$, and their extensions in $\fun(\spaces, \ch)$. 
{\def\arraystretch{1.5}
\begin{table}[] 
    \centering
    \begin{tabular}{c|ccl}
    {A functor $F\colon \fr\to \ab$}   & \multicolumn{3}{c}{The corresponding functor $\widehat F\colon \spaces\to \ch$} \\ \hline
       ${\abelianization}$  & $\widehat\abelianization(X)$ &$=$& $\Sigma^{-1} \widetilde \Z[X]$ \\
       $\tensor^n \circ \abelianization $ &$\widehat{\tensor^n\circ\abelianization} (X)$&$=$& $\Omega^n \widetilde \Z[X^{\wedge n}]$ \\
       $\Lambda^n \circ \abelianization $ & $\widehat{\Lambda^n \circ \abelianization}(X)$&$=$&$\Sigma^{-n} \widetilde \Z[X^{\wedge n}_{\Sigma_n}]$\\
       $\symm^n\circ \abelianization$ &  
       $\widehat{\symm^n\circ \abelianization}(X)$&$=$&$(\Omega^n \widetilde\Z[X^{\wedge n}])_{\Sigma_n}$\\
      $\Gamma^n\circ \abelianization$ &  $\widehat{\Gamma^n\circ \abelianization}(X)$&$=$&$\Sigma^{-2n}\widetilde\Z[(SX)^{\wedge n}_{\Sigma_n}]$   \\
       $\Passi_n$ & $ \widehat{\Passi_n}(X)$&$=$&$P_n \widetilde\Z[\Omega X]=\nat_{i\in \epi^{\le n}_0}(S^i, \widetilde \Z[X^{\wedge i}])$
    \end{tabular}
    \caption{A table of functors}
    \label{tab:functors}
\end{table}
}
Let us give a few explanations of the notation in the table
\begin{itemize}
    \item $\abelianization$ denotes the abelianization functor, 
    \item $\tensor^n$ denotes $n$-fold tensor power, \item $\Lambda^n$, $\symm^n$ and $\Gamma^n$ denote the $n$-th exterior, symmetric and divided power respectively, 
    \item $\widetilde \Z[-]$ denotes the functor from $\spaces$ to $\ch$ that represents reduced singular homology. Most of the time we work with $\infty$-categories, and it does not matter which model of this functor we use. For example, one can think of $\widetilde\Z[X]$ as  the Eilenberg-Mac Lane spectrum $H\mathbb Z\wedge X$, or the reduced singular chains complex of $X$, or the normalized chain complex of the pointed free abelian simplicial group generated by a pointed simplicial set $X$. Occasionally it will be convenient to use the latter construction as our specific model of $\widetilde\Z[X]$. 
    \item $\Passi_n$ denotes the $n$-th Passi functor (defined in Section~\ref{section:Passi}). 
    \item $P_n \widetilde\Z[\Omega X]$ is short for Goodwillie's $n$-th Taylor approximation of the functor $\widetilde\Z[\Omega -]$, evaluated at $X$.
    \item If $C$ is a chain complex and $n\in \Z$ then $\Sigma^nC$ denotes the $n$-fold shift of $C$.
    \item If $C$ is a (non-negatively graded) chain complex, then $\Omega^n C$ is defined as follows: starting with the standard simplicial model for $S^1$ define a simplicial model for $S^n$. Use the simplicial structure on $S^n$ to define $\Omega^n C$ as a cosimplicial chain complex. Normalize to get a (second quadrant) double complex, and then take total complex. 
    
    There is an equivalence $\Omega^n C\simeq \Sigma^{-n} C$. We favor the notation $\Omega^n C$ when we want to indicate that the symmetric group $\Sigma_n$ is acting non-trivially on the desuspension coordinates.
    \item $X^{\wedge n}_{\Sigma_n}$ denotes the strict orbit space of the action of $\Sigma_n$ on $X^{\wedge n}$. 
    \item The notation $(\Omega^n \widetilde\Z[X^{\wedge n}])_{\Sigma_n}$ indicates the strict coinvariants of the action of $\Sigma_n$ on the chain complex $\Omega^n \widetilde\Z[X^{\wedge n}]$. More details about it are given in Section~\ref{section:general formula}.
\end{itemize}


As we said in Remark~\ref{rem:little miracle}, most of the functors in the right column of  Table~\ref{tab:functors} admit explicit small resolutions in terms of representable functors, while the functors in the left column do not seem to admit such resolutions. Because of this, natural transformations are easier to calculate between functors in the right column than in the left column.

 The following result, originally due to C. Vespa~\cite{Vespa2018}, can be derived fairly easily from Theorem~\ref{thm:fr-to-top_intro} and Table~\ref{tab:functors}, plus the Yoneda lemma. Statement (1) of the corollary is proved in the paper as Corollary~\ref{cor: tensor-others}\ref{item:tensor-tensor}. Statements (2)-(4) are Proposition~\ref{prop:Vespa theorem 3}. 
\begin{cor}[\cite{Vespa2018}]\label{cor:previous}
There are isomorphisms of graded groups
\begin{enumerate}
\item 
\[
    \ext^i(T^m\circ \abelianization, T^n \circ \abelianization) \cong \left\{\begin{array}{cl} \Z^{\sur(n, m)} & i= n-m \\ 0 & \mbox{otherwise} \end{array}\right. 
\]
\item
\[
\ext^i(\Lambda^m\circ \abelianization, \Lambda^n \circ \abelianization)\otimes \Q \cong  
\left\{ \begin{array}{cl} \Q^{\Part(n, m)} & i=n-m \\ 0 &\mbox{otherwise} \end{array} \right.
\]
\item 
\[
\ext^i(\Lambda^m\circ \abelianization, \symm^n \circ \abelianization)\otimes \Q \cong  
\left\{\begin{array}{cc}
   \Q  & m=n\le 1, \, i=0  \\
   0  & \mbox{otherwise}
\end{array}\right. 
\]
\item \[
\ext^i(\Lambda^m\circ \abelianization, \tensor^n \circ \abelianization)\otimes \Q \cong  
   \left\{\begin{array}{cl} \Q^{\Stirling(n,m)} & i=n-m \\ 0 &\mbox{otherwise} \end{array}\right.
\]
\end{enumerate}
\end{cor}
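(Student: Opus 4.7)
The plan is to apply Theorem~\ref{thm:fr-to-top_intro} to identify each $\ext^*$ with $\pi_{-*}$ of a spectrum of natural transformations $\spectralNat(\widehat F, \widehat G)$ in $\exc_n(\spaces, \ch)$, then read off $\widehat F$ and $\widehat G$ from Table~\ref{tab:functors}, and finally compute the mapping spectrum by restricting to pointed finite sets via Lemma~\ref{lem: excisive finset} and invoking the Yoneda lemma.

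For part~(1), the extensions are $\widehat F = \Omega^m\widetilde\Z[X^{\wedge m}]$ and $\widehat G = \Omega^n\widetilde\Z[X^{\wedge n}]$. At $X=k_+$ these evaluate to free abelian groups $\Z^{k^m}$ and $\Z^{k^n}$, shifted by $-m$ and $-n$, so the pointwise mapping chain complex is shifted by $n-m$. By enriched Yoneda on $\finset^{\le\max(m,n)}$, a natural transformation must send a generic $m$-tuple $(x_1,\ldots,x_m)$ to a $\Z$-combination of $n$-tuples in $(k_+)^{\wedge n}$. Naturality constrains this to a combination of coordinate substitutions $(x_1,\ldots,x_m)\mapsto(x_{\sigma(1)},\ldots,x_{\sigma(n)})$, and the requirement that the map extend to the smash power (i.e. send basepoint to basepoint) selects exactly the surjective $\sigma\colon n\twoheadrightarrow m$. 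The result is $\Z^{\sur(n,m)}$ concentrated in degree $n-m$, which gives $\Sigma^{n-m}\Z^{\sur(n,m)}$ as claimed.

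Parts~(2)--(4) reduce to~(1) by taking appropriate symmetric-group invariants. The extensions in Table~\ref{tab:functors} are obtained by applying strict (for $\exterior$) or homotopy (for $\symm$) coinvariants, and the $\Sigma_k$-action acquires a sign twist from the desuspension coordinates in $\Omega^k$. Rationally, all these variants agree with strict invariants in the appropriate sign representation. Hence $\spectralNat\otimes\Q$ is the $(\Sigma_n\times\Sigma_m)$-invariants of the part~(1) answer, in the sign representations dictated by the $\exterior$ factors. The combinatorics is then routine: $(\Sigma_n\times\Sigma_m)$-orbits on $\sur(n,m)$ under pre- and post-composition correspond to integer partitions of $n$ into $m$ parts, yielding~(2) with count $\Part(n,m)$; $\Sigma_m$-orbits alone give set partitions of $\{1,\ldots,n\}$ into $m$ labelled blocks, the Stirling number $\Stirling(n,m)$ for~(4); and for~(3), the sign character on $\Sigma_m$ forces the antisymmetrizer of a free $\Sigma_m$-orbit to be $\Sigma_n$-trivial only when $n\le 1$, giving $\Q$ precisely when $m=n\le 1$ and $0$ otherwise.

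The main obstacle is executing the Yoneda step of part~(1) rigorously, since $X\mapsto \widetilde\Z[X^{\wedge m}]$ is not literally representable in $\fun(\spaces,\ch)$. One clean route is to pass through the equivalence $\exc_n(\spaces,\ch)\simeq \fun(\epi^{\le n},\ch)$ of Theorem~\ref{mainthm-modules}, under which the combinatorial heart of the argument becomes that morphisms in $\epi^{\le n}$ from the object of size $n$ to the object of size $m$ are exactly the surjections. This is the conceptual point flagged in Remark~\ref{rem:little miracle}: the functors in Table~\ref{tab:functors} become accessible to Yoneda on the space side precisely because they admit small representable resolutions there. Once part~(1) is in place, the sign bookkeeping and orbit counting needed for~(2)--(4) are essentially formal.
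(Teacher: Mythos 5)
Your proposal is correct in outline, but it takes a genuinely different route from the paper, so here is the comparison. For part (1) the paper does not transport to $\fun(\epi^{\le N},\ch)$: it proves (Lemma~\ref{lem: smash power represents ce}, feeding into Proposition~\ref{prop:tensor-general}) that $\spectralNat_X(\widetilde\Z[X^{\wedge m}],\widehat G(X))\simeq\ce_m\widehat G(S^0,\dots,S^0)$ by writing $X^{\wedge m}$ as a total cofiber of a cube of honestly representable functors $X^U$, and then computes $\ce_m$ of $\Omega^n\widetilde\Z[X^{\wedge n}]$ directly, the total fiber killing the non-surjective maps in $\Z[U^n]$; this gives Corollary~\ref{cor: tensor-others}(1) together with the $\Sigma_m\times\Sigma_n$-equivariant structure (permutation action on $\sur(n,m)$ twisted by $\mathrm{sgn}\boxtimes\mathrm{sgn}$). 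Your route --- under $\exc(\spaces,\ch)\simeq\fun(\epi^{\le N},\ch)$ the source becomes the shifted linearized corepresentable $\Sigma^{-m}\Z[\hom_{\epi}(m,-)]$ and one applies linearized Yoneda --- is an equally valid mechanism; but note that your concrete naturality argument at $k_+$ only identifies degree-zero transformations, and the actual content of (1) is the absence of higher derived terms, which is exactly what projectivity/compactness of $\Z[\hom_{\epi}(m,-)]$ (equivalently, the paper's cube-of-representables argument) supplies, so the step you labelled an ``obstacle'' is the proof rather than an afterthought. For (2)--(4) the paper argues via Lemma~\ref{lem:rational} (rationally, the strict orbits in $\widehat{\Lambda^m\circ\abelianization}$ are homotopy orbits, so $\ext^*\cong\pi_{-*-m}\,\ce_m\widehat G(S^0,\dots,S^0)^{h\Sigma_m}$) plus explicit cross-effect computations and topological input such as the contractibility of $S^n_{\Sigma_n}$ for $n>1$ (Proposition~\ref{prop:Vespa theorem 3}); your reduction instead splits $\Lambda^m_{\Q}$ and $\symm^n_{\Q}$ off the tensor powers by the rational idempotents and reads (2)--(4) as isotypic pieces of the equivariant form of (1), which is legitimate and purely character-theoretic. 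Two spots need more care than your one-line summaries: in (3), for $n>m$ the relevant isotypic part vanishes because the $\Sigma_n$-stabilizers on $\sur(n,m)$ are nontrivial Young subgroups, while for $m=n\ge2$ you need the bimodule decomposition $\Q[\Sigma_n]\cong\bigoplus_\lambda V_\lambda\boxtimes V_\lambda$, which contains no $\mathrm{triv}\boxtimes\mathrm{sgn}$ constituent (your ``free orbit'' clause covers only part of this); and $\Stirling(n,m)=\sur(n,m)/_{\Sigma_m}$ counts partitions into $m$ unordered blocks, not labelled ones. What the paper's route buys is integral information (part (1) is proved integrally, and the same machinery later yields integral refinements such as Lemma~\ref{lem:Lambda2 to Lambdan}); what your route buys is that, once (1) is known equivariantly, (2)--(4) reduce to routine finite-group representation theory with no further topology.
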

Here $\sur(n, m)$ denotes the set of surjections from a set with $n$ elements to a set with $m$ elements, $\Stirling(n, m)=\sur(n,m)/_{\Sigma_m}$ is the set (or number) of ways to partition a \emph{set} with $n$ elements into $m$ parts, and $\Part(n,m)= {_{\Sigma_n}}\!\backslash\sur(n, m)/_{\Sigma_m}$ can be identified with the set of partitions of the \emph{number} $n$ with $m$ parts. 

We can use Theorem~\ref{thm:fr-to-top_intro} together with Table~\ref{tab:functors} to do many new calculations. As a general remark, modulo torsion many of calculations in this paper were done before, by Vespa and others. But all the calculations of torsion in this paper are new. 

In Section~\ref{section:ext from tensor} we calculate $\ext^*(\tensor^n\circ\ab, G)$ for several functors $G$. The main observation here is that $\ext^*(\tensor^n\circ\ab, G)$ is given by the cross-effects of $\widehat G$, and the cross-effects are usually easy to compute. The cases when $G=\tensor^n\circ \abelianization, \Lambda^n\circ \abelianization,$ or $\Gamma^n\circ \abelianization$ are calculated easily with our methods. The case when $G=\symm^n\circ\abelianization$ is less easy, because our formula for $\widehat{\symm^n\circ\abelianization}$ is not so explicit. We calculate $\ext^*(\abelianization, \symm^n\circ\abelianization)$ for $n\le 9$ (see Table~\ref{tab:ext}), and give a kind of general formula for $\ext^*(\tensor^m\circ \abelianization, \symm^n\circ\abelianization)$ in terms of $\ext^*(\abelianization, \symm^n\circ\abelianization)$. One of the reasons this calculation is interesting to the author is that it is our first example in which Ext groups have lots of torsion (and are trivial rationally).\footnote{Added in revision: further progress on $\ext^*(\abelianization, \symm^n\circ\abelianization)$ has been made in~\cite{Kim-Vespa} and also in work in progress by M. Kim, C. Vespa and the author.} 

In Section~\ref{section:ext from tensor} we also calculate $\ext^*(\abelianization, \Passi_n)$. This reduces to calculating the homology of the ``reduced Ran space'' of $S^1$. As far as we know, this calculation is new even rationally.

In Section~\ref{section:ext from exterior} we study $\ext^*(\Lambda^m\circ\abelianization, G)$ for various $G$. The key to calculating these groups is to understand $\nat(X^{\wedge m}_{\Sigma_m}, \widehat G)$. We accomplish this by writing a kind of ``small projective resolution'' of $X^{\wedge m
}_{\Sigma_m}$.

To illustrate, let us present a full calculation of $\ext^*(\Lambda^2\circ \abelianization, \Lambda^n \circ \abelianization)$. First, let us give a more general result. Suppose $G\colon \spaces\to \ch$ is a functor. Define $\ce_2G(S^0, S^0)$ to be the second cross-effects of $G$, i.e., the kernel of the homomorphism $G(S^0\vee S^0)\to G(S^0)\times G(S^0)$. Note that there is a natural homomorphism $\ce_2G(S^0, S^0)\to G(S^0)$. We prove the following fact in Examples~\ref{exmp:lambda 2 and 3}
\begin{prop}
    Let $G\colon\fr\to \ch$ be a polynomial functor. Let $\widehat G\colon \spaces\to \ch$ be the extension of $G$, in the sence of Theorem~\ref{thm:fr-to-top_intro}. The graded group $\ext^{*}(\Lambda^2\circ \abelianization, G)$ is isomorphic to $\pi_{-*-2}$ of the homotopy pullback of the following diagram of chain complexes\footnote{We use $\pi_*$ synonymously with $\HH_*$ when applied to chain complexes}
    \[
\widehat G(S^0) \to \pointedmaps(\RP^\infty_+, \widehat G(S^0))) \leftarrow \ce_2\widehat G(S^0, S^0)^{h\Sigma_2}.
    \]
\end{prop}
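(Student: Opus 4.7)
The plan is to apply Theorem~\ref{thm:fr-to-top_intro}, which identifies $\ext^*_{\fun(\fr,\ab)}(\Lambda^2\circ\abelianization, G)$ with $\pi_{-*}$ of $\spectralNat(\widehat{\Lambda^2\circ\abelianization}, \widehat G)$. By Table~\ref{tab:functors}, $\widehat{\Lambda^2\circ\abelianization}(X)\simeq \Sigma^{-2}\widetilde\Z[X^{\wedge 2}_{\Sigma_2}]$, and the $\Sigma^{-2}$ accounts for the shift from $\pi_{-*}$ to $\pi_{-*-2}$ in the statement. So the problem reduces to identifying $\spectralNat(\widetilde\Z[X^{\wedge 2}_{\Sigma_2}], \widehat G(X))$ with the stated homotopy pullback.

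The central geometric input is a pushout square of pointed-space-valued functors of $X$:
\[
\begin{array}{ccc}
X_{h\Sigma_2} & \longrightarrow & X^{\wedge 2}_{h\Sigma_2} \\
\downarrow & & \downarrow \\
X & \longrightarrow & X^{\wedge 2}_{\Sigma_2}
\end{array}
\]
The horizontal maps are induced by the $\Sigma_2$-equivariant diagonal $X\hookrightarrow X^{\wedge 2}$ (trivial action on source, swap on target), and the vertical maps are the canonical comparison from homotopy to strict orbits. Because the $\Sigma_2$-action on the cofiber $X^{\wedge 2}/X$ is free off the basepoint, the horizontal cofibers of both rows identify naturally with $(X^{\wedge 2}/X)_{h\Sigma_2} \simeq (X^{\wedge 2}/X)_{\Sigma_2}$, in a way compatible with the vertical comparison, so the total cofiber vanishes and the square is a pushout.

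I then apply $\widetilde\Z[-]$, which preserves colimits, followed by $\spectralNat(-, \widehat G)$, which takes colimits in the source to limits; this turns the pushout into a homotopy pullback of chain complexes. The three non-corner terms are identified by Yoneda-type arguments. Since $\widetilde\Z[-]$ is the free $H\Z$-module functor and $\widehat G$ is $H\Z$-linear, one has $\spectralNat(\widetilde\Z[X], \widehat G)\simeq \widehat G(S^0)$. Since $X_{h\Sigma_2}\simeq X\wedge \RP^\infty_+$, the same adjunction gives $\spectralNat(\widetilde\Z[X\wedge \RP^\infty_+], \widehat G)\simeq \pointedmaps(\RP^\infty_+, \widehat G(S^0))$. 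Finally, natural transformations from $X\mapsto X^{\wedge 2}$ to $\widehat G(X)$ compute the second cross-effects $\ce_2\widehat G(S^0, S^0)$ as a $\Sigma_2$-chain complex; passing from $X^{\wedge 2}$ to $X^{\wedge 2}_{h\Sigma_2}$ on the source corresponds to taking $\Sigma_2$-homotopy fixed points on the mapping object, producing $\spectralNat(\widetilde\Z[X^{\wedge 2}_{h\Sigma_2}], \widehat G)\simeq \ce_2\widehat G(S^0, S^0)^{h\Sigma_2}$. Splicing these three identifications yields exactly the homotopy pullback in the statement.

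The main obstacle is verifying the pushout square; once it is in place, the rest amounts to bookkeeping with Yoneda and cross-effects. The crucial point is the equality between strict and homotopy orbits for the free $\Sigma_2$-quotient $X^{\wedge 2}/X$, which makes the two row cofibers coincide and lets one recover the strict quotient $X^{\wedge 2}_{\Sigma_2}$ from the more tractable pieces $X$, $X_{h\Sigma_2}$, and $X^{\wedge 2}_{h\Sigma_2}$.
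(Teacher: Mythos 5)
Your proof is correct and lands on exactly the same pushout square of functors that the paper uses. The difference is in how you justify that square. The paper obtains it as the $m=2$ instance of Lemma~\ref{lem: cubical orbits}, which is itself a consequence of the general orbit-category homotopy colimit decomposition of $X^{\wedge n}_{\Sigma_n}$ (Proposition~\ref{prop:orbits homotopy colimit}, Lemma~\ref{lem:hocolim to orbit}, and the S{\l}omi\'nska cubical model). You instead give a direct verification: the horizontal cofiber of the top row is $(X^{\wedge 2}/X)_{h\Sigma_2}$, that of the bottom row is $(X^{\wedge 2}/X)_{\Sigma_2}$, and since $\Sigma_2$ acts freely off the basepoint on the cofiber of the diagonal these agree, so the square is cocartesian. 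This is an attractive, elementary shortcut; the price is that it is special to $m=2$ (already for $m=3$ the paper needs the full three-dimensional cubical model, and the fat diagonal is no longer a free quotient), whereas the paper's machinery is what makes the later $m=3$ computation and the general Theorem~\ref{thm:ext from exterior} possible.

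Two small remarks. First, when justifying $\spectralNat(\widetilde\Z[X],\widehat G)\simeq \widehat G(S^0)$ you phrase it as ``$\widehat G$ is $H\Z$-linear''; the cleaner reason, which is what the paper uses, is the free-forgetful adjunction between $\spaces$ and $\ch$ together with the (enriched) Yoneda lemma applied to the representable functor $X\mapsto X$. The conclusion is the same. Second, when you say the two row-cofiber identifications are ``compatible with the vertical comparison,'' this does need that both rows are cofiber sequences of $\Sigma_2$-spaces and that homotopy orbits and strict orbits each preserve cofiber sequences for $G$-CW inclusions, so that the induced map on cofibers is literally the canonical $Y_{h\Sigma_2}\to Y_{\Sigma_2}$ for $Y=X^{\wedge 2}/X$; this is straightforward but worth stating, since it is the step where the pushout claim actually gets decided.
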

The proposition is proved by using an explicit resolution of the functor $X^{\wedge 2}_{\Sigma_2}$. 
As a corollary, we can calculate the groups $\ext^*(\Lambda^2\circ \abelianization, \Lambda^n\circ\abelianization)$ for all $n$ and $*$. The following lemma is Lemma~\ref{lem:Lambda2 to Lambdan} in the paper. 
\begin{lem} 
There is n isomorphism:
\[
\ext^i(\Lambda^2\circ \abelianization, \Lambda^n\circ\abelianization)\cong \left\{ \begin{array}{cl}
\Z^{\Part(n, 2)}    & i=n-2  \\
\Z/2   &  n<i, n \mbox{ is odd, } i \mbox{ is even.} \\
0 & \mbox{otherwise}
\end{array}\right.
\]   
\end{lem} 
Rationally this result agrees with Corollary~\ref{cor:previous} (2), but it shows that when $n$ is odd, $\ext^*(\Lambda^2\circ\abelianization, \Lambda^n\circ\abelianization)$ has $2$-torsion. It is worth mentioning that the torsion comes from the cohomology of $\RP^\infty$.

We also describe $\ext^*(\Lambda^3\circ\abelianization, \Lambda^n\circ \abelianization)$ in Corollary~\ref{cor:ext from lambda three to lambda n}. Unsurprisingly, in this case the answer has both $2$ and $3$-torsion. Just as the $2$-torsion part is captured by the cohomology of $\RP^\infty$, the $3$-torsion part is encoded in $\widetilde\HH^*(\classifying\Sigma_3/\classifying \Sigma_2)$. This is the classifying space of the collection of non-transitive subgroups of $\Sigma_3$.

In Theorem~\ref{thm:ext from exterior} we give a general formula for $\ext^*(\Lambda^m\circ\abelianization, G)$ as a homotopy limit of an explicit diagram of chain complexes. This gives a spectral sequence for calculating $\ext^*(\Lambda^m\circ\abelianization, G)$, whose $E^1$ and $E^2$ pages are accessible. We speculate that the spectral sequence collapses at $E^2$ in many cases of interest, for example when $G=\Lambda^n\circ \abelianization$.

In Section~\ref{section: ext from Passi} we investigate $\ext^*(\Passi_n, G)$. We show that it can be calculated in terms of the rank filtration of $\widehat G$. We apply the general formula to study $\ext^*(\Passi_m, \tensor^n\circ\abelianization)$ and obtain a refinement of a result of Vespa~\cite[Proposition 5]{Vespa2018}.

\subsubsection*{Applications to stable cohomology of $\aut(\free{n})$}
Let $\aut(\free{n})$ be the group of automorphisms of the free group on $n$ generators. Suppose $G\colon \fr\to \ab$ is a (polynomial) functor. Then for each $n$, $G(\free{n})$ is a module over $\aut(\free{n})$. By taking a suitable limit as $n\to \infty$ of $\HH^*(\aut(\free{n}); G(\free{n}))$ one may define the stable cohomology of $\aut(\free{n})$ with coefficients in $G$. We denote the stable cohomology by $\HH^*_s(\aut; G)$. By remarkable work of Djament~\cite{Djament2019} (which in turn relies on a famous theorem of Galatius~\cite{Galatius}), it is possible to calculate $\HH^*_s(\aut; G)$ in terms of $\ext^*(\tensor^m\circ\abelianization,  G)$. In Theorem~\ref{thm:Djament reinterpreted} we give the following reinterpretation of Djament's result:
\begin{mainthm}\label{thm:Djament reinterpreted_intro}
Let $G\colon \fr\to \ab$ be a polynomial functor of degree $n$. Let $\widehat G\colon \spaces\to \ch$ be the extension of $G$ given by Theorem~\ref{thm:fr-to-top_intro}. There is an isomorphism
\[
\HH^*_s(\aut;G)\cong \pi_{-*}\pointedmaps\left({\classifying\Sigma_\infty}_+, \prod_{d=0}^n \ce_d\widehat G(S^0, \ldots, S^0)^{h\Sigma_d}\right).
\]
\end{mainthm}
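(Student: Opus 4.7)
The plan is to combine Theorem~\ref{thm:fr-to-top_intro} with Djament's theorem from~\cite{Djament2019}, which itself rests on Galatius's identification~\cite{Galatius} of the plus-construction of $\classifying\aut(\free{\infty})$ with that of $\classifying\Sigma_\infty$. Djament's theorem expresses $\HH^*_s(\aut;G)$ in terms of graded groups $\ext^*_{\fun(\fr,\ab)}(\tensor^m\circ\abelianization, G)$ for varying $m$, assembled together according to the Barratt--Priddy--Quillen recognition of $\classifying\Sigma_\infty$.

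The first step of the proof is to apply Theorem~\ref{thm:fr-to-top_intro} to translate each such $\ext$ into a spectral natural transformation in the stable category $\fun(\spaces,\ch)$:
\[
\ext^*(\tensor^m\circ\abelianization, G)\cong \pi_{-*}\spectralNat(\widehat{\tensor^m\circ\abelianization}, \widehat G).
\]
Using Table~\ref{tab:functors}, $\widehat{\tensor^m\circ\abelianization}(X)\simeq \Omega^m\widetilde\Z[X^{\wedge m}]$, which is $m$-linear and corepresented by $S^m$ with its $\Sigma_m$-action. By Yoneda, the right-hand side identifies (up to shifts) with the $m$-th cross-effect $\ce_m \widehat G(S^0,\ldots, S^0)$ of $\widehat G$, carrying its natural $\Sigma_m$-action; this is precisely the ``little miracle'' highlighted in Remark~\ref{rem:little miracle}, namely that while $\tensor^m\circ\abelianization$ has no tractable resolution in $\fun(\fr,\ab)$, its extension is representable in $\fun(\spaces,\ch)$.

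Second, I would reassemble these cross-effects according to Djament's prescription. Djament's recipe, together with Galatius's theorem, organizes the various $\ce_m$-contributions (for $0\le m\le n$) into a single mapping object of the form $\pointedmaps({\classifying\Sigma_\infty}_+, -)$, with the $\Sigma_m$-action on each $\ce_m\widehat G(S^0,\ldots,S^0)$ absorbed into homotopy $\Sigma_m$-fixed points. The product over $d$ is automatically finite, terminating at $d=n$, because $\widehat G$ is $n$-excisive by Theorem~\ref{thm:fr-to-top_intro}, so its cross-effects of order greater than $n$ vanish.

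The main obstacle is step two: carefully matching Djament's assembly procedure against the compact expression $\pointedmaps({\classifying\Sigma_\infty}_+, \prod_d \ce_d \widehat G(S^0,\ldots,S^0)^{h\Sigma_d})$. This requires in particular that the symmetric-group equivariance, which appears as $\Sigma_d$-orbits in covariant constructions, be converted to homotopy fixed points via the duality implicit in the Ext calculation. A secondary but routine check is that a coherent model of the cross-effects is chosen so that the Yoneda identification of step one is compatible with the BPQ-style assembly of step two.
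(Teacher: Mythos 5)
Your plan is structurally the same as the paper's proof: plug Theorem~\ref{thm:Djament} (Djament's formula, itself resting on Galatius) into Theorem~\ref{thm:fr-to-top_intro}, identify $\widehat{\tensor^d\circ\abelianization}(X)\simeq\Omega^d\widetilde\Z[X^{\wedge d}]$ via Lemma~\ref{lem:abelianization transform}, invoke the Yoneda-type argument (Lemma~\ref{lem: smash power represents ce}) to replace $\spectralNat(\widetilde\Z[X^{\wedge d}],\widehat G)$ with $\ce_d\widehat G(S^0,\ldots,S^0)$, and then truncate the sum at $d=n$ because $\widehat G$ is $n$-excisive. All of that is exactly right.

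Where your description of the ``main obstacle'' goes slightly off target is the alleged need to ``convert $\Sigma_d$-orbits into homotopy fixed points via duality.'' Djament's formula already presents the coefficient as hypercohomology $\mathbb H^*(\Sigma_d;-)$, i.e., as homotopy fixed points, so no orbits-to-fixed-points conversion is required. The genuinely delicate point is a sign and suspension bookkeeping that you do not flag: Djament's statement carries an internal shift $\Sigma^d$ and a twist by the sign representation $\Z[-1]$, while the $\Sigma_d$-equivariant identification of $\widehat{\tensor^d\circ\abelianization}$ places a conjugation action on $\Omega^d\widetilde\Z[X^{\wedge d}]$, with $\Sigma_d$ permuting both the loop coordinates and the smash factors (Remark~\ref{rem:action}). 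The loop-coordinate action contributes precisely the sign representation on $S^d$, so the three twists --- the $\Sigma^d$ shift, the $\Z[-1]$, and the $\Sigma_d$-action on $\Omega^d$ --- cancel, leaving $\left(\spectralNat_X(\widetilde\Z[X^{\wedge d}],\widehat G)\right)^{h\Sigma_d}\simeq \ce_d\widehat G(S^0,\ldots,S^0)^{h\Sigma_d}$ with no residual shift or twist. Without carrying out this cancellation explicitly your step two will not close, and the way you've framed the difficulty would send you looking in the wrong place.
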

Here $\ce_d\widehat G$ denotes the $d$-th cross-effect of $\widehat G$. With help of Theorem~\ref{thm:Djament reinterpreted_intro} we calculate $\HH^*_s(\aut, \tensor^n\circ\abelianization)$, $\HH^*_s(\aut, \Lambda^n\circ\abelianization)$ and $\HH^*_s(\aut, \Gamma^n\circ\abelianization)$ in terms of cohomology of symmetric groups. See Proposition~\ref{prop:stable cohomology calculations}. This proposition refines rational results given in~\cite[Theorem 4]{Vespa2018}. 

\subsection*{Organization of the paper}
In Section~\ref{section:preliminaries} we go over some basics regarding $\infty$-categories. In particular, we review how Ext in the category of functors from a small category $\catname{I}$ to $\ab$ can be viewed as homotopy groups of spectral mapping objects in the $\infty$-category $\fun(\catname{I}, \ch)$.

In Section~\ref{section: fr to connected} we review the connection between the categories $\fr$ and $\connected$, via $\sgr$. We prove that the category of functors $\fun(\fr, \catname{D})$ is equivalent to the category of sifted colimits preserving functors $\fun_{\Sigma}(\connected, \catname{D})$.

In Section~\ref{section:cubical} we review, in preparation for discussion of polynomial and excisive functors, some notions surrounding cubical diagrams, such as cocartesian and strongly cocartesian diagrams. This is elementary material, but we need to pin down some details of the relation between $\infty$-categorical and $1$-categorical versions of these notions. We check that in some cases cocartesianness in an $\infty$-category can be detected in the homotopy category.

In Section~\ref{sec:polynomial} we review the notion of polynomial functors from a pointed category with coproducts to an idempotent complete additive $\infty$-category\footnote{Polynomial functors in this setting were considered by Johnson and McCarthy. See for example~\cite{JM03}}. We give a characterization of polynomial functors that is analogous to Goodwillie's definition of excisive functors.

In Section~\ref{section:excisive} we review Goodwillie's notion of excisive functors. We then focus on excisive functors from $\sgr$ or, equivalently, $\connected$, and compare them with polynomial functors from $\fr$ or, equivalently, $\circles$. We prove our first substantial result: the classifying space functor $\fr\to \connected$ induces an equivalence
\[
\exc_n(\connected, \catname{D})\xrightarrow{\simeq}\poly_n(\fr, \catname{D})
\]
from a category of \emph{excisive} functors to a category of \emph{polynomial} functors.

There is similarity and some overlap between our Sections~\ref{sec:polynomial} and~\ref{section:excisive} and~\cite[Section 2]{Barwick-Glasman-Mathew-Nikolaus}. However, we don't see that we could just quote the paper of Barwick-Glasman-Mathew-Nikolaus, because they only deal with functors whose domain is a stable $\infty$-category, while we are interested in functors whose domain category is not stable.

In Section~\ref{section:connected to all} we compare excisive functors on connected spaces with excisive functors on all spaces. We show that the inclusion $\connected\hookrightarrow \spaces$ induces an equivalence of categories
\[
\exc_n(\spaces, \catname{D})\xrightarrow{\simeq}\exc_n(\connected, \catname{D})
\]
and an inverse is given by right Kan extension. A version of this statment was proved by Brantner-Mathew~\cite[Theorem 3.36]{Brantner-Mathew}. We give a different proof, which works also when $\catname{D}$ is an unstable $\infty$-category.

In Section~\ref{section:main theorems} we prove Theorems~\ref{thm:main-general}, \ref{mainthm-modules} and~\ref{thm:fr-to-top_intro}. We also prove a version for bifunctors. 

Section~\ref{section:main theorems} finishes the first part of the paper which is devoted to proving the general results. The second part is devoted to calculations.

In Section~\ref{section:dictionary} we calculate $\widehat F$ for most functors in the left column of Table~\ref{tab:functors}. In some cases the easiest way to find $\widehat F$ is to ``guess'' the answer and check that it satisfies the condition of Theorem~\ref{thm:fr-to-top_intro}. In this way we calculate $\widehat F$ for $F=\tensor^n\circ\abelianization, \Lambda^n\circ\abelianization,$ and $\Gamma^n\circ\abelianization$. We also give a kind of general formula for $\widehat F$ when $F$ factors through abelianization. We use this to describe $\widehat{\symm^n\circ \ab}$. The general formula is of less topological nature and seems less easy to use that the special formulas we have for $\tensor^n\circ \abelianization$ etc., but it is perhaps not entirely useless. 

In section~\ref{section:Passi} we describe $\widehat{\Passi_n}$, i.e., the extensions of Passi functors. The Passi functors are our main example of functors $\fr\to\ab$ that do not factor through abelianization.

In Sections~\ref{section:ext from tensor},~\ref{section:ext from exterior} and~\ref{section: ext from Passi} we calculate $\ext$ between a few of the functors in Table~\ref{tab:functors}, reproducing and extending all previous calculations of this kind that we are aware of. 

Finally in Section~\ref{section:stable cohomology} we combine our results with those of Djament to describe the stable cohomology of $\aut(\free{n})$ with coefficients in a polynomial functor.

\subsection*{Acknowledgements} This paper is very much inspired by the work of Christine Vespa, and by  stimulating conversations we had during a visit to Strasbourg in the summer of 2013. It only took twelve years to write up! In the meantime I benefited from conversations with Lukas Brantner and also with Marco Nervo. Some of the calculations in Examples~\ref{exmp:lambda 2 and 3} first appeared in Marco's master thesis (University of Torino, 2021).

\part{Theory}

\section{Preliminaries and conventions regarding $\infty$-categories}\label{section:preliminaries}
In this section we review some preliminaries regarding $\infty$-categories. The main point of the section is to recall how classical Ext groups can be interpreted as homotopy (or homology) groups of spectral (or differential graded) mapping objects in a stable $\infty$-category.

Throughout the first half of the paper we use $\infty$-categorical language. Thus terms such as ``category'', ``limit'' and ``colimit'' are usually meant in $\infty$-categorical sense. Occasionally we may fall back into $1$-categorical usage of these terms, more so in the second part of the paper than the first. We will try our best to be clear as to which meaning is intended! For now, until further notice, terms are used in the $\infty$-categorical sense.

Given objects $X$, $Y$ of an $\infty$-category $\catname{C}$ we denote the space of maps from $X$ to $Y$ by $\map_{\catname{C}}(X, Y)$, or simply $\map(X, Y)$. This is a well-defined homotopy type. If $\catname{C}$ is a pointed $\infty$-category, then $\map_{\catname{C}}(X, Y)$ is a well-defined pointed homotopy type. For some categories we use a special notation for the space of maps. If $X$ and $Y$ are pointed spaces we sometimes use the notation $\pointedmaps(X, Y)$ to emphasise that we are considering the space of pointed maps. 

$\infty$-categories that have limits (resp. colimits) are cotensored (resp. tensored) over the category of spaces $\unpointed$. In a similar fashion, pointed $\infty$-categories with limits/colimits are cotensored/tensored over the category of pointed spaces $\spaces$. In practice all $\infty$-categories that we will encounter will be pointed. Given an object $X$ in a pointed $\infty$-category $\catname{C}$ and a pointed space $K$ we denote the tensoring and cotensoring of $K$ and $X$ by $K\otimes X$ (or $X\otimes K$), and $\pointedmaps(K, X)$ respectively.

Given $\infty$-categories $\catname{C}$ and $\catname{D}$, let $\fun(\catname{C}, \catname{D})$ denote the $\infty$-category of functors from $\catname{C}$ to $\catname{D}$. If $F, G\colon \catname{C}\to \catname{D}$ are objects of $\fun(\catname{C}, \catname{D})$ we may denote the space $\map_{\fun(\catname{C}, \catname{D})}(F, G)$ by $\nat_{\catname{C}}(F, G)$, or $\nat_X(F(X), G(X))$, or just $\nat(F, G)$. 

If $\catname{C}$ is a stable $\infty$-category then the mapping space $\map_{\catname{C}}(X, Y)$ is naturally an infinite loop space. The $n$-fold delooping of $\map_{\catname{C}}(X, Y)$ is equivalent to $\map_{\catname{C}}(X, \Sigma^n Y)$. Note that in~\cite{LurieHA} Lurie sometimes denotes $\pi_0\left(\map_{\catname{C}}(X, \Sigma^n Y)\right)$ by $\ext^n_{\catname{C}}(X, Y)$. As we shall recall shortly (Lemma~\ref{lem:ext-pi} below), when $\catname{C}$ is the category of chain complexes, this recovers the classic Ext groups.

The infinite delooping of $\map_{\catname{C}}(X, Y)$ is called the {\it mapping spectrum} from $X$ to $Y$. We will denote the mapping spectrum by 
$\spectralmaps\hphantom{.}_{\!\!\! \catname{C}}(X, Y)$ or just $\spectralmaps(X, Y)$. When $\catname{C}$ is stable, $\spectralmaps(X, Y)$ is a well-defined stable homotopy type, and there is a natural equivalence $\map(X, Y)\simeq \Omega^\infty \spectralmaps(X, Y)$.

In particular if $\catname{C}$ is a small $\infty$-category and $\catname{D}$ is a stable $\infty$-category then $\fun(\catname{C}, \catname{D})$ is a stable $\infty$-category. Given two objects $F, G\in \fun(\catname{C}, \catname{D})$ we denote the spectral mapping object from $F$ to $G$ by $\spectralNat_{\catname{C}}(F, G)$, or $\spectralNat_X(F(X), G(X))$, or just $\spectralNat(F, G)$.

Given a functor $\alpha\colon \catname{C}_1\to\catname{C}_2$, we denote the corresponding restriction functor $\fun(\catname{C}_2, \catname{D})\to \fun(\catname{C}_1, \catname{D})$ by $\rho_\alpha$, or just $\rho$. The left and right adjoints to restriction, a.k.a the left and right Kan extensions from $\catname{C}_1$ to $\catname{C}_2$ are denoted by $\lkan_\alpha$ and $\rkan_\alpha$, or just $\lkan$ and $\rkan$.

We will consider ordinary categories as $\infty$-categories via the nerve construction. More generally we will implicitly convert simplicially or topologically enriched $1$-categories into $\infty$-categories via the simplicial or topological nerve construction~\cite[Definition 1.1.5.5]{LurieHTT}. Usually we will not distinguish notationally between a category $\catname{C}$ and its (simplicial) nerve, and denote both by $\catname{C}$.

Aside from simplicial and topological categories, there is a well-known way to associate an $\infty$-category with a Quillen model category, which we will review now.

\subsubsection*{The underlying $\infty$-category of a model category} \label{section:model-to-infinity} Suppose $\catname{C}$ is a Quillen model category. Following Lurie~\cite{LurieHTT}, we denote by $\catname{C}^\circ$ the full subcategory of $\catname{C}$ consisting of fibrant-cofibrant objects, or any full subcategory thereof that contains at least one representative of each homotopy equivalence class. If $\catname{C}$ is a combinatorial simplical model category, then $\catname{C}$ is considered as an $\infty$-category via the simplicial nerve of $\catname{C}^\circ$. The $\infty$-category associated with $\catname{C}$ is denoted by $\Nerve(\catname{C}^{\circ})$ in~\cite[Appendix A.2]{LurieHTT}. We will generally not distinguish notationally between a model category $\catname{C}$ and the $\infty$-category $\Nerve(\catname{C}^{\circ})$, and denote both simply by $\catname{C}$, except in this section, where we have to explain precisely which construction we are using.

When $\catname{C}$ is a simplicial model category, the $\infty$-category $\Nerve(\catname{C}^{\circ})$ is the localisation (in the sense of~\cite[Definition 1.3.4.1]{LurieHA}) of $\catname{C}$ at the class of weak equivalences~\cite[Theorem 1.3.4.20 + Remark 1.3.4.16]{LurieHA}. This is the justification for calling $\Nerve(\catname{C}^{\circ})$ ``the underlying $\infty$-category of $\catname{C}$''. Furthermore, the construction of underlying $\infty$-category can be extended to more general model categories. Given a category $\catname{C}$ with a class of weak equivalences $\mathcal W$, one can form the Dwyer-Kan localisation (a.k.a the hammock localisation) of $\catname{C}$ at $\mathcal W$. This is a simplicial category, which we denote by $\DK(\catname{C})$. Using the simplicial nerve construction, we may also consider $\DK(\catname{C})$ as an $\infty$-category. When $\catname{C}$ is a combinatorial simplicial model category, there is a natural equivalence of $\infty$-categories 
\begin{equation}\label{eq: equivalence} 
\Nerve(\catname{C}^\circ)\xrightarrow{\simeq} \DK(\catname{C}).
\end{equation}
This is part of what is proved in~\cite[Section 1.4.3]{HinichDK}. The category $\DK(\catname{C})$ is well-defined for an arbitrary model category $\catname{C}$. Thus, one may define $\DK(\catname{C})$ to be the $\infty$-category associated with a general model category $\catname{C}$. Let us repeat that in the rest of the paper we will denote $\DK(\catname{C})$  simply by $\catname{C}$, but not in this section.

 It is known that a Quillen adjunction between model categories induces an adjunction of the associated $\infty$-categories~\cite[Proposition 1.5.1]{HinichDK}. It follows that a Quillen equivalence induces an equivalence of $\infty$-categories. It also is known that at least when $\catname{C}$ is combinatorial, the construction $\catname{C}\mapsto \DK(\catname{C})$ takes homotopy (co)limits in $\catname{C}$ to $\infty$-categorical (co)limits in $\DK(\catname{C})$~\cite[Corollary 1.5.2]{HinichDK}. 
\begin{rem} \label{rem:stable}
 Recall that an $\infty$-category $\catname{C}$ is {\it stable} if it has finite limits and colimits, and a square diagram in $\catname{C}$ is a pushout if and only if it is a pullback~\cite[Proposition 1.1.3.4]{LurieHA}. Similarly, a model category is stable if it has finite homotopy limits and colimits, and a square diagram in it is a homotopy pushout if and only if it is a homotopy pullback. It follows that the $\infty$-category associated with a combinatorial stable model category is stable.
\end{rem}
Furthermore, Hinich proves that holds when $\catname{C}$ is a model category with a weaker notion of simplicial enrichment than simplicial model category. The definition of a weak simplicial enrichment is given in~\cite[Definition 1.4.2]{HinichDK}. What matters to us is that the category of unbounded chain complexes $\ch$ with the projective module structure satisfies it. 

Let $\catname{A}$ be an abelian $1$-category and $\ch_{\catname{A}}$ the category of chain complexes in $\catname{A}$. Recall that the projective model structure on $\ch_{\catname{A}}$ is the one where weak equivalences are quasi-isomorphisms and fibrations are level-wise surjections. If $\catname{A}$ has a set of compact projective generators, then the projective model structure on $\ch_{\catname{A}}$ exists, and is cofibrantly generated, and therefore clearly combinatorial. For a proof of this see for example~\cite{Christensen-Hovey}. Given the projective model structure, we may convert $\ch_{\catname{A}}$ into an $\infty$-category by the same. The $\infty$-category $\ch_{\catname{A}}$ is stable, and it has a well-known $t$-structure, whose heart $\ch_{\catname{A}}^{\heartsuit}$ is the full subcategory consisting of chain complexes whose homology is concentrated in degree zero. It is well known that there is an equivalence of $\infty$ categories $\ch_{\catname{A}}^{\heartsuit}\simeq \catname{A}$, and thus $\catname{A}$ embeds fully faithfully in $\ch_{\catname{A}}$ as the heart.

The category $\ch_{\catname{A}}$ is enriched over $\ch$ (in the $1$-categorical sense), and one can convert the enrichment over $\ch$ into enrichment over simplicial sets by first taking $-1$-connected cover and then applying the Dold-Kan functor. See \cite[Page 79]{LurieHA} for more details on this. The simplicial enrichment of $\ch_{\catname{A}}$  does not make $\ch_{\catname{A}}$ into a simplicial model category, because $\ch_{\catname{A}}$ is not tensored over simplicial sets, but only ``weakly'' tensored in a certain sense. But the simplicial enrichment of $\ch_{\catname{A}}$ does satisfy Hinich's axioms of weak enrichment. See the first Example on page 37 of~\cite{HinichDK}. Hinich only states it for chain complexes of modules over a ring, but it is clear that it holds for chain complexes over a general abelian category with enough projectives. As a result, we obtain the following lemma
\begin{lem}\label{lem:DK for ch}
Let $\catname{A}$ be an abelian category with enough projectives, and let $\ch_{\catname{A}}$ be the category of chain complexes over $\catname{A}$ equipped with the projective model structure. Then the equivalence of~\eqref{eq: equivalence} holds for $\ch_{\catname{A}}$. I.e., there is an equivalence of $\infty$-categories
\[
\Nerve(\ch_{\catname{A}}^\circ)\xrightarrow{\simeq} \DK(\ch_{\catname{A}}).
\]
\end{lem}
It follows that for two chain complexes $P$ and $Q$, the space of maps from $P$ to $Q$ in the $\infty$-category of chain complexes is naturally equivalent to the simplicial mapping space from a projective cofibrant replacement of $P$ to $Q$. As a result, we obtain the well-known statement 
\begin{lem}\label{lem:ext-pi}
Let $X, Y$ be objects of $\catname{A}$, which we can also consider as objects of $\ch_{\catname{A}}$ via the natural embedding of $\catname{A}$ into $\ch_{\catname{A}}$. There is a natural isomorphism. 
\begin{equation}\label{eq: ext-pi}
\ext^*_{\catname{A}}(X, Y)\cong \pi_{-*}\left(\spectralmaps \hphantom{.}_{\!\!\!\ch_{\catname{A}}}(X, Y)\right).
\end{equation}
\end{lem}
\begin{proof}
The isomorphism is obtained from the following chain of isomorphisms ($\cofibrant{X}$ denotes a cofibrant replacement of $X$, i.e., a projective resolution of $X$)
\[
\ext^n_{\catname{A}}(X, Y)=[\cofibrant{X}, \Sigma^nY]=\pi_0\map_{\ch_{\catname{A}}}(X, \Sigma^nY)\cong \pi_{-n}\left(\spectralmaps_{\ch_{\catname{A}}}\!\!(X, Y)\right).
\]
\end{proof}


 \subsection{Homological and homotopical algebra of functor categories}\label{sec:homological} 
  We continue with the convention that $\catname{A}$ is an abelian $1$-category with enough projectives, and $\ch_{\catname{A}}$ denotes the category of chain complexes over $\catname{A}$. 

Now suppose $\catname{I}$ is a small $1$-category. Let $\fun(\catname{I}, \catname{A})$ denote the $1$-category of functors. Then $\fun(\catname{I}, \catname{A})$ is again an abelian category with enough projectives. If $\catname{A}$ has a set of compact projective generators, then so does $\fun(\catname{I}, \catname{A})$, and it follows that both categories $\ch_{\fun(\catname{I}, \catname{A})}$ and $\fun(\catname{I}, \ch_{\catname{A}})$ have a projective model structure, which is combinatorial. There is an obvious isomorphism of $1$-categories 
\begin{equation}\label{eq: complexes of functors}
\ch_{\fun(\catname{I}, \catname{A})}\cong \fun(\catname{I}, \ch_{\catname{A}}).
\end{equation}
Furthermore, this isomorphism induces a bijection between the projective model structures on both sides. Indeed, it is obvious that the weak equivalences and fibrations on both sides correspond to each other, and therefore the cofibrations correspond to each other as well. Subsequently, we will consider $\fun(\catname{I}, \ch_{\catname{A}})$ as a stable $\infty$-category. Right now, let us denote the $\infty$-category associated with $\fun(\catname{I}, \ch_{\catname{A}})$ by $\Nerve(\fun(\catname{I}, \ch_{\catname{A}})^{\circ})$. 

Suppose $\catname{C}$ is a simplicial model category. There is a canonical functor
\begin{equation}\label{eq:functor category presentation}
\Nerve(\fun(\catname{I}, \catname{C})^{\circ})\to \fun(\catname{I}, \Nerve(\catname{C}^{\circ}))
\end{equation}
At least if $\catname{C}$ is a combinatorial simplicial model category, this functor is an equivalence by~\cite[Proposition 4.2.4.4]{LurieHTT}. On the other hand, by the results of~\cite{Rezk-Schwede-Shipley} or~\cite{Dugger01}, every combinatorial model category $\catname{C}$ is Quillen equivalent to a combinatorial simplicial model category. Suppose there is a Quillen equivalence between $\catname{C}$ and a combinatorial simplicial category $\catname{C}^s$. Then it induces a Quillen equivalence between $\fun(\catname{I}, \catname{C})$ and $\fun(\catname{I}, \catname{C}^s)$. These Quillen equivalences, together with~\eqref{eq:functor category presentation} induce a chain of equivalences of $\infty$-categories
\begin{equation}\label{eq: functor categories}
\Nerve(\fun(\catname{I}, \catname{C})^{\circ})\simeq\Nerve(\fun(\catname{I}, \catname{C}^s)^{\circ})\simeq \fun(\catname{I}, \Nerve({\catname{C}^s}^{\circ}))\simeq \fun(\catname{I}, \Nerve(\catname{C}^{\circ}))
\end{equation}
In particular equivalences~\eqref{eq: functor categories} hold when $\catname{C}=\ch_{\catname{A}}$. Let us write it out explicitly: there is an equivalence of categories
\begin{equation}\label{eq:functors into chain complexes presentation}
\Nerve(\fun(\catname{I}, \ch_{\catname{A}})^{\circ}) \simeq \fun(\catname{I}, \Nerve(\ch_{\catname{A}}^{\circ}))
\end{equation}
In other words, the projective model structure on $\fun(\catname{I}, \ch_{\catname{A}})$ is  a presentation of the $\infty$-category of functors $\fun(\catname{I}, \ch_{\catname{A}})$. 

To summarize, we have the following well-known statement, which is however crucial to the whole paper.
\begin{lem}\label{lem:key isomorphism}
Let $\catname{I}$ be a small $1$-category, and $\catname{A}$ an abelian $1$-category with a set of projective generators. Let $F, G\colon\catname{I}\to \catname{A}$ be functors. We have an isomorphism 
\begin{equation}\label{eq:key isomorphism}
\ext^*_{\fun(\catname{I}, \catname{A})}(F, G)\cong\pi_{-*}\left(\spectralNat(F, G)\right).
\end{equation}
\end{lem}
\begin{proof}
The result follows by combining Lemma~\ref{lem:ext-pi} and equivalence~\eqref{eq:functors into chain complexes presentation}
\end{proof}

\subsection{Sifted colimits}\label{section:sifted} 
Sifted categories are defined in~\cite[Definition 5.5.8.1]{LurieHTT}. A sifted colimit in a category $\catname{C}$ is a colimit of a functor from a sifted category to~$\catname{C}$. We do not need to repeat the definition of a sifted category, but for example filtered colimits are sifted (\cite[Proposition 5.3.1.22]{LurieHTT}), and simplicial colimits (also known as ``geometric realisations'') are sifted~\cite[Lemma 5.5.8.4]{LurieHTT}. Furthermore, these examples generate all sifted colimits in a sense that can be made precise as follows: suppose $\catname{C}, \catname{D}$ are categories that have small colimits. Then a functor $f\colon \catname{C}\to\catname{D}$ preserves sifted colimits if and only if it preserves filtered colimits and simplicial colimits~\cite[Corollary 5.5.8.17]{LurieHTT}. We let $\fun_{\Sigma}(\catname{C}, \catname{D})\subset \fun(\catname{C}, \catname{D})$ denote the full subcategory consisting of functors that preserve sifted colimits. We say more about sifted colimits in Section~\ref{section:sifted completion}, and for more details see~\cite[Section 5.5.8]{LurieHTT}.

\section{From free groups to connected spaces}\label{section: fr to connected}
Throughout this section, let $\catname{D}$ be a fixed $\infty$-category, that has all small colimits. Recall that $\fr$ is the category of finitely generated free groups. Let $\connected$ denote the category of \emph{connected} pointed spaces. In this section we compare the functor categories $\fun(\fr, \catname{D})$ and $\fun(\connected, \catname{D})$. The main result of this section is Theorem~\ref{theorem: fr to connected} below. 

Let $\classifying\colon \fr \to \connected$ be the classifying space functor. It is well-defined up to natural equivalence, and we may consider it a functor between $\infty$-categories. It induces a restriction functor
\[
\rho_{\classifying}\colon \fun(\connected, \catname{D}) \to \fun(\fr, \catname{D}).
\]
Let $\fun_{\Sigma}(\connected, \catname{D})\subset \fun(\connected, \catname{D})$ be the full subcategory spanned by functors that preserve sifted colimits (Section~\ref{section:sifted}). The following is the main result of this section
\begin{thm}\label{theorem: fr to connected}
Composition with the classifying space functor induces an equivalence of categories
\[
\rho_B\colon \fun_{\Sigma}(\connected, \catname{D}) \xrightarrow{\simeq} \fun(\fr, \catname{D}).
\]
\end{thm}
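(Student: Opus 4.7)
The plan is to factor the restriction functor $\rho_{\classifying}$ as a composition of two equivalences, using simplicial groups as an intermediary. More precisely, the classifying space functor factors (up to natural equivalence) as
\[
\fr\hookrightarrow \sgr\xrightarrow{\classifying} \connected,
\]
where $\fr$ is included as the subcategory of constant finitely generated free simplicial groups. I would show that restriction along each of these two functors induces an equivalence of the appropriate functor categories, and then conclude by composition.

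First I would invoke Kan's theorem~\cite{KanLoop}, upgraded to an equivalence of $\infty$-categories: the classifying space functor $\classifying\colon \sgr\to \connected$ is an equivalence of $\infty$-categories, with inverse given by the Kan loop group. This is the $\infty$-categorical shadow of the classical Quillen equivalence between the standard model structures on simplicial groups and pointed connected simplicial sets, passed through the localisation construction of Section~\ref{section:preliminaries}. Since $\classifying$ is already an equivalence, restriction along it induces an equivalence $\fun(\connected,\catname{D})\xrightarrow{\simeq}\fun(\sgr,\catname{D})$, which manifestly restricts to an equivalence
\[
\fun_{\Sigma}(\connected,\catname{D})\xrightarrow{\simeq}\fun_{\Sigma}(\sgr,\catname{D})
\]
on the sifted-colimit-preserving subcategories.

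Next I would identify $\sgr$ with the sifted cocompletion $\mathcal{P}_\Sigma(\fr)$ of $\fr$, under which the inclusion $\fr\hookrightarrow\sgr$ corresponds to the Yoneda embedding. The hypotheses of Lurie's sifted-cocompletion formalism apply: the category $\fr$ has finite coproducts given by free products, and the $1$-category $\gr$ is precisely the category of finite-product-preserving functors $\fr^{\op}\to\set$, with $\fr$ being its collection of compact projective generators. Thus the nonabelian derived category machinery of~\cite[\S 5.5.9]{LurieHTT} identifies the underlying $\infty$-category of the projective model structure on $\sgr$ with $\mathcal{P}_\Sigma(\fr)$. Granting this, the universal property of the sifted cocompletion (\cite[Proposition 5.5.8.15]{LurieHTT}) states that restriction along the Yoneda embedding induces an equivalence
\[
\fun_{\Sigma}(\sgr,\catname{D})\xrightarrow{\simeq}\fun(\fr,\catname{D}),
\]
and composing with the equivalence from the previous step yields the theorem.

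The main obstacle is the identification $\sgr\simeq \mathcal{P}_\Sigma(\fr)$, which requires matching a model-categorical construction with an $\infty$-categorical universal property. The essential input is that groups form an algebraic theory whose category of free finitely generated models is $\fr$; given this, one can either quote Lurie's general result, or verify the universal property by hand. The latter amounts to exhibiting, for every simplicial group $G_\bullet$, a canonical augmented simplicial resolution by simplicial free groups which realises $G_\bullet$ as a sifted (in fact, simplicial) colimit of representables in $\sgr$, and checking that mapping spaces out of a constant free group $F_n$ are computed by levelwise group homomorphisms, matching the Yoneda formula on the $\mathcal{P}_\Sigma(\fr)$ side.
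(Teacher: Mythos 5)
Your proposal is correct and takes essentially the same route as the paper: the paper's own proof is precisely the factorization through $\sgr$, identified with the sifted cocompletion $\presh_{\Sigma}(\fr)$ so that Lurie's Proposition 5.5.8.15 applies, combined with Kan's equivalence $\sgr\simeq\connected$. The only cosmetic difference is that you quote HTT \S 5.5.9 directly for $\sgr\simeq\presh_{\Sigma}(\fr)$, whereas the paper obtains the same identification by routing through wedges of circles and the equivalence $\connected\simeq\presh_{\Sigma}(\circles)$.
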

This theorem is essentially well-known. For example, it follows more or less immediately from~\cite[Corollary 5.2.6.21]{LurieHA}. We will give the proof both for the sake of being self-contained, and because some of the details that come up in the process of proving Theorem~\ref{theorem: fr to connected} will be used elsewhere in the paper. The theorem is proved at the end of the section. First we have to do some preparatory work.
\subsection{From free groups to wedges of circles} Let $\circles\subset \spaces$ be a full subcategory of pointed spaces spanned by finite wedge sums of $S^1$. We may consider $\circles$ as being spanned by objects of the form $\classifying \free{n}$, where $\free{n}$ is the free group on $n$ generators. It follows that the classifying space functor may be considered as a functor from $\fr$ to $\circles$. Indeed, the classifying space functor induces an equivalence of categories:
\begin{equation}\label{eq: classifying equivalence}
\classifying\colon  \fr \xrightarrow{\simeq} \circles.
\end{equation}
Here when we say that a functor is an equivalence of categories, we mean it in the sense of~\cite[Definition 1.1.3.6]{LurieHTT}: it is essentially surjective on objects, and for any two objects $G_1, G_2$ of $\fr$, $\classifying$ induces an equivalence of spaces (where the source has the discrete topology)
\[
 \hom(G_1, G_2) \xrightarrow{\simeq}\pointedmaps(\classifying G_1, \classifying G_2).
\]
An explicit inverse to equivalence~\eqref{eq: classifying equivalence} is given by the fundamental group functor
\[\label{eq: pi_1 equivalence}
\pi_1\colon \circles \xrightarrow{\simeq} \fr.
\]

It follows that the fundamental group functor and the classifying space functor induce inverse equivalences of functor categories (\cite[Proposition 1.2.7.3 (3)]{LurieHTT})
\begin{equation}\label{eq: functor equivalence}
\rho_{\pi_1} : \begin{tikzcd}
           \fun(\fr, \catname{D})
           \arrow[r, shift left=.75ex]
            \arrow[r, phantom, "\simeq"] 
           & \fun(\circles, \catname{D})
           \arrow[l, shift left=.75ex]      
        \end{tikzcd}  : \rho_{\classifying}
\end{equation}
\subsection{From wedges of circles to connected spaces} We claim that the category $\connected$ is obtained from $\circles$ by {\it freely adjoining sifted colimits}. To make this precise, we have to digress to recall Lurie's construction of a free completion of a category under sifted colimits.

\subsubsection{Sifted completion}\label{section:sifted completion} Let $\catname{C}$ be a category that admits finite coproducts. Let $\presh(\catname{C})=\fun(\catname{C}^{\op}, \spaces)$ be the category of presheaves on $\catname{C}$. The category $\presh_{\Sigma}(\catname{C})$ is defined to be the full subcategory of $\presh(\catname{C})$ consisting of presheaves that take finite coproducts to finite products~\cite[Definition 5.5.8.8]{LurieHTT}.  Proposition 5.5.8.10 in~\cite{LurieHTT} implies, among other things, that $\presh_{\Sigma}(\catname{C})$ has all small colimits, and the inclusion $\presh_{\Sigma}(\catname{C}) \to \presh(\catname{C})$ preserves sifted colimits. It is clear that $\presh_{\Sigma}(\catname{C})$ contains the essential image of $\catname{C}$ in $\presh(\catname{C})$ under the Yoneda embedding. Lemma 5.5.8.14 of \cite{LurieHTT} says that $\presh_{\Sigma}(\catname{C})$ is (equivalent to) the closure of $\catname{C}$ in $\presh(\catname{C})$ spanned by sifted (i.e. filtered and simplicial) 
colimits. Moreover, $\presh_{\Sigma}(\catname{C})$ is the category obtained by \emph{freely} adjoining sifted colimits to $\catname{C}$. This is made precise in the following lemma. Recall that $\fun_{\Sigma}(-, -)$ denotes the category of functors that preserve sifted colimits.
\begin{lem}[\cite{LurieHTT}, Proposition 5.5.8.15]\label{lem:sifted yoneda}
Let $\catname{C}$ be a small $\infty$-category which admits finite
coproducts and let $\catname{D}$ be an $\infty$-category which admits sifted colimits. The Yoneda embedding $C\hookrightarrow \presh(\catname{C})$ factors through a fully faithful embedding $j\colon \catname{C}\hookrightarrow \presh_{\Sigma}(\catname{C})$. Restriction along $j$ induces an
equivalence of categories
\[
\fun_{\Sigma}(\presh_{\Sigma}(\catname{C}), \catname{D}) \to \fun(\catname{C}, \catname{D}).
\]
The inverse of this equivalence is given by left Kan extension.
\end{lem}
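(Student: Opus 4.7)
The plan is to establish the equivalence by constructing the inverse functor explicitly as left Kan extension along $j$, and then verifying that both composites are equivalent to the identity. This is the standard route to a universal-property statement of this kind.

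First I would dispose of the preliminary claims. The Yoneda embedding factors through $\presh_{\Sigma}(\catname{C})$ because for any $X\in\catname{C}$ the representable $\map_{\catname{C}}(-, X)$ carries a coproduct $A\sqcup B$ to $\map_{\catname{C}}(A, X)\times \map_{\catname{C}}(B, X)$ by the universal property of coproducts; and $j$ is fully faithful because the Yoneda embedding is and $\presh_{\Sigma}(\catname{C})$ sits inside $\presh(\catname{C})$ as a full subcategory.

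Next I would construct the candidate inverse. Given $f\colon \catname{C}\to\catname{D}$, the pointwise left Kan extension $\lkan_j(f)\colon \presh_{\Sigma}(\catname{C})\to\catname{D}$ is well-defined because $\catname{D}$ admits sifted colimits and, by \cite[Lemma 5.5.8.14]{LurieHTT}, every object of $\presh_{\Sigma}(\catname{C})$ is a sifted colimit of representables. The main obstacle, as I see it, is verifying that $\lkan_j(f)$ actually preserves sifted colimits. The cleanest organisation is to first note that sifted colimits in $\presh_{\Sigma}(\catname{C})$ are computed pointwise (they commute with finite products in $\spaces$, which matches the defining condition of $\presh_{\Sigma}$), whence each representable $j(X)$ is sifted-compact: $\map_{\presh_{\Sigma}(\catname{C})}(j(X), -)\simeq \ev_X$ preserves sifted colimits. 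Combined with the fact that every object of $\presh_{\Sigma}(\catname{C})$ is a sifted colimit of representables, this yields the desired preservation via a cofinality argument: any map $j(Y)\to \colim_\alpha P_\alpha$ factors through some $P_\alpha$, which identifies the slice $j/\colim_\alpha P_\alpha$ with the appropriate colimit of the slices $j/P_\alpha$.

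Finally I would verify the equivalence. The composite $\rho_j\circ \lkan_j\simeq \id$ on $\fun(\catname{C},\catname{D})$ is immediate from fully faithfulness of $j$: the pointwise formula evaluated at $j(X)$ gives a colimit over the slice $\catname{C}_{/X}$, which has the terminal object $\id_X$, so $\lkan_j(f)(j(X))\simeq f(X)$. For the converse, given any sifted-colimit-preserving $F\colon \presh_{\Sigma}(\catname{C})\to \catname{D}$, write $P\simeq \colim_i j(X_i)$ as a sifted colimit of representables; then
\[
\lkan_j(\rho_j F)(P)\simeq \colim_i (\rho_j F)(X_i)\simeq \colim_i F(j(X_i))\simeq F(\colim_i j(X_i))\simeq F(P),
\]
where the first equivalence uses the sifted-colimit preservation of $\lkan_j(\rho_j F)$ established in the previous paragraph, and the third uses the sifted-colimit preservation of $F$.
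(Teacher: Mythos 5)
A preliminary remark: the paper does not prove this lemma at all — it is imported verbatim as \cite[Proposition 5.5.8.15]{LurieHTT} — so your reconstruction has to be judged on its own terms. Its architecture (factor the Yoneda embedding, build the inverse as left Kan extension along $j$, then use density of representables plus their sifted-compactness to check both composites) is indeed the standard route, but there is a genuine gap where you construct the inverse. You justify the existence of the pointwise left Kan extension $\lkan_j(f)$ by saying that $\catname{D}$ has sifted colimits and every object of $\presh_{\Sigma}(\catname{C})$ is a sifted colimit of representables. That is not the relevant condition: the pointwise formula computes $\lkan_j(f)(P)$ as a colimit over the slice $\catname{C}\times_{\presh_{\Sigma}(\catname{C})}\presh_{\Sigma}(\catname{C})_{/P}$, and for that colimit to exist in a $\catname{D}$ possessing only sifted colimits you must show this slice is itself sifted. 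This is exactly where the hypothesis that $\catname{C}$ admits finite coproducts does its real work: because $P$ carries finite coproducts to finite products, two objects $j(X)\to P$ and $j(Y)\to P$ of the slice admit a coproduct $j(X\sqcup Y)\to P$, so the slice has finite coproducts and is therefore sifted. Your proposal never uses the coproduct hypothesis beyond the trivial factorization step, so as written the inverse functor is not known to exist.

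Relatedly, your preservation-of-sifted-colimits step is asserted rather than proved. The claim that any map $j(Y)\to\colim_\alpha P_\alpha$ ``factors through some $P_\alpha$'' is correct for filtered diagrams but not literally for general sifted ones (e.g.\ geometric realizations); the precise statement is only that $\map(j(Y),-)\simeq\ev_Y$ preserves sifted colimits, which you do record. From there, the identification of the slice over $\colim_\alpha P_\alpha$ with a colimit of the slices over the $P_\alpha$ is a cofinality assertion that requires an actual verification (Joyal's criterion / Theorem~A, fed by the sifted-compactness of the representables), and this verification, together with the siftedness of the slices above, is essentially the whole technical content of Lurie's proposition. Once those two points are supplied, your two concluding verifications ($\rho_j\circ\lkan_j\simeq\id$ via the terminal object of $\catname{C}_{/X}$, and $\lkan_j\circ\rho_j\simeq\id$ via density of representables) are correct.
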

\begin{cor}\label{cor:psigma}
Let $\catname{C}$ be a small $\infty$-category which admits finite
coproducts and $\catname{D}$ an $\infty$-category which admits sifted colimits. Any functor $f\colon \catname{C} \to \catname{D}$ factors essentially uniquely as a composition
\[
\catname{C}\xrightarrow{j} \presh_{\Sigma}(\catname{C})\xrightarrow{F} \catname{D},
\]
where $j$ is as defined above, and $F$ preserves sifted colimits. Furthermore, $F$ is a left Kan extension of $f$.
\end{cor}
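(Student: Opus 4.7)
The plan is to deduce this corollary as a direct application of Lemma~\ref{lem:sifted yoneda}. The lemma asserts that restriction along the Yoneda-style embedding $j\colon\catname{C}\hookrightarrow \presh_{\Sigma}(\catname{C})$ gives an equivalence of $\infty$-categories
\[
\rho_j\colon \fun_{\Sigma}(\presh_{\Sigma}(\catname{C}), \catname{D}) \xrightarrow{\simeq} \fun(\catname{C}, \catname{D}),
\]
whose inverse is left Kan extension along $j$. Applying this equivalence to $f\in \fun(\catname{C}, \catname{D})$ produces an essentially unique sifted-colimit-preserving functor $F\colon \presh_{\Sigma}(\catname{C})\to \catname{D}$ with $F\circ j\simeq f$, and by the lemma this $F$ is precisely the left Kan extension $\lkan_j f$.

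First I would unpack the meaning of ``essentially uniquely.'' Given any factorization $f\simeq F\circ j$ with $F$ preserving sifted colimits, one has $F \in \fun_{\Sigma}(\presh_{\Sigma}(\catname{C}), \catname{D})$ and $\rho_j(F)\simeq f$. Since $\rho_j$ is an equivalence of $\infty$-categories, the fiber of $\rho_j$ over $f$ is a contractible Kan complex (or more precisely, the full subcategory of $\fun_{\Sigma}(\presh_{\Sigma}(\catname{C}), \catname{D})$ whose image under $\rho_j$ is equivalent to $f$ is itself contractible). This gives essential uniqueness of $F$. For existence, take $F$ to be the image of $f$ under the inverse equivalence, which the lemma identifies with $\lkan_j f$.

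The main (and essentially only) obstacle is that this is almost a tautology, so the writeup is a matter of carefully spelling out how ``equivalence of functor categories'' yields both parts of the statement. No additional calculations or constructions are needed; the content is entirely in Lemma~\ref{lem:sifted yoneda}, which is quoted from~\cite[Proposition 5.5.8.15]{LurieHTT}.
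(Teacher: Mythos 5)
Your proposal is correct and is exactly the deduction the paper intends: the corollary is stated as an immediate consequence of Lemma~\ref{lem:sifted yoneda}, with existence given by applying the inverse equivalence (left Kan extension along $j$) to $f$ and essential uniqueness following from $\rho_j$ being an equivalence of $\infty$-categories. Your careful unpacking of ``essentially uniquely'' via the contractibility of the relevant fiber is a fine way to make this explicit, and nothing further is needed.
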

Furthermore~\cite[Proposition 5.5.8.22]{LurieHTT} gives necessary and sufficient conditions for the functor $F$ above to be  an equivalence of categories
\begin{lem}[\cite{LurieHTT}, Proposition 5.5.8.22]\label{lemma:equivalence conditions}
Let $\catname{C}$ and $\catname{D}$ be as in Corollary~\ref{cor:psigma}, and suppose we have a functor $f\colon\catname{C}\to \catname{D}$. The induced functor $F\colon \presh_{\Sigma}(\catname{C})\to \catname{D}$ is an equivalence if the following conditions hold:
\begin{enumerate}
\item The functor $f$ is fully faithful. \label{faithful}
\item For every object $X$ of $\catname{C}$ the represented functor  \label{sifted}
\[
\map_{\catname{D}}(f(X) , -)\colon \catname{D}\to \catname{Spaces}
\] 
preserves sifted colimits.
\item The category $\catname{D}$ is generated by the essential image of $f$ under sifted colimits. \label{generates}
\end{enumerate}
\end{lem}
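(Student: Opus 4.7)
My plan is to prove the lemma by showing that $F$ is both essentially surjective and fully faithful, exploiting the fact that $F$ is a left Kan extension of $f$ along $j$ and therefore (by Corollary~\ref{cor:psigma}) preserves sifted colimits and satisfies $F \circ j \simeq f$.

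\textbf{Essential surjectivity.} This is the easy half. By Lemma~\ref{lem:sifted yoneda} (or rather the discussion preceding it), $\presh_\Sigma(\catname{C})$ is generated under sifted colimits by the representables $j(X)$. Since $F$ preserves sifted colimits and $F(j(X))\simeq f(X)$, the essential image of $F$ contains $f(\catname{C})$ and is closed under sifted colimits in $\catname{D}$. Condition~\ref{generates} then gives that this essential image is all of $\catname{D}$.

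\textbf{Full faithfulness.} I would prove this in three stages of increasing generality. First, for $P=j(X)$ and $Q=j(Y)$ representables, the map
\[
\map_{\presh_\Sigma(\catname{C})}(j(X), j(Y)) \to \map_{\catname{D}}(f(X), f(Y))
\]
identifies with $\map_\catname{C}(X, Y)\to \map_\catname{D}(f(X), f(Y))$ via the Yoneda lemma on both sides (Yoneda is valid for $j$ because $\presh_\Sigma(\catname{C})$ is a full subcategory of $\presh(\catname{C})$ containing the representables), and this is an equivalence by condition~\ref{faithful}. Second, I would fix $P=j(X)$ and allow $Q$ to vary. Writing any $Q\in \presh_\Sigma(\catname{C})$ as a sifted colimit $Q\simeq \colim_\alpha j(Y_\alpha)$ of representables (again by Lemma~\ref{lem:sifted yoneda}), both $\map_{\presh_\Sigma(\catname{C})}(j(X), -)$ and, by condition~\ref{sifted}, $\map_\catname{D}(f(X), -)$ carry this sifted colimit to a colimit of spaces. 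Combined with the representable case, this shows $\map(j(X), Q)\to \map(f(X), F(Q))$ is an equivalence for all $Q$. Third, I would let $P$ vary. Writing $P\simeq \colim_\beta j(X_\beta)$ as a sifted colimit of representables, both mapping spaces turn colimits in the first variable into limits, reducing the general case to the previous step.

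\textbf{Main obstacle.} The subtlety I would need to verify carefully is that the representables $j(X)$ are ``projective with respect to sifted colimits'' in $\presh_\Sigma(\catname{C})$, i.e., that $\map_{\presh_\Sigma(\catname{C})}(j(X), -)$ preserves sifted colimits. Equivalently, one must know that sifted colimits in $\presh_\Sigma(\catname{C})$ are computed pointwise in $\presh(\catname{C})=\fun(\catname{C}^\op, \spaces)$, which rests on the fact from~\cite[Proposition 5.5.8.10]{LurieHTT} that the inclusion $\presh_\Sigma(\catname{C})\hookrightarrow \presh(\catname{C})$ preserves sifted colimits. Once this is in hand, the rest of the argument is a routine Yoneda-plus-colimits manipulation. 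For our paper, the lemma is stated verbatim as~\cite[Proposition 5.5.8.22]{LurieHTT}, so I would simply cite Lurie rather than reproduce the argument.
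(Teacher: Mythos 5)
The paper offers no proof of this lemma at all --- it is quoted verbatim from Lurie, with \cite[Proposition 5.5.8.22]{LurieHTT} as the proof --- so the only question is whether your sketch is sound. Your full-faithfulness argument is the standard one and is correct: the key point you isolate, namely that the representables $j(X)$ are compact projective in $\presh_{\Sigma}(\catname{C})$ because sifted colimits there are computed in $\presh(\catname{C})$ and hence pointwise, is exactly the crux, and the two-variable bootstrap (representables, then general second variable via condition~\eqref{sifted}, then general first variable by turning colimits into limits) goes through. One small citation repair: to write an arbitrary object of $\presh_{\Sigma}(\catname{C})$ as a \emph{sifted} colimit of representables you want \cite[Lemma 5.5.8.13]{LurieHTT} (the slice of $\catname{C}$ over $Q$ is sifted, using that $\catname{C}$ has finite coproducts), not merely the closure statement you quote.

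The genuine gap is in what you call the easy half. The essential image of a sifted-colimit-preserving functor is \emph{not} in general closed under sifted colimits: to conclude that $\colim_{\alpha} d_{\alpha}$ lies in the image when each $d_{\alpha}$ does, you must lift the entire sifted diagram along $F$, and objectwise lifts do not assemble into a diagram without further input. A $1$-categorical counterexample: the free abelian group functor $\set\to\ab$ preserves all colimits, yet its essential image (free abelian groups) is not closed under filtered colimits, since $\Q$ is a filtered colimit of copies of $\Z$. The missing input is precisely full faithfulness, so the repair is to run your two halves in the opposite order: once $F$ is fully faithful it restricts to an equivalence onto its essential image, a sifted diagram in that image lifts to $\presh_{\Sigma}(\catname{C})$, and since $F$ preserves sifted colimits the image is closed under them; condition~\eqref{generates} then gives essential surjectivity. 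With that reordering your sketch is complete and coincides with Lurie's argument.
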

\subsubsection*{Back to topological spaces} It follows from Corollary~\ref{cor:psigma} that the inclusion $\circles\to \connected$ factors essentially uniquely as a composition
\begin{equation}\label{eq: inclusion factorisation}
\circles \to \presh_{\Sigma}(\circles) \to \connected
\end{equation}
where the first functor is the Yoneda embedding, and the second functor preserves sifted colimits. Next, we claim that the second functor is in fact an equivalence 
\begin{lem}\label{lemma: connected generated by circles}
The functor $\presh_{\Sigma}(\circles) \to \connected$ in~\eqref{eq: inclusion factorisation} is an equivalence of categories.
\end{lem}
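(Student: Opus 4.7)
The plan is to apply the criterion of Lemma~\ref{lemma:equivalence conditions} to the inclusion $f\colon \circles\hookrightarrow \connected$, reducing the statement to verifying the three conditions~\ref{faithful}, \ref{sifted}, and~\ref{generates} for $f$.

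Condition~\ref{faithful} is immediate: $\circles$ is by construction a full subcategory of $\spaces$, and $\connected \subset \spaces$ is full, so $f$ is fully faithful. For condition~\ref{generates}, the plan is to invoke Kan's loop group theorem, which gives an equivalence $\connected \simeq \sgr$ under which the classifying space functor corresponds to the identity. The $\infty$-category $\sgr$ is monadic over $\spaces$ via the free simplicial group monad, whose underlying endofunctor of $\spaces$ preserves sifted colimits. Hence every simplicial group is a sifted (in fact simplicial) colimit of free simplicial groups, and the free simplicial group on a finite pointed set is a discrete finitely generated free group. Transporting back via $\classifying$, every object of $\connected$ is a sifted colimit of finite wedges of circles, establishing condition~\ref{generates}.

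Condition~\ref{sifted} is the main technical point. For $X = \bigvee^n S^1 \in \circles$ one has a natural equivalence $\map_{\connected}(X, Y) \simeq (\Omega Y)^n$, and since finite products in $\spaces$ commute with sifted colimits (combining~\cite[Proposition 5.3.1.22]{LurieHTT} with~\cite[Lemma 5.5.8.4]{LurieHTT}), it suffices to prove that $\Omega\colon \connected \to \spaces$ preserves sifted colimits. Under Kan's equivalence, $\Omega$ corresponds to the forgetful functor $U\colon \sgr \to \spaces$; monadicity, together with the sifted-colimit preservation of the free group monad, then implies that $U$ creates, and in particular preserves, sifted colimits. The main obstacle is therefore concentrated in the appeal to the Kan loop group equivalence and the monadic description of $\sgr$; with both in place, all three hypotheses of Lemma~\ref{lemma:equivalence conditions} hold and the lemma follows.
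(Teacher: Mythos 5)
Your argument is correct and rests on the same key lemma as the paper's proof: both verify the three conditions of Lemma~\ref{lemma:equivalence conditions} for the inclusion $\circles\hookrightarrow\connected$ (the paper notes the statement is also essentially \cite[Corollary 5.2.6.21]{LurieHA}). Where you differ is in how the two nontrivial conditions are checked. The paper handles condition~\eqref{sifted} by hand, splitting sifted colimits into filtered colimits (preserved because a finite wedge of circles is compact) and geometric realisations (preserved because $\Omega$ is inverse to $\classifying\colon\sgr\to\connected$, which has a simplicial model that visibly preserves realisations), and it gets condition~\eqref{generates} from Kan's classical fact that $\Omega X$ is the realisation of a levelwise free simplicial group. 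You instead funnel both conditions through Barr--Beck--Lurie monadicity of $\sgr$ over $\spaces$ for the reduced free-group monad: the forgetful functor then creates sifted colimits (giving~\eqref{sifted}) and the monadic bar resolution gives generation (giving~\eqref{generates}). This works and is arguably more uniform, but note that verifying monadicity and the sifted-colimit preservation of the monad ultimately rests on the same simplicial inputs the paper uses directly (equivalences and realisations of simplicial groups are detected and computed on underlying simplicial sets, and the free-group construction commutes with filtered colimits and diagonals), so the gain is organisational rather than logical. Two small points to tighten: (i) the bar resolution exhibits a simplicial group as a realisation of free groups on arbitrary pointed spaces, not on finite pointed sets, so you need the additional (easy) observation that every pointed space lies in the sifted-colimit closure of finite pointed sets and that the free-group functor preserves these colimits; since condition~\eqref{generates} only asks for generation under (iterated) sifted colimits, this suffices. (ii) The fact that finite products in $\spaces$ commute with sifted colimits is true and standard, but it does not follow from the two results you cite, which only identify filtered colimits and realisations as sifted; the correct reference is the corresponding statement in \cite[Section 5.5.8]{LurieHTT}.
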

\begin{proof}
The lemma is essentially given by~\cite[Corollary 5.2.6.21]{LurieHA}, which asserts that $\presh_{\Sigma}(\circles)$ and $\connected$ are equivalent categories. It can also be deduced directly from Lemma~\ref{lemma:equivalence conditions}. Indeed, this lemma gives three conditions that one needs to verify. Condition~\eqref{faithful} is obvious. Condition~\eqref{sifted} says that if $X$ is a finite wedge of circles, then the represented functor $\pointedmaps(X, -)$ preserves filtered colimits and geometric realisations of simplicial objects in $\connected$. It preserves filtered colimits because $X$ is compact. That it preserves geometric realisations follows from the well-known fact that the loop space functor $\Omega$ preserves geometric realisations of simplicial objects in $\connected$. This is true because $\Omega$ is an inverse to the classifying space functor, which has a simplicial model that clearly preserves geometric realisations.

Finally condition~\eqref{generates} says that every object of $\connected$ is equivalent to a sifted colimit of objects of $\circles$. This follows (for example) from the well-known fact that for every connected pointed space $X$, $\Omega X$ is weakly equivalent to the geometric realisation of a simplicial group that is free in each degree (this goes back to Kan~\cite{KanLoop}). Thus $X$ is equivalent to the geometric realisation of a simplicial space that is equivalent to a wedge of circles in each degree.
\end{proof}

Since $\circles$ is a full subcategory of $\connected$
\[
\fun(\connected, \catname{D}) \to \fun(\circles, \catname{D})
\]
has a fully faithful left adjoint (also known as the left Kan extension). We can now describe the image of the left adjoint. It consists of functors that preserve sifted colimits. This is stated formally in the following proposition.
\begin{prop}\label{prop:circles to connected}
Let $\catname{D}$ be a category that admits small colimits. The inclusion $\circles\subset \connected$ induces an equivalence of categories
\[
\fun_{\Sigma}(\connected, \catname{D}) \xrightarrow{\simeq} \fun(\circles, \catname{D}).
\]
The inverse is given by left Kan extension, which we denote by 
\[
\lkan\colon \fun(\circles, \catname{D})\xrightarrow{\simeq}  \fun_{\Sigma}(\connected, \catname{D}).
\]
\end{prop}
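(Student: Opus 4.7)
The plan is to deduce the proposition essentially formally from the two preceding results, Lemma~\ref{lem:sifted yoneda} and Lemma~\ref{lemma: connected generated by circles}. Concretely, Lemma~\ref{lemma: connected generated by circles} identifies $\connected$ with $\presh_{\Sigma}(\circles)$ in a way that sends the inclusion $\circles \hookrightarrow \connected$ to the Yoneda embedding $j\colon \circles \hookrightarrow \presh_{\Sigma}(\circles)$. Under this identification, Lemma~\ref{lem:sifted yoneda} (taking $\catname{C}=\circles$) directly supplies an equivalence
\[
\fun_{\Sigma}(\connected, \catname{D}) \simeq \fun_{\Sigma}(\presh_{\Sigma}(\circles), \catname{D}) \xrightarrow{\simeq} \fun(\circles, \catname{D})
\]
given by restriction along $j$, with the inverse realized by left Kan extension. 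This is exactly the statement to be proved.

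First I would verify that $\circles$ satisfies the hypothesis of Lemma~\ref{lem:sifted yoneda}, i.e., that it admits finite coproducts; this is immediate since finite wedge sums of copies of $S^1$ are again in $\circles$. Next I would check that $\catname{D}$ admits sifted colimits, which is guaranteed by our standing assumption that $\catname{D}$ admits all small colimits. These two checks let us invoke Lemma~\ref{lem:sifted yoneda}.

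Then I would invoke Lemma~\ref{lemma: connected generated by circles} to replace $\presh_{\Sigma}(\circles)$ with $\connected$ everywhere, taking a moment to note that the equivalence $\presh_{\Sigma}(\circles) \simeq \connected$ is compatible with the two embeddings of $\circles$: the Yoneda embedding on one side and the subcategory inclusion on the other. This compatibility follows from the essential uniqueness of the factorization~\eqref{eq: inclusion factorisation} given by Corollary~\ref{cor:psigma}. Under this translation, restriction along the Yoneda embedding becomes restriction along the inclusion $\circles \hookrightarrow \connected$, and ``left Kan extension along $j$'' becomes ``left Kan extension along the inclusion'', which proves the last sentence of the proposition.

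The main obstacle, such as it is, is merely the bookkeeping of identifying the two embeddings of $\circles$ and making sure that ``left Kan extension along the Yoneda embedding'' really corresponds to ``left Kan extension along the subcategory inclusion $\circles \subset \connected$''. Everything substantive has already been done in Lemmas~\ref{lem:sifted yoneda} and~\ref{lemma: connected generated by circles}, so no genuine new ingredient is needed; the proof amounts to assembling these two facts.
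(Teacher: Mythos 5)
Your proposal is correct and follows essentially the same route as the paper: the paper likewise factors the inclusion $\circles\hookrightarrow\connected$ through $\presh_{\Sigma}(\circles)$ using the essentially unique factorization~\eqref{eq: inclusion factorisation}, and then observes that the two induced restriction functors are equivalences by Lemma~\ref{lemma: connected generated by circles} and Lemma~\ref{lem:sifted yoneda} respectively. Your extra bookkeeping about identifying the two embeddings of $\circles$ and about left Kan extension being the inverse is exactly what the paper's appeal to essential uniqueness and to Lemma~\ref{lem:sifted yoneda} implicitly supplies.
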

\begin{proof}
We saw in~\eqref{eq: inclusion factorisation} that the inclusion $\circles\to \connected$ factors essentially uniquely as a composition
\[
\circles \to \presh_{\Sigma}(\circles) \to \connected.
\]
These induce restriction functors
\[
\fun_{\Sigma}(\connected, \catname{D}) \to \fun_{\Sigma}(\presh_{\Sigma}(\circles), \catname{D}) \to \fun(\circles, \catname{D}).
\]
The first of these restrictions is an equivalence by Lemma~\ref{lemma: connected generated by circles} and the second by Lemma~\ref{lem:sifted yoneda}.
\end{proof}

\subsection{Connection with simplicial groups} 
It is well-known that the $1$-category of simplicial groups is equipped with a projective model structure, where fibrations and weak equivalences are those of the underlying simplicial sets~\cite{QuillenHA},~\cite[Chapter II, Example 5.2]{Goerss-Jardine}. We denote by $\sgr$ the $\infty$-category associated with this model structure.

There is a fully faithful embedding $\const\colon \fr\hookrightarrow \sgr$ that sends a group to the associated constant simplicial group. The classifying space functor $\classifying \colon \fr \to \circles$ extends to a functor, which we denote by the same letter $\classifying \colon \sgr \to \connected$. The latter functor is defined by taking levelwise classifying space of a simplicial group and then taking geometric realisation. On the level of model categories it preserves weak equivalences, and therefore it induces a functor between the associated $\infty$-categories. We obtain a diagram of categories that commutes up to a natural equivalence
\[\begin{tikzcd}
	\fr & \circles \\
	\sgr & \connected
	\arrow["\classifying", from=1-1, to=1-2]
	\arrow["\const"', from=1-1, to=2-1]
	\arrow[hook, from=1-2, to=2-2]
	\arrow["\classifying"', from=2-1, to=2-2]
\end{tikzcd}\]
We already saw that because of Corollary~\ref{cor:psigma} the inclusion $\circles\to \connected$ factors essentially uniquely as a composition $\circles \to \presh_{\Sigma}(\circles) \to \connected$. Also by Corollary~\ref{cor:psigma}, the functor $\const\colon \fr\to \sgr$ factors essentially uniquely as $\fr\to \presh_{\Sigma}(\fr) \to \sgr$. Furthermore, the composition functor $\fr \xrightarrow{\classifying} \circles \hookrightarrow \presh_{\Sigma}(\circles)$ factors essentially uniquely through $\presh_{\Sigma}(\fr)$. Because of the uniqueness of factorisations, the following diagram of categories, which refines the previous one, commutes up to a natural equivalence
\begin{equation}\label{diagram:categories}
\begin{tikzcd}
	\fr & \circles \\
	{\presh_{\Sigma}(\fr)} & {\presh_{\Sigma}(\circles)} \\
	\sgr & \connected
	\arrow["\simeq"', "\classifying", from=1-1, to=1-2]
	\arrow[from=1-1, to=2-1]
	\arrow[from=1-2, to=2-2]
	\arrow["\simeq"', from=2-1, to=2-2]
	\arrow["\simeq"', from=2-1, to=3-1]
	\arrow["\simeq", from=2-2, to=3-2]
	\arrow["\simeq"', "\classifying", from=3-1, to=3-2]
\end{tikzcd}
\end{equation}
\begin{lem}\label{lemma:equivalences}
Every arrow marked with $\simeq$ in Diagram~\eqref{diagram:categories} is an equivalence of categories. 
\end{lem}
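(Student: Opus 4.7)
My plan is to verify four of the five $\simeq$-arrows in Diagram~\eqref{diagram:categories} directly, and then deduce the fifth from the commutativity of the bottom square. Explicitly, I will first dispatch the top horizontal $\classifying\colon\fr\to\circles$, the middle horizontal $\presh_\Sigma(\fr)\to\presh_\Sigma(\circles)$, the middle-right vertical $\presh_\Sigma(\circles)\to\connected$, and the bottom horizontal $\classifying\colon\sgr\to\connected$, leaving the middle-left vertical $\presh_\Sigma(\fr)\to\sgr$ for last.

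For the four direct verifications: the top horizontal is an equivalence by~\eqref{eq: classifying equivalence}. The construction $\presh_\Sigma(-)$ is characterised by the universal property of Lemma~\ref{lem:sifted yoneda}, so it is $2$-functorial on the $\infty$-category of small $\infty$-categories with finite coproducts and sends equivalences to equivalences; applied to $\classifying\colon\fr\xrightarrow{\simeq}\circles$ this gives the middle horizontal as an equivalence. The middle-right vertical is precisely the content of Lemma~\ref{lemma: connected generated by circles}. The bottom horizontal is Kan's theorem~\cite{KanLoop}: on the level of model categories, the classifying space functor is the right adjoint of a Quillen equivalence between $\sgr$ and reduced simplicial sets, and reduced simplicial sets model $\connected$, so via the $\infty$-categorical translation reviewed in Section~\ref{section:preliminaries} one gets an equivalence of $\infty$-categories $\classifying\colon\sgr\xrightarrow{\simeq}\connected$.

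For the fifth arrow I appeal to the commutativity of the bottom square of Diagram~\eqref{diagram:categories}, which is already established as part of the text surrounding the diagram via the uniqueness of sifted extensions in Corollary~\ref{cor:psigma}. The composite $\presh_\Sigma(\fr)\to\presh_\Sigma(\circles)\to\connected$ is an equivalence by the previous steps, and it agrees up to natural equivalence with $\presh_\Sigma(\fr)\to\sgr\to\connected$; composing with an inverse to $\classifying\colon\sgr\xrightarrow{\simeq}\connected$ shows that the middle-left vertical is an equivalence as well. The only point requiring some care is the $\infty$-categorical reformulation of Kan's theorem; apart from that, the argument is entirely formal two-out-of-three.
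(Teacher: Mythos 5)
Your proposal is correct, and for the top, middle and middle-right arrows it coincides with the paper's argument (the paper simply says the middle arrow ``follows from the fact that the top arrow is an equivalence'', which is your $2$-functoriality remark, and quotes Lemma~\ref{lemma: connected generated by circles} for $\presh_{\Sigma}(\circles)\to\connected$). Where you diverge is in how the bottom square is handled: you take Kan's theorem as the direct input, verifying $\classifying\colon\sgr\to\connected$ via the Quillen equivalence with reduced simplicial sets and the model-to-$\infty$-category translation of Section~\ref{section:preliminaries}, and then deduce the left vertical $\presh_{\Sigma}(\fr)\to\sgr$ by two-out-of-three using the commutativity already secured by Corollary~\ref{cor:psigma}. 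The paper runs the same two-out-of-three in the mirror direction: it proves the left vertical $\presh_{\Sigma}(\fr)\to\sgr$ \emph{directly}, by checking the three criteria of Lemma~\ref{lemma:equivalence conditions} exactly as in the proof of Lemma~\ref{lemma: connected generated by circles} (full faithfulness of $\fr\hookrightarrow\sgr$, compactness/realization-preservation of the corepresented functors, and generation of $\sgr$ under sifted colimits by constant free groups), and then concludes that the bottom horizontal is an equivalence, with Kan's theorem only invoked as ``of course well-known''. Your route is shorter but puts real weight on the $\infty$-categorical upgrade of Kan's theorem; in particular you should note that the paper's model of $\classifying$ (levelwise classifying space followed by realization) agrees up to natural weak equivalence with the right adjoint $\overline{W}$ of the Kan loop-group Quillen equivalence, since that identification is what lets you import the Quillen equivalence. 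The paper's route stays self-contained within the machinery it has already set up (Lemmas~\ref{lem:sifted yoneda} and~\ref{lemma:equivalence conditions}) and, as a byproduct, reproves the $\infty$-categorical form of Kan's equivalence rather than assuming it. Both arguments are sound; neither has a gap.
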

\begin{proof}
That the middle arrow is an equivalence follows from the fact that the top arrow is an equivalence. That the bottom right arrow is an equivalence is Lemma~\ref{lemma: connected generated by circles}. It is of course well-known that the bottom arrow is an equivalence. This goes back to Kan~\cite{KanLoop}. One can also prove directly that the bottom left arrow is an equivalence, very similarly to the proof of Lemma~\ref{lemma: connected generated by circles}, using Lemma~\ref{lemma:equivalence conditions}. It then follows that the bottom arrow is an equivalence.
\end{proof}
Now we are ready to prove the main result of this section.
\begin{proof}[Proof of Theorem~\ref{theorem: fr to connected}]
Diagram~\eqref{diagram:categories} induces the following diagram of functor categories, where the horizontal arrows are induced by the classifying space functor, and all arrows are equivalences (for the lower vertical arrows this follows from Lemma~\ref{lem:sifted yoneda}):
\begin{equation}\label{diagram:functors}
\begin{tikzcd}
	{\fun_{\Sigma}(\connected, \catname{D})} & {\fun_{\Sigma}( \sgr, \catname{D})} \\
	{\fun_{\Sigma}( \presh_{\Sigma}(\circles), \catname{D})} & {\fun_{\Sigma}( \presh_{\Sigma}(\fr), \catname{D})} \\
	{\fun(\circles, \catname{D})} & {\fun(\fr, \catname{D})}
	\arrow["\simeq"', swap, "\rho_{\classifying}", from=1-1, to=1-2]
	\arrow["\simeq", from=1-1, to=2-1]
	\arrow["\simeq"', from=1-2, to=2-2]
	\arrow["\simeq", from=2-1, to=2-2]
	\arrow["\simeq", from=2-1, to=3-1]
	\arrow["\simeq"', from=2-2, to=3-2]
	\arrow["\simeq"',  swap, "\rho_{\classifying}", from=3-1, to=3-2]
\end{tikzcd}
\end{equation}
Going from top left to bottom right corner gives the desired equivalence.
\end{proof}

\section{Cubical diagrams}\label{section:cubical}
We saw that the inclusion $\fr\to \sgr$ induces an equivalence of functor categories $\fun_{\Sigma}(\sgr, \catname{D})\xrightarrow{\simeq} \fun(\fr, \catname{D})$. Our next goal is to prove that when $\catname{D}$ is stable, this equivalence restricts to an equivalence from the category of \emph{excisive} functors from $\sgr$ to $\catname{D}$ to the category of \emph{polynomial} functors from $\fr$ to $\catname{D}$. 

The notions of polynomial and excisive functors use cubical diagrams and cross-effects. In this section we review some generalities regarding cubical diagrams. This is elementary material, but we need to pinpoint some facts about the relationship between being cocartesian in an $\infty$-category and in its homotopy category.

\begin{defn}
Given a set $U$, let $\power(U)$ denote the category (poset) of \emph{finite} subsets of $U$ and inclusions. Most of the time $U$ will itself be finite, in which case $\power(U)$ is the category of all subsets. We write $\power(n)$ for the category of subsets of the set $\underline{n}:=\{1, \ldots, n\}$. Given an integer $i$, let $\power(U)_{\le i}$ denote the subcategory of $\power(U)$ consisting of sets of cardinality at most $i$. 

Given a category $\catname{D}$, an $n$-dimensional cubical diagram in $\catname{D}$ is a functor $\chi\colon \power(n)\to \catname{D}$. 
\end{defn}
Next, let us recall the well-known definitions of cartesian, cocartesian and strongly cocartesian cubical diagrams.
\begin{defn}\label{def:cocartesian} Let $\chi\colon \power(n)\to \catname{D}$ be a cubical diagram. We say that $\chi$ is {\it cartesian} if it is a pullback cube. I.e., if the following natural map is an equivalence
\[
\chi(\emptyset)\to \underset{\emptyset \ne U\subset \underline{n}}{\lim} \chi(U).
\]
Dually, we say that $\chi$ is {\it cocartesian} if it is a pushout cube. I.e., if the following natural map is an equivalence
\[
\underset{U\subsetneq \underline{n}}{\colim}\chi(U) \to \chi(\underline{n}).
\] 
In yet another formulation: $\chi$ is cocartesian if and only if $\chi$ is equivalent to the left Kan extension of its restriction to $\power(n)_{\le n-1}$. Finally, we say that $\chi$ is {\it strongly cocartesian} if $\chi$ is equivalent to the left Kan extension of its restriction to $\power(n)_{\le 1}$. Specifically it means that for all $U\subset \underline{n}$ the following natural map is an equivalence
\[
\colim \chi|_{\power(U)_{\le 1}} \to \chi(U).
\]
\end{defn}
The following definition of free and split cubical diagrams is perhaps less standard, but these types of cubes come up naturally in the study of polynomial functors. Let $\ob(\power(n))$ be the category that has the same objects as $\power(n)$, and just the identity morphisms. Let us remark that if $\catname{D}$ is a category with finite coproducts (but not necessarily other colimits), then the left Kan extension functor 
\[
\lkan\colon \fun(\ob(\power(n)), \catname{D})\to\fun(\power(n), \catname{D})
\]
exists, because pointwise Kan extension only involves finite coproducts in this case.
\begin{defn}\label{def:split}
Let $\catname{D}$ be a category with finite coproducts. A cubical diagram $\chi\colon\power(n)\to \catname{D}$ is {\it split} if it is a left Kan extension of a functor $\ob(\power(n))\to \catname{D}$. 
%
\end{defn}
\begin{exmp}\label{example:split}
Let us describe explicitly what a split $n$-dimensional cubical diagram looks like. A functor $\ob(\power(n))\to\catname{D}$ is a collection of objects $X(U)$, where $U$ ranges over subsets of $\underline{n}$. The left Kan extension of this functor to $\power(n)$ is defined on objects by the formula
\begin{equation}\label{eq:general split}
U\mapsto \coprod_{V\subset U} X(V),
\end{equation}
and on the level of morphisms it is given by the obvious summand inclusions.
\end{exmp}
\begin{rem} \label{rem:split}
It follows from Definitions~\ref{def:cocartesian} and~\ref{def:split} that a cubical diagram is both split and cocartesian (resp. split and strongly cocartesian) if and only if it is a left Kan extension of a functor $\ob(\power(n)_{\le n-1})\to \catname{D}$ (resp. a functor $\ob(\power(n)_{\le 1})\to \catname{D}$). 
\end{rem}
\begin{exmp}\label{example:splitstrong}
To be really explicit, a split strongly cocartesian $n$-dimensional diagram is determined by $n+1$ objects $X_0, \ldots, X_n$ and is given by the formula
\[
U\mapsto X_0\sqcup\left(\coprod_{i\in U} X_i\right).
\]
with maps being inclusions of wedge summands. More generally, every split strongly cocartesian cube in $\catname{D}$ is equivalent to one of this form.
\end{exmp}
\begin{lem}\label{lemma:additive preserves split}
Let $\catname{C}$, $\catname{D}$ be categories with finite coproducts. Let $F\colon \catname{C}\to \catname{D}$ be a functor that preserves finite coproducts. Then $F$ preserves split cubical diagrams, as well as split cocartesian cubical diagrams and split strongly cocartesian cubical diagrams.
\end{lem}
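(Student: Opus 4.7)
The plan is to observe that all three notions in the statement --- split, split cocartesian, and split strongly cocartesian cubes --- are by definition or by Remark~\ref{rem:split} left Kan extensions from a discrete subcategory of $\power(n)$, and that such Kan extensions are computed pointwise as finite coproducts. Preservation under $F$ then follows immediately from the hypothesis that $F$ preserves finite coproducts.

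In more detail, for a split cube $\chi \colon \power(n) \to \catname{C}$, Example~\ref{example:split} gives the explicit pointwise formula
\[
\chi(U) \;\simeq\; \coprod_{V \subset U} X(V)
\]
for some family $\{X(V)\}_{V \subset \underline{n}}$, with structure maps given by the canonical inclusions of coproduct summands. Since each $U$ is finite, the coproduct is finite, so applying $F$ yields
\[
F(\chi(U)) \;\simeq\; \coprod_{V \subset U} F(X(V)),
\]
again with the canonical summand inclusions as structure maps. This exhibits $F \circ \chi$ as the left Kan extension along $\ob(\power(n)) \hookrightarrow \power(n)$ of the family $\{F(X(V))\}$, so $F \circ \chi$ is split.

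For the other two cases, Remark~\ref{rem:split} identifies split cocartesian (resp.\ split strongly cocartesian) cubes with left Kan extensions of functors $\ob(\power(n)_{\le n-1}) \to \catname{C}$ (resp.\ $\ob(\power(n)_{\le 1}) \to \catname{C}$). Since $\power(n)_{\le n-1}$ and $\power(n)_{\le 1}$ are again finite posets, these Kan extensions are still computed pointwise by \emph{finite} coproducts, and exactly the same argument applies: $F$ commutes with the pointwise coproduct formula, and the resulting cube $F \circ \chi$ is again a left Kan extension from the same discrete subcategory, hence split cocartesian (resp.\ split strongly cocartesian) by Remark~\ref{rem:split}.

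There is essentially no obstacle here; the only subtlety worth naming is to be explicit that all coproducts involved are finite (so that the hypothesis on $F$ applies) and that the structure maps of the cube really are determined by the coproduct structure, so that preservation on objects automatically upgrades to preservation of the whole diagram. This is immediate from the explicit pointwise formula for left Kan extension along a discrete subcategory.
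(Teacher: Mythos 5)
Your proof is correct and takes essentially the same route as the paper: both arguments rest on the observation that left Kan extension along $\ob(\power(n)) \hookrightarrow \power(n)$ (and its truncated variants) is computed pointwise by finite coproducts, which $F$ preserves. The paper packages this as a commuting square of functor categories (Kan extension commutes with postcomposition by $F$), whereas you unpack it pointwise via Example~\ref{example:split} and note that the comparison map $\lkan(F\circ\chi|_{\ob}) \to F\circ\chi$ is a pointwise equivalence; these are the same argument in different dress.
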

\begin{proof}
We claim that the following diagram of functor categories commutes up to a natural equivalence
\begin{equation}\label{eq:general square of cubes}
\begin{tikzcd}
	{\fun(\ob(\power(n)), \catname{C})} & {\fun(\power(n), \catname{C})} \\
	{\fun(\ob(\power(n)), \catname{D})} & {\fun(\power(n), \catname{D})}
	\arrow["\lkan", from=1-1, to=1-2]
	\arrow["F", from=1-1, to=2-1]
	\arrow["F", from=1-2, to=2-2]
	\arrow["\lkan"', from=2-1, to=2-2]
\end{tikzcd}
\end{equation}
The reason that the diagram commutes is that, once again, the construction of left Kan extension only involves finite coproducts, and $F$ preserves finite coproducts. The split cubical diagrams in $\catname{C}$ and $\catname{D}$ are precisely the objects in the essential image of the horizontal functors in this diagram. It follows that $F$ preserves split cubical diagrams. The proof for split (strongly) cocartesian cubical diagrams is similar. One just needs to replace $\ob(\power(n))$ with $\ob(\power(n)_{\le n-1})$ (or $\ob(\power(n)_{\le 1})$) in the left column of the diagram.
\end{proof}
\begin{exmps}\label{example: preserves} Let $\catname{D}$ be an $\infty$-category with coproducts. Let $\ho(D)$ denote the homotopy category of $\catname{D}$. The canonical functor $\catname{D} \to \ho(\catname{D})$ from $\catname{D}$ to its homotopy category preserves coproducts, and therefore it preserves split (cocartesian) cubical diagrams. 

If $\catname{D}$ is a stable $\infty$-category with a $t$-structure, then the inclusion functor $\catname{D}^{\heartsuit}\hookrightarrow \catname{D}$ from the heart of $\catname{D}$ to $\catname{D}$ also preserves coproducts, and therefore preserves split (cocartesian) cubical diagrams.

The functor $\const\colon\fr\to \sgr$, that sends a group to a constant simplicial group, preserves coproducts, and therefore preserves split strongly cocartesian cubical diagrams.
\end{exmps}  
\begin{notn}
Let $\splitcube^n(\catname{C}) \subset \fun(\power(n), \catname{C})$ be the full subcategory consisting of split cubical diagrams. 
\end{notn}
Suppose $\catname{I}$ is a small 1-category, and let $\catname{C}$ be an $\infty$-category. There is a natural functor 
\begin{equation}\label{eq:hofunctor}
\ho(\fun(\catname{I}, \catname{C}))\to \fun(\catname{I}, \ho\catname{C}).
\end{equation}
It is well known that in general this functor is not an equivalence of categories. It is neither essentially surjective on objects nor fully faithful. On the other hand, it is conservative. A map between diagrams in $\catname{C}$ an equivalence if and only if it induces an isomorphism between diagrams in $\ho\catname{C}$. 

Special cases when~\eqref{eq:hofunctor} or a related functor is an equivalence are therefore of note. One obvious case when~\eqref{eq:hofunctor} is an equivalence is when $\catname{I}$ is a discrete category. The following easy lemma gives another example when a certain restriction of~\eqref{eq:hofunctor} is an equivalence.
\begin{lem}\label{lemma: homotopy split}
Suppose $\catname{C}$ has finite coproducts. The functor $\ho(\fun(\power(n), \catname{C}))\to \fun(\power(n), \ho\catname{C})$ restricts to an equivalence of categories
\[
\ho(\splitcube^n(\catname{C}))\xrightarrow{\simeq} \splitcube^n(\ho\catname{C})
\]
from the homotopy category of split cubical diagrams in $\catname{C}$ to split cubical diagrams in the homotopy category of $\catname{C}$. A similar result holds for cocartesian split diagrams and strongly cocartesian split diagrams.
\end{lem}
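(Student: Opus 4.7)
The plan is to check essential surjectivity and fully faithfulness directly, exploiting the Kan extension adjunction to reduce all computations to the discrete category $\ob(\power(n))$, where passage to the homotopy category is tautologically compatible with everything else. First I would verify that the natural functor~\eqref{eq:hofunctor} does restrict to a functor $\ho(\splitcube^n(\catname{C})) \to \splitcube^n(\ho\catname{C})$: this is immediate from Lemma~\ref{lemma:additive preserves split} applied to the coproduct-preserving projection $\catname{C} \to \ho\catname{C}$ (see Examples~\ref{example: preserves}), which sends $\lkan X$ to $\lkan$ of the objectwise image of $X$, hence to a split cube in $\ho\catname{C}$.

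Next, for essential surjectivity, any split cube in $\ho\catname{C}$ is of the form $\lkan X'$ for some $X'\colon \ob(\power(n)) \to \ho\catname{C}$; since $\ob(\power(n))$ is discrete we can lift $X'$ objectwise to some $X\colon \ob(\power(n)) \to \catname{C}$, and then $\lkan X$ is a split cube in $\catname{C}$ whose image is $\lkan X'$. For fully faithfulness, given split cubes $\chi_i = \lkan X_i$, the left Kan extension adjunction together with the discreteness of $\ob(\power(n))$ yields
\[
\map_{\splitcube^n(\catname{C})}(\chi_1, \chi_2) \;\simeq\; \prod_{U \subset \underline{n}} \map_{\catname{C}}\Big(X_1(U),\ \coprod_{V \subset U} X_2(V)\Big).
\]
Applying $\pi_0$, which commutes with finite products, and using that $\catname{C} \to \ho\catname{C}$ preserves finite coproducts (Examples~\ref{example: preserves}), produces exactly the analogous formula computed entirely in $\ho\catname{C}$ for $\hom_{\splitcube^n(\ho\catname{C})}(\mathrm{im}\,\chi_1, \mathrm{im}\,\chi_2)$; so the two sides agree.

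The variants for cocartesian and strongly cocartesian split cubes are obtained by replacing $\ob(\power(n))$ throughout with the discrete subcategories $\ob(\power(n)_{\le n-1})$ and $\ob(\power(n)_{\le 1})$ respectively, as in Remark~\ref{rem:split}; the argument goes through verbatim. I do not foresee any serious obstacle. The crucial point that makes the argument succeed — and distinguishes it from the general failure of $\ho(\fun(\catname{I}, \catname{C})) \to \fun(\catname{I}, \ho\catname{C})$ to be an equivalence — is simply that a split cube is determined, via the Kan extension adjunction, by its restriction to a \emph{discrete} indexing category, so no higher coherence information is involved on the source side of the mapping spaces computed above.
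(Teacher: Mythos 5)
Your proof is correct and follows essentially the same route as the paper: both arguments reduce everything through the left Kan extension adjunction to the discrete category $\ob(\power(n))$ (resp.\ $\ob(\power(n)_{\le n-1})$, $\ob(\power(n)_{\le 1})$), where passage to homotopy categories is an equivalence, with your explicit product formula for the hom-sets being just an unpacked version of the paper's commuting square of Kan extensions. The only point stated a bit tersely is that the agreement of the two hom-set formulas is realized by the canonical comparison functor (via naturality of the adjunction unit/counit), but this is the same level of detail the paper itself leaves implicit.
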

\begin{proof}
The category $\ho\catname{C}$ has finite coproducts, and the functor $\catname{C}\to \ho\catname{C}$ preserves finite coproducts. 
It follows that the functor $\catname{C}\to \ho\catname{C}$ preserves split cubical diagrams (as well as cocartesian and strongly cocartesian split cubical diagrams) by Lemma~\ref{lemma:additive preserves split}. 
It follows that we have a functor $\splitcube^n{\catname{C}}\to \splitcube^n{\ho\catname{C}}$, which passes through a functor $\ho(\splitcube^n(\catname{C}))\to \splitcube^n(\ho\catname{C})$. Furthermore, there is a commuting (up to isomorphism)
diagram of categories, as in ~\eqref{eq:general square of cubes}
\[
\begin{tikzcd}
	\ho(\fun(\ob(\power(n)), \catname{C})) & \ho(\fun(\power(n), \catname{C})) \\
	{\fun(\ob(\power(n)), \ho\catname{C})} & {\fun(\power(n), \ho\catname{C})}
	\arrow["\lkan", from=1-1, to=1-2]
	\arrow[ from=1-1, to=2-1]
	\arrow[ from=1-2, to=2-2]
	\arrow["\lkan"', from=2-1, to=2-2]
\end{tikzcd}
\]
The left vertical functor is an equivalence, because $\ob(\power(n))$ is a discrete category. It follows that the essential image of the top horizontal functor maps surjectively onto the essential image of the bottom horizontal functor. This means that the functor $\ho(\splitcube^n(\catname{C}))\to \splitcube^n(\ho\catname{C})$ is essentially surjective on objects. 

Since $\splitcube^n(\catname{C})$ is the essential image of the left Kan extension functor, we obtain the following diagram
\[
\begin{tikzcd}
	\ho(\fun(\ob(\power(n)), \catname{C})) & \ho(\splitcube^n(\catname{C})) \\
	{\fun(\ob(\power(n)), \ho\catname{C})} & {\splitcube^n(\ho\catname{C})}
	\arrow["\lkan", from=1-1, to=1-2]
	\arrow["\simeq", from=1-1, to=2-1]
	\arrow[ from=1-2, to=2-2]
	\arrow["\lkan"', from=2-1, to=2-2]
\end{tikzcd}
\]
We have shown that the right vertical functor is essentially surjective on objects. It remains to show that it induces a bijection on sets of morphisms. Let us suppose we are given two split cubical diagram $\chi_1, \chi_2\colon \power(n) \to \catname{C}$. Let us denote the images of these cubes in $\fun(\power(n), \ho\catname{C})$ by $\ho\chi_1$ and $\ho\chi_2$. We need to show that the set of morphisms from $\chi_1$ to $\chi_2$ in $\ho(\splitcube^n(\catname{C}))$ maps bijectively to the set of morphisms from $\ho\chi_1$ to $\ho\chi_2$ in $\splitcube^n(\ho\catname{C})$. By assumption $\chi_1$ is split, which means that $\chi_1$ is a left Kan extension of some functor $\chi^0_1\colon \ob(\power(n))\to \catname{C}$. It follows that the set of morphisms from $\chi_1$ to $\chi_2$ in $\ho(\splitcube^n(\catname{C}))$ is in bijective correspondence to the set of morphisms in $\ho(\fun(\ob(\power(n)), \catname{C}))$ from $\chi^0_1$ to the restriction of $\chi_2$ to $\ob(\power(n))$. Since the left vertical functor is an equivalence of categories, this is the same as the set of morphisms in $\fun(\ob(\power(n)), \ho\catname{C})$ from $\ho\chi^0_1$ to the restriction of $\ho\chi_2$ to $\ob(\power(n))$. Again using the fact that the horizontal morphisms are Kan extensions and the diagram commutes, this is the same as the set of morphisms in $\splitcube^n(\ho\catname{C})$ from $\ho\chi_1$ to $\ho\chi_2$.

We have proved that the functor $\ho(\splitcube^n(\catname{C}))\xrightarrow{\simeq} \splitcube^n(\ho\catname{C})$ is an equivalence. The analogous statement for split (strongly) cocartesian diagrams is proved similarly, replacing $\ob(\power(n))$ with $\ob(\power(n)_{\le n-1})$ (or with $\ob(\power(n)_{\le 1})$)
\end{proof}
The following lemma is somewhat similar to the previous one. It is handy for detecting split cubical diagrams in an $\infty$-category.
\begin{lem}\label{lemma:lift}
Let $\catname{C}$ be a category that admits finite coproducts. A cubical diagram in $\catname{C}$ is split (resp. cocartesian split) if and only if its image in $\ho(\catname{C})$ is split (resp. cocartesian split).
\end{lem}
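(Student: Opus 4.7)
My approach handles both cases (split, split cocartesian) uniformly via the Kan extension criterion encoded in Definition~\ref{def:split} and Remark~\ref{rem:split}. The forward implication is immediate: the canonical functor $\catname{C}\to\ho\catname{C}$ preserves finite coproducts (Examples~\ref{example: preserves}), so Lemma~\ref{lemma:additive preserves split} applies and split (resp. split cocartesian) diagrams are sent to split (resp. split cocartesian) diagrams.

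For the reverse implication, let $\iota\colon\catname{J}\hookrightarrow \power(n)$ denote the inclusion either of $\ob(\power(n))$ (for the split case) or of $\ob(\power(n)_{\le n-1})$ (for the split cocartesian case); in both cases $\catname{J}$ is a discrete subcategory. Given $\chi\colon\power(n)\to\catname{C}$, I consider the counit $\varepsilon_\chi\colon \lkan_\iota \iota^*\chi\to \chi$. By Definition~\ref{def:split} together with Remark~\ref{rem:split}, $\chi$ is split (resp. split cocartesian) if and only if $\varepsilon_\chi$ is an equivalence. Because $\catname{J}$ is discrete, the pointwise formula for left Kan extension collapses to a finite coproduct: $(\lkan_\iota \iota^*\chi)(U)\simeq \coprod_{V\in \catname{J},\, V\subseteq U} \chi(V)$.

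Since $\catname{C}\to \ho\catname{C}$ preserves finite coproducts, it carries $\varepsilon_\chi$ to the analogous counit $\varepsilon_{\ho\chi}$ for the image of $\chi$ in $\ho\catname{C}$. If $\ho\chi$ is split (resp. split cocartesian), then $\varepsilon_{\ho\chi}$ is an objectwise isomorphism. Now a morphism in $\catname{C}$ is an equivalence if and only if its image in $\ho\catname{C}$ is an isomorphism, and equivalences of $\power(n)$-shaped diagrams are detected objectwise, so $\varepsilon_\chi$ is an equivalence, and $\chi$ is split (resp. split cocartesian). No real obstacle arises here: the key point is merely that a left Kan extension from a discrete category is computed by a finite coproduct formula that is preserved by the passage to the homotopy category, so splitness is controlled by data that sees only isomorphism classes and finite coproducts.
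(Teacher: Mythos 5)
Your forward implication matches the paper's and is correct. The reverse implication, however, rests on a false biconditional: it is not true that $\chi$ is split if and only if the counit $\varepsilon_\chi\colon\lkan_\iota\iota^*\chi\to\chi$ is an equivalence. That characterization of the essential image of a left Kan extension is valid when $\iota$ is fully faithful (so that $\lkan_\iota$ is fully faithful), but $\iota\colon\ob(\power(n))\hookrightarrow\power(n)$ has the same objects and strictly fewer morphisms, hence is \emph{not} fully faithful, and likewise for $\ob(\power(n)_{\le n-1})\hookrightarrow\power(n)$. Already for $n=1$ the claim fails: take $\psi(\emptyset)=A$, $\psi(\{1\})=B$ and set $\chi=\lkan_\iota\psi$, so that $\chi(\emptyset)=A$ and $\chi(\{1\})=A\sqcup B$ with the structure map the summand inclusion. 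Then $(\lkan_\iota\iota^*\chi)(\{1\})=\chi(\emptyset)\sqcup\chi(\{1\})\simeq A\sqcup A\sqcup B$, and the counit there restricts to the codiagonal $A\sqcup A\to A$ on the first two summands, which is not an equivalence unless $A$ is initial. So $\chi$ is split by construction while $\varepsilon_\chi$ is not an equivalence, and your biconditional is refuted.

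The underlying confusion is that $\iota^*\chi$ is not the generating data of a split cube; it is strictly bigger, and there is no canonical way to cut it back down naturally over $\power(n)$. The paper's proof works with the genuine generating data instead. Knowing $\ho\chi$ is split, one invokes the essential surjectivity established in the proof of Lemma~\ref{lemma: homotopy split} to produce a diagram $\bar\chi_0\colon\ob(\power(n))\to\catname{C}$ -- generally different from $\iota^*\chi$ -- with $\ho(\lkan_\iota\bar\chi_0)\cong\ho\chi$. The adjunct map $\ho\bar\chi_0\to\iota^*\ho\chi$ is then lifted through the equivalence $\ho(\fun(\ob(\power(n)),\catname{C}))\simeq\fun(\ob(\power(n)),\ho\catname{C})$ (which holds precisely because $\ob(\power(n))$ is discrete), left Kan extended to a map $\lkan_\iota\bar\chi_0\to\chi$, and shown to be an equivalence because it becomes an isomorphism in $\ho\catname{C}$. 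To repair your argument you would need to replace $\iota^*\chi$ by a lift of this generating data, which is exactly the step your counit formulation skips.
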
 
\begin{proof}
We saw already that the functor $\catname{C}\to \ho(\catname{C})$ preserves (cocartesian) split cubical diagrams, so the ``only if'' direction follows. For the ``if'' direction, consider again the commutative diagram
\[\label{eq:square of cubes}
\begin{tikzcd}
\fun(\ob(\power(n)), \catname{C}) & \fun(\power(n), \catname{C}) \\
	\ho(\fun(\ob(\power(n)), \catname{C})) & \ho(\fun(\power(n), \catname{C})) \\
	{\fun(\ob(\power(n)), \ho(\catname{C}))} & {\fun(\power(n), \ho(\catname{C}))}
	\arrow["\lkan", from=1-1, to=1-2]
	\arrow[from=1-1, to=2-1]
	\arrow[from=2-2, to=3-2]
	\arrow["\simeq", from=2-1, to=3-1]
	\arrow[from=1-2, to=2-2]
	\arrow["\lkan"', from=2-1, to=2-2]
	\arrow["\lkan", from=3-1, to=3-2]
\end{tikzcd}
\]
Let $\chi$ be an object of $\fun(\power(n), \catname{C})$, and let $\ho \chi$ denote the image of $\chi$ in $ \fun(\power(n), \ho(\catname{C}))$. Suppose that $\ho\chi$ is split, i.e. is in the image of the bottom horizontal functor. We want to prove that $\chi$ is split. Since the vertical functors on the left of the diagram are essentially surjective on objects, it follows that there exists a split cubical diagram $\bar\chi \colon\power(n)\to \catname{C}$ such that $\ho\bar\chi\cong \ho\chi$, as cubical diagrams in $\ho(C)$. Since $\bar \chi$ is split, it is a left Kan extension of a diagram $\bar\chi_0\colon \ob(\power(n))\to \catname{C}$. By the commutativity of the diagram above, $\ho\bar\chi$ is a left Kan extension of $\ho \bar\chi_0$. It follows that there is a natural map $\ho \bar\chi_0\to \ho \chi|_{\ob(\power(n))}$, 
that left Kan extends to the isomorphism $\ho\bar \chi \xrightarrow{\cong} \ho \chi$. Since the bottom left vertical map in the diagram above is an equivalence of categories, and the top left vertical map is essentially surjective on objects and on morphisms, a natural map $\ho \bar\chi_0\to \ho \chi|_{\ob(\power(n))}$ in $\fun(\ob(\power(n)), \ho(\catname{C}))$ comes from a natural map $\bar\chi_0\to \chi|_{\ob(\power(n))}$ in $\fun(\ob(\power(n)), \catname{C})$. The left Kan extension of this map gives a map $\bar\chi \to \chi$. Using again the commutativity of the diagram above, the map $\bar\chi\to \chi$ becomes an isomorphism of diagrams in $\ho(\catname{C})$, and therefore $\bar\chi \xrightarrow{\simeq} \chi$ is an equivalence. Since $\bar\chi$ is a split cubical diagram by assumption, we have proved that $\chi$ is a split cubical diagram.

A similar argument applies to split cocartesian diagrams. One just needs to replace $\ob(\power(n))$ with $\ob(\power(n)_{\le n-1})$ everywhere in the argument.
\end{proof}
\section{Polynomial functors}\label{sec:polynomial}
The reason that we spent so much space on split cubical diagrams is that they come up when studying the cross-effects of a functor, which in turn are important in the study of polynomial functors. We will next review the notion of a cross-effect. We will follow the original treatment from~\cite{Eilenberg1954}, which constructed the cross-effects as images of certain idempotents. Let us briefly review from~\cite[Section 4.4.5]{LurieHTT} the meaning of an idempotent in an $\infty$-category.
\begin{defn}
Let $\idem$ be the category consisting of a single object, whose monoid of endomorphisms consist of two elements $\{\id, e\}$, where the composition is determined by the relation $e^2=e$. Given a category $\catname{D}$, an idempotent in $\catname{D}$ is a functor $\idem \to \catname{D}$.
\end{defn}
When $\catname{D}$ is a $1$-category, an idempotent in $\catname{D}$ is an endomorphism $f$ of some object in $\catname{D}$ that satisfies $f\circ f=f$. In $1$-categories being an idempotent is a property of an endomorphism. When $\catname{D}$ is an $\infty$-category, an idempotent in $\catname{D}$ is an endomorphism $f$ together with compatible homotopies between $f$ and its iterates. In $\infty$-categories, being an idempotent is a structure on an endomorphism.




We will avoid subtleties concerning idempotents in $\infty$-categories by carrying out this part of the discussion in the context of $1$-categories. Then in the end we will lift the result that we need to $\infty$-categories using Lemma~\ref{lemma:lift}. But let us also remark that it might not be very difficult to carry out the entire forthcoming discussion on the level of \emph{stable} $\infty$-categories, since idempotents in the homotopy category of a stable $\infty$-category have lifts to $\infty$-idempotents~\cite[Lemma 1.2.4.6]{LurieHA}.

Until further notice, let $\catname{C}$ be a pointed $1$-category with finite coproducts, possibly enriched over topological spaces, and let $\catname{D}$ be an idempotent complete additive $1$-category (also known as a Karoubian category). We denote the coproduct in $\catname{C}$ by $\vee$ and the coproduct in $\catname{D}$ by $\oplus$. We denote the zero object of $\catname{C}$ by $*$ and the zero object of $\catname{D}$ by $0$. Let $X_1, \ldots, X_n$ be objects of $\catname{C}$.
\begin{defn}
For every subset $U\subset \underline{n}$, let 
\[
\psi_U(X_1, \ldots, X_n) \colon X_1 \vee\ldots\vee X_n\to X_1 \vee\ldots\vee X_n
\] 
be the map that is the identity on $X_i$ for each $i\in U$, and collapses $X_i$ to $*$ for each $i\not \in U$. 
\end{defn}
Sometimes we will just write $\psi_U$ instead of $\psi_U(X_1, \ldots, X_n)$, if the $X_i$s are clear from the context, or are irrelevant. 
It is clear that $\{\psi_U\mid U\subset \underline{n}\}$ are pairwise commuting idempotents, which satisfy the relation $\psi_U\circ \psi_V=\psi_{U\cap V}$.

Now let $F\colon\catname{C}\to \catname{D}$ be a functor (if $\catname{C}$ is topologically enriched while $\catname{D}$ is an ``ordinary'' category, then we assume $F$ to be topologically enriched, in the sense of inducing locally constant maps on morphism spaces). For each $U\subset \underline{n}$ we have the idempotent
\[
F(\psi_U)(X_1, \ldots, X_n)\colon F(X_1 \vee\ldots\vee X_n)\to F(X_1 \vee\ldots\vee X_n).
\]
By functoriality of $F$, the maps $F(\psi_U)$ are pairwise commuting idempotents, satisfying $F(\psi_U)\circ F(\psi_V)=F(\psi_{U\cap V})$. Because $\catname{D}$ is an additive category, we may add and subtract maps of the form $F(\psi_U)$. Therefore the following definition makes sense
\begin{defn}
For every subset $U\subset \underline{n}$, define the map 
\begin{equation}\label{eq:D_U}
D_UF(X_1, \ldots, X_n)\colon F(X_1 \vee\ldots\vee X_n)\to F(X_1 \vee\ldots\vee X_n)
\end{equation}
by the formula
\[
D_UF=\sum_{V\subset U} (-1)^{|U|-|V|} F(\psi_V).
\]
\end{defn}
\begin{lem}\label{lem:orthogonal}
The maps $\{D_UF\mid U\subset \underline{n}\}$ are pairwise orthogonal idempotents whose sum is the identity.
\end{lem}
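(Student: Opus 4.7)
The plan is to reduce the lemma to Möbius inversion on the Boolean lattice $\power(n)$, using the structural properties of the maps $\psi_V$. Directly from the definition, $\psi_{\underline n} = \id$ and $\psi_U \circ \psi_V = \psi_{U\cap V}$, so by functoriality the endomorphisms $F(\psi_V)$ form a pairwise commuting family in the abelian endomorphism monoid of $F(X_1\vee\cdots\vee X_n)$, satisfying $F(\psi_V)\circ F(\psi_{V'}) = F(\psi_{V\cap V'})$ and $F(\psi_{\underline n}) = \id$. The whole argument will take place inside this commutative monoid, and the additivity of $\catname{D}$ is used precisely to make sense of the alternating sums defining $D_U F$.

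By standard Möbius inversion on $\power(n)$, the defining formula $D_U F = \sum_{V\subset U}(-1)^{|U|-|V|}\, F(\psi_V)$ is equivalent to the inverse relation
\[
F(\psi_V) \;=\; \sum_{U\subset V} D_U F.
\]
Specializing to $V = \underline n$ immediately gives $\sum_U D_U F = F(\psi_{\underline n}) = \id$, which is the completeness statement.

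For orthogonality and idempotence simultaneously, substitute the inverse relation into both factors of $F(\psi_V)\circ F(\psi_{V'}) = F(\psi_{V\cap V'})$, and also expand the right-hand side via the same relation, to obtain, for every pair $V, V' \subset \underline n$,
\[
\sum_{U\subset V,\, U'\subset V'} D_U F \circ D_{U'} F \;=\; \sum_{T\subset V\cap V'} D_T F \;=\; \sum_{U\subset V,\,U'\subset V'} [U=U']\,D_U F.
\]
Reading both sides as the $(V,V')$-th partial sum of a function in $(U,U')$, the Möbius inversion formula on $\power(n)\times\power(n)$ recovers the integrand from its partial sums; thus the two integrands agree, and $D_U F\circ D_{U'} F = [U=U']\, D_U F$, which yields orthogonality and idempotence at once.

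The main (minor) subtlety is not really an obstacle but a bookkeeping point: one must observe that the displayed identity holds for \emph{every} pair $(V,V')$, so that Möbius inversion applied jointly to the two indices is legitimate. The argument uses nothing beyond formal combinatorics in a commutative additive monoid of endomorphisms of a single object, which is why it suffices to carry it out in the $1$-categorical setting as the author has already arranged; the passage to $\infty$-categorical content is then deferred to Lemma~\ref{lemma:lift}.
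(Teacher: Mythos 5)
Your proof is correct. The inverse relation $F(\psi_V) = \sum_{U\subset V} D_U F$ is indeed standard M\"obius inversion on the Boolean lattice (using $\sum_{W\subset U\subset V}(-1)^{|U|-|W|}=[W=V]$), and evaluating at $V=\underline n$ gives completeness. The joint product-lattice inversion step is also sound: you have established, for every pair $(V,V')$, that the $(V,V')$-partial sums of $f(U,U') := D_U F\circ D_{U'}F$ and of $g(U,U') := [U=U']\,D_U F$ agree, and the partial-sum (zeta) transform on $\power(n)\times\power(n)$ is injective, so $f=g$. Everything happens inside the abelian group of endomorphisms of a single object, so the formal combinatorics is legitimate.

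The paper does not spell out a proof of this lemma; it defers to Eilenberg--MacLane \cite[Thm.\ 9.1]{Eilenberg1954} and to \cite[Lem.\ 3.21]{Arone-Barthel-Heard-Sanders}. Those sources argue more directly: they expand $D_U F\circ D_{U'}F$ as a double alternating sum over $V\subset U$, $V'\subset U'$ of $F(\psi_{V\cap V'})$, group terms by $W=V\cap V'$, and verify the resulting sign count collapses to $[U=U']\,D_U F$. Your argument packages that sign count abstractly as M\"obius inversion on the product poset, which has the advantage of yielding orthogonality and idempotence simultaneously from a single inversion, at the cost of one slightly less transparent step (recognizing the two indexed sums as partial sums of the same shape). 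Both are correct; yours is a bit cleaner conceptually, the referenced proofs a bit more elementary and explicit.
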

\begin{proof}
The proof is straightforward. The lemma is proved as part of the proof of~\cite[Theorem 9.1]{Eilenberg1954}. It also is~\cite[Lemma 3.21]{Arone-Barthel-Heard-Sanders}, with the following  two minor caveats: In~\cite{Eilenberg1954} the lemma is proved for functors between abelian categories, and in~\cite{Arone-Barthel-Heard-Sanders} it is proved for functors from spectra to spectra. But it is clear that the proof works equally well for functors from a pointed category to an additive category. Second, in~\cite{Eilenberg1954} it is assumed that $F$ is a reduced functor, i.e., that it preserves the zero object. But there is no need to assume this. This means that we have to include $U=\emptyset$ in the decomposition. Note that $D_{\emptyset}F(X_1, \ldots, X_n)$ is the composition
\[
F(X_1 \vee\ldots\vee X_n) \to F(*) \to F(X_1 \vee\ldots\vee X_n).
\]
\end{proof}
\begin{defn}\label{def:cross-effects}
Let $\ce_UF(X_1, \ldots, X_n)$ denote the image of the idempotent map~\eqref{eq:D_U}. When $U=\underline{n}$, we denote it simply by $\ce_nF(X_1, \ldots, X_n)$. We call the functor of $n$ variables $\ce_nF$ the $n$-th cross-effect of $F$.
\end{defn}
\begin{lem}\label{lemma:split}
Let $F\colon \catname{C}\to \catname{D}$ be a functor, and $X_1, \ldots, X_n$ be objects of $\catname{C}$. The idempotents $D_UF$ induce an isomorphism
\[
F(X_1\vee\ldots\vee X_n)\xrightarrow{\cong} \bigoplus_{U\subset\underline{n}} \ce_UF(X_1, \ldots, X_n).
\]
Moreover, the functor $\ce_UF(X_1, \ldots, X_n)$ depends only on $X_i$ for $i\in U$. More precisely, suppose we have morphisms $h_i\colon X_i\to Y_i$ for $i=1, \ldots, n$ and a set $U$ such that $h_i$ is an isomorphism for all $i\in U$. Then the induced map
$\ce_UF(X_1, \ldots, X_n)\to \ce_UF(Y_1, \ldots, Y_n)$ is an isomorphism. In particular, if we have an injective map  $\underline{m}\hookrightarrow\underline{n}$, and $U\subset \underline{m}$, then it induces an isomorphism $\ce_UF(X_1, \ldots, X_m)\xrightarrow{\cong} \ce_UF(X_1, \ldots, X_n)$. 
\end{lem}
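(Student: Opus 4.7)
The first assertion is immediate from Lemma~\ref{lem:orthogonal}: in the idempotent complete additive category $\catname{D}$, any decomposition of the identity on an object as a sum of pairwise orthogonal idempotents realises the object as the direct sum of the images of these idempotents.

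For the second assertion, the plan is to exhibit, for each tuple $(X_1,\ldots,X_n)$, a canonical isomorphism
\[
\ce_UF(X_1,\ldots,X_n)\xrightarrow{\cong}\ce_UF\bigl((X_j)_{j\in U}\bigr).
\]
The key calculation is that $D_UF$ absorbs $F(\psi_U)$ on both sides. Indeed, for $V\subset U$ one has $\psi_U\circ\psi_V=\psi_{U\cap V}=\psi_V=\psi_V\circ\psi_U$, and summing over $V$ with signs yields $F(\psi_U)\circ D_UF=D_UF=D_UF\circ F(\psi_U)$. Because $\catname{C}$ is pointed, $\psi_U$ factors as $\iota_U\circ\pi_U$, where $\pi_U\colon\bigvee_{j=1}^{n}X_j\to\bigvee_{j\in U}X_j$ is the projection (identity on $X_j$ for $j\in U$, zero on $X_j$ for $j\notin U$) and $\iota_U$ is the coproduct inclusion, with $\pi_U\circ\iota_U=\id$. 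Hence $F(\iota_U)$ and $F(\pi_U)$ exhibit the image of $F(\psi_U)$ as canonically isomorphic to $F(\bigvee_{j\in U}X_j)$, and conjugating $D_UF(X_1,\ldots,X_n)$ by these maps produces an idempotent on $F(\bigvee_{j\in U}X_j)$. A direct computation — using $\pi_U\circ\psi_V\circ\iota_U=\psi_V'$ for $V\subset U$, where $\psi_V'$ denotes the analogous map on the smaller wedge — identifies this conjugated idempotent with $D_UF$ applied to the tuple $(X_j)_{j\in U}$. Passing to images yields the desired isomorphism, which is natural in \emph{all} of the $X_j$ (since $\pi_U$ commutes with the wedge of any morphisms, the maps on the coordinates outside $U$ being killed on both sides).

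The remaining claims are now formal. If $h_i\colon X_i\to Y_i$ is an isomorphism for every $i\in U$, then $F(\bigvee_{j\in U}h_j)$ is an isomorphism on $F(\bigvee_{j\in U}X_j)$, hence restricts to an isomorphism on the intrinsic cross-effect, and by naturality of the canonical isomorphism above the induced map $\ce_UF(X_1,\ldots,X_n)\to\ce_UF(Y_1,\ldots,Y_n)$ is an isomorphism. Similarly, given an injection $\underline{m}\hookrightarrow\underline{n}$ with $U\subset\underline{m}$, both $\ce_UF(X_1,\ldots,X_m)$ and $\ce_UF(X_1,\ldots,X_n)$ are canonically identified with $\ce_UF\bigl((X_j)_{j\in U}\bigr)$, and under these identifications the induced map is the identity. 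The only non-routine step is the bookkeeping check that $F(\pi_U)\circ D_UF(X_1,\ldots,X_n)\circ F(\iota_U)=D_UF\bigl((X_j)_{j\in U}\bigr)$; once one expands the defining sums this comes down to the identity $\pi_U\circ\psi_V\circ\iota_U=\psi_V'$ and I do not foresee any deeper obstacle.
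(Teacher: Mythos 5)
Your proof is correct and takes essentially the same route as the paper: the splitting is exactly the paper's appeal to Lemma~\ref{lem:orthogonal}, and your treatment of the second statement --- the absorption identity $D_UF=D_UF\circ F(\psi_U)=F(\psi_U)\circ D_UF$ together with conjugation along the factorization $\psi_U=\iota_U\circ\pi_U$ --- is precisely the elementary argument the paper leaves implicit when it says the claim is ``very easy to prove from the definition of the idempotents $D_U$''. The one step you only gesture at, namely that $F(\pi_U)\circ D_UF$ and $D_UF\circ F(\iota_U)$ are mutually inverse between the two images, follows from the same absorption identity together with $\pi_U\circ\iota_U=\id$, so there is no gap.
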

\begin{proof}
The first statement is an immediate consequence of Lemma~\ref{lem:orthogonal}, and also is~\cite[Theorem 9.1]{Eilenberg1954}. The second statement is very easy to prove from the definition of the idempotents $D_U$.
\end{proof} 
The following easy and well-known lemma is left as an exercise for the reader
\begin{lem}
Let $F$ be a functor as above. There is a natural isomorphism
\begin{multline*}
\ce_nF(X_1\vee X_2, X_3, \ldots, X_{n+1})\cong \\ \cong\ce_nF(X_1, X_3, \ldots, X_{n+1}) \oplus \ce_nF(X_2, X_3, \ldots, X_{n+1}) \oplus \ce_{n+1}F(X_1, X_2, \ldots, X_{n+1}).
\end{multline*}
\end{lem}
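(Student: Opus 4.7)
The strategy is to compare two different orthogonal idempotent decompositions of the single object $F(X_1\vee X_2\vee X_3\vee\cdots\vee X_{n+1})$: the one obtained by viewing this as $F$ applied to the $(n+1)$-ary coproduct of the $X_i$, and the one obtained by viewing it as $F$ applied to the $n$-ary coproduct with first variable $X_1\vee X_2$. Write $a$ for the index of the combined variable $X_1\vee X_2$ in the $n$-ary viewpoint. For each subset $V\subset\{a,3,\ldots,n+1\}$, the endomorphism $\psi_V$ of $F(X_1\vee\cdots\vee X_{n+1})$ coming from the $n$-ary indexing coincides with $\psi_{V'}$ in the $(n+1)$-ary indexing, where $V'=V$ if $a\notin V$, and $V'=\{1,2\}\cup(V\setminus\{a\})$ if $a\in V$. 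This is the only identification I will need.

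The core of the proof is to establish the identity of endomorphisms
\[
D_{\{a,3,\ldots,n+1\}}^{n}F \;=\; D_{\{1,3,\ldots,n+1\}}^{n+1}F + D_{\{2,3,\ldots,n+1\}}^{n+1}F + D_{\{1,2,\ldots,n+1\}}^{n+1}F,
\]
where the superscripts indicate whether $D_U$ is computed in the $n$-ary or $(n+1)$-ary indexing. To check this, I would expand each $D_U$ as the alternating sum $\sum_{V\subset U}(-1)^{|U|-|V|}F(\psi_V)$ and collect, on the right-hand side, the coefficient of each $F(\psi_V)$ for $V\subset\{1,\ldots,n+1\}$. A short case analysis on $V\cap\{1,2\}$ shows that if $|V\cap\{1,2\}|=1$, the contributions from two of the three summands cancel; if $V\supset\{1,2\}$, only the third summand contributes, with sign $(-1)^{n+1-|V|}$; and if $V\cap\{1,2\}=\emptyset$, the three contributions add to $(-1)^{n-|V|}$. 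Using the dictionary between the two indexings from the previous paragraph, this is precisely the expansion of the left-hand side.

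Given this identity, the decomposition follows formally. By Lemma~\ref{lem:orthogonal}, the three summands on the right are pairwise orthogonal idempotents in the $(n+1)$-ary system, so the image of their sum is the direct sum of their images. The naturality part of Lemma~\ref{lemma:split} identifies
\[
\operatorname{im}(D_{\{1,3,\ldots,n+1\}}^{n+1}F)\cong \ce_nF(X_1,X_3,\ldots,X_{n+1}),\quad \operatorname{im}(D_{\{2,3,\ldots,n+1\}}^{n+1}F)\cong \ce_nF(X_2,X_3,\ldots,X_{n+1}),
\]
while $\operatorname{im}(D_{\{1,2,\ldots,n+1\}}^{n+1}F)=\ce_{n+1}F(X_1,\ldots,X_{n+1})$ by definition. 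On the other hand, the image of the left-hand side is $\ce_nF(X_1\vee X_2,X_3,\ldots,X_{n+1})$ by definition. Each step of the argument is natural in the $X_i$, so the resulting isomorphism is natural.

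The only real obstacle is the sign bookkeeping in the second paragraph; everything else is a formal consequence of Lemmas~\ref{lem:orthogonal} and~\ref{lemma:split}. I would likely organise the sign check by first writing the right-hand side as a double sum indexed by $V\subset\{1,\ldots,n+1\}$ and by which of the three subsets $\{1,3,\ldots,n+1\}$, $\{2,3,\ldots,n+1\}$, $\{1,2,\ldots,n+1\}$ contains $V$, and then simply tabulating the four cases determined by $V\cap\{1,2\}$.
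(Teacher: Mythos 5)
Your argument is correct: the identity $D^{n}_{\{a,3,\ldots,n+1\}}F = D^{n+1}_{\{1,3,\ldots,n+1\}}F + D^{n+1}_{\{2,3,\ldots,n+1\}}F + D^{n+1}_{\{1,2,\ldots,n+1\}}F$ checks out (the four-case sign analysis on $V\cap\{1,2\}$ is exactly right, using the dictionary $\psi^n_V=\psi^{n+1}_{V'}$), and the splitting then follows from the orthogonality in Lemma~\ref{lem:orthogonal} together with the "depends only on the variables in $U$" part of Lemma~\ref{lemma:split}. The paper leaves this lemma as an exercise, and your proof is precisely the argument its idempotent framework intends.
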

The following is an immediate corollary
\begin{cor}\label{cor:implies}
Let $F$ be a functor as above. Suppose $\ce_nF(X_1, \ldots, X_n)=0$ for all $X_1, \ldots, X_n\in \catname{C}$. Then $\ce_{n+1}F(X_1, \ldots, X_{n+1})=0$ for all $X_1, \ldots, X_{n+1}$. More generally, $\ce_mF$ is the trivial functor for all $m\ge n$.
\end{cor}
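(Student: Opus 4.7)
The plan is to deduce this directly from the preceding decomposition lemma, then bootstrap by induction. Suppose $\ce_n F(X_1, \ldots, X_n) = 0$ for all choices of objects $X_1, \ldots, X_n \in \catname{C}$. First I would apply the lemma with $X_1, X_2$ and any $X_3, \ldots, X_{n+1}$ to get the natural splitting
\[
\ce_n F(X_1 \vee X_2, X_3, \ldots, X_{n+1}) \cong \ce_n F(X_1, X_3, \ldots, X_{n+1}) \oplus \ce_n F(X_2, X_3, \ldots, X_{n+1}) \oplus \ce_{n+1} F(X_1, X_2, \ldots, X_{n+1}).
\]
By the standing hypothesis, the left-hand side vanishes (it is an instance of $\ce_n F$), and so do the first two summands on the right (same reason). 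Since $\catname{D}$ is an additive idempotent-complete category, a direct sum is zero if and only if each summand is zero, so the remaining summand $\ce_{n+1} F(X_1, X_2, \ldots, X_{n+1})$ must vanish for every tuple $(X_1, \ldots, X_{n+1})$.

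For the general claim, I would proceed by induction on $m \geq n$. The base case $m = n$ is the hypothesis. For the inductive step, assume $\ce_m F$ is identically zero. Applying exactly the same argument with $m$ in place of $n$ — that is, using the lemma in the form
\[
\ce_m F(X_1 \vee X_2, X_3, \ldots, X_{m+1}) \cong \ce_m F(X_1, X_3, \ldots, X_{m+1}) \oplus \ce_m F(X_2, X_3, \ldots, X_{m+1}) \oplus \ce_{m+1} F(X_1, \ldots, X_{m+1})
\]
— yields $\ce_{m+1} F = 0$, closing the induction. There is no serious obstacle here; the only point worth checking is that the cited lemma applies verbatim at each index, which it does since $F$ is an arbitrary functor and the lemma was stated without restriction on the number of variables. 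Thus $\ce_m F$ is trivial for every $m \geq n$, as claimed.
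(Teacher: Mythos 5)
Your argument is exactly the one the paper intends: it states the corollary as an immediate consequence of the preceding splitting lemma, and your write-up simply spells out that deduction (a direct summand of a zero object is zero, then induct on $m$). The proof is correct and matches the paper's approach.
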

Now we can define polynomial functors
\begin{defn}\label{def: polynomial}
Suppose $\catname{C}$ is a pointed category with coproducts, and $\catname{D}$ be either an idempotent complete stable $\infty$-category or an idempotent complete additive $1$-category. We say that a functor $F\colon \catname{C}\to \catname{D}$ is polynomial of degree $n$ if $\ce_{n+1}\ho F$ is the trivial functor of $n+1$ variables. Here by $\ho F$ we mean $F$ if $\catname{D}$ is a $1$-category, and the composition $\catname{C}\xrightarrow{F} \catname{D} \to \ho\catname{D}$ if $\catname{D}$ is a stable $\infty$-category.
\end{defn}
It follows from Corollary~\ref{cor:implies} that if $F$ is polynomial of degree $n$ then $F$ is polynomial of degree $m$ for all $m\ge n$. We say that $F$ is polynomial if it is polynomial of some degree. The next proposition will help establish a connection between polynomial functors and excisive functors in the sense of Goodwillie.
\begin{prop}\label{prop:polynomials}
Let $\catname{C}$ be a pointed $1$-category with finite coproducts. Let $\catname{D}$ be either an idempotent complete stable $\infty$-category or an idempotent complete additive $1$-category. Let $F\colon \catname{C}\to \catname{D}$ be a functor and let $\chi\colon \power(n+1)\to \catname{C}$ be a split strongly cocartesian cubical diagram. Then $F\circ \chi$ is always a split cubical diagram. If $F$ is polynomial of degree $n$, then $F\circ \chi$ is cocartesian. Conversely, if $F\circ \chi$ is cocartesian for all split strongly cocartesian $n+1$-dimensional diagrams $\chi$, then $F$ is polynomial of degree $n$.
\end{prop}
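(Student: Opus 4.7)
The plan is to combine the cross-effect decomposition from Lemma~\ref{lemma:split} with the explicit form of split strongly cocartesian cubes given in Example~\ref{example:splitstrong}, and then to invoke the detection results of Section~\ref{section:cubical} to transfer conclusions between an $\infty$-category and its homotopy category.

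First I would reduce to the case in which $\catname{D}$ is an idempotent complete additive $1$-category. If $\catname{D}$ is an idempotent complete stable $\infty$-category, then $\ho\catname{D}$ is additive and idempotent complete, the polynomial condition of Definition~\ref{def: polynomial} is by construction a statement about $\ho F$, and Lemma~\ref{lemma:lift} guarantees that a cubical diagram in $\catname{D}$ is split (respectively split cocartesian) if and only if its image in $\ho\catname{D}$ is. Hence it suffices to argue in the $1$-categorical setting.

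Next, by Example~\ref{example:splitstrong} one may take $\chi(U) = X_0 \vee \bigvee_{i\in U} X_i$ with the evident wedge-summand inclusions as structure maps. Applying $F$ and Lemma~\ref{lemma:split} to each wedge yields a canonical decomposition
\[
F\chi(U) \;\cong\; \bigoplus_{V\subset U}\bigl(\ce_V F \,\oplus\, \ce_{\{0\}\cup V}F\bigr),
\]
where each cross-effect summand depends only on the $X_i$ indexed by its variable set. The ``in particular'' clause of Lemma~\ref{lemma:split} then says that for $U\subset U'$ the map $F\chi(U)\to F\chi(U')$ carries the summand indexed by $V$ identically onto the summand indexed by the same $V$ on the right. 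Comparing with Example~\ref{example:split}, this exhibits $F\circ\chi$ as the split cube associated with the functor $V\mapsto Y(V):=\ce_V F\oplus \ce_{\{0\}\cup V}F$ on $\ob(\power(n{+}1))$, which proves the first assertion.

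For the remaining equivalence, Remark~\ref{rem:split} says a split cube is cocartesian precisely when its underlying splitting functor vanishes at the top index, which here reads
\[
\ce_{n+1}F(X_1,\ldots,X_{n+1})\,\oplus\, \ce_{n+2}F(X_0,\ldots,X_{n+1}) \;=\; 0
\]
for all choices of $X_0,\ldots,X_{n+1}$. If $F$ is polynomial of degree $n$ then $\ce_{n+1}F=0$ by definition and $\ce_{n+2}F=0$ by Corollary~\ref{cor:implies}, so both summands vanish. Conversely, vanishing of a direct sum in an additive category forces each summand to vanish; applied to the first summand, for arbitrary $X_1,\ldots,X_{n+1}$, this is exactly the polynomial-of-degree-$n$ condition. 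The one point requiring care is verifying that the cross-effect splitting is compatible with the structure maps of the cube, for which the naturality clause of Lemma~\ref{lemma:split} is exactly what one needs; the $\infty$-categorical reduction via Lemma~\ref{lemma:lift} is the other place where one must be slightly vigilant.
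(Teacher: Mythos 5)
Your proof is correct and follows essentially the same route as the paper's: the same cross-effect decomposition of $F\chi(U)$ via Lemma~\ref{lemma:split} and Example~\ref{example:splitstrong}, the same identification of the splitting functor $V\mapsto \ce_V F\oplus\ce_{\{0\}\cup V}F$ with vanishing at the top index detected via Remark~\ref{rem:split} and Corollary~\ref{cor:implies}, and the same passage between $\catname{D}$ and $\ho\catname{D}$ via Lemma~\ref{lemma:lift}. The only cosmetic difference is that you reduce to the additive $1$-categorical case at the outset (noting, as one must in the converse direction, that splitness of $F\circ\chi$ is what lets cocartesianness transfer to the homotopy category), whereas the paper proves the $1$-categorical case first and then lifts; the content is identical.
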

\begin{proof}
Let us first suppose that $\catname{D}$ is an additive $1$-category. We saw in Example~\ref{example:splitstrong} that there exist objects $X_0, X_1, \ldots X_{n+1}$ such that $\chi$ is equivalent to the diagram given by the formula
\[
\chi(U)=X_0\vee \bigvee_{i\in U} X_i.
\]
It follows that there are isomorphisms, by Lemma~\ref{lemma:split}, of the following form: suppose $U=\{i_1, \ldots, i_s\}\subset\underline{n+1}$
\[
F\circ \chi(U)\xrightarrow{\cong} \bigoplus_{V\subset U\cup \{0\}} \ce_V F(X_0, X_{i_1}, \ldots, X_{i_s}).
\]
Furthermore, by the second part of the same lemma, the maps in the cube $F\circ \chi$ are as follows: for $V\subset U\subset U'$ the summands $\ce_V F(X_0, X_{i_1}, \ldots, X_{i_s})$ and $\ce_{V\cup \{0\}} F(X_0, X_{i_1}, \ldots, X_{i_s})$ of $F\circ \chi(U)$ are mapped isomorphically to the summands  $\ce_VF(\ldots)$ and $\ce_{V\cup \{0\}} F(\ldots)$ of $F\circ \chi(U')$. Here we assumed that $U=\{i_1, \ldots, i_s\}$ and we avoided naming the elements of $U'$.

It follows that $F\circ \chi$ is isomorphic to the left Kan extension of the functor $\Theta\colon \ob(\power(n+1))\to \catname{D}$ that is defined by the formula
\[
\Theta(U)= \ce_UF(X_{i_1}, \ldots, X_{i_s})\oplus \ce_{U\cup \{0\}}F(X_0, X_{i_1}, \ldots, X_{i_s}).
\]
Compare with Example~\ref{example:split}. Note that since we assume that $F$ takes values in a $1$-category, it is enough that we have verified that $F\circ \chi$ is isomorphic to the left Kan extension of $\Theta$ on the level of objects and morphisms. We do not need to worry about higher simplices. It follows that $F\circ \chi$ is always a split cube, regardless of whether $F$ is polynomial.

Suppose $F$ is polynomial of degree $n$. Then we have the isomorphism
\[
\Theta(\underline{n+1})=\ce_{n+1}F(X_{1}, \ldots, X_{n+1})\oplus \ce_{n+2}F(X_{0}, \ldots, X_{n+1})\cong 0,
\]
and it follows that $F\circ \chi$ is a left Kan extension of the functor 
\[
\Theta|_{\power(n+1)_{\le n}}\colon \power(n+1)_{\le n}\to \catname{D}.
\] 
Thus $F\circ \chi$ is split cocartesian (see Remark~\ref{rem:split}). Conversely, if $F\circ \chi$ is always cocartesian, then it implies that $\chi_{n+1}F(X_{1}, \ldots, X_{n+1})=0$ for all $X_1, \ldots, X_{n+1}$, which implies that $F$ is polynomial of degree $n$.

Now suppose that $\catname{D}$ is an idempotent complete stable $\infty$-category.  Then $\ho \catname{D}$ is an idempotent complete additive $1$-category. If $\chi$ is a split strongly cocartesian cubical diagram in $\catname{C}$ then the image of $F\circ \chi$ in $\ho\catname{D}$ is a split diagram by the first part of the proof, and therefore $F\circ \chi$ is a split diagram by Lemma~\ref{lemma:lift}. If $F$ is polynomial of degree $n$, then by definition $\ho F$ is polynomial, so the image of $F\circ \chi$ in $\ho\catname{D}$ is split strongly cocartesian, again by the first part of the proof, and therefore $F\circ \chi$ is split strongly cocartesian, again by Lemma~\ref{lemma:lift}. Finally, if $\ho F\circ \chi$ is cocartesian for all split strongly cocartesian $\chi$ then we know that $\ho F \circ \chi$ is split cocartesian for all such $\chi$, so $\ho F$ is polynomial, and thus $F$ is polynomial.
\end{proof}
The next lemma says that if a functor preserves finite coproducts then composition with this functor preserves polynomial functors.
\begin{lem}\label{lemma:additive preserves polynomial}
Suppose $F\colon \catname{C}\to \catname{D}$ is polynomial of degree $n$, and $L\colon \catname{D}\to \catname{D}_1$ is a functor that preserves finite coproducts. Then $L\circ F$ is polynomial of degree $n$.
\end{lem}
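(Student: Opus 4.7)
The plan is to use the cubical characterization of polynomial functors supplied by Proposition~\ref{prop:polynomials}, which removes the need to manipulate cross-effects directly. First I would pick an arbitrary split strongly cocartesian cubical diagram $\chi\colon \power(n+1)\to \catname{C}$. Since $F$ is polynomial of degree $n$, the forward direction of Proposition~\ref{prop:polynomials} tells us that $F\circ \chi$ is a split cocartesian cubical diagram in $\catname{D}$.

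Next I would apply Lemma~\ref{lemma:additive preserves split}, which says that any functor between categories with finite coproducts that preserves finite coproducts automatically preserves split cocartesian cubical diagrams. Applying this to $L$, we obtain that $(L\circ F)\circ \chi = L\circ(F\circ \chi)$ is a split cocartesian cubical diagram in $\catname{D}_1$; in particular it is cocartesian. As $\chi$ was an arbitrary split strongly cocartesian $(n+1)$-cube in $\catname{C}$, the converse direction of Proposition~\ref{prop:polynomials} now forces $L\circ F$ to be polynomial of degree $n$.

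I do not foresee any real obstacle here; the proof is a formal chain of two previously established results. The only small subtlety is bookkeeping: Proposition~\ref{prop:polynomials} and Lemma~\ref{lemma:additive preserves split} accommodate both the stable $\infty$-categorical and additive $1$-categorical cases for the target, and one should note that $\ho L\colon \ho\catname{D}\to \ho\catname{D}_1$ continues to preserve finite coproducts when $L$ does, so the argument goes through uniformly regardless of which flavor of target category one is in.
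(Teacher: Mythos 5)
Your proof is correct and follows the same route as the paper: apply Proposition~\ref{prop:polynomials} to conclude that $F\circ\chi$ is split cocartesian, use Lemma~\ref{lemma:additive preserves split} to see that $L$ preserves this, and invoke the converse direction of Proposition~\ref{prop:polynomials}. The extra remark about $\ho L$ is harmless but unnecessary, since Lemma~\ref{lemma:additive preserves split} applies to $L$ directly.
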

\begin{proof}
By Proposition~\ref{prop:polynomials}, $F$ is polynomial of degree $n$ if and only if for every strongly cocartesian split cubical diagram $\chi\colon \power(n+1)\to \catname{C}$, $F\circ \chi$ is cocartesian split. By Lemma~\ref{lemma:additive preserves split}, it follows that $L\circ F\circ \chi$ is cocartesian split for all strongly cocartesian split $\chi$, and therefore $L\circ F$ is polynomial of degree $n$.
\end{proof}
\begin{exmp}\label{example:heart}
Let $\catname{D}$ be a stable $\infty$-category with a $t$-structure. Suppose $\catname{C}$ is, as usual, a pointed category with finite coproducts, and $F\colon \catname{C}\to \catname{D}^{\heartsuit}$ is a polynomial functor of degree $n$. Then the functor $\catname{C} \to \catname{D}$ obtained by composing $F$ with the inclusion functor $\catname{D}^{\heartsuit}\to \catname{D}$ is polynomial of degree $n$, since the inclusion functor preserves coproducts.
\end{exmp}

\section{Excisive functors}\label{section:excisive}
Excisive functors are defined similarly to polynomial functors, but with a stricter hypothesis. They turned out to be very useful in the study of homotopical functors. In this section we review excisive functors, and establish a precise relationship between polynomial functors on the category of (finitely generated free) groups, and excisive functors on the category of simplicial groups.

Let us recall a condition (\cite[Definition 6.1.1.6]{LurieHA}) on a category $\catname{D}$ that helps ensure that Goodwillie calculus works as it should for functors with values in $\catname{D}$
\begin{defn}
An $\infty$-category $\catname{D}$ is {\it differentiable} if it has small colimits and finite limits, and sequential colimits in $\catname{D}$ commute with finite limits. 
\end{defn}
\begin{rem}
Let us remark that if $\catname{D}$ is any one of the following:
\begin{itemize}
\item a stable $\infty$-category with countable coproducts
\item an $\infty$-topos
\item a compactly generated $\infty$-category
\end{itemize}
then $\catname{D}$ is differentiable~\cite[Examples 6.1.1.7-9]{LurieHA}.
\end{rem}
Let us recall what excisive functors are.
\begin{defn}\label{def: excisive}
Suppose $\catname{C}$ is a category with small colimits and $\catname{D}$ differentiable. We say that a functor $F\colon \catname{C}\to \catname{D}$ is $n$-excisive if (a) $F$ preserves filtered colimits, and (b) for every strongly cocartesian diagram $\chi\colon \power(n+1)\to \catname{C}$, the diagram $F\circ \chi\colon \power(n+1)\to \catname{D}$ is cartesian.
We say that a functor is excisive if it is $n$-excisive for some $n$.
\end{defn}
\begin{rems}
The definition of an excisive functor is due to Goodwillie~\cite{Goodwillie2003}, except that Goodwillie did not include the requirement that it preserves filtered colimits. But we are only interested in functors that preserve filtered colimits, so we made it part of the definition. Functors that preserve filtered colimits are called {\it finitary} in~\cite{Goodwillie2003}.

If $\catname{D}$ is a stable $\infty$-category with small colimits then it automatically has finite limits, and a cubical diagram in $\catname{D}$ is cocartesian if and only if it is cartesian~\cite[Proposition 1.2.4.13]{LurieHA}. Thus for functors with values in a stable $\infty$-category we may equivalently state condition (b) as that $F$ takes strongly cocartesian $n+1$-dimensional diagrams to cocartesian diagrams.
\end{rems}
We will mostly be interested in excisive functors with values in a stable $\infty$-category. It turns out that such functors preserve all sifted colimits.
\begin{lem}\label{lemma:preserve sifted}
A functor with values in a stable $\infty$-category that is excisive in the sense of Definition~\ref{def: excisive} preserves sifted colimits.
\end{lem}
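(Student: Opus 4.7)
The plan is to invoke the characterization of sifted colimits recalled in Section~\ref{section:sifted}: a functor with small colimits in its target preserves sifted colimits if and only if it preserves both filtered colimits and geometric realizations of simplicial objects. Preservation of filtered colimits is built into Definition~\ref{def: excisive}, so it suffices to show that any $n$-excisive $F$ preserves geometric realizations of simplicial objects in $\catname{C}$.

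I would accomplish this via the Goodwillie tower. Since $F$ is $n$-excisive, the $n$-th Taylor approximation $P_n F$ agrees with $F$, so $F$ fits into a finite tower
\[
F \simeq P_n F \to P_{n-1} F \to \cdots \to P_0 F \simeq F(*)
\]
whose successive fibers $D_k F := \fib(P_k F \to P_{k-1} F)$ are $k$-homogeneous functors to the stable category $\catname{D}$. By the classification of homogeneous functors to a stable target \cite[Section 6.1.4]{LurieHA}, each $D_k F$ has the form $X \mapsto (C_k(X, \ldots, X))_{h\Sigma_k}$ for a symmetric multilinear functor $C_k\colon\catname{C}^k\to\catname{D}$.

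Next I would verify that each such layer preserves sifted colimits. A multilinear functor to a stable category is linear (i.e.\ $1$-excisive) in each variable, and a linear functor to a stable category preserves all colimits. The diagonal $\catname{C}\to\catname{C}^k$ preserves sifted colimits, because sifted colimits commute with finite products. Homotopy orbits by the finite group $\Sigma_k$ commute with all colimits, being themselves a colimit construction. Composing these three observations, $D_k F$ preserves sifted colimits.

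Finally, since $\catname{D}$ is stable, finite limits in $\catname{D}$ agree with finite colimits, so sifted colimits in $\catname{D}$ commute with finite limits. This lets one induct up the tower: $P_0 F$ is constant and preserves sifted colimits because sifted categories are non-empty and connected; and if $P_{k-1} F$ preserves sifted colimits, then so does $P_k F$, since it sits in a fiber sequence $D_k F \to P_k F \to P_{k-1} F$ whose outer terms both preserve sifted colimits. After $n$ steps we conclude that $F \simeq P_n F$ preserves sifted colimits. The main obstacle is that we invoke, rather than redevelop, the Goodwillie tower and the classification of homogeneous functors in the required generality; but these are available in \cite{LurieHA} for any differentiable target, and in particular for the stable $\catname{D}$ at hand.
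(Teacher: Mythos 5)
Your argument is correct in substance and is essentially the same route the paper takes: after the identical reduction via \cite[Corollary 5.5.8.17]{LurieHTT} to filtered colimits plus geometric realizations, the paper simply cites \cite[Proposition 2.15]{Barwick-Glasman-Mathew-Nikolaus}, and the proof it invokes there is exactly the Goodwillie-tower/homogeneous-layer argument you spell out (the paper itself notes that the key ingredient is the classification of homogeneous functors as symmetric multilinear functors with $\Sigma_k$-homotopy orbits, valid for a non-stable domain). One caveat: your blanket claim that ``a linear functor to a stable category preserves all colimits'' is false as stated --- for instance $X\mapsto \prod_{n\in\N}X$ on spectra is exact but does not preserve infinite coproducts --- so you must also observe that the multilinear layers are finitary; this does hold here because the cross-effects of the finitary functor $F$ are finite total fibers of values of $F$, and finite limits commute with filtered colimits in the stable target, after which ``finite colimits plus filtered colimits'' gives what you need. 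With that one-line repair your layerwise induction up the finite tower goes through, and it has the mild advantage of bypassing the paper's separate reduction to finite geometric realizations in favor of quoting the classification of homogeneous functors directly.
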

\begin{proof}
Let $F\colon \catname{C}\to \catname{D}$ be an excisive functor. By~\cite[Corollary 5.5.8.17]{LurieHTT}, it is enough to prove that $F$ preserves filtered colimits and geometric realisations. $F$ preserves filtered colimits by definition. Therefore, it is enough to prove that $F$ preserves geometric realisations. In fact, it is enough to know that $F$ preserves \emph{finite} geometric realisations, because every simplicial object is equivalent to a filtered colimit of skeletal simplicial objects. 

Preservation of finite geometric realisations in known for functors $F\colon \catname{C}\to \catname{D}$ (where $\catname{D}$ is stable) that take strongly cocartesian $n$-dimensional cubes to cocartesian $n$-dimensional cubes. This was proved by Barwick-Glasman-Mathew-Nikolaus as part of~\cite[Proposition 2.15]{Barwick-Glasman-Mathew-Nikolaus}. Barwick-Glasman-Mathew-Nikolaus only state the claim in the case when $\catname{C}$ and $\catname{D}$ are both stable $\infty$-categories, but the relevant part of the proof does not require $\catname{C}$ to be stable. The main ingredient of the proof is Goodwillie's equivalence between homogeneous functors and symmetric multi-linear functors, and this equivalence holds for functors from an arbitrary $\infty$-category to a stable $\infty$-category. See~\cite{LurieHA} Section 6.1.4, in particular Proposition 6.1.4.14.
\end{proof}
\begin{notns}
Let $\poly_n(\catname{C}, \catname{D})\subset \fun(\catname{C}, \catname{D})$ denote the category of polynomial functors of degree $n$, and $\exc_n(\catname{C}, \catname{D})$ the category of $n$-excisive functors. 
\end{notns}
\begin{rem}\label{rem:excisive-sifted}
It follows from Lemma~\ref{lemma:preserve sifted} that if $\catname{C}$ has sifted colimits and $\catname{D}$ is stable, then $\exc_n(\catname{C}, \catname{D})$ us a (full) subcategory of $\fun_{\Sigma}(\catname{C}, \catname{D})$. Notice furthermore that $\fun_{\Sigma}(\catname{C}, \catname{D})$ is a full subcategory of $\fun_{\operatorname{ind}}(\catname{C}, \catname{D})$ of functors that preserve filtered colimits. By definition, Goodwillie's functor $P_n\colon \fun_{\operatorname{ind}}(\catname{C}, \catname{D})\to \exc_n(\catname{C}, \catname{D})$ is left adjoint to inclusion. Here we have restricted the domain of $P_n$ to consist of filtered-colimit-preserving functors because we have defined $\exc_n(\catname{C}, \catname{D})$ to consists just of filtered-colimit-preserving excisive functors (the assumption that $\catname{D}$ is differentiable guarantees that if $F$ preserves filtered colimits then so does $P_nF$). It follows that the restriction of $P_n$ to $\fun_{\Sigma}(\catname{C}, \catname{D})$, which we will also denote simply by $P_n$, is also a left adjoint to the inclusion $\exc_n(\catname{C}, \catname{D}) \hookrightarrow \fun_{\Sigma}(\catname{C}, \catname{D})$.
\end{rem}
Now we are ready to compare polynomial functors on $\fr$ with excisive functors on $\sgr$ (or, equivalently, $\connected$).
\begin{prop}\label{prop:poly to exc}
Let $\catname{D}$ be a stable $\infty$-category with all small colimits. For each $n\ge 0$, the equivalence of categories (seen in Diagram~\eqref{diagram:functors})
\[
\fun_{\Sigma}(\sgr, \catname{D}) \xrightarrow{\simeq} \fun(\fr, \catname{D})
\]
restricts to an equivalence
\[
\exc_n(\sgr, \catname{D}) \xrightarrow{\simeq} \poly_n(\fr, \catname{D}).
\]
\end{prop}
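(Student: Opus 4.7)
The proof proceeds via two directions. The first direction, that restriction along $\const\colon \fr \to \sgr$ sends $\exc_n(\sgr, \catname{D})$ into $\poly_n(\fr, \catname{D})$, is essentially formal. By Examples~\ref{example: preserves} the functor $\const$ preserves finite coproducts, so by Lemma~\ref{lemma:additive preserves split} it sends split strongly cocartesian cubes in $\fr$ to split strongly cocartesian cubes in $\sgr$. If $F$ is $n$-excisive then it sends strongly cocartesian $(n+1)$-cubes to cartesian cubes, which are also cocartesian in the stable $\infty$-category $\catname{D}$. Hence $F\circ\const$ sends split strongly cocartesian $(n+1)$-cubes to cocartesian cubes, so it is polynomial of degree $n$ by Proposition~\ref{prop:polynomials}.

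Theorem~\ref{theorem: fr to connected} gives an equivalence $\rho\colon \fun_\Sigma(\sgr, \catname{D}) \xrightarrow{\simeq} \fun(\fr, \catname{D})$, and Lemma~\ref{lemma:preserve sifted} places $\exc_n(\sgr, \catname{D})$ inside $\fun_\Sigma(\sgr, \catname{D})$. The functor $\exc_n(\sgr, \catname{D}) \to \poly_n(\fr, \catname{D})$ constructed above is therefore automatically fully faithful, as a restriction of an equivalence to full subcategories, and essential surjectivity is all that remains. Given $G \in \poly_n(\fr, \catname{D})$, let $\widetilde G \in \fun_\Sigma(\sgr, \catname{D})$ be the corresponding sifted-colimit-preserving extension; the goal is to prove $\widetilde G$ is $n$-excisive. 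Since sifted-colimit preservation implies filtered-colimit preservation, it suffices to show $\widetilde G$ sends every strongly cocartesian $(n+1)$-cube $\chi\colon \power(n+1)\to \sgr$ to a cocartesian cube in $\catname{D}$.

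The strategy is to present $\chi$ as a sifted colimit $\chi \simeq \colim_\alpha \chi_\alpha$ of split strongly cocartesian cubes $\chi_\alpha\colon \power(n+1) \to \fr$. Granted such a presentation, the conclusion follows quickly: $\widetilde G$ preserves sifted colimits and agrees with $G$ on $\fr$, so $\widetilde G \circ \chi \simeq \colim_\alpha G \circ \chi_\alpha$; each $G\circ\chi_\alpha$ is cocartesian by Proposition~\ref{prop:polynomials}; and a sifted colimit of cocartesian cubes in the stable $\infty$-category $\catname{D}$ is cocartesian, because a pushout, being a finite colimit, commutes with all colimits in the target.

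The main obstacle is constructing this sifted-colimit presentation of $\chi$. The key structural inputs are Lemma~\ref{lemma:equivalences}, which identifies $\sgr$ with $\presh_\Sigma(\fr)$, and the characterization of strongly cocartesian cubes as left Kan extensions of their $1$-skeleta. I expect the presentation to be built via two nested resolutions. First, working in the under-category of $\chi(\emptyset)$, one uses a bar construction for the free-forgetful adjunction between $\sgr$ and $\sgr$-under-$\chi(\emptyset)$ to simplicially resolve each of the $n+1$ morphisms $\chi(\emptyset) \to \chi(\{i\})$ by wedge-summand inclusions of the form $\chi(\emptyset) \hookrightarrow \chi(\emptyset) \vee X_i^k$, assembled coherently in the product of the $n+1$ under-categories; left Kan extending level-wise to $\power(n+1)$ then produces a simplicial object of split strongly cocartesian cubes with base $\chi(\emptyset)$. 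Second, one resolves $\chi(\emptyset)$ itself as a sifted colimit of objects of $\fr$. The delicate point is the interchange of sifted colimits with the left Kan extension $\power(n+1)_{\le 1} \to \power(n+1)$: this reduces to sifted colimits commuting with pushouts in $\sgr$, which holds because $\sgr$ is presentable. The combinatorial heart of the argument, coherently assembling the $n+1$ bar resolutions, is reminiscent of the methods of \cite[Section 2]{Barwick-Glasman-Mathew-Nikolaus}, as the remark following Theorem~\ref{thm:main-general} suggests.
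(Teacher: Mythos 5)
Your easy direction and your overall reduction coincide with the paper's: the paper also proves the statement by showing that a sifted-colimit-preserving extension of a degree~$n$ polynomial functor takes strongly cocartesian $(n+1)$-cubes in $\sgr$ to cocartesian cubes, by writing such a cube as a sifted colimit of split strongly cocartesian cubes of free groups and invoking Proposition~\ref{prop:polynomials} plus commutation of colimits. But the step you yourself flag as the main obstacle --- actually producing that presentation --- is where your text stops being a proof, and the sketch you give has concrete problems. The bar resolution against the free--forgetful adjunction produces split cubes whose vertices have the form $\chi(\emptyset)*\free{Y^k_i}$ with $\free{Y^k_i}$ a free \emph{simplicial} group on a (typically infinite) simplicial set; resolving $\chi(\emptyset)$ alone does not bring these vertices into $\fr$, so the added summands must themselves be resolved (degreewise, and then by filtered colimits, since free groups on infinite sets are not objects of $\fr$), and all of these resolutions have to be assembled into a single coherent sifted diagram of split cubes with all vertices in $\fr$. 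That assembly is the genuine delicate point; the interchange you single out --- left Kan extension along $\power(n+1)_{\le 1}\subset\power(n+1)$ versus sifted colimits --- is automatic because colimits commute with colimits and needs no presentability. Without the coherent construction, the claim ``$\chi\simeq\colim_\alpha\chi_\alpha$ with each $\chi_\alpha$ split strongly cocartesian in $\fr$'' is an assertion, not an argument.

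For comparison, the paper sidesteps the $\infty$-categorical coherence problem by a model-categorical rectification. First it shows, by a filtered-colimit argument your plan would also need, that $G$ is polynomial not just on $\fr$ but on the category $\allfr$ of all free groups, because split strongly cocartesian cubes in $\allfr$ are filtered colimits of ones in $\fr$. Then, using the projective model structure on simplicial groups, any strongly cocartesian $(n+1)$-cube in $\sgr$ is replaced by a strict cube whose initial maps are almost-free cofibrations, hence a cube which in each simplicial degree is a split strongly cocartesian cube in the $1$-category $\allfr$ (each map being an inclusion $F_0\hookrightarrow F_0*F_1$ of a summand of a free product of free groups). Evaluating $F$ degreewise then gives cocartesian cubes in $\catname{D}$, and preservation of geometric realisations (sifted colimits) finishes the proof --- the same endgame as yours. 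So your outline is on the right track, but the missing construction is the substance of the proof, and as proposed it requires both the extension from $\fr$ to $\allfr$ and an honest solution to the coherence of the nested resolutions, for which the paper's almost-free replacement is the efficient substitute.
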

\begin{proof}
Let $F\in \fun_{\Sigma}(\sgr, \catname{D})$ be a functor from $\sgr$ to $\catname{D}$ that preserves sifted colimits. To prove the proposition it is enough to show that the restriction of $F$ to $\fr$ is polynomial of degree $n$ if and only if $F$ is $n$-excisive.

First let us prove the ``if'' direction, which is easier. Suppose $F\colon \sgr \to \catname{D}$ is $n$-excisive. We want to show that the restriction of $F$ to $\fr$ is polynomial of degree $n$. By Proposition~\ref{prop:polynomials}, this is equivalent to showing that $F$ takes split strongly cocartesian $n+1$-dimensional diagrams in $\fr$ to cocartesian diagrams in $\catname{D}$. By one of the examples in~\ref{example: preserves}, a split strongly cocartesian diagram in $\fr$ is also strongly cocartesian in $\sgr$. Since $F$ is $n$-excisive on $\sgr$, the desired claim follows.

Now suppose that $F\in \fun_{\Sigma}(\sgr, \catname{D}) $ is a sifted colimits preserving functor that restricts to a polynomial functor of degree $n$ on $\fr$. We need to prove that $F$ is $n$-excisive. First of all, we claim that $F$ restricts to a polynomial functor on the category $\allfr$ of all free groups (as opposed to just the finitely generated ones). To prove this claim we have to show that $F$ takes split strongly cocartesian $n+1$-dimensional cubical diagrams in $\allfr$ to cocartesian diagrams in $\catname{D}$. Let $\chi\colon \power(n+1)\to \allfr$ be a split strongly cocartesian diagram. It is easy to show that $\chi$ is equivalent to a filtered colimit of split strongly cocartesian diagrams in $\fr$. Let us note that filtered colimits in the $1$-category $\allfr$ are also filtered colimits in the $\infty$-category $\sgr$. We assume that $F$ preserves sifted colimits, so in particular preserves filtered colimits. It follows that $F\circ \chi$ is equivalent to a filtered colimit of cocartesian diagrams, and thus it is itself cocartesian.

Finally we have to show that if $\chi\colon \power(n+1)\to\sgr$ is a strongly cocartesian diagram, then $F\circ \chi$ is cocartesian. For this we will use that $\sgr$ is the underlying $\infty$-category of a Quillen model structure on the $1$-category of simplicial groups. Let us recall a few facts about the Quillen model structure on $\sgr$ (now considered as a $1$-category). The model structure on $\sgr$ is cofibrantly generated, where generating cofibrations are {\it almost-free maps}. The definition of an almost free map can be found in~\cite[Page 257]{Goerss-Jardine}. For our purposes, the relevant property of almost free maps is the following: Suppose $\theta_\bullet \colon G_\bullet\to H_\bullet$ is an almost free map. Then in each simplicial degree $n$, $\theta_n$ is isomorphic to an inclusion of a summand into a free product with a free group. The small-object argument can be used to replace every morphism $ G_\bullet\to H_\bullet$ in the $1$-category $\sgr$ with an equivalent almost free  map. Furtheremore, one can replace $G_\bullet$ with an equivalent almost free group. Thus any morphism in $\sgr$ is equivalent to a cofibration which in each simplicial degree is an inclusion of the form $F_0\hookrightarrow F_0 * F_1$, where $F_0$ and $F_1$ are free groups.

It follows that a strongly cocartesian diagram in the $\infty$-category $\sgr$ is equivalent to a strongly cocartesian diagram in the $1$-category of simplicial groups, where the initial maps are cofibrations that satisfy the condition above. Therefore we may assume that a strongly cocartesian diagram $\chi\colon \power(n+1)\to\sgr$ is a diagram of simplicial groups that in each simplicial degree restricts to a split strongly cocartesian diagram in the $1$-category $\allfr$. We have seen that $F$ restricts to a polynomial functor on $\allfr$, so evaluating $F$ in each simplicial degree of $\chi$ gives a cocartesian diagram in $\catname{D}$. Next, we assume that $F$ preserves sifted colimits. It follows that $F\circ \chi$ can be calculated as the colimit (over $\Delta^{\op}$) of the restriction of $F$ to each simplicial degree of $\chi$. Thus $F\circ \chi$ is a colimit of cocartesian diagrams and is therefore itself a cocartesian diagram.
\end{proof}
\begin{rem}
Since the inverse to the equivalence of categories
\[
\fun_{\Sigma}(\sgr, \catname{D}) \xrightarrow{\simeq} \fun(\fr, \catname{D})
\]
is given by left Kan extension, it follows that the inverse of the restricted equivalence
\[
\exc_n(\sgr, \catname{D}) \xrightarrow{\simeq} \poly_n(\fr, \catname{D}).
\]
is also given by left Kan extension. In other words, if $F\colon \fr \to \catname{D}$ is a polynomial functor, then its left Kan extension to $\sgr$ is an excisive functor $\lkan F \colon \sgr\to \catname{D}$.
\end{rem}
Since the classifying space functor $\classifying\colon \sgr \to \connected$ is an equivalence, the following is a corollary
\begin{cor}\label{cor:connected_to_fr}
Let $\catname{D}$ be a stable $\infty$-category with small colimits. Consider the classifying space functor $\classifying\colon \fr\to \connected$. For each $n\ge 0$ it induces an equivalence of categories
\[
\exc_n(\connected, \catname{D}) \xrightarrow{\simeq} \poly_n(\fr, \catname{D}).
\]
\end{cor}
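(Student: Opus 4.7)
The plan is to deduce this from Proposition~\ref{prop:poly to exc} by factoring the classifying space functor $\classifying\colon \fr \to \connected$ through simplicial groups. Recall from diagram~\eqref{diagram:categories} that $\classifying\colon \fr \to \connected$ is naturally equivalent to the composition $\fr \xrightarrow{\const} \sgr \xrightarrow{\classifying} \connected$. Restriction along this composition therefore factors as
\[
\fun(\connected, \catname{D}) \xrightarrow{\rho_{\classifying}} \fun(\sgr, \catname{D}) \xrightarrow{\rho_{\const}} \fun(\fr, \catname{D}).
\]

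The first step is to observe that the left arrow is an equivalence of $\infty$-categories. This is immediate from Lemma~\ref{lemma:equivalences}, which records that $\classifying\colon \sgr \to \connected$ is itself an equivalence of $\infty$-categories (Kan's theorem), together with the general fact that restriction along an equivalence of $\infty$-categories induces an equivalence on functor categories.

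The second step is to check that $\rho_{\classifying}\colon \fun(\connected, \catname{D}) \to \fun(\sgr, \catname{D})$ restricts to an equivalence between the subcategories of $n$-excisive functors. This is essentially tautological: an equivalence of $\infty$-categories preserves and reflects strongly cocartesian cubes and filtered colimits on the source, and $\catname{D}$ is unchanged, so the property of sending strongly cocartesian $(n{+}1)$-cubes to cartesian cubes and of preserving filtered colimits transports back and forth between $F\colon\connected\to\catname{D}$ and its restriction to $\sgr$. This yields an equivalence $\exc_n(\connected, \catname{D}) \xrightarrow{\simeq} \exc_n(\sgr, \catname{D})$.

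Finally, composing this equivalence with the equivalence $\exc_n(\sgr, \catname{D}) \xrightarrow{\simeq} \poly_n(\fr, \catname{D})$ from Proposition~\ref{prop:poly to exc} gives the desired equivalence $\exc_n(\connected, \catname{D}) \xrightarrow{\simeq} \poly_n(\fr, \catname{D})$. There is no real obstacle: all the substantive work has already been done in the preceding proposition, and this corollary is merely a bookkeeping consequence of the equivalence $\sgr \simeq \connected$.
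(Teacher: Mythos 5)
Your argument is correct and is essentially the same as the paper's: the paper states the corollary immediately after Proposition~\ref{prop:poly to exc} with the one-line justification that $\classifying\colon \sgr\to\connected$ is an equivalence, which is exactly the observation you spell out in more detail via the factorization through $\sgr$.
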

\section{From connected spaces to all spaces}\label{section:connected to all}
Our next step is to compare the functor category $\exc_n(\connected, \catname{D})$ with $\exc_n(\spaces,\catname{D})$. The restriction functor \[\fun(\spaces, \catname{D}) \to \fun(\connected, \catname{D})\] is not an equivalence. However, we will see that it restricts to an equivalence from $\exc_n(\spaces, \catname{D})$ to $\exc_n(\connected, \catname{D})$. Moreover, the right Kan extension functor $\rkan\colon\fun(\connected, \catname{D})\to \fun(\spaces, \catname{D})$ restricts to an equivalence between categories of excisive functors, which is an inverse to the restriction functor.

To show this, let us begin by noting that the inclusion functor $\connected \hookrightarrow \spaces$ preserves colimits, and therefore it preserves strongly cocartesian diagrams. Thus restriction along this inclusion really does induce a functor 
\[
\exc_n(\spaces, \catname{D})\to \exc_n(\connected, \catname{D}).
\]
To construct an inverse to the restriction functor we will use Goodwillie's operator $T_n$, which he introduced as an ingredient in the definition of the better known operator $P_n$ --- the universal $n$ excisive approximation. Roughly speaking, $P_n$ is an infinite iterate of $T_n$. 

The definition of $T_n$ involves a strongly cocartesian cubical diagram of a particular type. Let us first recall the construction of this cubical diagram in the classical setting of topological spaces. Suppose $X$ is a topological space and $U$ is a finite set. Recall that $X*U$ denotes the join of $X$ and $U$. The space $X*U$ is homeomorphic to the union of $|U|$ copies of the cone on $X$, glued along $X$. In particular, $X*\emptyset\cong X$, $X*\{1\}\cong CX$ (where $CX$ denotes the cone on $X$) and $X*\{1, 2\}\cong \Sigma X$ (where $\Sigma X$ is the suspension of $X$).

Suppose we let $U$ range over subsets of $\{1, \ldots, n\}$. We obtain a cubical diagram
\[
\chi(U)=X*U.
\]
The diagram $\chi$ has the following properties
\begin{enumerate}
\item $\chi(\emptyset)\cong X$.
\item $\chi(\{i\})\simeq *$
\item $\chi$ is strongly cocartesian.
\end{enumerate}
These properties determine $\chi$ up to equivalence. One can also characterise $\chi$ as follows: $\chi$ is a final object in the $\infty$-category of strongly cocartesian $n$-dimensional cubical diagrams whose initial object is $X$.

It is clear that the construction of a cubical diagram with properties (1)-(3) can be abstracted and extended to other categories. Kuhn showed how the construction can be carried out in a general model category~\cite{Kuhn_2007}, and Lurie generalised it to $\infty$-categories. More precisely, Lurie showed the following
\begin{lem}[\cite{LurieHA}, Construction 6.1.1.18] \label{lemma: abstract join}
Let $\catname{C}$ be a category with finite colimits and a final object $*$. Let $\injfin$ be the category of finite sets and injective functions between them. There is an essentially unique functor $\catname{C}\times \injfin\to \catname{C}$, which we will denote by $(X, U)\mapsto C_U(X)$ with the following properties
\begin{enumerate}
\item $C_{\emptyset}(X)\simeq X$. More precisely $C_{\emptyset}(-)$ is equivalent to the identity functor on $\catname{C}$.
\item $C_{\{i\}}(X)\simeq *$
\item \label{eq:like join} For all finite sets $U$ the following natural map is an equivalence
\[
\underset{S\in \power(U)_{\le 1}}{\colim} C_S(X) \xrightarrow{\simeq} C_U(X).
\]
\end{enumerate}
\end{lem}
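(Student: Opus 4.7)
The plan is to build $C_{-}(-)$ by first specifying it on sets of cardinality $\leq 1$, where conditions (1) and (2) pin it down uniquely, and then extending to all of $\injfin$ via left Kan extension. Condition (3) will then hold by the pointwise formula for the Kan extension, and it will also encode the universal property needed for uniqueness.

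First I would define an auxiliary functor $C_{-}^0\colon \catname{C}\times \injfin^{\leq 1}\to \catname{C}$, where $\injfin^{\leq 1}\subseteq \injfin$ is the full subcategory on sets of cardinality at most one. Condition (1) forces $C^0_\emptyset(-)$ to be the identity on $\catname{C}$, and condition (2) forces $C^0_{\{i\}}(X)=*$. The only nontrivial morphisms in $\injfin^{\leq 1}$ are the unique maps $\emptyset\hookrightarrow\{i\}$, and these must be sent to the essentially unique map $X\to *$. So $C^0$ is determined up to canonical equivalence by (1) and (2).

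Next I would left Kan extend $C^0$ along $\mathrm{id}_{\catname{C}}\times i$, where $i\colon \injfin^{\leq 1}\hookrightarrow \injfin$. For fixed $(X,U)$ the relevant slice category $(\injfin^{\leq 1})_{/U}$ is equivalent to the poset $\power(U)_{\leq 1}$ of subsets of $U$ of size $\leq 1$. Since this poset is finite, the pointwise Kan extension exists as a finite colimit in $\catname{C}$. The resulting functor $C_{-}(-)\colon\catname{C}\times \injfin\to \catname{C}$ restricts back to $C^0$ on sets of cardinality at most one (since left Kan extension along a fully faithful functor is the identity on the source, up to equivalence), so conditions (1) and (2) hold. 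Condition (3) is then exactly the pointwise formula for the Kan extension.

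For uniqueness, any functor $F$ satisfying (1)--(3) restricts on $\catname{C}\times \injfin^{\leq 1}$ to something equivalent to $C^0$ by the argument above, and condition (3) says precisely that $F$ is the pointwise left Kan extension of this restriction. By the universal property of pointwise Kan extensions in $\infty$-categories, $F$ is equivalent to $C_{-}(-)$. The hard part will be converting the ``pointwise equivalence'' encoded in (3) into the coherent universal property needed for uniqueness: a family of colimit equivalences does not automatically assemble into a single comparison natural transformation with $C_{-}(-)$, so one must verify that the equivalence in (3) fits into the unit of the Kan extension adjunction. This coherence is available from the standard $\infty$-categorical theory of left Kan extensions, but it is the delicate point distinguishing this from a $1$-categorical construction, and is essentially what makes ``essentially unique'' in the statement meaningful.
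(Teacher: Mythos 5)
Your proposal is correct and follows essentially the same strategy as Lurie's Construction 6.1.1.18, which is what the paper cites: define the functor on the subcategory of sets of cardinality at most one (where conditions (1) and (2) force the values), then left Kan extend along the inclusion into $\injfin$, using that the slice $(\injfin^{\leq 1})_{/U}$ is equivalent to $\power(U)_{\leq 1}$ so that the pointwise formula yields exactly condition (3), and obtain uniqueness from the universal property of the Kan extension. Your flagged concern about coherence is resolved by observing that the natural map in condition (3) is precisely the counit of the adjunction $\lkan_i\dashv\rho_i$ evaluated at $(X,U)$, so no separate assembly argument is needed.
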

\begin{rems}
The construction $C_U(X)$ is an abstraction of $X*U$. 

In particular, it is clear from the lemma that $C_{\{1,2\}}(X)\simeq *\coprod_X *$ is a suspension of $X$.

It is also clear from the lemma that if we let $U$ range over subsets of $\underline{n}$ then the assignment $U\mapsto C_U(X)$ gives a strongly cocartesian cubical diagram in $\catname{C}$.
\end{rems}
We will need an easy lemma to the effect that $C_U$ preserves certain colimits. 
\begin{lem}\label{lemma:colimits}
Suppose $\catname{C}$ is a category with finite colimits and a final object $*$. Let $\catname{I}$ be a finite category, or more generally a small category such that $\catname{C}$ has $\catname{I}$-shaped colimits. Assume that the following natural map is an equivalence
\[
\underset{\catname{I}}{\colim} * \to*.
\]
Then for any fixed finite set $U$ the functor $C_U(-)\colon \catname{C}\to \catname{C}$ preserves $\catname{I}$-shaped colimits.
\end{lem}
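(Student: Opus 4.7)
The plan is to exploit property~(3) of Lemma~\ref{lemma: abstract join}, which expresses $C_U(X)$ as a finite colimit over the poset $\power(U)_{\le 1}$ of the much simpler functors $C_S$ with $|S|\le 1$. These building blocks are of only two kinds: the identity functor (when $S=\emptyset$) and the constant functor with value $*$ (when $S=\{i\}$). My first step would be to dispatch the case $|U|\le 1$ directly. The identity preserves every colimit, and the constant functor with value $*$ preserves $\catname{I}$-shaped colimits precisely because $\colim_{\catname{I}} * \simeq *$. This is the only place where the standing hypothesis on $\catname{I}$ enters the argument.

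For the main step, given a diagram $X\colon \catname{I}\to \catname{C}$, I would start from property~(3) applied to the object $\colim_{\catname{I}} X$ and produce the chain of equivalences
\[
C_U\!\left(\colim_{i\in\catname{I}} X_i\right) \simeq \colim_{S\in\power(U)_{\le 1}} C_S\!\left(\colim_{i\in\catname{I}} X_i\right) \simeq \colim_{S\in\power(U)_{\le 1}} \colim_{i\in\catname{I}} C_S(X_i),
\]
where the second equivalence is the $|S|\le 1$ case established above, applied termwise. Then I would interchange the two colimits (which is formal, since colimits commute with colimits in any $\infty$-category in which they exist) and recognise the inner $S$-colimit as $C_U(X_i)$ by re-applying property~(3) in the reverse direction, giving $\colim_{i\in\catname{I}} C_U(X_i)$, as required.

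The one bookkeeping point that needs verification is that $S\mapsto C_S$ is functorial in $S\in\injfin$, so that the outer colimit over $\power(U)_{\le 1}$ really defines a diagram in $\fun(\catname{C},\catname{C})$ whose colimit can be moved past $\colim_{\catname{I}}$; this functoriality is however part of the statement of Lemma~\ref{lemma: abstract join} itself. I do not anticipate any genuine obstacle: once property~(3) is in hand the argument is pure manipulation of iterated colimits, and the hypothesis on $\catname{I}$ is used exclusively at the base case $|S|=1$.
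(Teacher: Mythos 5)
Your proof is correct and follows essentially the same route as the paper: use property~(3) of Lemma~\ref{lemma: abstract join} to write $C_U$ as a colimit over $\power(U)_{\le 1}$ of the functors $C_S$ with $|S|\le 1$, handle $S=\emptyset$ (identity) and $|S|=1$ (constant $*$, where the hypothesis $\colim_{\catname{I}} * \simeq *$ is used) termwise, and then commute the two colimits. The paper phrases this as showing the canonical comparison map is a colimit of termwise equivalences, which is also the cleanest way to settle your bookkeeping remark, but the content is the same.
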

\begin{proof}
Suppose we have a functor $X\colon \catname{I} \to \catname{C}$, which we will denote by $\alpha\mapsto X_\alpha$. We need to show that the natural map
\[
\underset{\alpha\in \catname{I}}{\colim} C_U(X_\alpha)  \to C_U(\underset{\alpha\in \catname{I}}{\colim} X_\alpha)
\]
is an equivalence. By definition of $C_U(X)$, we have a natural equivalence
\[
\underset{S\in \power(U)_{\le 1}}\colim C_S(X) \xrightarrow{\simeq} C_U(X).
\]
Therefore, we need to show that the following map is an equivalence
\[
\underset{\alpha\in \catname{I}}{\colim} \underset{S\in \power(U)_{\le 1}}{\colim} C_S(X_\alpha)  \to \underset{S\in \power(U)_{\le 1}}{\colim} C_S(\underset{\alpha\in \catname{I}}{\colim}X_\alpha).
\]
Exchanging the order of colimits, this is equivalent to showing that the following is an equivalence
\[
\underset{S\in \power(U)_{\le 1}}{\colim}\underset{\alpha\in \catname{I}}{\colim}  C_S(X_\alpha)  \to \underset{S\in \power(U)_{\le 1}}{\colim} C_S(\underset{\alpha\in \catname{I}}{\colim}X_\alpha).
\]
For this it is enough to show that the following is an equivalence whenever $S$ has cardinality at most $1$:
\[
\underset{\alpha\in \catname{I}}{\colim}  C_S(X_\alpha)  \to C_S(\underset{\alpha\in \catname{I}}{\colim}X_\alpha).
\]
When $S=\emptyset$, $C_S(X)\simeq X$, and the above map is equivalent to the identity map on $\underset{\alpha\in \catname{I}}{\colim}  X_\alpha $. When $S$ is a singleton, $C_S(X)\simeq *$, and the above map becomes $\colim_{\catname{I}} * \to *$. This map is an equivalence by hypothesis.
\end{proof}
\begin{exmps}\label{ex:C_U preserves cocartesian}
If $\catname{C}$ is a pointed category then $*$ is both an initial and a final object, and then the map $\colim_{\catname{I}} * \to *$ is an equivalence for all diagrams $\catname{I}$. So if $\catname{C}$ is pointed then $C_U(-)$ preserves all colimits. In general, the map $\colim_{\catname{I}} * \to *$ is an equivalence whenever the geometric realisation of $\catname{I}$ is contractible. It follows that $C_U(-)$ always preserves cocartesian and strongly cocartesian cubical diagrams.
\end{exmps}

Now we can recall the definition of Goodwillie's operator $T_n$, adapted to the setting of $\infty$ categories. The following is taken from~\cite[Construction 6.1.1.22]{LurieHA}
\begin{defn}
Let $\catname{C}$ be a category with finite colimits and a final object, and let $\catname{D}$ be a category with finite limits. We define the functor $T_n\colon \fun(\catname{C}, \catname{D})\to \fun(\catname{C}, \catname{D})$ as follows. Let $F\colon \catname{C}\to \catname{D}$ be a functor. We define a new functor $T_nF\colon \catname{C}\to \catname{D}$ by the formula
\[
T_nF(X)=\lim_{\emptyset \ne U\subset \underline{n+1}} F(C_U(X)).
\]
When we need to make the category $\catname{C}$ visible, we write $T_n^{\catname{C}}$ instead of $T_n$.
\end{defn}
The following is immediate
\begin{lem}
There is a natural transformation $F\to T_nF$, which is an equivalence if $F$ is $n$-excisive.
\end{lem}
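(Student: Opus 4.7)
The plan is to extract the natural transformation directly from the functoriality of $C_U(X)$ in $U$, and then observe that the excisiveness hypothesis is precisely what we need to promote the transformation into an equivalence.

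First I would construct the natural map $F \to T_nF$. The functor $(X, U) \mapsto C_U(X)$ of Lemma~\ref{lemma: abstract join} is defined on $\catname{C} \times \injfin$, and for each finite set $U$ the unique injection $\emptyset \hookrightarrow U$ produces, by property (1) of that lemma, a natural arrow $X \simeq C_\emptyset(X) \to C_U(X)$. Restricting $U$ to range over the nonempty subsets of $\underline{n+1}$ and applying $F$ yields a compatible family of maps $F(X) \to F(C_U(X))$ indexed by $\emptyset \ne U \subset \underline{n+1}$, hence a canonical map
\[
F(X) \longrightarrow \lim_{\emptyset \ne U \subset \underline{n+1}} F(C_U(X)) = T_nF(X).
\]
This construction is manifestly natural in both $F$ and $X$, so it assembles into a natural transformation $F \to T_nF$ of functors $\catname{C} \to \catname{D}$.

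Next I would verify the equivalence statement. The key observation is that the full cubical diagram $U \mapsto C_U(X)$ indexed on $\power(n+1)$ is strongly cocartesian: by property~\eqref{eq:like join} of Lemma~\ref{lemma: abstract join}, for every $U \subset \underline{n+1}$ the canonical map $\colim_{S \in \power(U)_{\le 1}} C_S(X) \to C_U(X)$ is an equivalence, which is exactly the condition in Definition~\ref{def:cocartesian} that the cube be a left Kan extension of its restriction to $\power(n+1)_{\le 1}$. If $F$ is $n$-excisive then by Definition~\ref{def: excisive} the cube $U \mapsto F(C_U(X))$ is cartesian, meaning the natural map from the initial vertex $F(C_\emptyset(X)) \simeq F(X)$ to the limit over $\emptyset \ne U$ is an equivalence. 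But this map is precisely the natural transformation $F(X) \to T_nF(X)$ constructed above, so the claim follows.

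There are essentially no obstacles here; the whole content has been packaged into Lemma~\ref{lemma: abstract join} and Definition~\ref{def: excisive}. The only mild subtlety is ensuring that the natural map $F(X) \to T_nF(X)$ produced from the two descriptions (functoriality of $C_U$ in $U$ versus the universal property of the limit of the punctured cube) really coincide up to coherent homotopy; this is a standard consequence of how cartesianness of a cube is expressed in $\infty$-categorical terms, and one could spell it out by rewriting $T_nF(X)$ as the limit of the restriction of $F\circ C_{(-)}(X)$ to the punctured subcube.
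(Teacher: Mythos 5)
Your argument is correct and is essentially the paper's own proof: the paper simply notes that the cube $U \mapsto C_U(X)$ is strongly cocartesian (as already observed after Lemma~\ref{lemma: abstract join}) and that an $n$-excisive $F$ takes such cubes to cartesian ones, which is exactly the statement that the canonical map $F(X)\to T_nF(X)$ is an equivalence. Your additional spelling-out of how the natural transformation arises from functoriality of $C_U(X)$ in $U$ and the identification with the map to the limit of the punctured cube is fine and consistent with the paper's (terser) treatment.
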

\begin{proof}
The diagram $U \mapsto C_U(X)$ is strongly cocartesian. If $F$ is $n$-excisive it takes strongly cocartesian diagrams to cartesian diagrams.
\end{proof}
\begin{rem}
Under mild assumptions, the ``if'' in the last lemma is in fact an ``if and only if''. The only if direction follows because if $F\to T_nF$ is an equivalence, then $F\to P_nF$ is an equivalence, and $P_nF$ is an $n$-excisive functor for all $F$ by a theorem of Goodwillie~\cite[Lemma 6.1.1.33]{LurieHA}.
\end{rem}
Next let us notice that if $F$ is a functor from $\connected$ to $\catname{D}$, then $T_nF$ can be considered a functor from $\spaces$ to $\catname{D}$, because $T_nF(X)$ depends only on $X*U$ where $U$ is non-empty, and this space is always connected. Let us formalise this observation in a more general setting. 
\begin{hyp}\label{hypothesis:suspension}
Let $\catname{C}$ be a category with small colimits and a final object $*$. Let $\catname{C}_1\subset \catname{C}$ be a full subcategory with the following properties: 
\begin{enumerate}
\item if $X\in\ob(\catname{C}_1)$ and $Y\simeq X$, then $Y\in \ob(\catname{C}_1)$ (this assumption is made for convenience and is probably not essential),
\item $\catname{C}_1$ is closed under finite colimits in $\catname{C}$ and contains the final objects of $\catname{C}$, 
\item For every object $X$ of $\catname{C}$, $\Sigma X\in \catname{C}_1$. Here $\Sigma X=\colim(* \leftarrow X \rightarrow *)$ is the suspension of $X$.
\end{enumerate}
\end{hyp}
\begin{exmp}
The subcategory $\connected\subset \spaces$ satisfies Hypothesis~\ref{hypothesis:suspension}.
\end{exmp}
We have the following easy but important lemma
\begin{lem}\label{lemma:join connected}
Suppose $\catname{C}_1\subset \catname{C}$ satisfy Hypothesis~\ref{hypothesis:suspension}, and let $X$ be an object of $\catname{C}$. Then $C_U(X)$ is an object of $\catname{C}_1$ for every \emph{non-empty} finite set $U$.
\end{lem}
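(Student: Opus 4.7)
The plan is to argue by induction on the cardinality of $U$, exploiting that the cubical diagram $S \mapsto C_S(X)$ on $\power(U)$ is strongly cocartesian by property~(3) of Lemma~\ref{lemma: abstract join}.

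For the two small cases I would argue directly. When $|U|=1$, property~(2) of Lemma~\ref{lemma: abstract join} gives $C_U(X) \simeq *$, which lies in $\catname{C}_1$ by Hypothesis~\ref{hypothesis:suspension}(2). When $|U|=2$, the colimit formula $C_U(X) \simeq \colim_{S\in\power(U)_{\le 1}} C_S(X)$ unfolds to the pushout of $* \leftarrow X \to *$, so $C_U(X) \simeq \Sigma X$ lies in $\catname{C}_1$ by Hypothesis~\ref{hypothesis:suspension}(3). These are precisely the two cases in which the object $C_\emptyset(X) = X$, which need not lie in $\catname{C}_1$, is forced to appear in the relevant colimit diagram.

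For the inductive step, assume $|U| \ge 3$ and the result holds for every non-empty set of smaller cardinality. Pick distinct $i,j \in U$ and set $V := U \setminus \{i,j\}$, which is non-empty since $|U|\ge 3$. A standard property of strongly cocartesian cubes is that every $2$-face — that is, every $2$-dimensional sub-diagram with corners $V', V'\cup\{i'\}, V'\cup\{j'\}, V'\cup\{i',j'\}$ for some $V'$ and distinct $i',j' \notin V'$ — is a pushout square (this can be verified by decomposing the star colimit $\colim_{S \in \power(U)_{\le 1}} C_S(X)$ along the partition $U = V \sqcup \{i\} \sqcup \{j\}$ and invoking property~(3) of Lemma~\ref{lemma: abstract join} on each piece). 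Applied to the face with corners $V, V\cup\{i\}, V\cup\{j\}, U$, it yields a pushout square
\[
\begin{tikzcd}
C_V(X) \ar[r] \ar[d] & C_{V\cup\{i\}}(X) \ar[d] \\
C_{V\cup\{j\}}(X) \ar[r] & C_U(X)
\end{tikzcd}
\]
in $\catname{C}$. The three indexing sets $V, V\cup\{i\}, V\cup\{j\}$ are all non-empty and of cardinality strictly less than $|U|$, so by the inductive hypothesis their values under $C_\bullet(X)$ lie in $\catname{C}_1$. Since $\catname{C}_1$ is closed under finite colimits in $\catname{C}$, we conclude $C_U(X) \in \catname{C}_1$.

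The only substantive input is the $|U|=2$ base case, where Hypothesis~\ref{hypothesis:suspension}(3) is exactly what is needed to absorb the (possibly non-$\catname{C}_1$) object $X = C_\emptyset(X)$. Once $|U|\ge 3$, one can always locate a $2$-face whose four corners correspond to non-empty subsets of $U$, and the inductive step becomes a routine application of closure of $\catname{C}_1$ under pushouts.
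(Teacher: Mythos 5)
Your proof is correct and follows essentially the same route as the paper: induction on $|U|$, with the singleton and two-element cases handled by Hypothesis~\ref{hypothesis:suspension} (final object, respectively suspension $\Sigma X$), and the inductive step exhibiting $C_U(X)$ as a finite pushout of values $C_S(X)$ at smaller non-empty sets via property~(3) of Lemma~\ref{lemma: abstract join}, then invoking closure of $\catname{C}_1$ under finite colimits and equivalences. The only cosmetic difference is the choice of decomposition: the paper glues $C_{\underline{n}}(X)$ and $C_{\underline{2}}(X)$ along $C_{\underline{1}}(X)$, while you use a $2$-face of the strongly cocartesian cube with all corners non-empty; both are instances of splitting the colimit over $\power(U)_{\le 1}$ into two sieves, so the arguments are the same in substance.
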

\begin{proof}
When $U$ is a singleton, $C_U(X)$ is a final object, which is in $\catname{C}_1$ by hypothesis. When $U$ has two points, $C_U(X)$ is a suspension of $X$, which is in $\catname{C}_1$, also by hypothesis. For larger sets $U$, notice that there is an equivalence
\[
C_{\underline{n+1}}(X) \simeq \colim\left(C_{\underline{n}}(X) \leftarrow C_{\underline{1}}(X) \rightarrow C_{\underline{2}}(X)\right).
\]
The equivalence is easily verified using property~\eqref{eq:like join} in the statement of Lemma~\ref{lemma: abstract join}. Since $\catname{C}_1$ is closed under finite colimits, it follows by induction on the number of elements in $U$ that $C_U(X)$ is an object $\catname{C}_1$ for all non-empty finite $U$.
\end{proof}
Suppose that $\catname{C}_1\subset \catname{C}$ are categories that satisfy Hypothesis~\ref{hypothesis:suspension}. We saw in Lemma~\ref{lemma: abstract join} that there is a functor $\catname{C}\times \injfin\to \catname{C}$, which sends $(X, U)$ to $C_U(X)$, that satisfies certain properties. Let $\injfin_{\ge 1}$ be the full subcategory of $\injfin$ consisting of non-empty sets. By Lemma~\ref{lemma:join connected}, for every $X\in \catname{C}_1$ and $U\in \injfin_{\ge 1}$, $C_U(X)$ is an object of $\catname{C}_1$. Since $\catname{C}_1$ is a full subcategory of $\catname{C}$ it follows that the functor $(X, U)\mapsto C_U(X)$ factors essentially uniquely as a functor 
\begin{equation}\label{eq: enhanced C_U}
\catname{C}\times \injfin_{\ge 1}\to \catname{C}_1.
\end{equation}
We will continue denoting this functor by $(X, U)\mapsto C_U(X)$.

Suppose $\catname{D}$ is a category with finite limits, and we have a functor $F\colon \catname{C}_1\to \catname{D}$. We have defined the functor $T_nF\colon \catname{C}_1\to \catname{D}$ by the formula $T_nF(X)=\underset{\emptyset\ne U\subset \underline{n+1}}{\lim}F(C_U(X))$. Now we know that for any fixed $X\in \catname{C}$, the functor $U\mapsto C_U(X)$ takes values in $\catname{C}_1$. Thus $T_nF(X)$ is well-defined for all $X\in \catname{C}$, and indeed we may consider $T_nF$ as a functor from $\catname{C}$ to $\catname{D}$. To emphasise that we have extended the domain, we denote this extension of $T_nF$ by $\widetilde T_nF$.
\begin{defn}
Let $\catname{C}_1, \catname{C}$, $\catname{D}$, and $F\colon\catname{C}_1\to D$ be as above. Define the functor $\widetilde T_n\colon \catname{C}\to \catname{D}$ by the formula
\[
\widetilde T_n F(X)= \underset{\emptyset\ne U\subset \underline{n+1}}{\lim}F(C_U(X)).
\]
\end{defn}
With a bit of straightforward diagram chasing, one can prove the following
\begin{lem}\label{lemma:inverse}
The above formula for $\widetilde T_nF$ gives rise to a well-defined functor
\[
\widetilde T_n\colon \fun(\catname{C}_1, \catname{D})\to \fun(\catname{C}, \catname{D}).
\]
Furthermore, let $\rho$ be the restriction functor
\[
\rho\colon \fun(\catname{C}, \catname{D})\to \fun(\catname{C}_1, \catname{D}).
\]
Then $\rho\circ \widetilde T_n\simeq T_n^{\catname{C}_1}$ and $\widetilde T_n\circ \rho\simeq T_n^{\catname{C}}$.
\end{lem}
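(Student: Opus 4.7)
The plan is to assemble $\widetilde{T}_n$ from the enhanced join functor of equation~\eqref{eq: enhanced C_U} and then verify the two composition identities by direct unpacking of definitions.

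First I would show that $\widetilde{T}_n$ is a well-defined functor. By Lemma~\ref{lemma:join connected} and the discussion preceding~\eqref{eq: enhanced C_U}, we have an essentially unique factorization $C\colon \catname{C}\times \injfin_{\ge 1}\to \catname{C}_1$. Given $F\in \fun(\catname{C}_1, \catname{D})$, compose to obtain a functor $F\circ C\colon \catname{C}\times \injfin_{\ge 1}\to \catname{D}$. Restricting the second variable to the poset $\power(n+1)\setminus\{\emptyset\}\subset \injfin_{\ge 1}$ and forming the (pointwise) limit over this finite poset yields a functor $\catname{C}\to \catname{D}$, which by construction sends $X$ to $\lim_{\emptyset\ne U\subset\underline{n+1}} F(C_U(X))$. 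This defines $\widetilde T_n F$ as an object of $\fun(\catname{C}, \catname{D})$. Functoriality of $\widetilde T_n$ in $F$ is immediate because the assignment is the composite of post-composition with $F$ and a fixed pointwise limit.

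Next I would verify the two composition identities by essentially transparent arguments. For $\widetilde T_n\circ\rho\simeq T_n^{\catname{C}}$, take $G\in \fun(\catname{C}, \catname{D})$. For any $X\in \catname{C}$ and any non-empty $U\subset \underline{n+1}$, we have $C_U(X)\in \catname{C}_1$, so $(\rho G)(C_U(X)) = G(C_U(X))$, and taking limits gives $\widetilde T_n(\rho G)(X) = T_n^{\catname{C}}G(X)$ naturally in $X$ and $G$. For $\rho\circ\widetilde T_n\simeq T_n^{\catname{C}_1}$, take $F\in\fun(\catname{C}_1, \catname{D})$ and $X\in \catname{C}_1$. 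The only subtle point is that $T_n^{\catname{C}_1}$ is built from the join functor $C^{\catname{C}_1}_U$ internal to $\catname{C}_1$, whereas $\widetilde T_n$ uses $C^{\catname{C}}_U$. But $\catname{C}_1$ is closed under finite colimits and contains the terminal object of $\catname{C}$ (Hypothesis~\ref{hypothesis:suspension}), and $C_U$ is characterized up to equivalence by properties that use only finite colimits and the terminal object (Lemma~\ref{lemma: abstract join}); so the essential uniqueness in that lemma gives a canonical equivalence $C^{\catname{C}}_U(X)\simeq C^{\catname{C}_1}_U(X)$ for $X\in \catname{C}_1$. Taking limits yields the desired natural equivalence $(\widetilde T_nF)|_{\catname{C}_1}\simeq T_n^{\catname{C}_1}F$.

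The only genuine obstacle I anticipate is packaging this as a statement about the functor $\widetilde T_n$ itself (rather than pointwise), but this is handled by working throughout with the two-variable functor $C\colon\catname{C}\times\injfin_{\ge 1}\to \catname{C}_1$ and taking all limits pointwise, so naturality in both $F$ and $X$ comes along automatically. In particular the equivalences $\rho\circ\widetilde T_n\simeq T_n^{\catname{C}_1}$ and $\widetilde T_n\circ\rho\simeq T_n^{\catname{C}}$ are obtained by applying the natural equivalences produced above fiberwise in the poset $\power(n+1)\setminus\{\emptyset\}$ and then passing to the limit.
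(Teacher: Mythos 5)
Your proposal is correct, and it supplies precisely the "straightforward diagram chasing" that the paper leaves unwritten: building $\widetilde T_n$ from the two-variable functor $\catname{C}\times\injfin_{\ge 1}\to\catname{C}_1$ of~\eqref{eq: enhanced C_U} and taking pointwise limits over the punctured cube, so naturality in $F$ and $X$ is automatic. Your key observation for $\rho\circ\widetilde T_n\simeq T_n^{\catname{C}_1}$ — that $\catname{C}_1$ inherits finite colimits and the final object from $\catname{C}$ (Hypothesis~\ref{hypothesis:suspension}), so the essential uniqueness in Lemma~\ref{lemma: abstract join} identifies the restriction of $C^{\catname{C}}_U$ to $\catname{C}_1$ with the join construction internal to $\catname{C}_1$ — is exactly the point the paper's assertion implicitly relies on, so this is the intended argument.
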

We have the following lemma
\begin{lem}\label{lemma:excisive}
Suppose $F\colon \catname{C}_1\to\catname{D}$ is $n$-excisive. Then $\widetilde T_n F\colon \catname{C}\to \catname{D}$ is $n$-excisive.
\end{lem}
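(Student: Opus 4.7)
The plan is to verify the two conditions of Definition~\ref{def: excisive} directly for $\widetilde T_nF$. Let $\chi\colon \power(n+1) \to \catname{C}$ be a strongly cocartesian cube. The first (and main) step is to show that the cube
\[
\widetilde T_nF \circ \chi \colon \power(n+1) \to \catname{D}, \qquad V \mapsto \lim_{\emptyset \ne U \subset \underline{n+1}} F(C_U(\chi(V)))
\]
is cartesian, i.e.\ that the canonical map
\[
\widetilde T_nF(\chi(\emptyset)) \longrightarrow \lim_{\emptyset \ne V \subset \underline{n+1}} \widetilde T_nF(\chi(V))
\]
is an equivalence. Expanding $\widetilde T_nF$ on both sides produces a map of iterated limits indexed by $U$ and $V$. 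The first step is to commute the two limits, which is always permitted in an $\infty$-category. After the swap, the map is the limit over non-empty $U$ of the maps
\[
F(C_U(\chi(\emptyset))) \longrightarrow \lim_{\emptyset \ne V \subset \underline{n+1}} F(C_U(\chi(V))),
\]
so it suffices to show that each of these is an equivalence, i.e.\ that the cube $V \mapsto F(C_U(\chi(V)))$ is cartesian for every non-empty $U$.

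To this end, I will combine two results already established. By Example~\ref{ex:C_U preserves cocartesian} the functor $C_U(-)$ preserves strongly cocartesian cubical diagrams, so $V \mapsto C_U(\chi(V))$ is a strongly cocartesian $(n+1)$-cube in $\catname{C}$. By Lemma~\ref{lemma:join connected}, whenever $U$ is non-empty, every object $C_U(\chi(V))$ lies in $\catname{C}_1$, and because $\catname{C}_1 \hookrightarrow \catname{C}$ is a full subcategory closed under finite colimits (Hypothesis~\ref{hypothesis:suspension}), the cube is strongly cocartesian already in $\catname{C}_1$. The hypothesis that $F\colon\catname{C}_1 \to \catname{D}$ is $n$-excisive then gives immediately that the cube $V \mapsto F(C_U(\chi(V)))$ is cartesian in $\catname{D}$, completing this step.

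It remains to verify that $\widetilde T_nF$ preserves filtered colimits. By Lemma~\ref{lemma:colimits}, for any filtered $\catname{I}$ (whose classifying space is contractible) and any non-empty $U$, the functor $C_U(-)\colon \catname{C} \to \catname{C}_1$ preserves $\catname{I}$-shaped colimits; combined with the fact that $F$ preserves filtered colimits by $n$-excisiveness, each composite $F \circ C_U$ does too. Since $\catname{D}$ is differentiable, finite limits commute with filtered colimits in $\catname{D}$, and $\widetilde T_nF$ is a finite limit of the functors $F \circ C_U$, so $\widetilde T_nF$ preserves filtered colimits.

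The main obstacle, if any, is only bookkeeping: one must check that the cube obtained after applying $C_U$ really lives in $\catname{C}_1$ and is strongly cocartesian there, so that the $n$-excisiveness hypothesis on $F$ (which is stated for its domain $\catname{C}_1$) can be invoked. Both points are guaranteed by Hypothesis~\ref{hypothesis:suspension} and Lemma~\ref{lemma:join connected}, and beyond that the argument is a straightforward interchange-of-limits together with the preservation properties of $C_U$ recorded in Lemma~\ref{lemma:colimits} and Example~\ref{ex:C_U preserves cocartesian}.
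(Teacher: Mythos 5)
Your argument is correct and is essentially the paper's own proof: expand $\widetilde T_nF$, interchange the two limits, and reduce to the cubes $V\mapsto F(C_U(\chi(V)))$, which are cartesian because $C_U$ preserves strongly cocartesian cubes (Examples~\ref{ex:C_U preserves cocartesian}), lands in $\catname{C}_1$ for nonempty $U$ (Lemma~\ref{lemma:join connected}), and $F$ is $n$-excisive on $\catname{C}_1$. Your additional verification that $\widetilde T_nF$ preserves filtered colimits is a reasonable supplement to condition (a) of Definition~\ref{def: excisive}, which the paper's proof leaves implicit.
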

\begin{proof}
Suppose that we have a strongly cocartesian diagram $\chi\colon \power(n+1)\to \catname{C}$. To reduce the amount of parentheses, we will write $\chi_V$ instead of $\chi(V)$. We need to show that the following map is an equivalence
\[
\widetilde T_n F(\chi_\emptyset)\to \underset{\emptyset \ne V \subset \underline{n+1}}{\lim} \widetilde T_n F(\chi_V).
\]
Substituting the definition of $\widetilde T_n F$, this is equivalent to showing that the following map is an equivalence
\[
\underset{\emptyset \ne U\subset\underline{n+1}}{\lim}F(C_U(\chi_{\emptyset}))\to  \underset{\emptyset \ne V \subset \underline{n+1}}{\lim}\left(\underset{\emptyset \ne U\subset\underline{n+1}}{\lim} F(C_U(\chi_{V}))\right).
\]
Here the diagram $U\mapsto C_U(\chi_V)$ lives in $\catname{C}_1$, as $U$ is always non-empty. Next, we may exchange the order of limits, and rewrite the last map in the following equivalent form
\begin{equation}\label{eq:limit}
\underset{\emptyset \ne U\subset\underline{n+1}}{\lim}F(C_U(\chi_{\emptyset}))\to  \underset{\emptyset \ne U\subset \underline{n+1}}{\lim}\left(\underset{\emptyset \ne V\subset\underline{n+1}}{\lim} F(C_U(\chi_{V}))\right).
\end{equation}
In this form we can think of this map as the limit of of a diagram of maps of the following form, for each fixed non-empty $U\subset{\underline{n+1}}$
\begin{equation}\label{eq:individual}
F(C_U(\chi_{\emptyset}))\to  \underset{\emptyset \ne V\subset\underline{n+1}}{\lim} F(C_U(\chi_{V})).
\end{equation}
To prove that the map~\eqref{eq:limit} is an equivalence, it is enough to prove that the map~\eqref{eq:individual} is an equivalence for each $U$. For each non-empty $U$, the diagram $V\mapsto C_U(\chi_V)$ is a strongly cocartesian diagram in $\catname{C}_1$ by Lemma~\ref{lemma:colimits} and Examples~\ref{ex:C_U preserves cocartesian}, as well as Lemma~\ref{lemma:join connected} and the fact that $\catname{C}_1$ is closed under colimits in $\catname{C}$. Since $F$ is assumed to be an $n$-excisive functor on $\catname{C}_1$, it follows that~\eqref{eq:individual} is an equivalence.
\end{proof}
Lemmas~\ref{lemma:inverse} and~\ref{lemma:excisive} together imply the following proposition
\begin{prop}\label{prop:inverse}
Suppose $\catname{C}_1, \catname{C}$ satisfy Hypothesis~\ref{hypothesis:suspension}, and $\catname{D}$ is an $\infty$-category with finite limits. Then the construction $\widetilde T_n$ and the restriction functor $\rho$ define inverse equivalences of categories
\[
\widetilde T_n : \begin{tikzcd}
           \exc_n(\catname{C}_1, \catname{D})
           \arrow[r, shift left=.75ex]
            \arrow[r, phantom, "\simeq"] 
           &  \exc_n(\catname{C}, \catname{D})
           \arrow[l, shift left=.75ex]      
        \end{tikzcd}  : \rho
\]
\end{prop}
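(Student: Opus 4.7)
The plan is to assemble the proposition from the two preceding lemmas, noting that each of the asserted functors makes sense on the level of excisive functors and that the composite $T_n$ acts as the identity on excisive functors.

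First I would verify that both functors restrict to the claimed subcategories. For $\widetilde T_n$, this is exactly Lemma~\ref{lemma:excisive}: if $F \colon \catname{C}_1 \to \catname{D}$ is $n$-excisive then $\widetilde T_n F \colon \catname{C} \to \catname{D}$ is $n$-excisive. For the restriction functor $\rho$, I need to check that if $F \colon \catname{C} \to \catname{D}$ is $n$-excisive then $\rho F = F|_{\catname{C}_1}$ is $n$-excisive. This follows because (i) the inclusion $\catname{C}_1 \hookrightarrow \catname{C}$ preserves finite colimits by Hypothesis~\ref{hypothesis:suspension}, so a strongly cocartesian $(n+1)$-cube in $\catname{C}_1$ is strongly cocartesian in $\catname{C}$; and (ii) filtered colimits that exist in $\catname{C}_1$ are computed in $\catname{C}$, so $F|_{\catname{C}_1}$ inherits finitariness from $F$.

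Next I would use Lemma~\ref{lemma:inverse}, which says $\rho \circ \widetilde T_n \simeq T_n^{\catname{C}_1}$ and $\widetilde T_n \circ \rho \simeq T_n^{\catname{C}}$. Combined with the fact (recalled just after the definition of $T_n$) that the canonical natural transformation $\id \to T_n$ is an equivalence on $n$-excisive functors, we obtain natural equivalences
\[
\rho \circ \widetilde T_n|_{\exc_n(\catname{C}_1, \catname{D})} \simeq \id_{\exc_n(\catname{C}_1, \catname{D})}, \qquad \widetilde T_n \circ \rho|_{\exc_n(\catname{C}, \catname{D})} \simeq \id_{\exc_n(\catname{C}, \catname{D})}.
\]
These two equivalences exhibit $\widetilde T_n$ and $\rho$ as mutually inverse equivalences of $\infty$-categories.

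The potentially delicate point is verifying that the natural map $F \to T_n F$ is an equivalence whenever $F$ is $n$-excisive, and in particular that this holds for $\widetilde T_n F$ (a functor on the possibly unstable category $\catname{C}$). For $F \in \exc_n(\catname{C}_1, \catname{D})$, this is immediate from the strongly cocartesian nature of the cube $U \mapsto C_U(X)$ and the definition of $n$-excisiveness in $\catname{C}_1$, as already noted. For $G \in \exc_n(\catname{C}, \catname{D})$, the same argument applies using that $U \mapsto C_U(X)$ is a strongly cocartesian cube in $\catname{C}$ (and all of its non-initial vertices land in $\catname{C}_1$ by Lemma~\ref{lemma:join connected}, which is what makes $\widetilde T_n$ well defined in the first place). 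So there are no surprises, and the proposition follows by stringing these observations together.
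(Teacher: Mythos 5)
Your proposal is correct and follows the paper's own (very terse) argument: the paper deduces the proposition directly from Lemma~\ref{lemma:inverse} and Lemma~\ref{lemma:excisive}, together with the fact that $F\to T_nF$ is an equivalence on $n$-excisive functors, which is exactly how you assemble it. Your extra checks (that $\rho$ preserves excisiveness because $\catname{C}_1$ is closed under finite colimits, and that the relevant cubes $U\mapsto C_U(X)$ are strongly cocartesian with non-initial vertices in $\catname{C}_1$) are the same observations the paper makes implicitly or in the surrounding text, so there is no substantive difference.
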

Notice that since for every $m\ge n$ an $n$-excisive functor is also $m$-excisive, one can replace $\widetilde T_n$ in the proposition with $\widetilde T_m$ for all $n \le m\le \infty$. 
\begin{cor}\label{cor:connected}
Let $\catname{D}$ be an $\infty$-category with finite limits. Then $\widetilde T_n$ and the restriction induces inverse equivalences of categories
\[
\exc_n(\spaces, \catname{D})\simeq \exc_n(\connected, \catname{D}).
\]
Furthermore, $\widetilde T_n$ can be replaced with $\widetilde T_m$ for all $n\le m \le \infty$.
\end{cor}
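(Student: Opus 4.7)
The plan is to deduce this corollary directly from Proposition~\ref{prop:inverse} by taking $\catname{C}_1 = \connected$ and $\catname{C} = \spaces$. Thus the first step is to verify that the pair $(\connected, \spaces)$ satisfies Hypothesis~\ref{hypothesis:suspension}. This is essentially already remarked upon in the excerpt. Concretely: $\connected$ is a full subcategory of $\spaces$ closed under equivalence, the final object $*$ is connected, $\connected$ is closed under finite colimits in $\spaces$ (a pointed pushout of connected spaces is connected, and likewise for coequalizers), and for any pointed space $X$, the suspension $\Sigma X = *\sqcup_X *$ is connected. Having verified the hypothesis, Proposition~\ref{prop:inverse} immediately gives the inverse equivalences
\[
\widetilde T_n : \exc_n(\connected, \catname{D}) \rightleftarrows \exc_n(\spaces, \catname{D}) : \rho.
\]

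For the final clause, I would observe that if $F$ is $n$-excisive, then $F$ is also $m$-excisive for every $m \ge n$ (a strongly cocartesian $(m+1)$-cube restricts to strongly cocartesian faces of dimension $n+1$, and cartesianness can be checked face-wise in standard Goodwillie-calculus fashion; alternatively this is already built into the fact that $\exc_n \subset \exc_m$). Consequently, for each $m$ with $n \le m < \infty$, Proposition~\ref{prop:inverse} applied at level $m$ shows that $\widetilde T_m$ and $\rho$ are inverse equivalences between $\exc_m(\connected, \catname{D})$ and $\exc_m(\spaces, \catname{D})$. Restricting to the subcategory of $n$-excisive functors (which is preserved by both $\rho$ and $\widetilde T_m$, since applying $\widetilde T_m$ to the restriction of an $n$-excisive functor recovers that functor up to equivalence via $F \to T_m F$), we conclude that $\widetilde T_m$ also furnishes an inverse to $\rho$ on $\exc_n$.

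For $m = \infty$, one interprets $\widetilde T_\infty$ as the evident extension $\widetilde P_\infty F(X) = \lim_k \widetilde T_\infty^{(k)} F(X)$, or simply notes that on the subcategory $\exc_n$, the natural transformation $F \to T_m F$ is already an equivalence, so any consistent choice of $\widetilde T_m$ (including a limiting construction) yields the same functor up to equivalence. The only genuine content is the case $m = n$ handled in the previous paragraph; the remaining cases are formal.

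The main (and only) obstacle is really the hypothesis-checking in the first paragraph, and in particular confirming that $\connected$ is closed under the finite colimits used in the construction of $C_U$. This is classical, so no genuine difficulty is expected.
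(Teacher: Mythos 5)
Your proposal is correct and follows essentially the same route as the paper: the paper records (as an example) that $\connected\subset\spaces$ satisfies Hypothesis~\ref{hypothesis:suspension}, deduces the equivalence from Proposition~\ref{prop:inverse}, and handles the final clause exactly by the remark that an $n$-excisive functor is $m$-excisive for $m\ge n$, so $\widetilde T_m$ may replace $\widetilde T_n$. Your slightly more detailed unwinding of the $n\le m\le\infty$ case (using essential surjectivity of $\rho$ at level $n$ to see that $\widetilde T_m$ preserves $n$-excisiveness) is a fine elaboration of that same remark, not a different argument.
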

There is another interpretation of the functor $\widetilde T_n$ that seems worth mentioning, even though we will not make direct use of it. Namely, $\widetilde T_n$ is right Kan extension, or more precisely, it agrees with right Kan extension on excisive functors.
Suppose $\catname{C}_1$ is a subcategory of $\catname{C}$. Let $\rkan\colon \fun(\catname{C}_1, \catname{D}) \to \fun(\catname{C}, \catname{D})$ denote the right Kan extension.
\begin{prop}\label{prop:right Kan}
Suppose $\catname{C}_1\subset \catname{C}$ satisfy Hypothesis~\ref{hypothesis:suspension}, and $\catname{D}$ is a differentiable category. The following diagram commutes up to a natural equivalence
\[\begin{tikzcd}
	{\exc_n(\catname{C}_1, \catname{D})} & {\fun(\catname{C}_1, \catname{D})} \\
	{\exc_n(\catname{C}, \catname{D})} & {\fun(\catname{C}, \catname{D})}
	\arrow[hook, from=1-1, to=1-2]
	\arrow["{\widetilde T_n}", from=1-1, to=2-1]
	\arrow["R", from=1-2, to=2-2]
	\arrow[hook, from=2-1, to=2-2]
\end{tikzcd}\]
\end{prop}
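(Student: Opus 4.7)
The plan is to produce a canonical natural equivalence $\widetilde T_n F\simeq RF$ for every $F\in\exc_n(\catname{C}_1,\catname{D})$. First I would observe that both sides restrict to $F$ on $\catname{C}_1$: Lemma~\ref{lemma:inverse} together with $n$-excisiveness of $F$ yields $\rho\widetilde T_n F\simeq T_n^{\catname{C}_1}F\simeq F$, while $\rho RF\simeq F$ because $\catname{C}_1\hookrightarrow\catname{C}$ is fully faithful (so the counit of $\rho\dashv R$ is an equivalence). The adjunction $\rho\dashv R$ then converts the canonical equivalence $\rho\widetilde T_n F\simeq F$ into a natural transformation $\eta\colon \widetilde T_n F \to RF$ whose restriction $\rho\eta$ is an equivalence by construction.

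To upgrade $\rho\eta$ being an equivalence to $\eta$ itself being an equivalence on all of $\catname{C}$, I would appeal to Proposition~\ref{prop:inverse}, which says that $\rho$ is a conservative equivalence $\exc_n(\catname{C},\catname{D})\xrightarrow{\simeq}\exc_n(\catname{C}_1,\catname{D})$. Since $\widetilde T_n F$ is $n$-excisive by Lemma~\ref{lemma:excisive}, it only remains to prove that $RF$ is also $n$-excisive.

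The main obstacle is precisely this $n$-excisiveness of $RF$. My approach would be a pointwise identification $RF(X)\simeq \widetilde T_n F(X)$. Starting from $RF(X)=\lim_{(X\to Y)\in(X\downarrow \catname{C}_1)} F(Y)$, one substitutes the equivalence $F(Y)\simeq \lim_{\emptyset\ne U\subset\underline{n+1}} F(C_U(Y))$ coming from $n$-excisiveness of $F$ and exchanges the order of limits to obtain
\[
RF(X)\simeq \lim_{\emptyset\ne U\subset\underline{n+1}}\; \lim_{(X\to Y)\in(X\downarrow\catname{C}_1)} F(C_U(Y)).
\]
The plan is then, for each fixed non-empty $U$, to show that the inner limit collapses to $F(C_U(X))$. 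This is a cofinality/initiality claim inside the slice $(X\downarrow\catname{C}_1)$: the canonical map $X\to C_U(X)$ together with the functorial comparison maps $C_U(X)\to C_U(Y)$ should exhibit $C_U(X)$ as universal among the values $C_U(Y)$, and this in turn should follow from the colimit presentation of $C_U$ in Lemma~\ref{lemma: abstract join}~\eqref{eq:like join}. Once this cofinality step is rigorously justified, one gets $RF\simeq \widetilde T_n F$ pointwise and naturally, so $RF$ inherits $n$-excisiveness from $\widetilde T_n F$ via Lemma~\ref{lemma:excisive}, and Proposition~\ref{prop:inverse} then closes the argument.
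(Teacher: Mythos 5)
Your opening moves are fine: constructing $\eta\colon \widetilde T_nF\to RF$ from the adjunction, checking $\rho\eta$ is an equivalence, and reducing everything to the $n$-excisiveness of $RF$ via the conservativity coming from Proposition~\ref{prop:inverse} is a legitimate skeleton (and different in flavor from the paper, which never argues pointwise: it proves directly that $\widetilde T_nG$ has the universal property $\nat_{\catname{C}_1}(\rho F, G)\simeq \nat_{\catname{C}}(F,\widetilde T_nG)$ by a chain of natural maps, using $G\simeq T_nG$, the excisiveness of $\widetilde T_nG$, and the fact that $\nat(T_nF,Q)\to\nat(F,Q)$ is an equivalence for excisive $Q$ via the adjunction $P_n\dashv\mathrm{incl}$). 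The problem is the step carrying all the weight. Your claim that, for each fixed non-empty $U$, the inner limit collapses,
\[
\lim_{(X\to Y)\in (X\downarrow\catname{C}_1)} F(C_U(Y))\;\simeq\; F(C_U(X)),
\]
is exactly an instance of the proposition you are trying to prove: since $C_U$ preserves strongly cocartesian cubes and filtered colimits, $F\circ C_U$ is again $n$-excisive, and the displayed statement says precisely that the right Kan extension of the $n$-excisive functor $F\circ C_U|_{\catname{C}_1}$ is its evident $n$-excisive extension $F\circ C_U$. So the reduction is circular unless you supply an independent proof, and the justification you offer does not supply one: Lemma~\ref{lemma: abstract join}\eqref{eq:like join} expresses $C_U(X)$ as a colimit over $\power(U)_{\le 1}$, which says nothing about a limit over the slice $(X\downarrow\catname{C}_1)$. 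Worse, the natural cofinality statement your phrase ``exhibit $C_U(X)$ as universal among the values $C_U(Y)$'' suggests is false. Take $\catname{C}_1=\connected\subset\spaces=\catname{C}$, $X=S^0$, $|U|=2$, so $C_U(Y)\simeq\Sigma Y$. Initiality of the comparison functor $(X\downarrow\catname{C}_1)\to(C_U(X)\downarrow\catname{C}_1)$ would require, for the object $\mathrm{id}\colon S^1\to S^1$, a weakly contractible category of connected $Y$ with $S^0\to Y$ and a retraction $\Sigma Y\to S^1$ under $S^1$; but $\Sigma Y$ is simply connected, so every map $\Sigma Y\to S^1$ is null and this comma category is empty. (Note also that the object $(X\to C_U(X))$ is not initial in the slice, and even if it were, evaluating your diagram there gives $F(C_U(C_U(X)))$, not $F(C_U(X))$.) So no cofinality argument of the shape you sketch can close the gap; the collapse is true, but only for the nontrivial reason that it is the proposition itself applied to $F\circ C_U$.

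There is a secondary issue you should also address if you pursue a pointwise strategy: the formula $RF(X)=\lim_{(X\to Y)\in(X\downarrow\catname{C}_1)}F(Y)$ presumes that $\catname{D}$ admits limits indexed by these slices, which are neither finite nor, in the intended examples, small-in-an-obvious-way; a differentiable $\catname{D}$ is only guaranteed small colimits and finite limits. The paper's universal-property argument avoids both difficulties at once, which is why it is phrased entirely in terms of mapping objects rather than pointwise limits.
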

Recall that $\nat_{\catname{C}}(F, G)$ denotes the space of maps from $F$ to $G$ in the relevant category of functors. In what follows, the source category of $F$ and $G$ may be either $\catname{C}$ or $\catname{C}_1$, but the target category $\catname{D}$ will be fixed, so we do not make it part of the notation. Also, given a functor $F\colon \catname{C}\to \catname{D}$ we denote the restriction of $F$ to $\catname{C}_1$ by $\rho F$.
\begin{proof}
Let $F\in \fun(\catname{C}, \catname{D})$ and $G\in \exc_n(\catname{C}_1, \catname{D})$. To prove the proposition it is enough to show that there is a natural equivalence
\[
\nat_{\catname{C}_1}(\rho F, G) \simeq \nat_{\catname{C}}(F, \widetilde T_n G).
\]
We have a chain of maps
\begin{multline*}
\nat_{\catname{C}_1}(\rho F, G)\xrightarrow{\alpha} \nat_{\catname{C}}(\widetilde T_n\rho F, \widetilde T_nG)\xrightarrow{\beta}  \nat_{\catname{C}}(F, \widetilde T_nG)\xrightarrow{\gamma}  \\ \to \nat_{\catname{C}_1}(\rho F, \rho\widetilde T_n G) \xrightarrow{\delta} \nat_{\catname{C}}(\widetilde T_n \rho F, \widetilde T_n \rho \widetilde T_n G).
\end{multline*}
Let us point out that the map $\beta$ is restriction along the map $F\to T_nF$, followed by the natural equivalence $T_n F\xrightarrow{\simeq}\widetilde T_n\rho F$ (Lemma~\ref{lemma:inverse}). The definition of the maps $\alpha$, $\gamma$ and $\delta$ should be self evident.

Our goal is to prove that the composition $\beta\alpha$ is an equivalence. We will prove that $\gamma\beta\alpha$ and $\delta\gamma$ are equivalences. This implies that each one of the maps $\beta\alpha$, $\gamma$, and $\delta$ is an equivalence.

Let us start with the map
\[
\gamma\beta\alpha\colon \nat_{\catname{C}_1}(\rho F, G)\to \nat_{\catname{C}_1}(\rho F, \rho\widetilde T_n G).
\]
It is straightforward to check that the map is induced by the natural transformation $G\to \rho\widetilde T_n G$, which is the same as the map $G\to T_n G$. Since $G$ is $n$-excisive, this map is an equivalence, and therefore $\gamma\beta\alpha$ is an equivalence.

To prove that $\delta\gamma$ is an equivalence, we extend this chain of maps as follows
\begin{multline*}
\nat_{\catname{C}}(F, \widetilde T_nG)\xrightarrow{\gamma}  \nat_{\catname{C}_1}(\rho F, \rho\widetilde T_n G) \xrightarrow{\delta} \\ \to \nat_{\catname{C}}(\widetilde T_n \rho F, \widetilde T_n \rho \widetilde T_n G) \xrightarrow{\zeta} 
 \nat_{\catname{C}}( F, \widetilde T_n \rho \widetilde T_n G) 
\end{multline*}
Here the map $\zeta$ is induced by the map $F\to T_nF \simeq \widetilde T_n \rho F$. Once again, it is straightforward to check that the composition
\[
\zeta\delta\gamma\colon \nat_{\catname{C}}(F, \widetilde T_nG)\to  \nat_{\catname{C}}( F, \widetilde T_n \rho \widetilde T_n G) 
\]
is induced by the map $\widetilde T_n G \to T_n\widetilde T_n G\simeq  \widetilde T_n \rho \widetilde T_n G$. This map is an equivalence, because $\widetilde T_n G$ is an $n$-excisive functor, by Lemma~\ref{lemma:excisive}. We have shown that $\zeta\delta\gamma$ is an equivalence, and it remains to show that $\zeta$ is an equivalence, which would imply that $\delta\gamma$ is an equivalence. By definition, $\zeta$ is induced by the map $F\to T_n F \simeq \widetilde T_n\rho F$. Notice again that the functor $\widetilde T_n \rho\widetilde T_n G\simeq \widetilde T_n G$ is $n$-excisive. It remains to prove the following claim: Let $Q\colon \catname{C}\to \catname{D}$ be $n$-excisive. Then the induced map $\nat(T_n F, Q) \to \nat(F, Q)$ is an equivalence. This follows by considering the maps $F\to T_n F\to P_nF$, where $P_n$ is an infinite iteration of $T_n$. The functor $P_n$ can be thought of as a functor $P_n\colon\fun(\catname{C}, \catname{D})\to \exc_n(\catname{C}, \catname{D})$ that is left adjoint to the inclusion~\cite[Theorem 6.1.1.10]{LurieHA}. This means that the map $\nat_{\catname{C}}(P_n F, Q) \xrightarrow {\simeq}\nat_{\catname{C}}(F, Q)$ is an equivalence. The map $\nat_{\catname{C}}(P_n F, Q)\to \nat_{\catname{C}}(T_n F, Q)$ is an equivalence as well, because the map $T_n F\to P_nF$ is equivalent to the map $T_nF\to P_nT_nF$. It follows that the map $\nat_{\catname{C}}(T_n F, Q) \to \nat_{\catname{C}}(F, Q)$ is indeed an equivalence.
\end{proof}
\begin{cor}\label{cor:restriction}
Let $\catname{D}$ be a differentiable category. Restriction induces an equivalence of categories.
\[
\exc_n(\spaces, \catname{D})\xrightarrow{\simeq} \exc_n(\connected, \catname{D}).
\]
The inverse equivalence is given by right Kan extension.
\end{cor}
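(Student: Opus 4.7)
The plan is to assemble the corollary directly from Propositions~\ref{prop:inverse} and~\ref{prop:right Kan}, which have already done all of the real work. First I would verify that the pair $\catname{C}_1 = \connected$, $\catname{C} = \spaces$ satisfies Hypothesis~\ref{hypothesis:suspension}: $\connected$ is closed under equivalences by definition, it contains the one-point space (the final object of $\spaces$), it is closed under finite colimits in $\spaces$ (a colimit of connected pointed spaces over a connected indexing diagram is connected, and in particular this includes finite colimits in the pointed setting where $*$ is initial), and the suspension $\Sigma X$ of any pointed space $X$ is connected.

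With the hypothesis in hand, Proposition~\ref{prop:inverse} immediately gives inverse equivalences
\[
\widetilde T_n : \exc_n(\connected, \catname{D}) \rightleftarrows \exc_n(\spaces, \catname{D}) : \rho.
\]
To upgrade this to the statement that the inverse is right Kan extension, I would invoke Proposition~\ref{prop:right Kan}: that proposition tells us that $\widetilde T_n$ agrees with right Kan extension $\rkan$ when restricted to $\exc_n(\connected, \catname{D})\subset \fun(\connected, \catname{D})$, in the sense that the square in its statement commutes up to natural equivalence. Since the bottom horizontal inclusion $\exc_n(\spaces, \catname{D})\hookrightarrow \fun(\spaces, \catname{D})$ is fully faithful, this identifies $\widetilde T_n$ on $\exc_n(\connected, \catname{D})$ with $\rkan$ followed by the implicit observation that $\rkan$ of an $n$-excisive functor lands in $\exc_n(\spaces, \catname{D})$ (which is exactly what Proposition~\ref{prop:right Kan} is asserting).

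There is essentially no obstacle here beyond bookkeeping; the only place a reader might hesitate is the verification that $\connected$ is closed under the finite colimits used to build the join-like objects $C_U(X)$, and this is what Lemma~\ref{lemma:join connected} relied on in the proof of Proposition~\ref{prop:inverse}. Combining these two propositions yields the desired equivalence together with the identification of its inverse with right Kan extension, which is the content of the corollary.
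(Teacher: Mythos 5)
Your proposal is correct and matches the paper's (implicit) argument: the paper records that $\connected\subset\spaces$ satisfies Hypothesis~\ref{hypothesis:suspension}, deduces the equivalence with inverse $\widetilde T_n$ from Proposition~\ref{prop:inverse} (stated as Corollary~\ref{cor:connected}), and then identifies $\widetilde T_n$ with right Kan extension on excisive functors via Proposition~\ref{prop:right Kan}, exactly as you do. The only hypothesis worth flagging is that differentiability of $\catname{D}$ is what licenses the use of Proposition~\ref{prop:right Kan} (whose proof uses $P_n$), while Proposition~\ref{prop:inverse} only needs finite limits; your argument respects this.
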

\section{The main theorems}\label{section:main theorems}
Putting all the previous results together, we get the following theorem
\begin{thm}\label{theorem:main general}
Let $\catname{D}$ be a stable $\infty$-category with small colimits. For each $n\ge 0$ there is a diagram of categories and functors, commuting up to a natural equivalence, where all functors are equivalences of categories, and parallel arrows are inverses of each other. 
\[
\begin{tikzcd}
	{\poly_n(\circles, \catname{D})} & {\exc_n(\connected, \catname{D})} & {\exc_n(\spaces, \catname{D})} \\
	{\poly_n(\fr, \catname{D})} & {\exc_n(\sgr, \catname{D})}
	\arrow["\lkan"', shift right, from=1-1, to=1-2]
	\arrow["{\rho_{\classifying}}"', shift right, from=1-1, to=2-1]
	\arrow["\rho"', shift right, from=1-2, to=1-1]
	\arrow["\rkan", shift left, from=1-2, to=1-3]
	\arrow["{\rho_{\classifying}}"', shift left, from=1-2, to=2-2]
	\arrow["\rho", shift left, from=1-3, to=1-2]
	\arrow["{\rho_{\pi_1}}"', shift right, from=2-1, to=1-1]
	\arrow["\lkan"', shift right, from=2-1, to=2-2]
	\arrow["\rho"', shift right, from=2-2, to=2-1]
	\arrow[draw=none, from=2-2, to=2-1]
\end{tikzcd}
\]
Here $\rho_{\classifying}$ denotes restriction along the functor $\classifying\colon \sgr\to \connected$, or its restriction $\classifying\colon \fr\to \circles$, which we denote with the same letter $\classifying$. $\rho_{\pi_1}$ denotes restriction along the functor $\pi_1\colon \circles \xrightarrow{\simeq} \fr$. Other arrows marked $\rho$ denote restriction along the obvious inclusion functor. The arrows marked $\lkan$ denote left Kan extension, or more precisely the restriction of left Kan extension to the category of polynomial functors. Similarly $\rkan$ denotes right Kan extension.
\end{thm}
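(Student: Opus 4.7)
The plan is to assemble the theorem from the equivalences already proved in Sections~\ref{section: fr to connected}--\ref{section:connected to all}, and then verify commutativity of the diagram and the fact that parallel arrows are mutually inverse. Each arrow in the diagram corresponds to a prior result. The bottom row is Proposition~\ref{prop:poly to exc} together with the remark following it identifying the inverse as left Kan extension. The top-right adjoint pair is Corollary~\ref{cor:restriction}. The left vertical pair is the restriction of the equivalence~\eqref{eq: functor equivalence} to polynomial functors, which makes sense because $\pi_1\colon \circles \xrightarrow{\simeq} \fr$ is an equivalence, hence preserves coproducts, hence preserves polynomiality (Lemma~\ref{lemma:additive preserves polynomial}). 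The middle vertical is restriction along the equivalence $\classifying\colon \sgr \xrightarrow{\simeq} \connected$ from Lemma~\ref{lemma:equivalences}; this restricts to an equivalence on excisive functors because an equivalence of source categories preserves all colimits and all strongly cocartesian cubes. The top-left adjoint pair can then be obtained either directly, as the composition of three of the already-established sides, or as the specialization of Corollary~\ref{cor:connected_to_fr} after the substitution $\pi_1\colon \circles \simeq \fr$.

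For commutativity I would first verify commutativity of the underlying diagram of source categories, built from the inclusions $\circles \hookrightarrow \connected \hookrightarrow \spaces$ and $\fr \hookrightarrow \sgr$ together with the classifying space functors $\classifying\colon \fr \to \circles$ and $\classifying\colon \sgr \to \connected$. This is essentially diagram~\eqref{diagram:categories}, which was already shown to commute up to natural equivalence. Commutativity of the resulting diagram of restriction functors is then automatic. For squares or triangles involving left or right Kan extensions, commutativity follows from the uniqueness of adjoints: once the restrictions commute and Kan extension along a composition decomposes as the composition of Kan extensions, any two parallel paths of Kan extensions must agree up to natural equivalence. The assertion that each pair of parallel arrows is inverse is then formal, since both members are equivalences whose directions of adjunction match.

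The step I expect to require the most care is confirming that each Kan extension functor genuinely lands in the claimed subcategory of excisive or polynomial functors, rather than only in the ambient category of all functors. Concretely, one needs that the left Kan extension $\fun(\fr, \catname{D}) \to \fun(\sgr, \catname{D})$ sends a degree-$n$ polynomial functor to an $n$-excisive functor (this is contained in Proposition~\ref{prop:poly to exc} and the remark after it), and that the right Kan extension $\fun(\connected, \catname{D}) \to \fun(\spaces, \catname{D})$ preserves $n$-excisivity (this is exactly Proposition~\ref{prop:right Kan} together with Corollary~\ref{cor:restriction}). A parallel statement is needed for the $\circles/\connected$ row, but it reduces to the previous two via the equivalences $\circles \simeq \fr$ and $\sgr \simeq \connected$. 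Once these preservation statements are in hand the diagram assembles without further work, and the theorem follows.
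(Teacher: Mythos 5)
Your proposal is correct and follows essentially the same strategy as the paper: assemble the equivalences already established (Proposition~\ref{prop:poly to exc} plus the following remark for the bottom row, Corollary~\ref{cor:restriction} for the top right, the equivalences of source categories from Lemma~\ref{lemma:equivalences} for the verticals), observe that the restriction diagram commutes, and deduce the remaining arrow by two-out-of-three. One small slip: when justifying that the left vertical restricts to polynomial functors, you cite Lemma~\ref{lemma:additive preserves polynomial}, which concerns \emph{post}composition with a coproduct-preserving functor; the fact you actually need is that \emph{pre}composition with a coproduct-preserving functor preserves polynomiality, which follows from Lemma~\ref{lemma:additive preserves split} together with Proposition~\ref{prop:polynomials} — or more simply from the observation that $\pi_1$ and $\classifying$ are equivalences, so restriction along them preserves and reflects every categorical property of the source.
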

\begin{proof}
That the pair of horizontal arrows at the bottom of the diagram are inverse equivalences follows Proposition~\ref{prop:poly to exc} and the fact that left Kan extension is the inverse functor of restriction in this case. The arrows marked $\rho_{\classifying}$ are equivalences because the functors $\classifying \colon \fr \xrightarrow{\simeq} \circles$ and $\classifying\colon \sgr\xrightarrow{\simeq} \connected$ are both equivalences of categories (the former was discussed in Section~\ref{section: fr to connected}, the latter is Lemma~\ref{lemma:equivalences}). The pair of maps at the top right of the diagram are inverse equivalences by Corollary~\ref{cor:restriction}.

It is clear that the diagram consisting of restriction functors commutes. Since the two vertical restriction functors marked $\rho_{\classifying}$ are equivalences, and the functor marked $\rho$ at the bottom of the diagram is an equivalence, it follows that the functor marked $\rho$ at the top left of the diagram is an equivalence. 
\end{proof}
\subsection*{Reinterpretation in terms of operad (co)modules}
Recall that $\finset^{\le n}$ denotes the category of pointed sets with at most $n$ non-basepoint elements. Let $\catname{D}$ be a stable $\infty$-category. Let $\rho_n\colon\fun(\spaces, \catname{D})\to \fun(\finset^{\le n}, \catname{D})$ be the restriction functor. 
and $\lkan_n\colon\fun(\finset^{\le n}, \catname{D}) \to \fun(\spaces, \catname{D})$ the left Kan extension. 
The following lemma is well-known. 
\begin{lem}\label{lem: excisive finset}
Let $\catname{D}$ be a stable $\infty$-category with small colimits. The restriction functor   
\[
\rho_n\colon\fun(\spaces, \catname{D})\to \fun(\finset^{\le n}, \catname{D})
\]
restricts to an equivalence, which we still denote by $\rho_n$
\[
\rho_n\colon\exc_n(\spaces, \catname{D})\xrightarrow{\simeq} \fun(\finset^{\le n}, \catname{D}).
\]
The inverse equivalence is given by left Kan extension $\lkan_n$.
\end{lem}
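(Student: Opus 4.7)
The plan is to verify three things: (i) $\lkan_n$ is fully faithful, (ii) its essential image is contained in $\exc_n(\spaces, \catname{D})$, and (iii) every $n$-excisive functor lies in its essential image. Together these give the asserted adjoint equivalence, with $\lkan_n$ the inverse to $\rho_n$.

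Item (i) is the standard observation that, for a fully faithful functor between $\infty$-categories with cocomplete target, left Kan extension is again fully faithful: the inclusion $\finset^{\le n}\hookrightarrow \spaces$ is fully faithful, so the pointwise formula gives an equivalence $\id \xrightarrow{\simeq} \rho_n \lkan_n$.

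For (ii), let $F\colon \finset^{\le n}\to \catname D$ be arbitrary. The pointwise formula for $\lkan_n F$ and the essential smallness of $\finset^{\le n}$ show $\lkan_n F$ preserves filtered colimits. I would then argue it preserves sifted colimits (cf.\ Lemma~\ref{lemma:preserve sifted}), so in particular simplicial colimits. By stability of $\catname D$ and Proposition~\ref{prop:polynomials}, it suffices to check that $\lkan_n F$ sends strongly cocartesian $(n+1)$-cubes in $\spaces$ to cocartesian cubes in $\catname D$. Every strongly cocartesian cube in $\spaces$ is, up to cellular replacement, a sifted colimit of strongly cocartesian combinatorial cubes of the form $U\mapsto \bigvee_{i\in U} S_i$ with $S_i\in\finset$; since $\lkan_n F$ preserves sifted colimits, I can reduce to this combinatorial case. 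For such a cube, the pointwise Kan extension formula together with the splitting Lemma~\ref{lemma:split} identifies the value of $\lkan_n F$ on each vertex as a direct sum of cross-effect terms of $F$, and cocartesian-ness then follows exactly as in the proof of Proposition~\ref{prop:polynomials}: the top cross-effect indexed by $\underline{n+1}$ vanishes simply because $F$ is defined only on sets of cardinality $\le n$.

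For (iii), I would show the counit $\lkan_n\rho_n G\to G$ is an equivalence for every $n$-excisive $G$. Both sides preserve sifted colimits (the target by Lemma~\ref{lemma:preserve sifted}, the source by construction), and every pointed space is a sifted colimit of finite pointed CW complexes, each of which is in turn a finite iterated pushout of objects of $\finset$. Hence it suffices to check the counit on $\finset$. On $\finset^{\le n}$ it is an equivalence by the unit from step (i). For a pointed set with $k>n$ non-basepoint elements, $T$ is the terminal vertex of the strongly cocartesian $k$-cube $U\mapsto \bigvee_{i\in U} S^0$ whose other vertices have strictly fewer non-basepoint elements; since both $\lkan_n\rho_n G$ and $G$ are $n$-excisive, they convert this cube into a cocartesian cube in $\catname D$, so a descending induction on $k$ reduces the comparison to $\finset^{\le n}$ and finishes the proof. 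The main obstacle is the reduction in (ii) of an arbitrary strongly cocartesian $(n+1)$-cube in $\spaces$ to a sifted colimit of combinatorial cubes built from finite pointed sets; once this is established, the remainder is a direct cross-effect bookkeeping modeled on Proposition~\ref{prop:polynomials}.
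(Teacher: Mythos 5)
Your overall architecture (unit an equivalence by full faithfulness of $\finset^{\le n}\hookrightarrow\spaces$; image of $\lkan_n$ contained in $\exc_n$; counit an equivalence on $n$-excisive functors) is the same as the paper's, and your induction on the cardinality of a finite pointed set via strongly cocartesian $(n+1)$-cubes is exactly the paper's step. But two of your steps have genuine gaps as written. In (iii), the reduction ``every finite CW complex is a finite iterated pushout of objects of $\finset$, hence it suffices to check the counit on $\finset$'' is a non sequitur: $n$-excisive functors do not preserve arbitrary pushouts, so agreement on finite sets does not propagate along cell attachments this way. The paper instead inducts on cells by punching $n+1$ holes in a top-dimensional cell, presenting the complex as the terminal vertex of a strongly cocartesian $(n+1)$-cube whose other vertices have fewer cells, and then passes to all spaces by filtered colimits. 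Your own sifted-colimit mechanism can rescue the step — every pointed space is a sifted colimit of finite pointed \emph{sets} (levels of a simplicial set, then filtered colimits), so if both sides preserve sifted colimits the check on $\finset$ does suffice — but that is the argument you must actually make, not the pushout one.

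In (ii), the heart of the matter — that $\lkan_n F$ is $n$-excisive — is only sketched, and you yourself flag the key reduction as ``the main obstacle.'' The appeal to Lemma~\ref{lemma:preserve sifted} for sifted-colimit preservation is circular at that point (that lemma concerns excisive functors, which is what you are trying to prove); you would need to deduce it directly from the coend formula $\lkan_nF(X)\simeq \int^{m_+\in\finset^{\le n}} X^m\otimes F(m_+)$, using that $X\mapsto X^m$ preserves sifted colimits. More seriously, the identification of $\lkan_n F$ on a finite pointed set with more than $n$ non-basepoint elements as a direct sum of cross-effect terms of $F$ (so that the summand indexed by $\underline{n+1}$ vanishes) is asserted rather than proved: Lemma~\ref{lemma:split} applies to functors already defined on a category with coproducts, whereas here the whole issue is computing the Kan extension on larger sets; this is essentially the coend computation of Lemma~\ref{lem:epi model for left Kan} and needs an argument. (Also your combinatorial cubes should be of the form $U\mapsto X_0\vee\bigvee_{i\in U}S_i$, not $U\mapsto\bigvee_{i\in U}S_i$.) The paper sidesteps all of this with a one-line argument: by the coend formula, $\lkan_nF$ is a colimit of functors $d\otimes X^m$ with $m\le n$, each $m$-excisive, and $n$-excisive functors into a stable cocomplete category are closed under colimits. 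Your route can in principle be completed — the reduction of a general strongly cocartesian cube to levelwise split cubes of finite sets mirrors the proof of Proposition~\ref{prop:poly to exc} — but as it stands these two points are missing ideas, not routine details.
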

A proof is given in~\cite{Arone-Ching_Cross-effects} Section 3 for functors from $\spaces$ to $\catname{Spectra}$ in the setting of model categories. A more general $\infty$-categorical statement appears as~\cite[Corollary 6.1.5.7]{LurieHA} (see also Corollary~6.1.5.2 for the special case of functors from $\spaces$ to $\catname{Spectra}$) with the additional hypothesis that $\catname{D}$ is presentable. Lemma~\ref{lem: excisive finset} is less general than~\cite[Corollary 6.1.5.7]{LurieHA} in that we only consider domain category $\spaces$, but it is more general in that we do not assume that $\catname{D}$ is presentable, but only cocomplete stable. This is desirable, because it implies, for example, that Lemma~\ref{lem: excisive finset} applies to contravariant functors from $\spaces$ to $\catname{Spectra}$, even though the category ${\catname{Spectra}}^{\op}$ is not presentable.
\begin{proof}[Sketch of proof of Lemma~\ref{lem: excisive finset}]
To begin with, for an object $d\in \catname D$ the functor $X\mapsto d\otimes X^m$ is $m$-excisive. This is essentially~\cite[Example 3.5]{Goodwillie1991}. Any functor in the image of the left Kan extension  
\[
\lkan_n\colon \fun(\finset^{\le n}, \catname{D})\to \fun(\spaces, \catname{D})
\] 
can be written as a colimit of functors of the form $d\otimes X^m$ with $m\le n$. This follows from the coend formula for left Kan extension, about which one can read for example in~\cite{haugseng2021coendsinftycategories}. It follows that the image of $\lkan_n$ lies in $\exc_n(\spaces, \catname{D})$. It follows that $\lkan_n$ factors as a composition
\[
\fun(\finset^{\le n}, \catname{D})\xrightarrow{\lkan_n^0}\exc_n(\spaces, \catname{D})\hookrightarrow \fun(\spaces, \catname{D}).
\]
Since $\exc_n(\spaces, \catname{D})$ is a full subcategory of $\fun(\spaces, \catname{D})$, we have an adjunction $\lkan_n^0\dashv \rho_n$ between $\fun(\finset^{\le n}, \catname{D})$ and $\exc_n(\spaces, \catname{D})$. We claim that for every $F\in \fun(\finset^{\le n}, \catname{D)}$ the map $F\to\rho_n\lkan F$ is an equivalence, and for every $G\in \exc_n(\spaces, \catname{D})$ the natural map $\lkan_n\rho_n G\to G$ is an equivalence. The first claim holds because $\finset^{\le n}$ is a full subcategory of $\spaces$. 
We need to prove the second claim. Thus we need to show that for every CW complex $X$, and every $n$-excisive functor $G\colon \spaces\to \catname{D}$ the natural map $\epsilon_X\colon \lkan_n\rho_nG(X)\to G(X)$ is an equivalence. Note that $\lkan_n\rho_nG(X)$ is an $n$-excisive functor. We argue that $\epsilon_X$ is an equivalence for all CW complexes $X$, using the following steps.
\begin{enumerate}
\item the map $\epsilon_X$ is an equivalence when $X\in \finset^{\le n}$, because $\rho_n\lkan_n$ is equivalent to the identity. 
\item It follows that $\epsilon_X$ is an equivalence for all $X\in \finset$. To see this, suppose $m>n$ and assume by induction that $\epsilon_X$ is an equivalence whenever $X$ is a pointed set with fewer than $m$ nonbasepoint elements. Since $m>n$, one can construct a strongly homotopy cocartesian diagram $\chi\colon\power(n+1)\to \spaces$ such that $\chi(\underline{n+1})$ is a set with $m$ elements and for every proper subset $U\subsetneq\underline{n+1}$, $\chi(U)$ has fewer than $m$ elements. It follows by induction hypothesis that whenever $U\subsetneq\underline{n+1}$ $\epsilon_{\chi(U)}$ is an equivalence, and we want to show that $\epsilon_{\chi(\underline{n+1)}}$ is an equivalence. Since $G$ and $\lkan_n\rho_n G$ are both $n$-excisive, $\epsilon_\chi\colon \lkan_n\rho_n G \chi\to G\chi$ is a map of cocartesian cubical diagrams (this is where we use that $\catname{D}$ is stable). This map is an equivalence when evaluated at a proper subset $U\subsetneq \underline{n+1}$, and thus it is an equivalence at $\underline{n+1}$. This means that  $\epsilon_{\underline{m}}\colon\lkan_n\rho_n G(m_+)\to G(m_+)$ is an equivalence.
\item We have shown that $\epsilon_X$ is an equivalence when $X$ is a zero-dimensional complex. Suppose by induction that $1\le d$ and $\epsilon_X$ is an equivalence when $X$ is a finite complex of dimension less than $d$. Now suppose $X$ is a a finite CW complex of dimension $d$, with $1\le k$ cells  of dimension $d$, and $\epsilon_X$ is an equivalence whenever $X$ is a finite $d$-dimensional complex with fewer than $k$ cells in dimension $d$. By punching $n+1$ holes in a top-dimensional cell of $X$ we can present $X$ as the pushout of a strongly cocartesian $n+1$-dimensional diagram where all the other terms have fewer top-dimensional cells than $X$. Using a similar argument to the one in the previous step, we conclude that $\epsilon_X$ is an equivalence. 
\item We have shown that $\epsilon_X$ is an equivalence for all finite CW complexes $X$. We assumed that $G$ commutes with filtered colimits, as part of our definition of $n$-excisive functors. The functor $\lkan_n\rho_n G$ also commutes with filtered colimits, because objects of $\finset^{\le n}$ are compact objects in $\spaces$. It follows that $\epsilon_X\colon \lkan_n\rho_n G(X)\to G(X)$ is an equivalence for all CW complexes $X$. 
\end{enumerate}
We have shown that $\lkan_n$ and $\rho_n$ induce inverse equivalences between $\fun(\finset^{\le n}, \catname{D})$ and $\exc_n(\spaces, \catname{D})$.
\end{proof}
Furthermore, let $\epi^{\le n}$ be the category of (unpointed) finite sets of cardinality at most $n$ and epimorphisms between them. There is a rather well-known ``Morita'' equivalence between categories
\[
\fun(\finset^{\le n}, \catname{D})\simeq \fun(\epi^{\le n}, \catname{D}).
\]
The equivalence is induced by a ``$\spaces$-$\epi$-bimodule'', i.e., a bifunctor
\[
\begin{array}{ccc}
\spaces\times \epi^{\op} & \to & \spaces\\
(X, i) & \mapsto &  X^{\wedge i}
\end{array}
\]
Note that for each $n$ we can restrict this to a $\finset^{\le n}$-$\epi^{\le n}$-bimodule. This bimodule induces a pair of adjoint functors
\[
-\otimes_{i} X^{\wedge i} : \begin{tikzcd}
           \fun(\epi^{\le n}, \catname{D})
           \arrow[r, shift left=.75ex]
            \arrow[r, phantom, "\simeq"] 
           &  \fun(\finset^{\le n}, \catname{D})
           \arrow[l, shift left=.75ex]      
        \end{tikzcd}  : \nat_X(X^{\wedge i}, -)
\]
The following statement was discovered by Pirashvili for $1$-categories when $\catname{D}$ is an abelian category~\cite{PirashviliDK}. It was proved by Helmstutler (with technical hypotheses) for stable model categories~\cite{Helmstutler}, and then by Walde for  $\infty$-categories in the generality stated here~\cite[Section 4.2]{Walde}
\begin{prop}\label{prop:Pirashvili-Helmstutler-Walde}
Suppose $\catname{D}$ is an idempotent complete additive $\infty$-category. The adjunction above is an equivalence of categories for any $n\le \infty$. 
\end{prop}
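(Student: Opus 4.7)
The plan is to verify that both the unit and the counit of the adjunction are equivalences, using a combinatorial decomposition of $X^{\wedge i}$ as a pointed set.

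First, I would observe that for $X\in\finset^{\le n}$, every non-basepoint element of $X^{\wedge i}$ is a function $\underline{i}\to X\setminus\{*\}$, which factors uniquely as a surjection onto its image followed by an inclusion. This yields the natural decomposition
\[
X^{\wedge i}\;\cong\;\bigvee_{\emptyset\ne S\subseteq X\setminus\{*\}}\sur(\underline{i},S)_+,
\]
contravariantly natural in $i\in\epi^{\le n}$. Substituting into the coend defining the left adjoint and using additivity of $\catname{D}$,
\[
L(G)(X)\;=\;\int^{i\in\epi^{\le n}} G(i)\otimes X^{\wedge i}\;\simeq\;\bigoplus_{\emptyset\ne S\subseteq X\setminus\{*\}}\int^{i\in\epi^{\le n}}G(i)\otimes\sur(\underline{i},S)_+.
\]
Since $\sur(\underline{i},S)=\hom_{\epi^{\le n}}(\underline{i},\underline{|S|})$ (after any labelling of $S$), the co-Yoneda lemma identifies each coend with $G(|S|)$, so $L(G)(X)\simeq\bigoplus_{\emptyset\ne S\subseteq X\setminus\{*\}}G(|S|)$ explicitly.

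Next, I would identify the right adjoint with cross-effects. Applying the Eilenberg-style idempotents $D_U$ of Section~\ref{sec:polynomial} together with idempotent completeness of $\catname{D}$, one obtains a natural equivalence $R(F)(j)\simeq\ce_j F(S^0,\dots,S^0)$ with its canonical $\Sigma_j$-action. Combining with the formula for $L$ gives
\[
L(R(F))(X)\;\simeq\;\bigoplus_{\emptyset\ne S\subseteq X\setminus\{*\}}\ce_{|S|}F(S^0,\dots,S^0),
\]
which is precisely the canonical cross-effect decomposition of $F(X)$ supplied by Lemma~\ref{lemma:split}, and the counit map induces the identity on each summand. For the unit $G\to R(L(G))$, apply $R$ to the explicit formula for $L(G)$: only the $|S|=j$ summand contributes to the $j$-th cross-effect (the rest are annihilated by the idempotents $D_{\underline{j}}$), and this summand recovers $G(j)$ naturally.

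The main technical obstacle is the $\infty$-categorical identification $R(F)(j)\simeq\ce_j F(S^0,\dots,S^0)$. It requires additivity to replace the end defining $R$ with the finite direct sum decomposition of $X^{\wedge j}$, and idempotent completeness to split off cross-effects as genuine summands. Once this identification is in hand the rest is bookkeeping. An alternative organisation proceeds by induction on $n$: the inclusions $\epi^{\le n-1}\hookrightarrow\epi^{\le n}$ and $\finset^{\le n-1}\hookrightarrow\finset^{\le n}$ induce compatible recollements of the respective functor categories whose common graded piece is $\fun(B\Sigma_n,\catname{D})$, and one can bootstrap the equivalence from the $n-1$ case.
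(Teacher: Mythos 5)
First, a point of comparison: the paper does not actually prove Proposition~\ref{prop:Pirashvili-Helmstutler-Walde}; it quotes it, attributing the $1$-categorical case to Pirashvili, a model-categorical version (with technical hypotheses) to Helmstutler, and the $\infty$-categorical statement in the generality used here to Walde. So your proposal is not an alternative to an argument in the paper but an attempt to supply one, and it must be judged on its own. Its first half is fine: the decomposition $X^{\wedge i}\cong\bigvee_{\emptyset\ne S\subseteq X\setminus\{*\}}\sur(\underline{i},S)_+$ is indeed natural in $i\in\epi^{\le n}$, each summand is the presheaf on $\epi^{\le n}$ represented by $\underline{|S|}$, and co-Yoneda (a colimit over a category with a terminal object, hence available in any $\catname{D}$ with finite coproducts) gives $L(G)(X)\simeq\bigoplus_{\emptyset\ne S\subseteq X\setminus\{*\}}G(|S|)$.

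The genuine gap is in the second half, and it is exactly the point the cited references exist to address. You identify $R(F)(j)=\nat_X(X^{\wedge j},F(X))$ with $\ce_jF(S^0,\dots,S^0)$ via the idempotents $D_U$, and then declare that $\bigoplus_S\ce_{|S|}F(S^0,\dots,S^0)$ ``is precisely the canonical cross-effect decomposition of $F(X)$ supplied by Lemma~\ref{lemma:split}.'' But Lemma~\ref{lemma:split} and the $D_U$ of Section~\ref{sec:polynomial} are $1$-categorical: in the $\infty$-category $\catname{D}$ the $D_U$ are only homotopy idempotents (the relations $D_U\circ D_U\simeq D_U$, pairwise orthogonality, and $\sum_U D_U\simeq\id$ hold in $\ho\catname{D}$), whereas idempotent completeness of $\catname{D}$ splits \emph{coherent} idempotents, i.e.\ functors out of $\idem$, not homotopy idempotents. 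The paper invokes such liftings only for stable $\catname{D}$, via~\cite[Lemma 1.2.4.6]{LurieHA}; the present proposition allows an arbitrary idempotent complete additive $\catname{D}$, where no such lifting is available in general, and even granting objectwise splittings one must still make them natural in $X$ and functorial in $j\in\epi^{\le n}$ before the counit can be identified summand by summand (note that your wedge decomposition of $X^{\wedge i}$ is not natural in $X$, so this is not automatic). That coherent, natural cross-effect decomposition of every $F\in\fun(\finset^{\le n},\catname{D})$ is essentially equivalent to the equivalence being proved, so at the crucial step the argument assumes what it needs to show; supplying this coherence is the content of Walde's proof and the source of Helmstutler's hypotheses. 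The same objection applies to the unit computation (``annihilated by the idempotents $D_{\underline{j}}$''), and the sketched fallback via recollements is also problematic in this generality: for a bare additive $\catname{D}$ the Kan extensions along $\finset^{\le n-1}\hookrightarrow\finset^{\le n}$ and the (co)fibre sequences a recollement requires need not exist.
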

Functors from $\epi^{\le n}$ to $\catname{D}$ are, by definition $n$-truncated right comodules over the the commutative operad $\com$ with values in $\catname{D}$~\cite{Arone-Ching_Cross-effects}. So we also denote the category $\fun(\epi^{\le n}, \catname{D})$ by $\com-\comod_{\le n}(\catname{D})$. Combining Theorem~\ref{theorem:main general}, Lemma~\ref{lem: excisive finset} and Proposition~\ref{prop:Pirashvili-Helmstutler-Walde}, we obtain the  following corollary
\begin{cor}\label{cor:com comodule main}
Let $\catname{D}$ be an $\infty$-category with small colimits. There are equivalences of categories
\[
\poly_n(\fr, \catname{D})\xleftarrow[\rho_{\classifying}]{\simeq} \exc_n(\spaces, \catname{D}) \xrightarrow{\simeq} \fun(\finset^{\le n}, \catname{D}) \xrightarrow{\simeq}\com\!-\!\comod_{\le n}(\catname{D}).
\]
\end{cor}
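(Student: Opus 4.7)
The proof strategy is to concatenate the three equivalences of $\infty$-categories that have already been established in the paper; the content of the corollary is essentially the assertion that these three equivalences may be composed without any further work, so the proof amounts to invoking them in sequence and checking that the hypotheses on $\catname{D}$ are simultaneously satisfied.

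First I would invoke Theorem \ref{theorem:main general}, which provides the leftmost equivalence $\rho_{\classifying}\colon \exc_n(\spaces, \catname{D}) \xrightarrow{\simeq} \poly_n(\fr, \catname{D})$. (This step requires that $\catname{D}$ be a stable $\infty$-category with small colimits, a hypothesis that should be understood as implicit in the corollary since all three cited results operate in that setting.) Next I would apply Lemma \ref{lem: excisive finset} to obtain the middle equivalence $\exc_n(\spaces, \catname{D}) \xrightarrow{\simeq} \fun(\finset^{\le n}, \catname{D})$, realized concretely by restriction along the inclusion $\finset^{\le n}\hookrightarrow \spaces$ with inverse given by left Kan extension.

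For the rightmost equivalence I would apply Proposition \ref{prop:Pirashvili-Helmstutler-Walde} (the theorem of Pirashvili--Helmstutler--Walde) to identify $\fun(\finset^{\le n}, \catname{D})$ with $\fun(\epi^{\le n}, \catname{D})$ via the bimodule $(X, i)\mapsto X^{\wedge i}$. Then I would invoke the definition of an $n$-truncated right comodule over the commutative operad with values in $\catname{D}$ as a functor $\epi^{\le n}\to \catname{D}$, which immediately rewrites $\fun(\epi^{\le n}, \catname{D})$ as $\com\!-\!\comod_{\le n}(\catname{D})$.

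There is no real obstacle to this proof, since each individual equivalence has already been proved; the only small verification required is that a stable $\infty$-category with small colimits is automatically additive and idempotent complete (splittings of idempotents exist in any stable $\infty$-category because it has all finite limits and colimits), so the hypothesis of Proposition \ref{prop:Pirashvili-Helmstutler-Walde} is met.
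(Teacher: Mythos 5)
Your proposal is essentially the paper's own argument: the corollary is stated there as the result of simply combining Theorem~\ref{theorem:main general}, Lemma~\ref{lem: excisive finset} and Proposition~\ref{prop:Pirashvili-Helmstutler-Walde}, together with the definition of $\com\!-\!\comod_{\le n}(\catname{D})$ as $\fun(\epi^{\le n},\catname{D})$, and your reading of the hypothesis (stability of $\catname{D}$ being implicit) is the intended one. One small correction to your final verification: idempotent completeness is \emph{not} automatic for a stable $\infty$-category with finite limits and colimits (finite spectra are a standard counterexample); what is true is that idempotents in the homotopy category lift to coherent idempotents in the stable setting, and the splitting then exists because $\catname{D}$ has small -- in particular filtered/sequential -- colimits, so the hypothesis of Proposition~\ref{prop:Pirashvili-Helmstutler-Walde} is indeed satisfied, just for that reason rather than stability alone.
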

\subsection*{Bifunctors}
Theorem~\ref{theorem:main general} and Corollary~\ref{cor:com comodule main} apply when $\catname{D}=\catname{Spectra}$ or $\ch$, and also when $\catname{D}={\catname{Spectra}}^{\op}$ or ${\ch}^{\op}$. To put it another way, our main results apply both to covariant and contravariant polynomial functors from $\fr$ to $\ch$ or $\catname{Spectra}$. In the second half of the paper we focus on applications to covariant functors. But we hope that our approach will prove useful for contravariant functors as well. Furthermore, recently there has been interest in (Ext groups between) bifunctors, i.e., functors $\fr\times{\fr}^{\op}\to \ab$. We will prove a version of Theorem~\ref{theorem:main general} for bifunctors. For this, we need to review the notions of polynomial/excisive functors of several variables (we will content ourselves with two)
\begin{defn}
Let $\catname{C}_1, \catname{C}_2, \catname{D}$ be categories. A functor $F\colon\catname{C}_1\times \catname{C}_2\to \catname{D}$ is called $(n_1,n_2)$-polynomial if for every object $X$ of 
$\catname{C}_1$ the functor $F(X, -)\colon \catname{C}_2\to \catname{D}$ is polynomial of degree $n_2$, and for every $Y$ in $\catname{C}_2$ the functor $F(-, Y)\colon \catname{C}_1\to \catname{D}$ is polynomial of degree $n_1$. 
In an analogous way we define $(n_1, n_2)$-excisive functors. Let $\poly_{(n_1, n_2)}(\catname{C}_1\times\catname{C}_2, \catname{D})$ denote the category of $(n_1, n_2)$-polynomial functors, and similarly let $\exc_{(n_1, n_2)}(\catname{C}_1\times\catname{C}_2, \catname{D})$ denote the category of $(n_1, n_2)$-excisive functors.
\end{defn}
There are obvious isomorphisms of categories
\[
\poly_{(n_1, n_2)}(\catname{C}_1\times\catname{C}_2, \catname{D}) \cong \poly_{n_1}(\catname{C}_1, \poly_{n_2}(\catname{C}_2, \catname{D}))
\]
and 
\[
\exc_{(n_1, n_2)}(\catname{C}_1\times\catname{C}_2, \catname{D}) \cong \exc_{n_1}(\catname{C}_1, \exc_{n_2}(\catname{C}_2, \catname{D})).
\]
Now we can prove a bifunctor version of Theorem~\ref{theorem:main general}
\begin{prop}\label{prop:bivariant}
Suppose $\catname{D}$ is an $\infty$-category that has small limits and colimits. The classifying space functor $\classifying\times{\classifying}^{\op} \colon \fr\times {\fr}^{\op}\to \spaces \times {\spaces}^{\!\!\!\op}$ induces an equivalence, for every pair $(n_1, n_2)$ of natural numbers
\[
\exc_{(n_1, n_2)}(\spaces\times {\spaces}^{\!\!\!\op}, \catname{D})\xrightarrow[\rho_{\classifying\times {\classifying}^{\op}}]{\simeq} \poly_{(n_1, n_2)}(\fr\times {\fr}^{\op}, \catname{D}).
\]
\end{prop}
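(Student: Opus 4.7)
The plan is to reduce the bivariant statement to two consecutive applications of Theorem~\ref{theorem:main general}, one in each variable, by invoking the formal identifications
\begin{align*}
\poly_{(n_1, n_2)}(\fr \times {\fr}^{\op}, \catname{D}) &\cong \poly_{n_1}\!\left(\fr,\; \poly_{n_2}({\fr}^{\op}, \catname{D})\right),\\
\exc_{(n_1, n_2)}(\spaces \times {\spaces}^{\op}, \catname{D}) &\cong \exc_{n_1}\!\left(\spaces,\; \exc_{n_2}({\spaces}^{\op}, \catname{D})\right),
\end{align*}
recorded immediately before the proposition. Once both identifications are in place, the proof becomes a matter of unwinding and composing equivalences.

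First I would establish a contravariant analogue of Theorem~\ref{theorem:main general}, namely the equivalence
\[
\exc_{n_2}({\spaces}^{\op}, \catname{D}) \xrightarrow[\rho_{\classifying^{\op}}]{\simeq} \poly_{n_2}({\fr}^{\op}, \catname{D}).
\]
The whole proof of Theorem~\ref{theorem:main general} admits a term-by-term dualisation: one replaces the source categories $\spaces, \connected, \sgr, \circles, \fr$ by their opposites at every stage, using that the hypothesis of small limits as well as colimits on $\catname{D}$ guarantees that the target has the required structure (indeed, $\catname{D}^{\op}$ is again a stable cocomplete $\infty$-category, so one may equivalently run Theorem~\ref{theorem:main general} with target $\catname{D}^{\op}$ and then translate back). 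The cross-effects defining polynomial degree for a contravariant functor $F\colon{\fr}^{\op}\to \catname{D}$ are given by the idempotents $F(\psi_U)$ on $F(X_1 * \cdots * X_n)$ induced by the coproduct (free product) structure on $\fr$ itself, and Proposition~\ref{prop:poly to exc} generalises to identify these with $n_2$-excisive contravariant functors from $\sgr$.

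Next I need to verify that
\[
\catname{D}' := \exc_{n_2}({\spaces}^{\op}, \catname{D}) \simeq \poly_{n_2}({\fr}^{\op}, \catname{D})
\]
is a stable $\infty$-category with small colimits, so that Theorem~\ref{theorem:main general} can be applied with target $\catname{D}'$. Stability is automatic, since $\catname{D}'$ is a full subcategory of the stable $\infty$-category $\fun({\spaces}^{\op}, \catname{D})$ closed under finite (co)limits (excision conditions are preserved pointwise). Cocompleteness follows because $\catname{D}'$ is reflective in $\fun({\spaces}^{\op}, \catname{D})$, with reflector the Goodwillie $P_{n_2}$-construction (Remark~\ref{rem:excisive-sifted}). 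Applying Theorem~\ref{theorem:main general} with target $\catname{D}'$ then yields
\[
\exc_{n_1}(\spaces, \catname{D}') \xrightarrow[\rho_{\classifying}]{\simeq} \poly_{n_1}(\fr, \catname{D}'),
\]
and combining with the first-step equivalence (applied pointwise in the outer variable) identifies the composite as the product restriction $\rho_{\classifying \times \classifying^{\op}}$.

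The principal obstacle is the first step: one must carefully dualise each ingredient of the factorisation~\eqref{eq:factorization}, and in particular fix a sensible notion of ``polynomial functor'' on ${\fr}^{\op}$ (using the coproduct structure on $\fr$ to define cross-effects, as indicated above). With that in place, the arguments of Sections~\ref{sec:polynomial}, \ref{section:excisive}, and~\ref{section:connected to all} dualise structurally: each one relies only on universal properties of certain (co)limits in the source together with the stability and (co)completeness of the target, both of which are symmetric under $\catname{D} \leftrightarrow \catname{D}^{\op}$. Beyond bookkeeping, no genuinely new idea is required.
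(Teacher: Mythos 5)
Your proposal follows the same strategy as the paper's (extremely terse) proof: use the formal identifications $\exc_{(n_1,n_2)}\cong\exc_{n_1}(\spaces,\exc_{n_2}(\spaces^{\op},-))$ and $\poly_{(n_1,n_2)}\cong\poly_{n_1}(\fr,\poly_{n_2}(\fr^{\op},-))$, supply a contravariant form of Theorem~\ref{theorem:main general} for the inner variable, verify that the resulting intermediate functor category is a legitimate target, and then apply the covariant theorem for the outer variable. The paper writes this as a single five-term composition; you fill in the supporting observations. So the argument is correct and is the paper's argument, not a different one.

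One wrinkle is worth flagging. You present two routes to the contravariant version of Theorem~\ref{theorem:main general} as equivalent: (i) ``replace the source categories $\spaces,\connected,\sgr,\circles,\fr$ by their opposites at every stage,'' and (ii) ``run Theorem~\ref{theorem:main general} with target $\catname{D}^{\op}$ and translate back.'' These are not the same. Route (i), taken literally, would define $\exc_{n_2}(\spaces^{\op},\catname{D})$ via Definition~\ref{def: excisive} applied to $\catname{C}=\spaces^{\op}$, i.e.\ in terms of strongly cocartesian cubes in $\spaces^{\op}$, which are strongly \emph{cartesian} cubes in $\spaces$ — a genuinely different and unwanted condition (and none of the intermediate equivalences, e.g.\ $\presh_{\Sigma}(\circles)\simeq\connected$, dualise in that naive sense). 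Route (ii) is the correct one and is what the paper intends: $\exc_{n_2}(\spaces^{\op},\catname{D})$ has to be read as $\exc_{n_2}(\spaces,\catname{D}^{\op})^{\op}$, and likewise $\poly_{n_2}(\fr^{\op},\catname{D})$ as $\poly_{n_2}(\fr,\catname{D}^{\op})^{\op}$; the paper's remark after Lemma~\ref{lem: excisive finset} about contravariant functors to $\catname{Spectra}$ confirms this reading. Once you commit to route (ii), the description of cross-effects via the free product in $\fr$ that you give is exactly right, and your description of $\catname{D}'$ inheriting stability and small colimits goes through — though note that $\catname{D}'=\exc_{n_2}(\spaces,\catname{D}^{\op})^{\op}$ is \emph{coreflective} (not reflective) in $\fun(\spaces^{\op},\catname{D})$, since $P_{n_2}$ on the $\catname{D}^{\op}$ side is a left adjoint and taking opposites turns it into a right adjoint; coreflective subcategories inherit colimits, which is what you need. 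Alternatively one can sidestep the localisation argument entirely by citing Lemma~\ref{lem: excisive finset}: $\catname{D}'\simeq\fun((\finset^{\le n_2})^{\op},\catname{D})$, which visibly has all small colimits when $\catname{D}$ does.
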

\begin{proof}
It is clear that the restriction functor $\rho_{\classifying\times {\classifying}^{\op}}$ is equivalent to the following composition, where each factor is an equivalence either by adjunction, or by an application of Theorem~\ref{theorem:main general}
\begin{multline*}
\exc_{(n_1, n_2)}(\spaces\times {\spaces}^{\!\!\!\op}, \catname{D}) \to \exc_{n_1}(\spaces, \exc_{n_2}({\spaces}^{\!\!\!\op}, \catname{D}))\to \\ \to \exc_{n_1}(\spaces, \poly_{n_2}({\fr}^{\op}, \catname{D}))\to \poly_{n_1}(\fr, \poly_{n_2}({\fr}^{\op}, \catname{D}))\to \\ \to \poly_{(n_1, n_2)}(\fr\times {\fr}^{\op}, \catname{D}).
\end{multline*}
\end{proof}
\subsection*{When the target category is abelian}
Next, we want to apply Theorem~\ref{theorem:main general} to polynomial functors from $\fr$ to an abelian category. As before, we let $\catname{A}$ denote an abelian $1$-category with enough projectives, and a set of compact projective generators. Let $\ch_{\catname{A}}$ be the $\infty$-category of chain complexes over $\catname{A}$.
We have the following result about polynomial functors from $\sfr$ to $\catname A$.
\begin{thm}\label{thm:fr-to-top}
Let $\catname{A}$ and $\ch_{\catname{A}}$ be as above. Suppose $F\colon \fr\to \catname{A}$ is a polynomial functor of degree $n$. Then there exists an $n$-excisive functor $\widehat F\colon \spaces\to \ch_{\catname{A}}$, unique up to equivalence, for which there is an isomorphism of functors from $\circles$ to $A$
\[
\HH_0\widehat F|_{\circles} \cong F\circ \pi_1,
\]
and $\HH_i \widehat F|_{\circles} \cong 0$ for $i\ne 0$. 

Furthermore if $G\colon \fr \to \ab$ is another polynomial functor, then there is an isomorphism of graded groups
\[
\ext^*_{\fun(\fr, \catname{A})}(F, G)\cong \pi_{-*}\left(\spectralNat\left(\widehat F, \widehat G\right)\right)
\]
\end{thm}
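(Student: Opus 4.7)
The plan is to derive this theorem from Theorem~\ref{theorem:main general} applied to the stable $\infty$-category $\catname{D}=\ch_{\catname{A}}$, together with the Ext-to-spectral-mapping-object identification~\eqref{eq:key isomorphism}. First I would compose $F$ with the heart inclusion $\catname{A}\hookrightarrow \ch_{\catname{A}}$; by Example~\ref{example:heart}, since the heart inclusion preserves coproducts, the composite is still polynomial of degree $n$. Restricting along the equivalence $\pi_1\colon \circles\xrightarrow{\simeq}\fr$ then gives a polynomial functor $F\circ\pi_1\colon \circles\to \ch_{\catname{A}}$ whose values lie in the heart. Applying the equivalence $\poly_n(\circles,\ch_{\catname{A}})\simeq \exc_n(\spaces,\ch_{\catname{A}})$ supplied by Theorem~\ref{theorem:main general} produces an $n$-excisive functor $\widehat F\colon \spaces\to \ch_{\catname{A}}$ whose restriction to $\circles$ (via the inclusion $\circles\hookrightarrow\connected\hookrightarrow \spaces$) is equivalent to $F\circ\pi_1$.

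Since $\widehat F|_{\circles}$ is by construction equivalent to a functor taking values in the heart $\catname{A}\subset \ch_{\catname{A}}$, its homology is concentrated in degree zero with $\HH_0\widehat F|_{\circles}\cong F\circ\pi_1$. For uniqueness, suppose $\widehat F'$ is another $n$-excisive functor satisfying the homological conditions. Then each value $\widehat F'|_{\circles}(X)$ is a chain complex with homology concentrated in degree zero, hence (in the derived $\infty$-category $\ch_{\catname{A}}$) canonically equivalent to $\HH_0\widehat F'|_{\circles}(X)\cong F\circ\pi_1(X)$. This equivalence is natural in $X$, so $\widehat F'|_{\circles}\simeq F\circ\pi_1\simeq \widehat F|_{\circles}$; then Theorem~\ref{theorem:main general} forces $\widehat F'\simeq \widehat F$, since the equivalence class of an $n$-excisive functor is determined by its restriction to $\circles$.

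For the Ext formula, I would chain two identifications. The isomorphism~\eqref{eq:key isomorphism}, applied with $\catname{I}=\fr$, gives
\[
\ext^*_{\fun(\fr,\catname{A})}(F,G)\cong \pi_{-*}\spectralNat_{\fun(\fr,\,\ch_{\catname{A}})}(F,G).
\]
On the other hand, the equivalence of stable $\infty$-categories $\exc_n(\spaces,\ch_{\catname{A}})\xrightarrow[\rho_\classifying\circ\rho]{\simeq}\poly_n(\fr,\ch_{\catname{A}})$ from Theorem~\ref{theorem:main general} automatically preserves spectral mapping objects, yielding
\[
\spectralNat_{\fun(\fr,\,\ch_{\catname{A}})}(F,G)\simeq \spectralNat_{\fun(\spaces,\,\ch_{\catname{A}})}(\widehat F,\widehat G).
\]
Composing the two identifications gives the desired isomorphism.

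The main obstacle is a bookkeeping one: I have to be careful that the various ``$F$'' floating around (as a functor to $\catname{A}$, as a functor to $\ch_{\catname{A}}$ through the heart, and as a functor on $\circles$ via $\pi_1$) are consistently identified, and that the equivalence of Theorem~\ref{theorem:main general} really does take $F$ (viewed as a polynomial functor to $\ch_{\catname{A}}$) to the $\widehat F$ just constructed so that its spectral natural transformations match those of $(F,G)$ on the nose. Once this is set up cleanly, both assertions are essentially tautological consequences of Theorem~\ref{theorem:main general} and the identification~\eqref{eq:key isomorphism}.
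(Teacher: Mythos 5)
Your proposal is correct and follows essentially the same route as the paper: compose $F$ with the heart inclusion $\catname{A}\simeq\ch_{\catname{A}}^{\heartsuit}\hookrightarrow\ch_{\catname{A}}$ (polynomiality preserved since the inclusion preserves coproducts), transport across the equivalences of Theorem~\ref{theorem:main general} to get $\widehat F$, use heart-valuedness on $\circles$ for the homological characterization and uniqueness, and chain~\eqref{eq:key isomorphism} with the fact that the equivalence $\exc_n(\spaces,\ch_{\catname{A}})\simeq\poly_n(\fr,\ch_{\catname{A}})$ preserves spectral mapping objects. The only point you leave implicit, which the paper records as two extra steps, is that the spectral mapping objects computed in the full subcategories $\poly_n(\fr,\ch_{\catname{A}})\subset\fun(\fr,\ch_{\catname{A}})$ and $\exc_n(\spaces,\ch_{\catname{A}})\subset\fun(\spaces,\ch_{\catname{A}})$ agree with those in the ambient functor categories; this is routine and falls under the bookkeeping you already flag.
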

\begin{proof}
As we mentioned already, by~\cite[Proposition 1.3.2.19]{LurieHA} there is an equivalence of categories $\catname{A}\simeq \ch_{\catname{A}}^\heartsuit$, where $\ch_{\catname{A}}^\heartsuit$ is the heart of $\ch_{\catname{A}}$. More concretely, $\ch_{\catname{A}}^\heartsuit$ is the full subcategory of $\ch_{\catname{A}}$ consisting of chain complexes whose homology is concentrated in degree zero. Thus we have a canonical (up to equivalence) fully faithful ``inclusion'' functor $\catname{A}\hookrightarrow \ch_{\catname{A}}$. It is clear that the inclusion functor preserves coproducts (as mentioned already in Example~\ref{example:heart}).

Let $F\colon \fr\to \catname{A}$ be a polynomial functor of degree $n$. Let $F'\colon\fr\to \ch_{\catname{A}}$ be the post-composition of $F$ with the inclusion functor. Note that $F'$ is characterised by the property that there is an isomorphism $\HH_0 F'\cong F$, and $\HH_n F'\cong 0$ for $n\ne 0$. Since the inclusion preserves coproducts, $F'$ is polynomial of degree $n$ by Lemma~\ref{lemma:additive preserves polynomial} (see also Example~\ref{example:heart}). By Theorem~\ref{theorem:main general} there are equivalences of categories
\[
\begin{tikzcd}
\poly_n(\fr, \ch_{\catname{A}}) \arrow[r, "\rho_{\pi_1}", "\simeq"'] &\poly_n(\circles, \ch_{\catname{A}}) & 
\exc_n(\spaces, \ch_{\catname{A}}) \arrow[l, "\rho"', "\simeq"].
\end{tikzcd}
\]
It follows that there is an essentially unique $n$-excicive functor $\widehat F\colon \spaces \to \ch_{\catname{A}}$ such that there is an equivalence $\widehat F|_{\circles} \simeq F'\circ \pi_1$. Since by construction $F'$ takes values in $\ch_{\catname{A}}^{\heartsuit}$, it follows that $\widehat F$ is determined by the requirement $\HH_* \widehat F|_{\circles}\cong \HH_* F'\circ \pi_1$, which is equivalent to saying that $\HH_0 \widehat F|_{\circles} \cong F\circ \pi_1$ and $\HH_i \widehat F|_{\circles} \cong 0$ for $i\ne 0$. This proves the first half of the theorem.

Let $G\colon \fr\to \catname{A}$ be another polynomial functor of degree $n$. Let $G'\colon \fr \to \ch_{\catname{A}}$ be the post-composition of $G$ with the inclusion functor. By the standard connection between homological algebra and stable homotopy theory (see Lemma~\ref{lem:key isomorphism} and the discussion preceding it), there is a natural isomorphism
\[
\ext^*_{\fun(\fr, \catname{A})}(F, G)\simeq \pi_{-*}\left(\spectralNat\left(F', G'\right)\right).
\]
Here by $\spectralNat\left(F', G'\right)$ we mean the spectral mapping object from $F'$ to $G'$ in the full category of functors $\fun(\fr,\ch_\catname{A})$. But in fact $F'$ and $G'$  are objects of the full subcategory $\poly_n(\fr, \ch_\catname{A})$. It follows that there is an isomorphism of graded groups
\[
 \pi_{*}\left(\spectralNat\left(F', G'\right)\right)\cong  \pi_{*}\left(\spectralmaps\hphantom{.}_{ \!\!\poly_n(\fr, \ch_\catname{A})}\!\left(F', G'\right)\right).
\]
By Theorem~\ref{theorem:main general} there is an equivalence of categories 
\[
\exc_n(\spaces, \ch_{\catname{A}})\simeq  \poly_n(\fr, \ch_\catname{A})
\]
induced by the functor $\classifying \fr \to \spaces$, and by definition $\widehat F$ and $\widehat G$ are the images of $F'$ and $G'$ under the inverse of this equivalence. It follows that there is a natural isomorphism 
\[
 \pi_{*}\left(\spectralmaps\hphantom{.}_{ \!\!\poly_n(\fr, \ch_\catname{A})}\!\left(F', G'\right)\right)\cong  \pi_{*}\left(\spectralmaps\hphantom{.}_{ \!\!\exc_n(\spaces, \ch_\catname{A})}\!\left(\widehat F, \widehat G\right)\right).
\]
Finally, since $\exc_n(\spaces, \ch_{\catname{A}})$ is a full subcategory of $\fun(\spaces, \ch_{\catname{A}})$, we have an isomorphism
\[
 \pi_{*}\left(\spectralmaps\hphantom{.}_{ \!\!\exc_n(\spaces, \ch_\catname{A})}\!\left(\widehat F, \widehat G\right)\right)\cong  \pi_{*}\left(\spectralNat\left(\widehat F, \widehat G\right)\right).
\]
Putting all these isomorphisms together, we obtain the isomorphism
\[
\ext^*_{\fun(\fr, \catname{A})}(F, G)\simeq \pi_{-*}\left(\spectralNat\left(\widehat F, \widehat G\right)\right).
\]
This is what we wanted to prove.
\end{proof}

\part{Calculations} \label{part:calculations}

\section{Functors that factor through abelianization}\label{section:dictionary}
Unlike in much of the first part, in this section ``categories'' and ``functors'' are meant in the $1$-categorical sense. Let $\catname{A}$ be an abelian category, and let $F\colon \fr \to \catname{A}$ be a polynomial functor. We continue to denote by $\widehat F\colon \spaces\to \ch_{\catname{A}}$ the extension of $F$, as characterized by Theorem~\ref{thm:fr-to-top}. In this section we calculate $\widehat F$ for a number of commonly occurring functors $F\colon \fr\to\ab$. In particular, we will prove the correctness of Table~\ref{tab:functors}. This will enable us to use Theorem~\ref{thm:fr-to-top} to perform calculations. We start with a fundamental example, on which many subsequent calculations will build. Recall that $\tensor^n\circ \abelianization$ is the $n$-fold tensor power of abelianization.
\begin{lem}\label{lem:abelianization transform}
There is a natural equivalence \[\widehat{\tensor^n\circ \abelianization}(X)=\Omega^n \widetilde\Z[X^{\wedge n}].\]
\end{lem}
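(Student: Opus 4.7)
The plan is to verify that the candidate
\[
\widehat F(X) := \Omega^n\,\widetilde\Z[X^{\wedge n}]
\]
satisfies the two defining properties of the extension given in Theorem~\ref{thm:fr-to-top}: namely, (i) it is $n$-excisive as a functor $\spaces\to \ch$, and (ii) its restriction to $\circles$ has homology concentrated in degree zero and naturally isomorphic to $\tensor^n\circ\abelianization\circ\pi_1$. The uniqueness clause in Theorem~\ref{thm:fr-to-top} then forces $\widehat F \simeq \widehat{\tensor^n\circ\abelianization}$.

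For (i), I will use the reduced K\"unneth equivalence $\widetilde\Z[X^{\wedge n}]\simeq \widetilde\Z[X]^{\otimes n}$ in $\ch$, which exhibits $\widehat F$ as the diagonal of the symmetric $n$-linear functor
\[
(X_1,\dots,X_n)\longmapsto \Omega^n\bigl(\widetilde\Z[X_1]\otimes\cdots\otimes\widetilde\Z[X_n]\bigr).
\]
This is $1$-excisive in each variable because $\widetilde\Z[-]$ is a stabilization and $\otimes$ in $\ch$ is exact in each slot; the diagonal of such an $n$-linear functor is $n$-excisive (a standard consequence of Goodwillie's identification of homogeneous and symmetric multilinear functors, see~\cite[Section~6.1.4]{LurieHA}). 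Preservation of filtered colimits is immediate since $\widetilde\Z[-]$, $(-)^{\wedge n}$, and shifting all preserve filtered colimits. Thus $\widehat F\in \exc_n(\spaces,\ch)$.

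For (ii), take $X=\bigvee_{i=1}^{k}S^{1}\in\circles$. Distributing smash over wedge gives $X^{\wedge n}\simeq \bigvee_{(i_1,\dots,i_n)\in\{1,\dots,k\}^n}S^{n}$, so $\widetilde\Z[X^{\wedge n}]$ has homology $\Z^{k^n}$ concentrated in degree $n$, and hence $\Omega^{n}\widetilde\Z[X^{\wedge n}]$ has homology concentrated in degree zero; in particular $\HH_i\widehat F(X)=0$ for $i\ne 0$. To upgrade the value in degree zero to a natural isomorphism with $\tensor^n(\abelianization(\pi_1(X)))$, I chain together
\[
\HH_0\bigl(\Omega^n\widetilde\Z[X^{\wedge n}]\bigr)\cong \widetilde\HH_n(X^{\wedge n};\Z)\cong \bigl(\widetilde\HH_1(X;\Z)\bigr)^{\otimes n}\cong \abelianization(\pi_1(X))^{\otimes n},
\]
where the middle step is the reduced K\"unneth isomorphism (no Tor term appears because $\widetilde\HH_1(X)$ is free abelian and concentrated in a single degree), and the last step is the Hurewicz isomorphism. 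Each arrow is natural for pointed maps between wedges of circles; since wedges of circles are aspherical, mapping spaces in the $\infty$-category $\circles$ are discrete, so $1$-categorical naturality suffices.

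The step that requires the most care is the naturality of the middle arrow: for a map $f\colon X\to Y$ in $\circles$, one must check that $\widetilde\HH_n(f^{\wedge n})$ corresponds under K\"unneth to $\bigl(\widetilde\HH_1(f)\bigr)^{\otimes n}$. This is the multiplicative naturality of K\"unneth for smash products of $1$-dimensional connected CW complexes; it holds without obstruction because all homology groups in sight are free and concentrated in a single degree, but it is the one place where $n$ arrows have to be coordinated simultaneously rather than one at a time.
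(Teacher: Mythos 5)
Your proof is correct and follows essentially the same route as the paper: both arguments reduce to wedges of circles, observe that $\Omega^n\widetilde\Z[X^{\wedge n}]$ there has homology concentrated in degree zero, and identify $\HH_0$ with $(\abelianization\circ\pi_1)^{\otimes n}$ via the K\"unneth and Hurewicz isomorphisms, the characterization/uniqueness of $\widehat F$ (Theorem~\ref{thm:fr-to-top}, itself a consequence of Theorem~\ref{theorem:main general}) then finishing the argument. The only difference is that you spell out the $n$-excisiveness of the candidate via multilinearity, which the paper treats as standard and leaves implicit.
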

\begin{proof}
By Theorem~\ref{theorem:main general} there is an equivalence of $\infty$-categories
\[
\exc_n(\spaces, \ch)\xrightarrow{\simeq} \poly_n(\fr, \ch)
\]
which is induced by restriction along the classifying space functor. By definition, $\widehat{\tensor^n\circ \abelianization}$ is the preimage of $\tensor^n\circ \abelianization$ under this equivalence. This means that we have to show that the following two functors from $\fr$ to $\ch$ are equivalent: one is $G\mapsto (G^{\abelianization})^{\otimes n}$ and the other is $G\mapsto \Omega^n\widetilde\Z[\classifying G^{\wedge n}].$ We claim that both functors take value in the heart of $\ch$, i.e., in chain complexes whose homology is concentrated in degree zero. For the functor $\tensor^n\circ \abelianization$ this holds by definition. Regarding the second functor, we know that $\widetilde\Z[\classifying G^{\wedge n}]$ is a simplicial abelian group whose homotopy groups are naturally isomorphic to the reduced homology group of $\classifying G^{\wedge n}$. Note that if $G\in \fr$, then $\classifying G$ is equivalent to a finite wedge of circles. It follows that the reduced homology of $\classifying G^{\wedge n}$ is concentrated in degree $n$. It follows that $\Omega^n\widetilde\Z[\classifying G^{\wedge n}]$ is a chain complex whose homology is concentrated in degree zero. 

To prove that two functors $\fr\to \ch$ that take values in $\ch^{\heartsuit}$ are equivalent, it is enough to show that they become isomorphic after applying $\pi_0/\HH_0$, as $1$-functors $\fr \to \ab$. To this end, we have isomorphisms, natural in $G$
\[
\pi_0(\Omega^n\widetilde\Z[\classifying G^{\wedge n}])\cong \pi_n(\widetilde\Z[\classifying G^{\wedge n}])\cong \widetilde \HH_n(\classifying G^{\wedge n})\cong [\widetilde\HH_1(\classifying G)]^{\otimes n}\cong (G^{\abelianization})^{\otimes n}.
\]


\end{proof}
\begin{rem}\label{rem:action}
The functor $\tensor^n \circ \abelianization$ has a natural action of $\Sigma_n$, permuting the factors. This action will be important when we consider applications to stable cohomology of automorphisms of free groups. It follows that $\widehat{\tensor^n \circ \ab}$ has a corresponding action of $\Sigma_n$. It is clear that the action of $\Sigma_n$ on $\Omega^n\widetilde \Z[X^{\wedge n}]$ corresponding to the action on $\tensor^n \circ \abelianization$ is the action by conjugation, where $\Sigma_n$ acts both on $\Omega^n$ and on $X^{\wedge n}$ by permuting coordinates. This is the correct action because with it the isomorphism of abelian groups
\[
\pi_0(\Omega^n\widetilde\Z[\classifying G^{\wedge n}])\cong (G^{\abelianization})^{\otimes n}
\]
respects the action of $\Sigma_n$.
\end{rem}
The next two examples concern the functors $\Lambda^n \circ \abelianization$ and $\Gamma^n \circ \abelianization$. Let us review these functors. For an abelian group $A$, $\Lambda^n(A)$ is defined to be the quotient of $A^{\otimes n}$ by the subgroup generated by all pure tensors $a_1\otimes \cdots \otimes a_n$ where at least two of the factors are the same. At least when $A$ is a free abelian group, there is an equivalent definition of $\Lambda^n(A)$ as the group of skew-symmetric tensors. Let $A^{\otimes n}_\pm$ denote the $n$-fold tensor power of $A$, equipped with the \emph{sign action} of $\Sigma_n$. Let $(A^{\otimes n}_\pm)^{\Sigma_n}$ denote the invariants of the sign action. There is a well-known homomorphism
\begin{equation}\label{eq:exterior}
\Lambda^n(A) \to {A^{\otimes n}_\pm}^{\Sigma_n}
\end{equation}
which is defined on pure tensors by the formula 
\[ 
a_1\otimes \cdots \otimes a_n\mapsto\sum_{\sigma\in \Sigma_n} \operatorname{sign}(\sigma) a_{\sigma(1)}\otimes \cdots \otimes a_{\sigma(n)}.
\]
The following lemma is elementary and well-known.
\begin{lem}\label{lemma:exterior}
The homomorphism~\eqref{eq:exterior} is well-defined and natural in $A$. When $A$ is a free abelian group it is an isomorphism.
\end{lem}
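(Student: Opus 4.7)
The plan is to dispatch the three claims (well-definedness, naturality, isomorphism on free $A$) in order, since each is essentially a bookkeeping exercise once set up correctly. First I would verify that the formula $a_1\otimes\cdots\otimes a_n\mapsto \sum_{\sigma\in \Sigma_n}\operatorname{sign}(\sigma)\,a_{\sigma(1)}\otimes\cdots\otimes a_{\sigma(n)}$, initially defined on $A^{\otimes n}$, vanishes on the subgroup generated by pure tensors with a repeated entry. The standard trick works: if $a_i=a_j$ for $i\ne j$, pair each $\sigma$ with $\sigma\circ (i\,j)$; the two summands carry opposite signs but yield identical tensors, so they cancel in pairs. Hence the map descends to $\Lambda^n(A)$. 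Checking that the image lies in the $\Sigma_n$-invariants of the sign representation is a one-line substitution of indices: applying $\tau\in\Sigma_n$ to the sum and reindexing by $\sigma'=\tau\sigma$ produces the factor $\operatorname{sign}(\tau)\operatorname{sign}(\tau^{-1})=1$, which is what invariance under the sign action requires. Naturality in $A$ is immediate from the formula, as the construction only uses $+$, tensor product, and the $\Sigma_n$-action, all of which are natural.

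Next I would establish the isomorphism claim. The plan is to first handle finitely generated free $A$ by a basis calculation, then bootstrap to the general case. For $A=\Z^k$ with basis $e_1,\ldots,e_k$, the group $\Lambda^n(A)$ is free abelian on $\{e_{i_1}\wedge\cdots\wedge e_{i_n}\mid i_1<\cdots<i_n\}$, and its image under the proposed map is the antisymmetrization $\sum_\sigma \operatorname{sign}(\sigma)\,e_{i_{\sigma(1)}}\otimes\cdots\otimes e_{i_{\sigma(n)}}$. On the other side, decomposing $A^{\otimes n}$ into its standard monomial basis and sorting by the \emph{underlying multiset} of indices shows that the $\Sigma_n$-invariants of $A^{\otimes n}_\pm$ are freely generated by exactly the antisymmetrizations of monomials indexed by strictly increasing tuples $i_1<\cdots<i_n$ (monomials with a repeated index contribute $0$ to the invariants). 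So the map sends a basis bijectively onto a basis and is thus an isomorphism.

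For a general free abelian group $A$, I would argue by reducing to the finitely generated case via filtered colimits. Writing $A$ as the filtered colimit of its finitely generated free (direct) summands, I would observe that $\Lambda^n$ commutes with filtered colimits (it is a quotient of the tensor product functor, which is a left adjoint), and that $A\mapsto (A^{\otimes n}_\pm)^{\Sigma_n}$ likewise commutes with filtered colimits, because filtered colimits in $\ab$ commute with finite limits and $(-)^{\Sigma_n}$ is such a finite limit. Since the homomorphism is natural, it is the filtered colimit of isomorphisms and hence an isomorphism. The only step requiring any care is this last colimit argument, but it is standard; the rest is formal.
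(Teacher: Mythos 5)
The paper gives no argument for this lemma at all --- it is asserted as ``elementary and well-known'' --- so there is no proof of record to compare with; the only question is whether your argument is correct, and in substance it is. Your basis computation for finitely generated free $A$ is the standard one: in the sign-twisted permutation action on $A^{\otimes n}$, a monomial $e_{i_1}\otimes\cdots\otimes e_{i_n}$ with a repeated index is fixed up to sign by a transposition in its stabilizer, which acts by $-1$ and so forces the corresponding coefficient of any invariant element to vanish over $\Z$, while each strictly increasing multi-index contributes a rank-one group of invariants generated by the antisymmetrized monomial, which is exactly the image of the corresponding wedge basis vector of $\Lambda^n(A)$. The reduction of the general free case to the finitely generated one via filtered colimits is also fine: $\Lambda^n$ commutes with filtered colimits (your ``left adjoint'' justification is loose --- $(-)^{\otimes n}$ is not a left adjoint in one variable --- but preservation of filtered colimits by tensor powers and quotients is standard), and $\bigl((-)^{\otimes n}_\pm\bigr)^{\Sigma_n}$ commutes with them because invariants are a finite limit and filtered colimits in $\ab$ are exact.

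One slip worth fixing in the well-definedness step: with the usual convention that $\sigma\circ(i\,j)$ means ``first $(i\,j)$, then $\sigma$'', the summands indexed by $\sigma$ and $\sigma\circ(i\,j)$ are \emph{not} literally the same tensor --- pre-composition transposes the tensor factors occupying positions $i$ and $j$ of the output, i.e.\ it exchanges $a_{\sigma(i)}$ and $a_{\sigma(j)}$, which the hypothesis $a_i=a_j$ does not control. The correct pairing is $\sigma\leftrightarrow(i\,j)\circ\sigma$: post-composing with the transposition replaces each occurrence of the index $i$ by $j$ and vice versa, so $a_i=a_j$ makes the two tensors coincide while their signs are opposite, and the terms cancel in pairs. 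This is a one-line correction (or a statement of the opposite composition convention) and does not affect anything else in your argument.
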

Subsequently $A$ will be $\widetilde \HH_1(X)$, where $X$ is a finite wedge of circles, so $A$ will be a finitely generated free abelian group. We are therefore justified in identifying the group ${A^{\otimes n}_\pm}^{\Sigma_n}$ with $\Lambda^n(A)$. Similarly, $\Gamma^n(A)$ is identified with ${A^{\otimes n}}^{\Sigma_n}$ - the invariants of the unsigned action of $\Sigma_n$ on $A^{\otimes n}$.
\begin{prop}\label{prop:exterior and divided powers}
\begin{enumerate}
\item If $F(G)=\Lambda^n (G^\abelianization) $ then $\widehat F(X)=\Sigma^{-n} \widetilde\Z[X^{\wedge n}_{\Sigma_n}]$ \label{eq:propexterior}
\item If $F(G)=\Gamma^n (G^\abelianization)$ then $\widehat F(X)=\Sigma^{-2n}\widetilde\Z[(SX)^{\wedge n}_{\Sigma_n}]$ \label{eq:dividedpower} 
\end{enumerate}
\end{prop}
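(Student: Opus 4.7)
The plan is to verify, for each proposed formula, the two conditions characterizing $\widehat F$ in Theorem~\ref{thm:fr-to-top}: namely that the candidate is $n$-excisive as a functor $\spaces \to \ch$, and that its restriction to $\circles$ has reduced homology concentrated in degree zero and naturally isomorphic there to $F \circ \pi_1$. Both candidates are manifestly built from the $n$-excisive functor $X \mapsto \widetilde\Z[X^{\wedge n}]$ (the composition of the reduced $n$-excisive functor $X\mapsto X^{\wedge n}$ with the colimit-preserving $\widetilde\Z[-]$), equipped with a natural $\Sigma_n$-action permuting the smash factors.

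For part (1), the key observation is that the $\Sigma_n$-action on $X^{\wedge n}$ carries a sign twist on the one-dimensional smash factors, so on $\widetilde\Z[X^{\wedge n}]$ this action corresponds to the sign-twisted conjugation action featured in Remark~\ref{rem:action}. Excisiveness of $\Sigma^{-n}\widetilde\Z[X^{\wedge n}_{\Sigma_n}]$ will be established by identifying the strict orbits with homotopy coinvariants of $\Omega^n\widetilde\Z[X^{\wedge n}]$ under the conjugation action, which is an $n$-excisive functor since it is a homotopy colimit (over $\classifying\Sigma_n$) of the $n$-excisive functor $\Omega^n\widetilde\Z[X^{\wedge n}]$. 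For the homological computation on $\circles$, let $X = \bigvee_{i=1}^k S^1$; then $X^{\wedge n}$ is a wedge of copies of $S^n$ indexed by $(i_1,\ldots,i_n) \in [k]^n$, with $\Sigma_n$ acting by permuting summands and via the sign representation on each $S^n$. Decomposing $X^{\wedge n}/\Sigma_n$ by $\Sigma_n$-orbits, an orbit with distinct entries has trivial stabilizer and contributes one $S^n$ summand, giving $\binom{k}{n}$ such spheres; an orbit with repeated entries has stabilizer containing a transposition, which acts on the corresponding $S^n$ as an orientation-reversing involution with fixed points (a reflection), so the quotient is contractible. Hence $X^{\wedge n}/\Sigma_n \simeq \bigvee_{\binom{k}{n}} S^n$, and after the shift $\Sigma^{-n}$ the reduced homology is concentrated in degree $0$ equal to $\Z^{\binom{k}{n}} \cong \Lambda^n(\widetilde\HH_1(X))$ by Lemma~\ref{lemma:exterior}.

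For part (2), the substitution $SX$ for $X$ replaces the odd-dimensional spheres by even-dimensional ones, so that permutations of smash factors of $SX$ act on top cells with trivial sign. In this setting, the norm map equivalence is even more transparent and the corresponding $n$-excisiveness argument applies to $\Sigma^{-2n}\widetilde\Z[(SX)^{\wedge n}_{\Sigma_n}]$. For the homological check, let $X = \bigvee_{i=1}^k S^1$, so $SX = \bigvee_{i=1}^k S^2$. I invoke James's theorem $SP^n(S^2) \cong \CP^n$ to see that $(S^2)^{\wedge n}/\Sigma_n \cong \CP^n/\CP^{n-1} \cong S^{2n}$, and more generally that for any subgroup $H \subseteq \Sigma_n$ the quotient $S^{2n}/H$ still has reduced homology concentrated in degree $2n$ equal to $\Z$. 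Decomposing $(SX)^{\wedge n}/\Sigma_n$ by $\Sigma_n$-orbits of $[k]^n$ gives one copy of $S^{2n}$ per orbit, namely $\binom{k+n-1}{n}$ copies indexed by multisets of size $n$ from $[k]$. This number equals the rank of $\Gamma^n(\Z^k)$, and after shifting by $\Sigma^{-2n}$ one obtains homology concentrated in degree $0$ naturally isomorphic to $\Gamma^n(\widetilde\HH_1(X))$.

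The main technical obstacle is the rigorous proof that the strict-orbits construction $\widetilde\Z[X^{\wedge n}_{\Sigma_n}]$ defines an $n$-excisive functor of $X$, which requires a careful comparison between strict and homotopy $\Sigma_n$-coinvariants of $\widetilde\Z[X^{\wedge n}]$ (with the appropriate sign twist in part (1)). Once excisiveness is in hand, the homological verification on $\circles$ proceeds by the orbit-by-orbit decomposition sketched above, and Theorem~\ref{thm:fr-to-top} then identifies the candidate with $\widehat{\Lambda^n\circ\abelianization}$ and $\widehat{\Gamma^n\circ\abelianization}$ respectively.
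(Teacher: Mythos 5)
There is a genuine gap in both halves of your argument. First, your route to $n$-excisiveness rests on identifying the strict orbits $\widetilde\Z[X^{\wedge n}_{\Sigma_n}]$ (up to shift) with the \emph{homotopy} coinvariants of $\Omega^n\widetilde\Z[X^{\wedge n}]$ for the conjugation action, and this identification is false integrally. Take $X=S^1$ and $n\ge 2$: the space $S^n_{\Sigma_n}$ is contractible, so $\Sigma^{-n}\widetilde\Z[X^{\wedge n}_{\Sigma_n}]\simeq 0$, whereas the conjugation action on $\Omega^n\widetilde\Z[S^n]$ is trivial on homotopy (the two sign twists cancel), so its homotopy coinvariants have $\pi_0\cong\Z$ and higher homotopy given by $\HH_*(\Sigma_n;\Z)$. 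Strict and homotopy orbits agree only rationally (this is exactly the point of Section~\ref{section:rational}), and their integral discrepancy is precisely what produces the torsion computed later, e.g.\ in Lemma~\ref{lem:Lambda2 to Lambdan}; so this step cannot be repaired by a sign bookkeeping, and excisiveness of $\Sigma^{-n}\widetilde\Z[X^{\wedge n}_{\Sigma_n}]$ has to be obtained differently (for instance from the fact that the functor preserves sifted colimits, has vanishing cross-effects above degree $n$, and is left Kan extended from $\finset^{\le n}$ -- equivalently it is the prolonged strict degree-$n$ symmetric power of $\widetilde\Z[X]$). The paper's proof never goes through homotopy orbits.

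Second, your verification of the condition on $\circles$ is only object-wise, while Theorem~\ref{thm:fr-to-top} requires an isomorphism of \emph{functors} on $\circles$. Your decomposition of $X^{\wedge n}$ (for $X=\bigvee_{i=1}^k S^1$) into wedge summands indexed by $[k]^n$ is natural only for maps respecting the wedge decomposition; it is not natural for, say, the map $S^1\to S^1\vee S^1$ classifying the product of the two generators, which is a morphism in $\circles$. The orbit count identifies $\widetilde\HH_n(X^{\wedge n}_{\Sigma_n})$ with a free abelian group of the correct rank, but it produces no natural transformation to $\Lambda^n(\widetilde\HH_1(X))$ (Lemma~\ref{lemma:exterior} only compares $\Lambda^n$ with invariants of the sign-twisted tensor power; you still need a natural map out of the homology of the strict quotient). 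Supplying such a map is the actual content of the paper's proof: the homology transfer $\widetilde\HH_*(X^{\wedge n}_{\Sigma_n})\to\widetilde\HH_*(X^{\wedge n})^{\Sigma_n}$ gives the natural comparison, and the work is to show it is an isomorphism, which the paper does by the binomial decomposition under $X\mapsto X\vee Y$ reducing to $X=S^1$ (resp.\ to $S^2$, with the degree-$n!$ argument) -- your counts could serve as that final verification, but without the natural map in place the uniqueness clause of Theorem~\ref{thm:fr-to-top} cannot be invoked. Two smaller points: the identification $(S^2)^{\wedge n}_{\Sigma_n}\cong\CP^n/\CP^{n-1}\cong S^{2n}$ is correct (though it is the classical computation of symmetric products of $\CP^1$, not a theorem of James), and your claim that $S^{2n}/H$ is a homology $2n$-sphere for \emph{every} subgroup $H\le\Sigma_n$ is unjustified; fortunately only Young subgroups occur as stabilizers, where the statement follows from the smash decomposition.
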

\begin{proof}
\eqref{eq:propexterior} Using the uniqueness part of Theorem~\ref{thm:fr-to-top}, it is not hard to see that our task amounts to showing that when $X$ is restricted to the category $\circles$ of finite wedges of circles, there is an isomorphism of graded groups, natural in $X$
\[
\widetilde \HH_{*} (X^{\wedge n}_{\Sigma_n})\cong \Lambda^n(\widetilde \HH_*(X)).
\]
We will make use of the homology transfer for strict orbits. Let $K$ be a pointed regular CW-complex with a cellular action of a finite group $G$. It is well-known that there is a natural transfer homomorphism
\[
\widetilde\HH_*(X/G)\to \widetilde \HH_*(X)^G.
\]
One of the defining properties of the transfer homomorphism is that the composition
\begin{equation*}\label{eq:transfer property}
\widetilde\HH_*(X) \to \widetilde\HH_*(X/G)\to \widetilde \HH_*(X)^G\hookrightarrow \widetilde\HH_*(X)
\end{equation*}
is equal to $\sum_{g\in G} g_*$~\cite[Section III.2]{Bredon72}.

When $X$ is a wedge of circles, $X^{\wedge n}$ has the structure of a regular CW-complex with an action of $\Sigma_n$. Let us fix such an $X$. Then $\widetilde \HH_*(X)$ is a finitely generated free abelian group concentrated in degree 1, and $\widetilde \HH_*(X^{\wedge n})$ is concentrated in degree $n$. In degree $n$ $\widetilde \HH_*(X^{\wedge n})$ is canonically isomorphic, as a group with an action of $\Sigma_n$, to $\widetilde \HH_1(X)^{\otimes n}_{\pm}$.  It follows that the transfer induces a homomorphism
\[
\widetilde \HH_{*} (X^{\wedge n}_{\Sigma_n})\xrightarrow{\cong} \widetilde\HH_*(X^{\wedge n})^{\Sigma_n}.
\]
The target of this homomorphism is, by Lemma~\ref{lemma:exterior}, naturally isomorphic to $\Lambda^n(\widetilde \HH_*(X))$. So our goal is to show that the homomorphism above is an isomorphism, whenever $X$ is a wedge of circles.

For the purpose of this proof let us denote $F_n(X)=\widetilde \HH_{*} (X^{\wedge n}_{\Sigma_n})$ and $G_n(X)=\widetilde \HH_*(X^{\wedge n})^{\Sigma_n}$. We have a natural transformation $F_n\to G_n$, and we want to show that it is an isomorphism for all $n>0$ when evaluated on a finite wedge of circles.

Next we claim that $F_n$ and $G_n$ satisfy the same recursive formula. Namely, we claim that there is a natural isomorphism
\begin{equation}\label{eq:recursive}
F_n(X\vee Y)\cong \bigoplus_{i=0}^n F_i(X) \otimes F_{n-i}(Y)
\end{equation}
and $G_n$ satisfies an exactly analogous formula. Furthermore, the natural transformation $F_n\to G_n$ respects this decomposition. All this follows easily from the ``binomial formula'' for $(X\vee Y)^{\wedge n}$, and the naturality of the transfer. By induction on the number of circles, it follows that it is enough to show that the homomorphism $F_n(S^1)\to G_n(S^1)$ is an isomorphism for all $n$. When $n=1$ we get the transfer map for $\Sigma_1$, i.e., the trivial group, which certainly is an isomorphism. For $n>1$, it is well-known that the orbit space $S^n_{\Sigma_n}$ is contractible, so $F_n(S^1)=0$, and it is easy to see that $G_n(S^1)=0$ as well. So in this case the homomorphism $F_n(S^1) \to G_n(S^1)$ can only be an isomorphism.

\eqref{eq:dividedpower} Using the uniqueness part of Theorem~\ref{thm:fr-to-top} once again, we see that our task is to prove that the transfer induces an isomorphism, whenever $X$ is a finite wedge of circles.
\[
\widetilde \HH_{*} ((SX)^{\wedge n}_{\Sigma_n})\xrightarrow{\cong} \widetilde \HH_*((SX)^{\wedge n})^{\Sigma_n}.
\]
This is equivalent to showing that the tranfer induces an isomorphism, whenever $Y$ is a finite wedge sum of copies of $S^2$
\[
\widetilde \HH_{*} (Y^{\wedge n}_{\Sigma_n})\xrightarrow{\cong} \widetilde \HH_*(Y^{\wedge n})^{\Sigma_n}.
\]
Once again, it is easy to see that both the source and the target of this homomorphism satisfy the same recursive relation as in~\eqref{eq:recursive}. It follows that it is enough to prove that this is an isomorphism when $Y=S^2$. In this case we obtain a homomorphism of the following form
\begin{equation}\label{eq:evensphere}
\widetilde \HH_{*} (S^{2n}_{\Sigma_n})\to \widetilde \HH_*(S^{2 n})^{\Sigma_n}.
\end{equation}
We want to show that the transfer homomorphism~\eqref{eq:evensphere} is an isomorphism. It is well-known that the orbit space $S^{2n}_{\Sigma_n}$ is in fact homeomorphic to $S^{2n}$. It follows that the source and the target of~\eqref{eq:evensphere} are in fact isomorphic graded abelian groups, and are both isomorphic to $\Z$, concentrated in degree $2n$. Furthermore, the quotient map $S^{2n} \to S^{2n}_{\Sigma_n}$ induces multiplication by $n!$ on $\widetilde \HH_{2n}$. To see this, note that it is a map between oriented manifolds of dimension $2n$, and it restricts to an oriented $n!$-fold covering over an open subset of $S^{2n}_{\Sigma_n}$, so it has degree $n!$. 

On the other hand, the following composition of homomorphisms is also multiplication by $n!$
\[
\widetilde \HH_{*}(S^{2n}) \to \widetilde \HH_{*}(S^{2n}_{\Sigma_n}) \to \widetilde \HH_{*}(S^{2n})^{\Sigma_n}\xrightarrow{\cong} \widetilde \HH_{*}(S^{2n}).
\]
Indeed, by the defining property of the transfer map, the composition is equal to $\sum_{\sigma\in \Sigma_n}\sigma_*$. But $\Sigma_n$ acts trivially on $\widetilde \HH_{*}(S^{2n})$, so the composition is just multiplication by $n!$. Since the first homomorphism, and the composition are both multiplication by $n!$, it follows that the transfer homomorphism $\widetilde \HH_{*}(S^{2n}_{\Sigma_n}) \to \widetilde \HH_{*}(S^{2n})^{\Sigma_n}$ is an isomorphism. 
\end{proof}
\subsection{A general formula}\label{section:general formula}
So far we calculated $\widehat F$ by guessing it. But it is good to also have a general procedure for calculating it. We will now develop a kind of formula for $\widehat F$ in case when $F$ factors through abelianization. 

Let $F$ be a polynomial functor from $\fr$ to $\ab$. Recall that $\abelianization$ denotes the abelianization functor from $\fr$ to the category of finitely generated free abelian groups. We say that a functor $F\colon \fr \to \ab$ factors through abelianization if there is a functor $F^{\abelianization}$ from the category of finitely generated free abelian groups to abelian groups, such that there is an isomorphism of functors $F\cong F^{\abelianization}\circ \abelianization$. 
\begin{rem}
It is easy to see that if a factorization $F\cong F^{\abelianization}\circ \abelianization$ exists then $F^{\abelianization}$ is unique up to isomorphism. This is so because the abelianization functor (from free groups to free abelian groups) is full and essentially surjective on objects. Furthermore, if $F$ is polynomial of degree $n$ then so is $F^{\abelianization}$, since the abelianization functor preserves coproducts. 
\end{rem}
Suppose $X$ is a pointed (simplicial) set. Normally we interpret $\widetilde \Z[-]$ as an $\infty$-categorical functor, but now we need to commit ourselves to a strict (i.e., 1-categorical) model for this functor. So let us define $\widetilde \Z[-]$ to be a $1$-categorical functor from pointed (simplicial) sets to (simplicial) abelian groups by the formula $\widetilde \Z[X]:=\Z[X]/\Z[*]$. If $K$ is another pointed finite simplicial set, in this section we let $\pointedmaps(K, \widetilde \Z[X])$ denote the cosimplicial simplicial abelian group which in bi-degree $([i], [j])$ is given by $\hom(\widetilde \Z[K_i], \widetilde\Z[X_j])$. Again, note that in this section we deviate slightly from our customary notation. Normally $\pointedmaps(K, \widetilde \Z[X])$ would denote the cotensoring of the chain complex $\widetilde \Z[X]$ by $K$ in the $\infty$-category $\ch$, but here we use the $1$-categorical model for it as a cosimplicial-simplicial group. Since we assume that $K$ is finite, it follows that $\pointedmaps(K, \widetilde\Z[X])$ is free abelian in each bidegree. If $S^1$ is a simplicial circle, we use the notation $\Omega \widetilde \Z[X]:= \pointedmaps(S^1, \widetilde \Z[X])$.
\begin{defn}
Suppose $F^{\abelianization}$ is a functor from finitely generated free abelian groups to abelian groups. Then we denote by $F^{\abelianization}\left(\pointedmaps(K, \widetilde\Z[X])\right)$ the chain complex obtained in the following three steps 
\begin{enumerate}
\item \label{cosimplicial-simplicial} apply $F^\abelianization$ levelwise to the cosimplicial simplicial abelian group $\pointedmaps(K, \widetilde\Z[X])$, to get another cosimplicial simplicial abelian group
\item \label{Dold-Kan} apply the Dold-Kan normalized chains functor in both directions to the cosimplicial simplicial abelian group obtained in step~\eqref{cosimplicial-simplicial}, to get a second quadrant bicomplex, 
\item take the total complex of the bicomplex obtained in step~\eqref{Dold-Kan}.
\end{enumerate}
\end{defn}
Now we are ready to give our formula for $\widehat F$
\begin{thm}\label{thm:formula}
Let $F\colon \fr \to \ab$ be a polynomial functor. Suppose that $F$ factors through abelianization, so there is an isomorphism of functors $F\cong F^{\abelianization}\circ \abelianization$. Then on the category of pointed simplicial sets, there is an equivalence, natural in $X$
\[
\widehat F(X) \simeq F^{\abelianization}\left(\Omega \widetilde\Z [X]\right).
\]
\end{thm}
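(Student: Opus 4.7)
The plan is to define the candidate functor
\[
G(X) := F^{\abelianization}\!\bigl(\Omega \widetilde\Z[X]\bigr)
\]
via the explicit cosimplicial-simplicial model in the statement, and then identify $G$ with $\widehat F$ using the uniqueness part of Theorem~\ref{thm:fr-to-top}. Concretely, in each bidegree of the cosimplicial-simplicial abelian group $\pointedmaps(S^1,\widetilde\Z[X])$ one has a finitely generated free abelian group (when $X$ is a finite pointed simplicial set), so the $1$-categorical functor $F^{\abelianization}$ can be applied bidegree-wise and the resulting bicomplex totalized functorially in $X$. First I would check that this construction sends weak equivalences of pointed simplicial sets to quasi-isomorphisms, and hence descends to an $\infty$-categorical functor $\spaces\to \ch$; this amounts to observing that the cosimplicial-simplicial model computes a Dold--Puppe style derived extension of $F^{\abelianization}$ applied to $\Omega\widetilde\Z[X]$.

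Next I would verify the boundary condition required by Theorem~\ref{thm:fr-to-top}, namely that $\HH_0 G|_{\circles}\cong F\circ\pi_1$ with vanishing higher homology. When $X=\bigvee_k S^1$ is a finite wedge of circles, the chain complex $\Omega\widetilde\Z[X]$ is quasi-isomorphic to the free abelian group $\widetilde\HH_1(X)\cong (\pi_1 X)^{\abelianization}$ placed in degree zero. Because a free abelian group is projective, applying $F^{\abelianization}$ via the cosimplicial-simplicial model collapses to the $1$-categorical answer:
\[
G(X)\simeq F^{\abelianization}\!\bigl((\pi_1 X)^{\abelianization}\bigr)=F(\pi_1 X),
\]
concentrated in degree zero, as required.

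The most delicate step is verifying that $G$ is $n$-excisive as a functor $\spaces\to\ch$. The plan is to factor $G$ as the composition of the linear functor $\widehat{\abelianization}\colon X\mapsto \Omega\widetilde\Z[X]$ with the cosimplicial-simplicial extension of $F^{\abelianization}$ to chain complexes. Since abelianization $\fr\to\mathrm{FreeAb}$ preserves coproducts, Lemma~\ref{lemma:additive preserves polynomial} guarantees that $F^{\abelianization}$ is polynomial of degree $n$ on finitely generated free abelian groups. I would then argue, using the cross-effect characterisation of polynomial functors from Proposition~\ref{prop:polynomials}, that the cosimplicial-simplicial extension of a polynomial functor of degree $n$ on free abelian groups yields an $n$-excisive functor on $\ch$; composing with the linear (hence $1$-excisive) $\widehat{\abelianization}$ preserves this degree. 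Combined with the boundary condition, uniqueness in Theorem~\ref{thm:fr-to-top} then gives $G\simeq\widehat F$. The main obstacle is showing that the naive $1$-categorical cosimplicial-simplicial construction genuinely represents the correct $\infty$-categorical derived extension of $F^{\abelianization}$: this is what makes the excisiveness argument go through, and is controlled by the fact that in each bidegree $F^{\abelianization}$ is applied only to free abelian groups, though the careful comparison requires some bookkeeping.
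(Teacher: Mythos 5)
Your overall strategy---define $G(X)=F^{\abelianization}(\Omega\widetilde\Z[X])$ from the explicit bicomplex, check that $G$ is $n$-excisive with the correct restriction to wedges of circles, and invoke the uniqueness clause of Theorem~\ref{thm:fr-to-top}---is a legitimate alternative in outline to the paper's argument, which goes in the opposite direction: it starts from the already-known excisive $\widehat F$, computes it on finite pointed sets via Waldhausen's cosimplicial model for $T_\infty$ (using Corollary~\ref{cor:connected} and Lemma~\ref{lem:circle} to identify the cosimplicial abelian group $[n]\mapsto \widetilde\HH_1(X*[n])$ with $\Omega\widetilde\Z[X]$), and then extends to simplicial sets because excisive functors preserve geometric realisations (Lemma~\ref{lemma:preserve sifted}). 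However, the two verifications on which your route depends are exactly where the content lies, and neither is established by your sketch.

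First, the collapse on wedges of circles. You argue that since $\Omega\widetilde\Z[X]$ is equivalent in $\ch$ to the free abelian group $\widetilde\HH_1(X)$ in degree zero, projectivity forces the bicomplex to collapse to the $1$-categorical value $F(\pi_1X)$. This does not follow: $F^{\abelianization}$ is not additive, so applying it bidegree-wise to a \emph{specific} cosimplicial-simplicial model is not invariant under replacing the input by an equivalent object of $\ch$. Concretely, for a fixed simplicial degree $j$ the cosimplicial direction applies $F^{\abelianization}$ levelwise to the cosimplicial group $\hom(\widetilde\Z[S^1_\bullet],\widetilde\Z[X_j])$, and already for $F^{\abelianization}=\symm^2$ this has nontrivial homology away from degree zero; the correct answer appears only after totalising over the simplicial direction as well, and that cancellation is precisely the assertion of the theorem, not a consequence of projectivity. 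Second, the $n$-excisiveness of $G$: you propose to deduce it from the claim that the cosimplicial-simplicial prolongation of a degree-$n$ polynomial functor is $n$-excisive on $\ch$, but proving that claim (together with homotopy invariance of the prolongation on the non-connective inputs $\Omega\widetilde\Z[X]$) is the same ``correct derived extension'' problem you defer as bookkeeping---it is the heart of the matter and no argument is given. (A minor point: Lemma~\ref{lemma:additive preserves polynomial} concerns post-composition with a coproduct-preserving functor, so it does not yield polynomiality of $F^{\abelianization}$; that instead follows because abelianization preserves coproducts, as remarked in Section~\ref{section:dictionary}.) The most economical repair is to argue in the paper's direction: identify your bicomplex, for $X$ a finite pointed set, with the totalisation of $[n]\mapsto\widehat F(X*[n])$ computing $\widetilde T_\infty\widehat F(X)$, and then extend to simplicial sets using preservation of realisations.
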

Before proving the theorem, we need to review the simplicial model for the circle that we will use. Recall that $\Delta$ is the category with objects $[0], [1], \ldots, [n], \ldots$. Here $[n]=\{0, \ldots, n\}$, and the morphisms in $\Delta$ are order-preserving functions between these sets. The standard simplicial model for the circle is the following contravariant functor $\Delta^{\op}\to \finset$:
\[
[n]\mapsto \Delta([n], [1])/\{c_0, c_1\}
\]
where $c_0, c_1$ denote the two constant maps from $[n]$ to $[1]$. We view it as a pointed simplicial set. Thus the following simplicial abelian group is a model for the reduced simplicial chains on $S^1$
\[
[n]\mapsto \widetilde \Z[\Delta([n], [1])/\{c_0, c_1\}]
\]
where $\widetilde \Z[-]$ denotes the pointed free abelian group functor. 

On the other hand, there is a simplicial abelian group
\[
[n]\mapsto \Z^{n+1}/\mathbb Z
\]
where $\Z^{n+1}:=\Z^{[n]}$, and the group is quotiened by the diagonal copy of $\Z$, that is by the group of constant function from $[n]$ to $\Z$. We denote this simplicial abelian group by $\Z^{\bullet +1}/\Z$.

The inclusion of sets $[1]\hookrightarrow \Z$ induces a function $\Delta([n], [1])\to \Z^{n+1}$, which extends to a group homomorphism $\Z[\Delta([n], [1])]\to \Z^{n+1}$. It is clear that it takes constant functions to constant functions, to it induces a homomorphism
\[
\widetilde \Z[\Delta([n], [1])/\{c_0, c_1\}]\to \Z^{n+1}/\Z.
\]
This homomorphism is obviously natural in $[n]$, and it is easy to check that it is an isomorphism (both the source and the target are isomorphic to $\Z^n$). We have proved the following lemma
\begin{lem}\label{lem:circle}
The simplicial abelian group $\Z^{\bullet+1}/\Z$ is isomorphic to $\widetilde \Z[S^1_{\bullet}]$, where $S^1_{\bullet}$ is the standard simplicial circle.   
\end{lem}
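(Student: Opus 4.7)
The plan is to construct the isomorphism explicitly along the lines the author has already sketched, and verify separately that it is a morphism of simplicial abelian groups and that it is bijective in each simplicial degree. The map in question is induced by the inclusion of sets $[1]\hookrightarrow \Z$, which postcomposes to give a map of sets $\Delta([n],[1])\to \Z^{n+1}=\Z^{[n]}$, and hence a homomorphism $\Z[\Delta([n],[1])]\to \Z^{n+1}$ after applying $\Z[-]$ degree-wise. The two constant maps $c_0,c_1$ land in the diagonal copy of $\Z$, so this descends to the homomorphism $\widetilde\Z[\Delta([n],[1])/\{c_0,c_1\}]\to \Z^{n+1}/\Z$ claimed in the excerpt.

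Next I would check naturality in $[n]$. Given $\sigma\colon[m]\to[n]$ in $\Delta$, the simplicial structure map on the left is $f\mapsto f\circ\sigma$ for $f\in\Delta([n],[1])$, and on the right it is the pullback $(a_0,\ldots,a_n)\mapsto (a_{\sigma(0)},\ldots,a_{\sigma(m)})$. Both structures are instances of the functoriality of $\hom(-,Y)$ for $Y=[1]$ and $Y=\Z$ respectively, so naturality of our map reduces to the tautological commutation of the inclusion $[1]\hookrightarrow\Z$ with pre-composition in the first argument. The diagonal subgroup $\Z\subset\Z^{n+1}$ pulls back to the diagonal subgroup in $\Z^{m+1}$ (since constant functions restrict to constant functions), so we do obtain a well-defined morphism of simplicial abelian groups.

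To verify that the map is an isomorphism in each degree, I would exhibit explicit bases on both sides. The non-constant order-preserving maps $[n]\to[1]$ are precisely the $n$ maps $f_1,\ldots,f_n$ defined by $f_k(i)=0$ for $i<k$ and $f_k(i)=1$ for $i\ge k$, so $\{[f_1],\ldots,[f_n]\}$ is a $\Z$-basis of $\widetilde\Z[\Delta([n],[1])/\{c_0,c_1\}]$. On the right side, $\{[e_1],\ldots,[e_n]\}$ is a basis of $\Z^{n+1}/\Z$ (where $e_i$ is the $i$-th standard basis vector of $\Z^{n+1}=\Z^{[n]}$), since the diagonal relation $[e_0]=-([e_1]+\cdots+[e_n])$ eliminates $e_0$. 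Our map sends $[f_k]$ to $[e_k+e_{k+1}+\cdots+e_n]$, so the matrix expressing the image basis in terms of $\{[e_1],\ldots,[e_n]\}$ is upper unitriangular, hence invertible over $\Z$. This completes the degree-wise isomorphism.

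There is no real obstacle here: the argument is purely combinatorial, and the only thing to be careful about is bookkeeping, namely that the two constant maps $c_0,c_1$ are exactly the functions that land in the diagonal subgroup $\Z\subset\Z^{n+1}$, so the quotient on the left matches the quotient on the right and we get an honest morphism before worrying about bijectivity.
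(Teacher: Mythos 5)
Your proposal is correct and follows the same route the paper sketches: construct the map via the inclusion $[1]\hookrightarrow\Z$, observe that it is natural and respects the quotients, and check it is a degreewise isomorphism. The only difference is that where the paper simply notes "it is easy to check that it is an isomorphism (both the source and the target are isomorphic to $\Z^n$)," you actually carry out the check by exhibiting explicit bases $\{[f_1],\dots,[f_n]\}$ and $\{[e_1],\dots,[e_n]\}$ and observing the resulting matrix is unitriangular — a genuine and welcome completion of the argument, since having the same rank is of course not by itself sufficient.
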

\begin{proof}[Proof of Theorem~\ref{thm:formula}]
By Theorem~\ref{thm:fr-to-top_intro}, we know the value of $\widehat F$ on wedges of circles. The key step is to find the value of $\widehat F$ on pointed finite sets. From there, one can evaluate $\widehat F$ on general spaces by left Kan extension from $\finset$ to $\spaces$, which, if we take spaces to mean simplicial sets, is equivalent to evaluating $\widehat F$ level-wise and then taking geometric realization.  

Let $X$ be a pointed finite set. By Corollary~\ref{cor:connected}, we can calculate 
$\widehat F(X)$ as $\widetilde T_\infty \widehat F(X).$ 
Here $\widetilde T_\infty$ is essentially Goodwillie's operator $T_\infty$ defined in~\cite{Goodwillie2003}, and we will now omit $\sim$ from the notation. The reason $T_\infty \widehat F(X)$ is defined when $X$ is a pointed finite set is that $T_\infty \widehat F(X)$ is a homotopy limit of objects of the form $F(\pi_1(X * U))$, where $U$ is a non-empty finite set. Here $X*U$ denotes the pointed joint of $X$ and $U$, i.e. the mapping cone of the map $X\wedge U_+ \to X$. Note that whenever $X$ is a pointed set and $U$ is a non-empty set, $X*U$ is homotopy equivalent to a wedge of circles, and thus there is a natural equivalence $\widehat F(X * U)\simeq F(\pi_1(X * U)).$

We will use a cosimplicial model for $T_\infty \widehat F(X)$. For all $X$, $T_\infty \widehat F(X)$ is equivalent to the totalization of the following cosimplicial object
\[\begin{tikzcd}
	{\widehat F(X*[0])} & {\widehat F(X*[1])} & {\widehat F(X*[2])} & \cdots
	\arrow[shift left, Rightarrow, from=1-1, to=1-2]
	\arrow[shift left, from=1-2, to=1-1]
	\arrow[shift left, Rightarrow, scaling nfold=3, from=1-2, to=1-3]
	\arrow[shift left, Rightarrow, from=1-3, to=1-2]
	\arrow[shift left, Rightarrow, scaling nfold=4, from=1-3, to=1-4]
	\arrow[shift left=2, Rightarrow, scaling nfold=3, from=1-4, to=1-3]
\end{tikzcd}\]
This cosimplicial model for $T_\infty$ appeared in a preprint version of~\cite{Goodwillie2003}, where it was credited to Waldhausen. The cosimplicial model did not make it into the published version of~\cite{Goodwillie2003}. There is an account of it in~\cite{Eldred2013}.

Since we assume $X$ is a pointed finite set, $X*[n]$ is equivalent to a wedge of circles for every $n$. By the defining property of $\widehat F$ given in Theorem~\ref{thm:fr-to-top_intro}, there is a natural equivalence $\widehat F(X*[n])\simeq F(\pi_1(X *[n]))$. We assume that $F$ factors as $F^{\abelianization}\circ \abelianization$, so there is furthere a natural equivalence $\widehat F(X*[n])\simeq F^{\abelianization}(\widetilde \HH_1(X *[n]))$. Recall that $X*[n]$ is the homotopy cofiber of the map $X\wedge [n]_+ \to X$. It follows that there are isomorphisms, natural in $X$ and $[n]$
\[
\widetilde \HH_1(X *[n]))\cong \widetilde \Z[X]\otimes \ker\left(\bigoplus_{n+1}\Z \to \Z\right)\cong \hom(\Z^{n+1}/\Z, \widetilde \Z[X]).
\]
It follows $\widehat F(X)$ is the totalisation of the cosimplicial object obtained by applying $F^{\abelianization}$ levelwise to $\hom(\Z^{\bullet +1}/\Z, \widetilde \Z[X])$. By Lemma~\ref{lem:circle} this cosimplicial abelian group is isomorphic to $\hom(\widetilde \Z[S^1_\bullet], \widetilde \Z[X])\cong \Omega \widetilde \Z[X].$

We have proved the theorem in case when $X$ is a pointed finite set. Since $\widehat F$ is an excisive functor, it preserves geometric realizations (Lemma~\ref{lemma:preserve sifted}). Therefore, if $X$ is a pointed finite simplicial set, one can evaluate $\widehat F$ on $X$ by applying it level-wise and then taking geometric realization. The geometric realization of a simplicial cosimplicial abelian group is equivalent to the total complex of the associated bi-complex, and therefore we can conclude that $\widehat F(X)\simeq F^{\abelianization}(\Omega \widetilde \Z[X])$ when $X$ is a finite simplicial set.
\end{proof}
By way of application we can now give a kind of formula for $\widehat F$ when $F$ is the symmetric powers functor. 
\begin{cor}\label{cor:symmetric-power}
Let $\symm^n$ be the $n$-th symmetric power functor, and consider the functor $\symm^n\circ\abelianization(G)={G^{\abelianization}}^{\otimes n}_{\Sigma_n}$. Then $\widehat{\symm^n\circ \abelianization}(X)=\left(\Omega^n \widetilde \Z [X^{\wedge n}]\right)_{\Sigma_n}.$
\end{cor}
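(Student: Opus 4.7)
My plan is to apply Theorem~\ref{thm:formula} to the factorization $\symm^n\circ\abelianization$, taking $F^{\abelianization}=\symm^n$ to be the functor sending a finitely generated free abelian group $A$ to $(A^{\otimes n})_{\Sigma_n}$. This immediately yields
\[
\widehat{\symm^n\circ\abelianization}(X)\simeq \symm^n(\Omega\widetilde\Z[X]),
\]
where the right-hand side is computed by applying $\symm^n$ level-wise to the cosimplicial-simplicial abelian group $\pointedmaps(S^1,\widetilde\Z[X])$, then applying normalized chains in both directions, and finally totalizing.

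The remaining task is to identify this cosimplicial-simplicial model with the one implicitly defining $(\Omega^n\widetilde\Z[X^{\wedge n}])_{\Sigma_n}$ in Section~\ref{section:general formula}. In bidegree $(i,j)$ we have $\pointedmaps(S^1,\widetilde\Z[X])_i^j = \hom(A_i,B_j)$ with $A_i=\widetilde\Z[S^1_i]$ and $B_j=\widetilde\Z[X_j]$. Since $A_i$ is finitely generated free abelian, the canonical map $\hom(A_i,B_j)^{\otimes n}\to \hom(A_i^{\otimes n},B_j^{\otimes n})$ is a $\Sigma_n$-equivariant isomorphism, where $\Sigma_n$ permutes the tensor factors on the left and acts diagonally by simultaneously permuting the source and target coordinates on the right. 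Combining this with the $\Sigma_n$-equivariant identifications $A_i^{\otimes n}\cong \widetilde\Z[(S^1_i)^{\wedge n}]$ and $B_j^{\otimes n}\cong\widetilde\Z[X_j^{\wedge n}]$, and then passing to $\Sigma_n$-orbits, I obtain an isomorphism of cosimplicial-simplicial abelian groups
\[
\symm^n\bigl(\pointedmaps(S^1,\widetilde\Z[X])\bigr)\cong \pointedmaps((S^1)^{\wedge n},\widetilde\Z[X^{\wedge n}])_{\Sigma_n}.
\]
Modeling $S^n$ as $(S^1)^{\wedge n}$, the right-hand side is precisely the cosimplicial-simplicial abelian group whose totalization is taken to be $(\Omega^n\widetilde\Z[X^{\wedge n}])_{\Sigma_n}$ in Section~\ref{section:general formula}. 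Totalizing both sides and combining with the first step will give the corollary.

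The only subtle point I expect is checking that the $\Sigma_n$-action arising from $\symm^n = ((-)^{\otimes n})_{\Sigma_n}$ matches the conjugation action of Remark~\ref{rem:action} on $\Omega^n\widetilde\Z[X^{\wedge n}]$. This is transparent from the level-wise identification above, as permutation of the tensor factors on the left corresponds, under $\hom(A,B)^{\otimes n}\cong \hom(A^{\otimes n},B^{\otimes n})$, to the simultaneous permutation of the $n$ source-sphere coordinates and the $n$ target smash factors on the right --- i.e. to the conjugation action.
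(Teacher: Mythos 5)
Your proposal is correct and takes essentially the same route as the paper: the paper's proof likewise applies Theorem~\ref{thm:formula} with $F^{\abelianization}=\symm^n$ and then invokes the natural isomorphism $\hom(\widetilde\Z[A],\widetilde\Z[X])^{\otimes n}\cong\hom(\widetilde\Z[A^{\wedge n}],\widetilde\Z[X^{\wedge n}])$ for pointed finite sets. You have merely spelled out the levelwise identification, the passage to $\Sigma_n$-orbits, and the equivariance check (Remark~\ref{rem:action}) that the paper leaves implicit.
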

\begin{proof}
    The corollary follows from Theorem~\ref{thm:formula}, and the isomorphism
    \[
    \hom(\widetilde \Z[A],\widetilde \Z[X])^{\otimes n}\xrightarrow{\cong} \hom(\widetilde \Z[A^{\wedge n}], \widetilde \Z[X^{\wedge n}].
    \]
    Here $A$ and $X$ are pointed finite sets, and the isomorphism is natural both in $A$ and in $X$.
\end{proof}
\subsubsection*{Divided power functor, revisited} For another example, let us calculate $\widehat F$ for $F(G)=\Gamma^n(G^{\abelianization})$ for the second time. We know from Proposition~\ref{prop:exterior and divided powers}~\eqref{eq:dividedpower} that $\widehat F(X)=\Sigma^{-2n}\widetilde\Z[(SX)^{\wedge n}_{\Sigma_n}]$. 
On the other hand, it follows from Theorem~\ref{thm:formula} that $\widehat F(X)=\left(\Omega^n \widetilde\Z [X^{\wedge n}]\right)^{\Sigma_n}$. For sanity check, let us prove directly that there is an equivalence of functors
\begin{equation}\label{eq: two functors}
\Sigma^{-2n}\widetilde \Z[(SX)^{\wedge n}_{\Sigma_n}]\simeq \left(\Omega^n \widetilde\Z [X^{\wedge n}]\right)^{\Sigma_n}.
\end{equation}
Both functors are excisive of degree $n$, so both are left Kan extended from $\finset^{\le n}$ by Lemma~\ref{lem: excisive finset}, and thus it is enough to show that the restrictions of these functors to $\finset^{\le n}$ are equivalent. Next, by Proposition~\ref{prop:Pirashvili-Helmstutler-Walde}, it is enough to show that the cross-effects of these functors are equivalent as functors from $\epi^{\le n}$ to $\ch$. So let us calculate the cross-effects of both functors. Let $F_1$ and $F_2$ denote the functors on the left hand side and the right hand side of~\eqref{eq: two functors}. We have
\[
\ce_iF_1(S^0, \ldots, S^0)=\Sigma^{-2n}\widetilde \Z\left[\bigvee_{n_1+\cdots+n_i=n}S^{n}_{\Sigma_{n_1}\times \cdots \times \Sigma_{n_i}}\right]
\]
The wedge sum is indexed by ordered $n$-tuples $(n_1, \ldots, n_i)$ of positive integers that add up to $n$. Note that if $i<n$ then at least one of the numbers $n_1, \ldots, n_i$ is greater than one, and then $S^n_{\Sigma_{n_1}\times\cdots \times \Sigma_{n_i}}\simeq *$. Thus $\ce_iF_1(S^0, \ldots, S^0)$ is homotopically trivial for $i<n$. On the other hand, $\ce_n F_1(S^0, \ldots, S^0)\simeq \Sigma^{-2n}\widetilde \Z[S^n]$. As a chain complex with an action of $\Sigma_n$, $\ce_n F_1(S^0, \ldots, S^0)$ is equivalent to the chain complex concentrated in degree $-n$, in which it is isomorphic to $\Z$ on which $\Sigma_n$ acts by sign.
On the other hand, an easy calculation shows that 
\[
\ce_i F_2(S^0, \ldots, S^0)=\prod_{n_1+\cdots+n_i=n} \map_*(S^n/_{\Sigma_{n_1}\times\cdots\times \Sigma_{n_i}}, \Z)
\]
where the product is, as before, over $i$-tuples of positive integers that add up to $n$. Once again, it is easy to see that $\ce_i F_2(S^0, \ldots, S^0)$ is trivial for $i<n$, and $\ce_n F_2(S^0, \ldots, S^0)=\Omega^n\Z$, which, again, is equivalent to the complex that has the sign representation in degree $-n$ and is zero otherwise.

We found that $\ce_iF_1$ and $\ce_iF_2$ are both trivial for $i<n$, and are equivalent as complexes with an action of $\Sigma_n$ for $i=n$. It follows that they are equivalent as functors from $\epi^{\le n}$ to $\ch$.

\section{Passi functors}\label{section:Passi} Let $G$ be a group. Recall that we have defined $\ideal(G):=\ker(\Z[G]\to \Z)$, and $\Passi_n(G):=\ideal(G)/\ideal(G)^{n+1}$. $\Passi_n$ is called the $n$-th Passi functor. As usual, we will only evaluate it on finitely generated free groups. It is well-known that $\Passi_1$ is the abelianization functor, but for $n>1$, $\Passi_n$ does not factor through abelianization, and this is our main example of functors that do not factor through abelianization. In this section we will describe $\widehat \Passi_n$ (Proposition~\ref{prop:passi}). In Sections~\ref{section:ab to Passi} and~\ref{section: ext from Passi} we will use the result of this section to calculate Ext into the Passi functor and out of the Passi functor respectively.

One of the reasons that the Passi functors are interesting is that $\Passi_n$ provides a universal $n$-polynomial approximation to the free functor $G\mapsto \widetilde \Z[G]$. Since $\widetilde Z[G]\simeq \widetilde \Z[\Omega BG]$, one may expect the functor $\widehat \Passi_n$ to be the universal $n$-excisive approximation to the functor $X\mapsto \widetilde Z[\Omega X]$. The purpose of this section is to make this intuition precise.

Recall that $P_n F$ denotes Goodwillie's $n$-excisive approximation of $F$. Consider the functor $X\mapsto \widetilde\Z[\Omega X]$. It will be useful to consider also the unreduced version of the functor $\Z[\Omega X]=\widetilde\Z[\Omega X_+]$. There are equivalences
\[
\widetilde\Z[\Omega X]\simeq \operatorname{cofiber}(\Z\to \Z[\Omega X] \simeq \operatorname{fiber}(\Z[\Omega X]\to \Z).
\]
Note that $\operatorname{fiber}(\Z[\Omega X]\to Z)=I(\Omega X)$. So we really have an equivalence
$I(\Omega X) \xrightarrow{\simeq} \widetilde \Z[\Omega X].$

The functor $\widehat \Passi_n$ will be described in terms of $P_n\widetilde\Z[\Omega -]$, so we will review what is known about the Taylor tower of the functor $\widetilde\Z[\Omega -]$. One way to think about this functor is as the functor $X\mapsto H\Z \otimes \Omega X$ from pointed spaces to $H\Z$-modules. Let $\epi^{\le n}_0$ denote the full subcategory of $\epi^{\le n}$ consisting of non-empty sets. In~\cite{Arone-thesis} it was shown that the $n$-th Taylor approximation of the functor $X\mapsto E\otimes \pointedmaps(K, X)$, where $E$ is a spectrum and $K$ is a finite CW-complex has the following form:
\[
P_n(E\otimes \pointedmaps(K, -))(X)=\underset{i\in \epi^{\le n}_0}{\spectralNat}( K^{\wedge i}, E\otimes X^{\wedge i}).
\]
Note that the assignments $i\mapsto 
K^{\wedge i}, E\otimes X^{\wedge i}$ give contravariant functors from $\epi$ to $\spaces$ and $\catname{Spectra}$ respectively. It is perhaps worth noting that the functor $i\mapsto K^{\wedge i}$ is cofibrant in the projective model structure on $\fun(\epi^{\op}, \spaces)$. Therefore with minor assumptions on cofibrancy of $\Sigma^\infty$ and fibrancy of $E\otimes X^{\wedge i}$ one can take $\spectralNat$ in the formula above to be a $1$-categorical mapping object.

The natural map $E\otimes \pointedmaps(K, X)\to \underset{i\in \epi^{\le n}_0}{\spectralNat}( K^{\wedge i}, E\otimes X^{\wedge i})$ is defined to be adjoint to the natural map 
\[
K^{\wedge i}\otimes E\otimes \pointedmaps(K, X)\to  K^{\wedge i}\otimes E\otimes \pointedmaps(K^{\wedge i}, X^{\wedge i})\to E\otimes X^{\wedge i}.
\]

Specialising to $K=S^1$ and $E=H\Z$, we can write the following model for the Taylor tower of $\widetilde\Z[\Omega -]$
\[
P_n\widetilde\Z[\Omega -](X)=\underset{i\in \epi^{\le n}_{0}}{\spectralNat}(S^i, \widetilde\Z[X^{\wedge i}]).
\]
\begin{prop}\label{prop:passi}
There is an equivalence, natural in $X$
\[
\widehat \Passi_n(X)\simeq P_n(\widetilde\Z[\Omega -])(X)\simeq \underset{i\in \epi^{\le n}_0}{\nat}(S^i, \widetilde\Z[X^{\wedge i}]).
\]
\end{prop}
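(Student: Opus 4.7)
The second equivalence in the proposition is the special case $K=S^1$, $E=H\Z$ of the formula for the Taylor tower of $X\mapsto E\otimes \pointedmaps(K,X)$ recalled immediately above the proposition, applied to the identification $\widetilde\Z[\Omega X]\simeq H\Z\otimes \pointedmaps(S^1,X)$.

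For the first equivalence, I would invoke the uniqueness part of Theorem~\ref{thm:fr-to-top}. Since $P_n\widetilde\Z[\Omega -]$ is $n$-excisive by construction of the Taylor approximation, it suffices to show that for every $G\in\fr$, the chain complex $P_n\widetilde\Z[\Omega BG]$ has homology concentrated in degree zero with $\HH_0$ naturally isomorphic to $\Passi_n(G)$.

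I would verify this by direct computation on $BG$ using the end formula. For $G=\free{m}$, $BG=\bigvee_m S^1$ and $(BG)^{\wedge i}\simeq \bigvee_{[m]^i} S^i$, so $\widetilde\Z[(BG)^{\wedge i}]$ is concentrated in chain degree $i$ with value $(\Z^m)^{\otimes i}$. Consequently each $\map_{\ch}(S^i,\widetilde\Z[(BG)^{\wedge i}])$ is concentrated in degree zero and equal to $(\Z^m)^{\otimes i}$. The unit $\widetilde\Z[\Omega BG]\simeq I(\free{m})\to P_n\widetilde\Z[\Omega BG]$, viewed through the end description, sends an element $(x_{i_1}-1)\cdots(x_{i_k}-1)\in I(\free{m})$ to its $k$-fold Magnus derivative in the $k$-th tensor slot. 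Unwinding the end over $\epi^{\le n}_0$ identifies $\HH_0 P_n\widetilde\Z[\Omega BG]$ additively with $\bigoplus_{i=1}^n (\Z^m)^{\otimes i}$, under which the unit map becomes the truncated Magnus embedding. Since the truncated Magnus embedding of $\Passi_n(\free{m})=I(\free{m})/I(\free{m})^{n+1}$ is an additive isomorphism onto $\bigoplus_{i=1}^n(\Z^m)^{\otimes i}$, this yields the desired natural isomorphism $\HH_0 P_n\widetilde\Z[\Omega BG]\cong \Passi_n(\free{m})$.

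The main obstacle is to show that no higher homology arises: a priori the derived end over $\epi^{\le n}_0$ of a diagram of abelian groups in chain degree zero can contribute nonzero cohomology coming from higher $\lim^j$ with $j>0$. I would handle this either by exhibiting an explicit small projective resolution of the functor $S^\bullet$ as an object of $\fun((\epi^{\le n}_0)^{\op},\ch)$ and verifying the spectral sequence for the end collapses, or inductively on $n$ using the fibre sequence $D_n\widetilde\Z[\Omega -]\to P_n\widetilde\Z[\Omega -]\to P_{n-1}\widetilde\Z[\Omega -]$ of Taylor layers together with a direct computation of $D_k\widetilde\Z[\Omega BG]$ via the Goodwillie derivative formula for $X\mapsto H\Z\otimes \pointedmaps(S^1,X)$.
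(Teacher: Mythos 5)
Your handling of the second equivalence (specializing the Taylor-tower formula to $K=S^1$, $E=H\Z$) matches the paper, and your overall strategy for the first equivalence --- invoke the uniqueness clause of Theorem~\ref{thm:fr-to-top} and check that $P_n\widetilde\Z[\Omega -]$, restricted to wedges of circles, has homology concentrated in degree zero with $\HH_0$ naturally isomorphic to $\Passi_n\circ\pi_1$ --- is legitimate and genuinely different from the paper's. But the proposal does not prove the one statement that carries all the weight, and the step you present as routine is not correct as stated. The object $\nat_{i\in \epi^{\le n}_0}(S^i,\widetilde\Z[X^{\wedge i}])$ is a (derived) end, not the product of the slotwise mapping objects: for a surjection $i\twoheadrightarrow j$ the compatibility term $\pointedmaps(S^j,\widetilde\Z[X^{\wedge i}])$ is concentrated in degree $i-j>0$ when $X$ is a wedge of circles, so ``unwinding the end'' does not identify $\HH_0$ with $\bigoplus_{i=1}^n(\Z^m)^{\otimes i}$, and the unit is not literally the truncated Magnus embedding. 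Indeed, unravelling the unit as defined in the paper, its $i$-th slotwise component sends a group element $g$ to $[g]^{\otimes i}$; with your product reading this would make the induced map $\ideal(\free{m})^2/\ideal(\free{m})^3\to(\Z^m)^{\otimes 2}$ equal to $1+\tau$, which is not injective --- contradicting the conclusion $\HH_0\cong\Passi_2$ you are aiming for. So the additive identification of $\HH_0$, the vanishing in nonzero degrees, and the Magnus comparison are precisely the content of the proposition on your route, and all three are deferred to two unexecuted strategies. The inductive one can be made to work: the layers $D_k\widetilde\Z[\Omega-]$ at a wedge of circles are concentrated in degree zero with value $(\Z^m)^{\otimes k}$, and the fibre sequences give concentration in degree zero; but you would still have to show that the unit map carries the $\ideal$-adic filtration onto the tower filtration, identifying $\ideal^k/\ideal^{k+1}$ with $\pi_0 D_k$ --- essentially the classical Magnus computation --- and nothing in the proposal supplies this.

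For contrast, the paper avoids the computation entirely: under the equivalence $\fun_{\Sigma}(\connected,\ch)\simeq\fun(\fr,\ch)$ the functor $\widetilde\Z[\Omega-]$ corresponds to the augmentation ideal $\ideal$, and the claim becomes that the $\infty$-categorical degree-$n$ approximation of $\ideal$ agrees with the $1$-categorical one, which is $\Passi_n$. This is proved formally, using that $\ideal$ is projective in $\fun(\fr,\ab)$ and the Djament--Pirashvili--Vespa results that $\Passi_n$ has homological dimension zero in $\poly_n(\fr,\ab)$ and that Ext there agrees with Ext in the full functor category. If you prefer your more computational route, it can in principle succeed, but as written there is a genuine gap at exactly the point where the work lies.
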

\begin{proof}
It follows from Proposition~\ref{prop:poly to exc}, together with Corollary~\ref{cor:connected_to_fr} that there is a homotopy commutative diagram of $\infty$-categories
\begin{equation}\label{eq:connected}
\begin{tikzcd}
	{\fun_{\Sigma}(\connected, \ch)} & {\fun(\fr, \ch)} \\
	{\exc_n(\connected, \ch)} & {\poly_n(\fr, \ch)}
	\arrow["\rho_{\classifying}", "\simeq"', from=1-1, to=1-2]
	\arrow["{P_n}"', from=1-1, to=2-1]
	\arrow["{\tilde q_n}", dashed, from=1-2, to=2-2]
	\arrow["\rho_{\classifying}", "\simeq"', from=2-1, to=2-2]
\end{tikzcd}
\end{equation}
Here the horizontal functors are induced by the classifying space functor. The left vertical functor is Goodwillie's $P_n$. By definition, $P_n$ is the ($\infty$-categorical) left adjoint to the inclusion functor (see Remark~\ref{rem:excisive-sifted}). It follows that there is an essentially unique functor $\tilde q_n$ that makes the diagram commute. Furthermore, $\tilde q_n$ is the $\infty$-categorical left adjoint of the inclusion $\poly_n(\fr, \ch)\hookrightarrow \fun_{\Sigma}(\fr, \ch)$. 

Consider the functor $\widetilde \Z[\Omega -]\colon\connected \to \ch$. Recall that $\widetilde Z[Y]=\Z[Y]/Z[*]$, and thus $\widetilde Z[Y]$ is also equivalent to $\ker(\Z[Y]\to \Z)$. Suppose $G$ is a finitely generated free group. We have equivalences, natural in $G$
\[
\widetilde \Z[\Omega \classifying G]\simeq \widetilde \Z[G] \cong \ideal(G).
\]
It follows that the equivalence of categories at the top of diagram~\eqref{eq:connected} takes $\widetilde \Z[\Omega -]$ to $\ideal(-)$. Thus to prove the proposition we need to show that there is a natural equivalence $\Passi_n\simeq \tilde q_n(\ideal)$, where $\ideal$ is the augmentation ideal functor. 

It is in fact well-known that $\Passi_n\cong q_n(\ideal)$, where \[q_n\colon \fun(\fr, \ab)\to \poly_n(\fr, \ab)\] is the \emph{one-categorical} left adjoint of the inclusion functor. See for example~\cite[Proposition 3.7]{Djament-Pirashvili-Vespa_2016}. So what we want is equivalent to $\tilde q_n(\ideal)\simeq q_n(\ideal)$. This, too, follows from results in~\cite{Djament-Pirashvili-Vespa_2016}. Indeed, to show that $\Passi_n\simeq \tilde q_n(\ideal)$, it is enough to show that for every $n$-polynomial functor $T\colon \fr\to \ch$, the quotient map $\ideal\to \Passi_n$ induces an equivalence of $\infty$-categorical spectral objects of natural transformations
\begin{equation}\label{eq:concentrated}
\spectralNat(\Passi_n, T)\xrightarrow{\simeq} \\ \spectralNat(\ideal, T).
\end{equation}
It is enough to prove this equivalence when $T$ is a functor that takes values in chain complexes homologically concentrated in degree zero. In other words, if $T$ is in the essential image of the ``inclusion'' functor $\poly_n(\fr, \ab)\to \poly_n(\fr, \ch)$. Indeed, if~\eqref{eq:concentrated} is an equivalence for $T$ concentrated in degree zero, then it is true for all $T$ that take values in bounded chain complexes, and then it follows that it is true for all $T$, because every chain complex is naturally equivalent to the inverse homotopy limit of bounded complexes.

Note that when $T$ is a functor taking values in chain complexes concentrated in degree zero, then both the domain and the range of~\eqref{eq:concentrated} are spectra whose homotopy groups vanish in positive degrees. So we only need to show that the map induces isomorphism on homotopy groups in degrees $\le 0$. Thus, to prove that~\eqref{eq:concentrated} is an equivalence whenever $T$ is concentrated in degree zero, is equivalent to showing that the quotient homomorphism $\ideal\to \Passi_n$ induces isomorphisms for all $i\ge 0$
\[
\ext_{\fun(\fr, \ch)}^i(\Passi_n, T)\xrightarrow{\cong}\ext_{\fun(\fr, \ch)}^i(\ideal, T).
\]
For $i=0$ this is an isomorphism because $\Passi_n=q_n(\ideal).$ For $i>0$ note that $\ideal(G)$ is a direct summand of $\Z[G]\cong \Z[\hom(\Z, G)]$, naturally in $G$. Thus $\ideal$ is a projective object in $\fun(\fr, \ch)$, and therefore $\ext_{\fun(\fr, \ch)}^i(\ideal, T)=0$ for $i>0$. So it remains to show that $\ext_{\fun(\fr, \ch)}^i(\Passi_n, T)=0$ for $i>0$. This follows from the following two resuts of~\cite{Djament-Pirashvili-Vespa_2016}. First, there is~\cite[Corollary 4.3]{Djament-Pirashvili-Vespa_2016}, which says that $\Passi_n$ has homological dimension zero in $\poly_n(\fr, \ab)$, and therefore $\ext_{\poly_n(\fr, \ch)}^i(\Passi_n, T)=0$ for $i>0$. Second, Theorem 1 of [loc. cit] says that there are isomorphisms for all $i$
\[
\ext_{\poly_n(\fr, \ch)}^i(\Passi_n, T)\cong \ext_{\fun(\fr, \ch)}^i(\Passi_n, T).
\]
This implies the result we want.
\end{proof}
\section{$\ext^*(\tensor^n\circ \abelianization, -)$}\label{section:ext from tensor}
In the next few sections we will use our general results to calculate $\ext$ groups between polynomial functors from $\fr$ to $\ab$. We begin with investigating $\ext^*(\tensor^m\circ \abelianization, G)$, where $G\in \poly(\fr, \ab)$. We remind the reader that $\tensor^m$ is the $n$-fold tensor power functor, and $\abelianization$ is abelianization. The calculation will be in terms of the cross-effects of $\widehat G$. In this section we use the term cross-effect as defined by Goodwillie~\cite{Goodwillie2003}. Let us recall the definition
\begin{defn}[Cross-effects, in the sense of Goodwillie]
    Let $H\colon \catname{C}\to \catname{D}$ be a functor, where $\catname{C}$ is pointed and $\catname{D}$ is a stable $\infty$-category. The $n$-th cross-effect of $H$ is a functor $\ce_nH\colon \catname{C}^n\to \catname{D}$. The value $\ce_nH(X_1, \ldots, X_n)$ is the total homotopy fiber of the $n$-dimensional cubical diagram that sends a subset $U\subset \{1, \ldots, n\} $ to 
    $H\left(\bigvee_{i\notin U} X_i\right),$ 
    and whose maps are induced by collapsing the appropriate $X_i$s to the zero object.
\end{defn}
It is well-known, and not difficult to prove that this definition of cross-effect is equivalent to the one in Definition~\ref{def:cross-effects}. See, for example~\cite[Lemma 3.29]{Arone-Barthel-Heard-Sanders}, which establishes this equivalence for functors from Spectra to Spectra.

Here is a general result regarding $\ext^*(\tensor^m\circ \abelianization, G)$.
\begin{prop}\label{prop:tensor-general}
Let $G\in \poly(\fr, \ab)$, and let $\widehat G\in \exc(\spaces, \ch)$ be the extension of $G$, as characterised in Theorem~\ref{thm:fr-to-top_intro}. Then there is an equivalence, respecting the action of $\Sigma_m$
\[
\ext^*(\tensor^m\circ \abelianization, G)\cong  \pi_{-*}\left( S^m\otimes \ce_m\widehat G(S^0, \ldots, S^0)\right) \cong \pi_{-*-m}\ce_m\widehat G(S^0, \ldots, S^0)^{\pm}.
\]
Here the $\pm$ indicates that the action of $\Sigma_m$ on the right hand side is twisted by the sign representation.
\end{prop}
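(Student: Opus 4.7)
\emph{Plan.} By Theorem~\ref{thm:fr-to-top_intro},
\[
\ext^*(\tensor^m\circ\abelianization, G)\cong \pi_{-*}\spectralNat\bigl(\widehat{\tensor^m\circ\abelianization}, \widehat G\bigr),
\]
so the task reduces to computing this spectral natural transformation object $\Sigma_m$-equivariantly. By Lemma~\ref{lem:abelianization transform} combined with Remark~\ref{rem:action}, I would substitute $\widehat{\tensor^m\circ\abelianization}(X)\simeq \Omega^m\widetilde\Z[X^{\wedge m}]$, where $\Sigma_m$ acts by simultaneous permutation of the loop coordinates and of the smash factors.

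Next, the stability of $\ch$ lets us identify $\Omega^m\simeq \Sigma^{-m}$; the action of $\Sigma_m$ on the loop coordinates becomes the sign action on the desuspension, i.e., the sign action on $S^m$ after inverting. Pulling this sphere factor out of the spectral natural transformation object gives
\[
\spectralNat_X\bigl(\Omega^m\widetilde\Z[X^{\wedge m}], \widehat G(X)\bigr)\simeq S^m\otimes \spectralNat_X\bigl(\widetilde\Z[X^{\wedge m}], \widehat G(X)\bigr),
\]
with the $\Sigma_m$-action on the right combining the natural permutation action on $X^{\wedge m}$ (hence on the mapping object) with the sign action on $S^m$. This is exactly the $(-)^{\pm}$ twist in the statement, once the $S^m$ factor is absorbed as a shift by $-m$ in homotopy.

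What remains is a $\Sigma_m$-equivariant equivalence $\spectralNat_X(\widetilde\Z[X^{\wedge m}], \widehat G(X))\simeq \ce_m\widehat G(S^0, \ldots, S^0)$ for the natural $\Sigma_m$-action on cross-effects. Since $\widehat G$ is $n$-excisive for some $n\ge m$ and the functor $X\mapsto \widetilde\Z[X^{\wedge m}]$ is itself $m$-excisive, Lemma~\ref{lem: excisive finset} allows one to restrict the target of both functors to $\finset^{\le n}$ without loss of information. Under the Pirashvili--Helmstutler--Walde equivalence of Proposition~\ref{prop:Pirashvili-Helmstutler-Walde}, the functor $\widehat G|_{\finset^{\le n}}$ corresponds to the cross-effect functor $i\mapsto \ce_i\widehat G(S^0,\ldots,S^0)$ on $\epi^{\le n}$, while $X\mapsto \widetilde\Z[X^{\wedge m}] = X^{\wedge m}\otimes H\Z$ is the image of the corepresentable object at $\underline m$ under the bimodule; the right adjoint $\spectralNat_X(X^{\wedge i}, -)$ of the PHW adjunction then delivers the identification.

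The main obstacle is the $\Sigma_m$-equivariant bookkeeping: one must isolate the sign twist attached to the desuspension, and separately verify that no additional sign enters when passing from the $H\Z$-linear mapping object $\spectralNat_X(\widetilde\Z[X^{\wedge m}],-)$ to the ordinary cross-effect through Pirashvili--Helmstutler--Walde. Once this tracking is in place, concatenating the equivalences yields
\[
\pi_{-*}\bigl(S^m\otimes \ce_m\widehat G(S^0, \ldots, S^0)\bigr) \cong \pi_{-*-m}\ce_m\widehat G(S^0, \ldots, S^0)^{\pm},
\]
as asserted.
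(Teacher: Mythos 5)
Your proposal is correct and matches the paper's proof in its overall structure: reduce via Theorem~\ref{thm:fr-to-top_intro} to computing $\spectralNat(\widehat{\tensor^m\circ\abelianization},\widehat G)$, substitute $\Omega^m\widetilde\Z[X^{\wedge m}]$ from Lemma~\ref{lem:abelianization transform} with the conjugation $\Sigma_m$-action of Remark~\ref{rem:action}, move the $m$-fold (de)suspension across the mapping object so that the sign twist appears on $S^m$, and identify $\spectralNat_X(\widetilde\Z[X^{\wedge m}],\widehat G(X))$ with $\ce_m\widehat G(S^0,\ldots,S^0)$. The one genuine divergence is how that last identification is justified: the paper argues directly on $\spaces$ (Lemma~\ref{lem: smash power represents ce}), writing $X^{\wedge m}$ as a total cofiber of the cube of representable functors $X^{j}\simeq\pointedmaps(j_+,X)$, applying Yoneda, and then using the free--forgetful adjunction between $\spaces$ and $\ch$; you instead restrict to $\finset^{\le n}$ via Lemma~\ref{lem: excisive finset} and pass through the Pirashvili--Helmstutler--Walde equivalence of Proposition~\ref{prop:Pirashvili-Helmstutler-Walde}, recognizing $\widetilde\Z[(-)^{\wedge m}]$ as the image of the corepresentable at $\underline{m}$ and applying Yoneda on $\epi^{\le n}$. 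Both routes work, and yours has the mild advantage of making the $\Sigma_m$-equivariance transparent (it becomes the $\aut_{\epi}(\underline{m})$-action on a corepresentable), but note that the input ``the PHW right adjoint $\nat_X(X^{\wedge i},-)$ is the cross-effect functor'' is exactly the content of the paper's Lemma~\ref{lem: smash power represents ce}, so your detour does not avoid that lemma, and it additionally invokes excisiveness of both functors, which the paper's direct argument never needs; the sign bookkeeping you flag is resolved in the paper simply by noting that cross-effects commute with suspension and that $\Sigma_m$ acts on the homology of $S^m$ by sign.
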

\begin{proof}
By Lemma~\ref{lem:abelianization transform},
$\widehat{\tensor^m \circ \abelianization}\colon \spaces \to \ch$ is the functor \[\widehat{\tensor^m \circ \abelianization}(X)=\Omega^m\widetilde \Z[X^{\wedge m}].\] 
By Theorem~\ref{thm:fr-to-top_intro}, there is an isomorphism   
\[
\ext^*(\tensor^m\circ \abelianization, G)\cong \pi_{-*}\spectralNat_X(\Omega^m\widetilde \Z[X^{\wedge m}], \widehat G(X))
\]
and the right hand side is isomorphic to $\pi_{-*}\spectralNat_X(\widetilde \Z[X^{\wedge m}], S^m\otimes\widehat G(X))$.

Let us recall the functor $X^{\wedge m}$ represents the $m$-th cross effect. The following lemma is elementary and well-known.
\begin{lem}\label{lem: smash power represents ce}
For any functor $H\colon\spaces\to \catname{D}$ where $\catname{D}$ is an $\infty$-category there is an equivalence
\begin{equation}\label{eq: nat from smash power is cross-effect-general version}
    \nat_X(X^{\wedge m}, H(X))\simeq \ce_m H(S^0, \ldots, S^0).
\end{equation}
\end{lem}
\begin{proof}
Notice that if we replace $X^{\wedge m}$ with $X^m$, $X^m\cong\pointedmaps(m_+, X)$ is a representable functor, represented by $m_+$, and we obtain an equivalence by the Yoneda lemma
\[
\nat(X^{m}, H)\simeq H(\underbrace{S^0\vee\ldots\vee S^0}_{m}).
\]
Now write $X^{\wedge m}$ as a total cofiber of a cube of representable functors, indexed by subsets of $\{1, \ldots, m\}$, and apply $\nat(-, H)$. For more details of this argument, see~\cite[Lemma 3.13]{Arone-Ching_Cross-effects}, where the claim is proved in the context of model categories for functors from spaces to spectra.
\end{proof}
Using the free-forgetful adjunction between $\spaces$ and $\ch$, it follows that if $H$ is a functor from $\spaces$ to $\ch$ there is an equivalence
\begin{equation}\label{eq: nat from smash power is cross-effect}
    \spectralNat_X(\widetilde \Z[X^{\wedge m}], H(X))\simeq \ce_m H(S^0, \ldots, S^0).
\end{equation}

Taking cross-effects commutes with suspension. It follows that there is a natural isomorphism
\[
\ext^*(\tensor^m\circ \abelianization, G)\cong \pi_{-*}(S^m\otimes \ce_m\widehat G(S^0, \ldots, S^0)).
\]
Note that the action of $\Sigma_m$ on $S^m$ induces the sign action on its non-trivial reduced homology group. It follows that the right hand side is isomorphic to
$\pi_{-*-m}(\ce_m\widehat G(S^0, \ldots, S^0))$, where the action of $\Sigma_m$ is twisted by the sign.
\end{proof}
As a consequence, we can calculate $\ext$ from $\tensor^m \circ \abelianization$ to most of the functors in Table~\ref{tab:functors}. We begin with the easy ones. Recall that if $A$ is a graded abelian group, or a chain complex, $\Sigma^n A$ is the shift of $A$ by $n$. In the absence of grading, a group is considered concentrated in degree zero. Recall that $\sur(n, m)$ denotes the set of surjections from $\{1, \ldots, n\}$ to $\{1, \ldots, m\}$. 
Let $\Comp(n, m)={_{\Sigma_n}}\!\backslash\sur(n, m)$ denote the set of compositions of $n$ with $m$ components.
\begin{cor}\label{cor: tensor-others}
There are isomorphisms, that respect the action of the appropriate groups of automorphisms.
\begin{enumerate}
    \item \[\ext^i(\tensor^m \circ \abelianization, \tensor^n \circ \abelianization)\cong \left\{\begin{array}{cl} \Z[\sur(n, m)] & i=n-m \\ 0 & \mbox{otherwise}\end{array} \right.,\] \label{item:tensor-tensor}
    where the natural action of the group $\Sigma_m\times\Sigma_n$ on $\Z[\sur(n,m)]$ on the right hand side is twisted by the tensor product of the sign representations of $\Sigma_m$ and of $\Sigma_n$.
    \item \[
    \ext^i(\tensor^m\circ \abelianization, \Lambda^n \circ \abelianization)\cong \left\{ \begin{array}{cl} \Z[\Comp(n, m)] & i=n-m\\ 0 & \mbox{otherwise} \end{array}\right.,\]\label{item:tensor-exterior}
where the action of $\Sigma_m$ on $\Z[\Comp(n, m)]$ is twisted by the sign.
       \item \[\ext^i(\tensor^m\circ \abelianization, \Gamma^n\circ \abelianization) \cong \left\{\begin{array}{cl} \Z & i=m=n=0\\ 0 & \mbox{otherwise}\end{array}\right.. 
       \] \label{item:tensor-dividedP} 
       Here $\Sigma_m$ acts trivially on the right hand side.
\end{enumerate}
\end{cor}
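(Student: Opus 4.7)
All three parts will follow from Proposition~\ref{prop:tensor-general}, which reduces the calculation to $\pi_{-*-m}\ce_m\widehat G(S^0,\ldots,S^0)^{\pm}$. The strategy in each case is to use the explicit formula for $\widehat G$ listed in Table~\ref{tab:functors}, and to observe that the cross-effect of a functor $X\mapsto Y(X)$ at $(S^0,\ldots,S^0)$ is obtained by applying $Y$ to a wedge $\bigvee_{i=1}^m S^0$ and extracting the summands in which all $m$ variables appear, i.e.\ those indexed by surjections $n\twoheadrightarrow m$.

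For (1), with $\widehat{\tensor^n\circ\abelianization}(X)=\Omega^n\widetilde\Z[X^{\wedge n}]$, the relevant summands give $\Sigma^{-n}\Z[\sur(n,m)]$, concentrated in degree $-n$, the $\Sigma_n$-action coming from the permutation action on surjections combined with the sign contribution of the conjugation action on $\Omega^n$ (Remark~\ref{rem:action}). For (2), with $\widehat{\Lambda^n\circ\abelianization}(X)=\Sigma^{-n}\widetilde\Z[X^{\wedge n}_{\Sigma_n}]$, passing to $\Sigma_n$-orbits replaces surjections with the set $\Comp(n,m)$ of compositions and yields $\Sigma^{-n}\Z[\Comp(n,m)]$. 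In both cases the claimed formulas follow once one tracks the sign twist $\pm$ from Proposition~\ref{prop:tensor-general}: on $\Sigma_m$ it produces the sign twist on the target $\Sigma_m$-action, and in case~(1) it combines with the sign coming from $\Omega^n$ to twist the $\Sigma_n$-action as well.

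For (3), with $\widehat{\Gamma^n\circ\abelianization}(X)=\Sigma^{-2n}\widetilde\Z[(SX)^{\wedge n}_{\Sigma_n}]$, the contribution to $\ce_m\widehat G(S^0,\ldots,S^0)$ from a composition $(n_1,\ldots,n_m)\in\Comp(n,m)$ is $\Sigma^{-2n}\widetilde\Z\bigl[\bigwedge_{i=1}^m (S^1)^{\wedge n_i}/\Sigma_{n_i}\bigr]$. The critical point is that $(S^1)^{\wedge k}/\Sigma_k$ is contractible whenever $k\ge 2$: indeed $\R^k/\Sigma_k$ is homeomorphic to the closed convex cone $\{x_1\le\cdots\le x_k\}\subset\R^k$, whose one-point compactification is contractible. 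Therefore for $m<n$ every composition of $n$ into $m$ parts has some $n_i\ge 2$ and the entire cross-effect vanishes; for $m>n$ it vanishes since $\Gamma^n\circ\abelianization$ is polynomial of degree $n$; and for $m=n$ only the composition $(1,\ldots,1)$ contributes, giving $\Sigma^{-2n}\widetilde\Z[S^n]\simeq\Sigma^{-n}\Z$ in degree $0$ after applying $\pi_{-*-m}$. In this surviving case the action of $\Sigma_n$ permuting the $n$ factors $S^1$ acts by sign on $\widetilde H_n(S^n)$, and this cancels the $\pm$ sign twist of Proposition~\ref{prop:tensor-general}, leaving the trivial action.

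The main obstacle is the contractibility phenomenon in case~(3), which stands in sharp contrast to the even-dimensional identifications $(S^2)^{\wedge k}/\Sigma_k\cong S^{2k}$ used in the proof of Proposition~\ref{prop:exterior and divided powers}~\eqref{eq:dividedpower}; it is precisely this contrast between odd and even sphere quotients that forces $\ext^*(\tensor^m\circ\abelianization,\Gamma^n\circ\abelianization)$ to vanish off the diagonal. Besides this, the principal care is the bookkeeping of $\Sigma_m\times\Sigma_n$-actions, combining the tautological permutation actions on indexing sets with the various sign contributions from sphere coordinates and from the $\pm$ sign twist.
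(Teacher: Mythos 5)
Your proposal is correct and follows essentially the same approach as the paper: reduce to cross-effects via Proposition~\ref{prop:tensor-general}, substitute the Table~\ref{tab:functors} formulas for $\widehat G$, and compute directly. In part~(3) the wedge decomposition you write, $\ce_m\widehat G(S^0,\dots,S^0)\simeq\Sigma^{-2n}\widetilde\Z\bigl[\bigvee_{(n_1,\dots,n_m)}\bigwedge_i(S^{n_i})_{\Sigma_{n_i}}\bigr]$, is exactly the paper's $\widetilde\Z[S^n\wedge_{\Sigma_n}\sur(n,m)_+]\cong\widetilde\Z[\bigvee_{H}S^n_H]$, with $H$ ranging over Young-subgroup stabilizers, just unpacked; and the reduction to the contractibility of $S^k_{\Sigma_k}$ for $k\ge 2$ is the paper's key step. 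The only genuinely new content you add is a proof of that contractibility (one-point compactification of the Weyl-chamber cone $\{x_1\le\cdots\le x_k\}$), which the paper treats as well-known; your sketch is fine but is elliptic at the point where one must know that for $k\ge 2$ this cone is topologically a closed half-space so its compactification is a disk --- a proper closed convex cone of lower dimension (or the whole of $\R^k$) would not have contractible compactification. Your bookkeeping of the $\Sigma_m\times\Sigma_n$ sign twists also agrees with the paper's, phrased through the $\pm$ twist of Proposition~\ref{prop:tensor-general} rather than the $S^{2m}\otimes\Sigma^{-2m}$ bookkeeping the paper uses in the $m=n$ case; these are equivalent.
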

\begin{rem}
Part~\eqref{item:tensor-tensor} is~\cite[Theorem 1]{Vespa2018}. See also [loc. cit., Proposition 2.5] for a discussion of signs.
\end{rem}
\begin{proof}
\eqref{item:tensor-tensor}    By Proposition~\ref{prop:tensor-general} we need to calculate \[S^m\otimes\ce_m\widehat{\tensor^n \circ \abelianization}(S^0, \ldots, S^0).\] By Lemma~\ref{lem:abelianization transform}, $\widehat{\tensor^n \circ \abelianization}$ is the functor $X\mapsto \Omega^n\widetilde Z[X^{\wedge n}]$, so we need to calculate the cross-effect of this functor. Shift commutes with cross-effects, so we suppress $\Omega^n$ at first. Consider the case $X=U_+=\underset{U}{\bigvee}S^0$ where $U$ is a finite set. We have $\widetilde Z[U_+^{\wedge n}]\cong Z[U^n]$. It follows that $\ce_m\widetilde \Z[-^{\wedge}](S^0, \ldots, S^0)$ is equivalent to the total fiber of the contravariant cubical diagram $U\mapsto \Z[U^n]$, were $U$ ranges over subsets of $\{1, \ldots, m\}$. Furthermore, the maps in the cubical diagram are defined as follows: if $U_1\subset U_2$ then the corresponding map $ \Z[U_2^n]\to\Z[U_1^n]$ sends a map $n\to U_2$ to itself if its image is in $U_1$, and to $0$ if its image is not in $U_1$. It is easy to see that taking the total fiber of this diagram amounts to ``killing'' the non-sujective functions from $\{1, \ldots, n\}$ to $\{1, \ldots, m\}$. Thus we obtain that
\[
\ce_m \Omega^n\widetilde \Z[-^{\wedge n}](S^0, \ldots, S^0)\cong \Omega^{n}\Z[\sur(n, m)].
\]
It follows that 
\[
\ext^*(\tensor^m \circ \abelianization, \tensor^n \circ \abelianization)\cong \pi_{-*}\left(S^m\otimes \Omega^n\Z[\sur(n, m)]\right)=\Sigma^{n-m}\Z[\sur(n, m)].
\]
Note that the natural action of $\Sigma_m$ on $S^m$ introduces on homology a twist by the sign representation of $\Sigma_m$ and the action of $\Sigma_n$ on $\Omega^n$ introduces a twist by the sign representation of $\Sigma_n$. This proves part~\eqref{item:tensor-tensor}.

\eqref{item:tensor-exterior} Appealing again to Proposition~\ref{prop:tensor-general}, and to Proposition~\ref{prop:exterior and divided powers}~\eqref{eq:propexterior}, we need to calculate the cross-effects of the functor $X\mapsto \Sigma^{-n} \widetilde\Z [X^{\wedge n}_{\Sigma_n}]$. Once again, note that if $U$ is a finite set then $\widetilde Z[(U_+^{\wedge n})_{\Sigma_n}]\cong Z[(U^n)_{\Sigma_n}]$. It follows that the $m$-th cross-effect of the functor $X\mapsto \widetilde \Z[(X^{\wedge n})_{\Sigma_n}]$ evaluated at $(S^0, \ldots, S^0)$ is equivalent to the total fiber of the cubical diagram $U\mapsto \Z[(U^n)_{\Sigma_n}]$, were $U$ ranges over subsets of $\{1, \ldots, m\}$. It is easy to see, once again, that taking the total fiber of this diagram amounts to ``killing'' the non-sujective functions from $\{1, \ldots, n\}$ to $\{1, \ldots, m\}$. Thus we obtain that
\[
\ce_m \Sigma^{-n}\widetilde \Z[(-^{\wedge n})_{\Sigma_n}](S^0, \ldots, S^0)\cong \Sigma^{-n}\Z[\Comp(n, m)].
\]
It follows that 
\[
\ext^*(\tensor^m \circ \abelianization, \Lambda^n \circ \abelianization)\cong \pi_{-*}S^m\otimes\Sigma^{-n}\Z[\Comp(n, m)]=\pi_{n-m-*}\Z[\Comp(n, m)],
\]
which means that $\ext^*(\tensor^m \circ \abelianization, \Lambda^n \circ \abelianization)\cong\Sigma^{n-m}\Z[\Comp(n,m)]$, as claimed. We proved part~\eqref{item:tensor-exterior}.

\eqref{item:tensor-dividedP} It follows from Proposition~\ref{prop:tensor-general} and Proposition~\ref{prop:exterior and divided powers}~\eqref{eq:dividedpower} that we have an isomorphism
\[
\ext^*(\tensor^m\circ\abelianization, \Gamma^n\circ\abelianization)\cong  \pi_{-*}\left(S^m\otimes \Sigma^{-2n}\ce_m\left(\widetilde\Z[(S^1\wedge -)^{\wedge n}_{\Sigma_n}]\right)(S^0, \ldots, S^0)\right).
\]
As usual, let us first (a) suppress the (de)suspension and (b) consider the value of this functor at a finite pointed set $U_+$. We have 
\[
\widetilde\Z[(S^1\wedge U_+)^{\wedge n}_{\Sigma_n}]\cong\widetilde\Z[(S^n \wedge U^n_+)_{\Sigma_n}].
\]
As before, it follows that the $m$-th cross-effect of the functor $X\mapsto \widetilde\Z[(S^1\wedge X)^{\wedge n}_{\Sigma_n}]$, evaluated at $S^0, \ldots, S^0$, is equivalent to $\widetilde \Z[S^n \wedge_{\Sigma_n}\sur(n, m)_+]$. 

It is elementary to show that $S^n \wedge_{\Sigma_n}\sur(n, m)_+$ is homeomorphic to a wedge sum of terms of the form $S^n_{H}$, where $H$ ranges over a representing set of stabilizers of the action of $\Sigma_n$ on $\sur(n,m)$.
When $n<m$ this is trivial, of course. On the other hand, when $n>m$, the stabilizer of every element of $\sur(n, m)$ is a non-trivial Young subgroup of $\Sigma_n$. We already mentioned that the quotient space $S^n_{\Sigma_n}$ is contractible whenever $n>1$. It follows that the quotient of $S^n$ by the action of a non-trivial Young subgroup of $\Sigma_n$ is contractible as well. We conclude that when $m\ne n$, $\widetilde \Z[S^n \wedge_{\Sigma_n}\sur(n, m)_+]$ is trivial, and therefore $\ext^*(\tensor^m \circ \ab, \Gamma^n \circ \abelianization)=0$. On the other hand, when $m=n$ we have the equivalence
\[
\ce_m\widetilde Z[(S^1\wedge -)^{\wedge m}_{\Sigma_m}](S^0, \ldots, S^0)\simeq \widetilde Z[S^m\wedge_{\Sigma_m}\sur(m,m)_+]\simeq \widetilde\Z[S^m]
\]
where $\Sigma_m$ is acting on the right hand side by permuting the coordinates of $S^m$. 

It follows that
\[
\ext^*(\tensor^m\circ \abelianization, \Gamma^m\circ \abelianization)\cong \pi_{-*} S^{2m}\otimes \Sigma^{-2m} \Z\cong \Z.
\]
Note that $\Sigma_m$ acts by permutation on $S^{2m}$ and acts trivially on $S^{-2m}$. The action of $\Sigma_m$ on $S^{2m}$ is trivial on homology, and therefore the $\Sigma_m$-action on $\Z$ on the right hand side is trivial.
\end{proof}
\subsection{$\ext^*(T^m\circ \abelianization, \symm^n\circ\abelianization)$}
Recall that $\symm^n$ is the $n$-th symmetric power functor $\symm^n(A)=A^{\otimes n}_{\Sigma_n}$. We will now  consider $\ext^*(\tensor^m \circ \abelianization, \symm^n\circ \abelianization)$. By Corollary~\ref{cor:symmetric-power}, 
\begin{equation} \label{eq:symmetric-power}
\widehat{\symm^n\circ \abelianization}(X)=\left(\Omega^n\widetilde\Z[X^{\wedge n}]\right)_{\Sigma_n}.
\end{equation}
This formula is less topological than the one we have for $\tensor^n\circ \abelianization$, $\Lambda^n\circ\abelianization$ and $\Gamma^n\circ \abelianization$, and for this reason calculating $\ext$ into $\symm^n\circ \abelianization$ is harder. We will do some calculations for small $m$ and $n$. One reason for doing it is to illustrate how one can work with the formula in~\eqref{eq:symmetric-power}. Another reason why this calculation is interesting is that it is the first example where we get torsion in the $\ext$ groups.

Let us begin by considering the case $m=1$. Combining the case $m=1$ of Proposition~\ref{prop:tensor-general} with Corollary~\ref{cor:symmetric-power}, we conclude that
\[
\ext^*\left(\abelianization, \symm^n\circ\abelianization\right)\cong \pi_{-*}\left(S^1 \otimes (\Omega^n  \Z)_{\Sigma_n}\right).
\]
It is well-known that there is a $\Sigma_n$-equivariant homeomorphism $S^n\cong S^1 \wedge \bar S^{n-1}$. Here $\Sigma_n$ acts on $S^n$ by permuting the coordinates, while it acts trivially on $S^1$, and acts by reduced standard representation on $\bar S^{n-1}$. It follows that there is an equivalence $S^1\otimes (\Omega^n\widetilde \Z)_{\Sigma_n}\simeq (\bar \Omega^{n-1}  \Z)_{\Sigma_n}$. We obtained the following lemma.
\begin{lem}\label{lem:ab-to-symm}
There is an isomorphism
\[
\ext^*\left(\abelianization, \symm^n\circ\abelianization\right)\cong \pi_{-*}\left((\bar\Omega^{n-1} \Z)_{\Sigma_n}\right).
\]
\end{lem}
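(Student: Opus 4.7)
The plan is to specialize the general formula of Proposition~\ref{prop:tensor-general} to $m=1$ and then simplify the resulting expression using an equivariant splitting of $S^n$. Setting $m=1$ in Proposition~\ref{prop:tensor-general} gives
\[
\ext^*(\abelianization, \symm^n\circ\abelianization)\cong \pi_{-*}\left(S^1\otimes \ce_1\widehat{\symm^n\circ\abelianization}(S^0)\right),
\]
so the first task is to identify the first cross-effect of $\widehat{\symm^n\circ\abelianization}$ at $S^0$. Using Corollary~\ref{cor:symmetric-power}, we have $\widehat{\symm^n\circ\abelianization}(X)\simeq (\Omega^n\widetilde\Z[X^{\wedge n}])_{\Sigma_n}$; since the first cross-effect of a reduced functor at $S^0$ is just evaluation at $S^0$, and $\widetilde\Z[(S^0)^{\wedge n}]\cong \Z$, the cross-effect is $(\Omega^n\Z)_{\Sigma_n}$ with $\Sigma_n$ acting by permuting the $n$ loop coordinates.

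Next, I would invoke the standard $\Sigma_n$-equivariant homeomorphism $S^n\cong S^1\wedge \bar S^{n-1}$, where $\Sigma_n$ acts trivially on the $S^1$ factor (coming from the sum-of-coordinates direction in $\R^n$) and by the reduced standard representation on $\bar S^{n-1}$. Dualising, this gives a $\Sigma_n$-equivariant equivalence $\Omega^n\Z\simeq \Omega^1\bar\Omega^{n-1}\Z$ with trivial $\Sigma_n$-action on the $\Omega^1$ factor. Because the $\Sigma_n$-action on $S^1$ (or equivalently on $\Omega^1$) is trivial, tensoring with $S^1$ commutes with taking $\Sigma_n$-coinvariants, yielding
\[
S^1\otimes (\Omega^n\Z)_{\Sigma_n}\simeq S^1\otimes (\Omega^1\bar\Omega^{n-1}\Z)_{\Sigma_n}\simeq (\bar\Omega^{n-1}\Z)_{\Sigma_n}.
\]
Combining these two equivalences gives the desired isomorphism.

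There is no substantial obstacle: the content of the lemma is really a bookkeeping consequence of Proposition~\ref{prop:tensor-general} together with the explicit model for $\widehat{\symm^n\circ\abelianization}$ supplied by Corollary~\ref{cor:symmetric-power}. The only point that requires a moment's care is checking that the $S^1$-factor in the splitting $S^n\cong S^1\wedge \bar S^{n-1}$ carries trivial $\Sigma_n$-action, so that it can be pulled out of the coinvariants; this is a well-known fact about the permutation representation of $\Sigma_n$ on $\R^n$, whose invariant line is the diagonal.
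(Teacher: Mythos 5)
Your proposal is correct and follows essentially the same route as the paper: specialize Proposition~\ref{prop:tensor-general} to $m=1$, identify the first cross-effect via Corollary~\ref{cor:symmetric-power} as $(\Omega^n\Z)_{\Sigma_n}$, and then use the $\Sigma_n$-equivariant splitting $S^n\cong S^1\wedge\bar S^{n-1}$ (trivial action on $S^1$, reduced standard representation on $\bar S^{n-1}$) to cancel the $S^1$ against one loop coordinate. The extra remarks you make (reducedness of the functor, and triviality of the action on the $S^1$ factor so it passes through the coinvariants) are exactly the bookkeeping the paper leaves implicit.
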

This has the following easy consequences
\begin{lem}
\begin{enumerate}
    \item $\ext^i\left(\abelianization, \symm^n\circ\abelianization\right)\otimes \Q=0$ for $n>1$ and all $i$.
    \item $\ext^i\left(\abelianization, \symm^n\circ\abelianization\right)=0$ for $i\ge n$.
\end{enumerate}    
\end{lem}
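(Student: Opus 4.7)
The plan is to reduce both parts to vanishing statements about the chain complex $\symm^n(\Omega\Z)$. Starting from Lemma~\ref{lem:ab-to-symm}, and using the splitting $S^n\simeq S^1\wedge\bar S^{n-1}$ from its proof together with Corollary~\ref{cor:symmetric-power}, one identifies $(\bar\Omega^{n-1}\Z)_{\Sigma_n}\simeq \Sigma\,\symm^n(\Omega\Z)$, so that
\[
\ext^i(\abelianization,\symm^n\circ\abelianization)\cong \pi_{-i-1}\symm^n(\Omega\Z).
\]

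For part (1), I would use that strict $\Sigma_n$-coinvariants is an exact functor on chain complexes of $\Q$-vector spaces, which allows one to commute $\pi_*$ past $(-)_{\Sigma_n}$ after rationalizing. The rational homotopy of $\bar\Omega^{n-1}\Q$ is concentrated in degree $-(n-1)$, where $\Sigma_n$ acts via the orientation character of $\bar S^{n-1}$; using $V\cong \bar V\oplus\mathbf{1}$ and $\det(V)=\mathrm{sign}$, this character is the sign representation of $\Sigma_n$, whose coinvariants vanish for $n\ge 2$.

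For part (2), I plan to compute the Moore normalization of the cosimplicial abelian group $\symm^n(\Omega\Z)$ directly. Modelling $\Omega\Z$ as the $\Z$-linear dual of $\widetilde\Z[S^1_\bullet]$ for $S^1_p=\Delta([p],[1])/\{c_0,c_1\}$, one has $\Omega\Z^p\cong \Z^p$ with a natural dual basis $f_1^*,\ldots,f_p^*$. A short calculation identifies each codegeneracy $s^k\colon \Omega\Z^{p+1}\to \Omega\Z^p$ as the map that sends $f_{k+1}^*$ to $0$ and matches the remaining basis vectors with those of $\Omega\Z^p$ in the evident way. After applying $\symm^n$ levelwise, the kernel of $s^k$ on $\symm^n(\Z^{p+1})$ is spanned by the monomials that involve $f_{k+1}^*$ with positive exponent; intersecting over $k=0,\ldots,p$ then shows that the normalized subgroup at cosimplicial degree $p+1$ is spanned by the monomials of total degree $n$ that involve every one of $f_1^*,\ldots,f_{p+1}^*$ with positive exponent. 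Such monomials exist only when $p+1\le n$, so the Moore normalization vanishes in cohomological degrees $>n$, whence $\pi_k\symm^n(\Omega\Z)=0$ for $k<-n$ and $\ext^i(\abelianization,\symm^n\circ\abelianization)=0$ for $i\ge n$. The main technical point is the cosimplicial bookkeeping in part (2) — writing down the codegeneracies on the standard simplicial circle and checking the stated description of the normalized subcomplex — but nothing more than a careful unpacking of definitions should be needed.
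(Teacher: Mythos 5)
Your proposal is correct, and it splits interestingly against the paper's own proof. For part (1) you are essentially doing what the paper does: both arguments reduce, via Lemma~\ref{lem:ab-to-symm}, to the fact that the only nonzero rational cohomology of $\bar S^{n-1}$ is the sign representation of $\Sigma_n$ and that its rational (co)invariants vanish for $n\ge 2$; the paper replaces strict by homotopy coinvariants rationally and runs the homotopy-orbit spectral sequence (so it quotes vanishing of $\HH_*(\Sigma_n;\Q_{\mathrm{sign}})$ in all degrees), while you invoke exactness of $(-)_{\Sigma_n}$ over $\Q[\Sigma_n]$ and only need the degree-zero statement — marginally more elementary, same key fact. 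For part (2) you take a genuinely different route: the paper gets the degree bound from the observation that the equivariant simplicial model of $\bar S^{n-1}$ is $(n-1)$-dimensional, whereas you compute the conormalization of the cosimplicial abelian group $\symm^n(\Omega\Z)$ directly — your description of the codegeneracies of $\Omega\Z$ (each $s^k$ kills one dual basis vector and matches the rest) is correct, so the normalized piece in codegree $q$ is spanned by degree-$n$ monomials involving all $q$ variables, forcing $q\le n$. This buys you, as a by-product, an explicit small complex computing $\ext^*(\abelianization,\symm^n\circ\abelianization)$ whose ranks $\binom{n-1}{q-1}$ reproduce (after the degree shift) the complex~\eqref{eq:coinvariat cochains} that the paper only derives afterwards from the simplicial model of $\bar S^{n-1}$. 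One simplification worth noting: the identification $\ext^i\cong\pi_{-i-1}\symm^n(\Omega\Z)$ follows directly from Proposition~\ref{prop:tensor-general} (case $m=1$) together with Corollary~\ref{cor:symmetric-power}, since $\symm^n(\Omega\Z)$ is exactly the paper's model for $\widehat{\symm^n\circ\abelianization}(S^0)$; so the detour through the equivariant splitting $S^n\simeq S^1\wedge\bar S^{n-1}$ and $(\bar\Omega^{n-1}\Z)_{\Sigma_n}$ is not actually needed for your part (2) and can be dropped.
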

\begin{proof}
    For the first part, we need to show that $\pi_{*}\left((\bar\Omega^{n-1}  \Z)_{\Sigma_n}\right)\otimes \Q=0$ whenever $n>1$. This is the same as the homology of the chain complex $(\bar \Omega^{n-1} \Q)_{\Sigma_n}$. Since we are in characteristic zero, there is an equivalence 
    \[
    (\bar \Omega^{n-1} \Q)_{\Sigma_n} \simeq (\bar \Omega^{n-1} \Q)_{h\Sigma_n} 
    \]
    where by the right hand side we mean $\bar \Omega^{n-1} \Q\otimes_{\Sigma_n}\Z[E\Sigma_n]$. There is a spectral sequence
 \[
 \HH_p(\Sigma_n; \pi_q(\bar \Omega^{n-1}\Q))\Rightarrow \pi_{p+q}\left((\bar \Omega^{n-1} \Q)_{h\Sigma_n}\right).
 \] 
 $\pi_*(\bar \Omega^{n-1} \Q)$ consists of the rationalized sign representation of $\Sigma_n$ in degree $1-n$, and nothing else. Assuming $n>1$, the rational homology of $\Sigma_n$ with coefficients in the sign representation is zero in all degrees, including degree zero. It follows that $ \HH_p(\Sigma_n; \pi_q(\bar \Omega^{n-1}\Q))=0$ for all $p$ and $q$, and therefore $\pi_{*}\left((\bar\Omega^{n-1}  \Z)_{\Sigma_n}\right)\otimes \Q=0$.

 For the second part, note the $\Sigma_n$-space $\bar S^{n-1}$ is the geometric realization of an $n-1$-dimensional
 simplicial set (see below). It follows that $(\bar\Omega^{n-1}  \Z)_{\Sigma_n}$ is equivalent to a chain complex concentrated in degrees $1-n, \ldots, 0$. It follows that $\pi_i(\bar\Omega^{n-1}  \Z)_{\Sigma_n}=0$ for $i\le -n$ and therefore $\ext^i\left(\abelianization, \symm^n\circ\abelianization\right)=0$ for $i\ge n$.
\end{proof}
The space $\bar S^{n-1}$ is $\Sigma_n$-equivariantly homeomorphic to the quotient space $\Delta^{n-1}/\partial\Delta^{n-1}$, where $\Delta^{n-1}$ is the $n-1$ dimensional simplex, on which $\Sigma_n$ acts by permuting the vertices. The space $\Delta^{n-1}$ is homeomorphic to the geometric realization of the poset of non-empty subsets of $\{1, \ldots, n\}$. The boundary $\partial\Delta^{n-1}$ is the realisation of the poset of non-empty \emph{proper} subsets of $\{1, \ldots, n\}$. It follows that $\bar S^{n-1}$ has a pointed simplicial model, where the set of $k$-dimensional simplices is the set of chains of subsets $U_0\subset U_1\subset \cdots \subset U_k=\{1, \ldots, n\}$, together with a basepoint which corresponds to all chains that don't end at the maximal element. Notice that the stabilizer group of a chain of this form is $\Sigma_{|U_0|}\times \Sigma_{|U_1|-|U_0|}\times \cdots \Sigma_{n-|U_{k-1}|}$. It follows that $\bar S^{n-1}$ has an equivariant pointed simplicial model of the following form
\[\begin{tikzcd}[column sep=small]
	{\frac{\Sigma_n}{\Sigma_n}_+} & {\displaystyle\bigvee_{1\le n_0\le n_1=n}\frac{\Sigma_n}{\Sigma_{n_0}\times\Sigma_{n-n_0}}_+} & \cdots  {\displaystyle\bigvee_{1\le n_0\le \cdots\le n_k=n}\frac{\Sigma_n}{\Sigma_{n_0}\times\cdots\times\Sigma_{n-n_{k-1}}}_+}  \cdots
	\arrow[shift right, from=1-1, to=1-2]
	\arrow[shift right, Rightarrow, from=1-2, to=1-1]
	\arrow[shift right=2, Rightarrow, from=1-2, to=1-3]
	\arrow[shift right, Rightarrow, scaling nfold=3, from=1-3, to=1-2]
\end{tikzcd}\]
Applying $\map(-, \Z)_{\Sigma_n}$ to this simplicial set we obtain a cosimplicial abelian group. Taking normalized chains, one obtains a chain complex of the following form
\begin{equation} \label{eq:coinvariat cochains}
\begin{tikzcd}
	\Z & {\displaystyle\bigoplus_{1\le n_0 < n_1=n} \Z} & \cdots & {\displaystyle\bigoplus_{1\le n_0<n_1<\cdots<n_k=n}\Z} & \cdots
	\arrow["{\delta^0}", from=1-1, to=1-2]
	\arrow["{\delta^1}", from=1-2, to=1-3]
	\arrow["{\delta^{k-1}}", from=1-3, to=1-4]
	\arrow[from=1-4, to=1-5]
\end{tikzcd}
\end{equation}
The number of tuples of integers that satisfy $1\le n_0<\cdots <n_k=n$ is $n-1 \choose k$. Thus the $k$-th coboundary homomorphism in this complex is a homomorphism of the form
\[
\delta^k \colon \Z^{n-1 \choose k} \to \Z^{n-1\choose k+1}.
\]
It follows that $\delta^k$ can be represented with an ${n-1 \choose k}\times {n-1 \choose k+1}$ matrix of integers. Let us denote this matrix by $[\delta^k]$. 
The rows of $[\delta^k]$ are indexed by tuples of the form $0=n_{-1}< n_0<\cdots <n_k=n$ and the columns by tuples of the form $0=m_{-1}< m_0<\cdots <m_{k+1}=n$.
We will now describe the entries of $[\delta^k]$, which one obtains by unraveling the definitions. Suppose that the tuple $(m_0, \ldots, m_{k+1})$ is a refinement of $(n_0, \ldots n_k)$. By this we mean that there exists a $0\le j\le k$ such that there is an identity of tuples
\[(m_0, \ldots,m_{j-1}, m_{j}, m_{j+1}, \ldots m_{k+1})=(n_0, \ldots,  n_{j-1},n_j', n_{j}, \ldots, n_k)\]
where $n_{j-1}<n_j'=m_{j}< n_{j}=m_{j+1}$. Then the entry of $[\delta^k]$ corresponding to row $(n_0, \ldots n_k)$ and column $(m_0, \ldots, m_{k+1})$ is $(-1)^j {n_{j}-n_{j-1} \choose n_j'-n_{j-1}}$. In all other cases, the entry is zero.
We have proved the following proposition
\begin{prop}
The groups $\ext^*(\abelianization,\symm^n\circ\abelianization)$ are isomorphic to the cohomology groups of the cochain complex~\eqref{eq:coinvariat cochains}, whose coboundary homomorphisms are described above.
\end{prop}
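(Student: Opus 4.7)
The plan is to compute $\pi_{-*}((\bar\Omega^{n-1}\Z)_{\Sigma_n})$ from Lemma~\ref{lem:ab-to-symm} by realizing this object as the totalization of a cosimplicial abelian group arising from the equivariant simplicial model of $\bar S^{n-1}$ already written down. Since $\bar\Omega^{n-1}\Z \simeq \pointedmaps(\bar S^{n-1}, \Z)$, applying $\pointedmaps(-,\Z)$ levelwise to the pointed simplicial model above produces a cosimplicial abelian group whose term in cosimplicial degree $k$ is a product of copies of $\Z^{\Sigma_n/H}$, one for each chain $1 \le n_0 \le \cdots \le n_k = n$, with $H = \Sigma_{n_0}\times\cdots\times\Sigma_{n-n_{k-1}}$. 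Taking strict $\Sigma_n$-coinvariants decomposes along the product, and on each factor $\pointedmaps((\Sigma_n/H)_+, \Z)_{\Sigma_n}$ the $\Sigma_n$-action is free permutation of coordinates, so the coinvariants are a single copy of $\Z$. The $\Sigma_n$-orbits of chains are parametrized exactly by strictly increasing tuples $0 < n_0 < \cdots < n_k = n$, so the $k$-th term of the coinvariant cosimplicial abelian group is $\Z^{\binom{n-1}{k}}$, matching the $k$-th term of~\eqref{eq:coinvariat cochains}.

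The next step is to apply the Dold–Kan normalized cochain functor and to note that the resulting cochain complex computes $\pi_{-*}$ of the totalization, hence computes $\pi_{-*}((\bar\Omega^{n-1}\Z)_{\Sigma_n})$. For this one needs strict $\Sigma_n$-coinvariants to commute with the totalization; this holds here because, level by level, the relevant permutation representations are free modules on a finite $\Sigma_n$-set, so coinvariants are exact and commute with the levelwise product/limit, and therefore with the cosimplicial structure maps. Combined with Lemma~\ref{lem:ab-to-symm} and the standard identification of $\ext^*$ with $\pi_{-*}$ of spectral natural transformation objects (equation~\eqref{eq:key isomorphism}), this produces the stated isomorphism of graded groups.

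The remaining task is to match the coboundary homomorphisms with the explicit formula. A coface $\delta^k_j$ in the simplicial model of $\bar S^{n-1}$ corresponds to refining a chain $(n_0, \ldots, n_k)$ to $(n_0, \ldots, n_{j-1}, n_j', n_j, \ldots, n_k)$ by inserting an element $n_j'$ with $n_{j-1} < n_j' < n_j$. On the pointed wedge of transitive $\Sigma_n$-sets, this is an equivariant map that, upon dualizing with $\pointedmaps(-,\Z)$ and taking strict $\Sigma_n$-coinvariants, multiplies the canonical generator by the index of the finer stabilizer inside the coarser one, namely $\binom{n_j - n_{j-1}}{n_j' - n_{j-1}}$; the sign $(-1)^j$ is the usual cosimplicial alternating sign. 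All other matrix entries vanish because distinct $\Sigma_n$-orbits of chains contribute to disjoint summands after taking coinvariants.

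The main obstacle I foresee is not conceptual but bookkeeping: carefully verifying that the strict coinvariants functor commutes with the relevant cosimplicial limits in this specific situation (which one can check either by the freeness argument above or by passing through the equivalence between strict and homotopy orbits when applied to direct sums of permutation modules), and then pinning down the binomial coefficients and signs in the cofaces so that they match the formula stated. The combinatorics here is unambiguous once one fixes conventions for the ordering of the chain and for the identification $\pointedmaps((\Sigma_n/H)_+, \Z)_{\Sigma_n} \cong \Z$, but it requires care.
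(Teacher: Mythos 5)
Your proposal reproduces the paper's argument: start from Lemma~\ref{lem:ab-to-symm}, use the simplicial model of $\bar S^{n-1}$ as chains of subsets, apply $\map(-,\Z)$, take strict $\Sigma_n$-coinvariants (reducing each transitive permutation module $\Z[\Sigma_n/H]$ to a single copy of $\Z$), normalize, and read off the coboundary entries as the indices $\binom{n_j-n_{j-1}}{n_j'-n_{j-1}}$ with the alternating cosimplicial sign. The only small slip is the phrase ``free permutation'' (the relevant action on the basis is transitive, not free, but that is precisely what makes the coinvariants $\Z$), and the count $\Z^{\binom{n-1}{k}}$ holds only after normalization rather than for the raw coinvariant cosimplicial group; neither affects the conclusion.
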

\begin{exmps}
Consider the case $n=2$. Then the cochain complex~\eqref{eq:coinvariat cochains} has the following form. Each copy of $\Z$ is labeled with the corresponding increasing sequence $(n_0, n_1\ldots, n_k)$, where $n_k=n=2$.
\[
\underset{(2)}{\Z}\xrightarrow{2}\underset{(1,2)}{\Z}.
\]
It follows that $\ext^1(\abelianization, \symm^2\circ \abelianization)=\Z/2$, and $\ext^i(\abelianization, \symm^2\circ \abelianization)=0$ for $i\ne 1$.

Next, consider the case $n=3$. In this case~\eqref{eq:coinvariat cochains} assumes the following form
\[
\underset{(3)}{\Z}\xrightarrow{[3, 3]}\underset{(1,3)}{\Z}\oplus \underset{(2,3)}{\Z} \xrightarrow{\begin{bmatrix} -2 \\2\end{bmatrix}} \underset{(1,2,3)}{\Z}.
\]
It follows that $\ext^1(\abelianization, \symm^3 \circ \abelianization)=\Z/3$, $\ext^2(\abelianization, \symm^3 \circ \abelianization)=\Z/2$, and $\ext^i(\abelianization, \symm^3 \circ \abelianization)=0$ for $i\ne 1, 2$.

Finally let us consider the case $n=4$. This is the last case we will do by hand. In this case~\eqref{eq:coinvariat cochains} takes the following form
\[
\underset{(4)}{\Z}\xrightarrow{\tiny[4,6,4]} \underset{(1, 4)}{\Z}\oplus \underset{(2, 4)}{\Z}\oplus \underset{(3,4)}{\Z}\xrightarrow{\tiny\begin{bmatrix}
    -3 & -3 & 0\\
    2 & 0 & -2\\
    0 & 3 & 3 
\end{bmatrix}} \underset{(1,2,4)}{\Z}\oplus \underset{(1,3,4)}{\Z}\oplus \underset{(2,3,4)}{\Z} \xrightarrow{\tiny\begin{bmatrix}
    2\\-2\\2
\end{bmatrix}} \underset{(1,2,3,4)}{\Z}
\]
It follows that $\ext^1(\abelianization, \symm^4\circ\abelianization)=\Z/2$, $\ext^2(\abelianization, \symm^4\circ\abelianization)=\Z/3$, $\ext^3(\abelianization, \symm^4\circ\abelianization)=\Z/2$, and $\ext^i(\abelianization, \symm^4\circ\abelianization)=0$ for all $i\ne 1,2,3$.

Table~\ref{tab:ext} describes $\ext^*(\abelianization, \symm^n\circ\abelianization)$ for $n\le 9$. It was obtained by calculating the cohomology of the complex~\eqref{eq:coinvariat cochains} with the help of a Python script. The reader is invited to guess and prove a general pattern.

\begin{table}
    \centering
    \begin{tabular}{c|c||c|c|c|c|c|c|c|c|c|c|}
       \multicolumn{11}{c}{\, \, \quad i} \\
       \cline{2-11}
      \multirow{10}{*}{n}& & 0 & 1 & 2 & 3 & 4 & 5 & 6 & 7 & 8  \\ \cline{2-11} \cline{2-11}
      & 1  & $\Z$ & 0 & 0 & 0 & 0 & 0 & 0 & 0 & 0 \\
      & 2  & 0 & $\Z/2$ & 0 & 0 & 0 & 0 & 0 & 0 & 0 \\
       & 3 & 0 & $\Z/3$ & $\Z/2$ & 0 & 0 & 0 & 0 & 0 & 0 \\
       & 4 & 0 & $\Z/2$ & $\Z/3$ & $\Z/2$ & 0 & 0 & 0 & 0 & 0  \\
       & 5 & 0 & $\Z/5$ & $\Z/2$ & 0 & $\Z/2$ & 0 & 0 & 0 &0 \\
       &6  & 0 & 0 & $\Z/10$ & $\Z/6$ & 0 & $\Z/2$ & 0 & 0 & 0 \\
       & 7 & 0 & $\Z/7$ & 0 & $\Z/2$ & $\Z/6$ & 0 & $\Z/2$ & 0 & 0 \\
       & 8 & 0 & $\Z/2$ & $\Z/7$ & $\Z/2$ & $\Z/2$ & $\Z/2$ & 0 & $\Z/2$ & 0\\
& 9 & 0 & $\Z/3$ & $\Z/2$ & 0 & $\Z/2$ & $\Z/6$ & $\Z/2$ & 0 & $\Z/2$\\ \cline{2-11}
    \end{tabular}
    \caption{$\ext^i(\abelianization, \symm^n\circ\abelianization)$ for $n\le 9$}
    \label{tab:ext}
\end{table}
\end{exmps}
Next let us discuss how one can calculate $\ext^*(\tensor^m\circ \abelianization, \symm^n\circ \abelianization)$. It turns out that it essentially can be reduced to the case $m=1$. We have the following generalization of Lemma~\ref{lem:ab-to-symm}
\begin{lem}\label{lem:tensor-to-symm}
There is an isomorphism
\[
\ext^*\left(\tensor^m\circ \abelianization, \symm^n\circ\abelianization\right)\cong \bigoplus_{\underset{n_1, \ldots, n_m>0}{n_1+\cdots +n_m=n}}\pi_{-*}\left((\bar\Omega^{n_1-1} \Z)_{\Sigma_{n_1}}\otimes\cdots \otimes (\bar\Omega^{n_m-1} \Z)_{\Sigma_{n_m}}\right).
\]
\end{lem}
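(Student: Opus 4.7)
The plan is to combine Proposition~\ref{prop:tensor-general} with Corollary~\ref{cor:symmetric-power} and carry out the resulting cross-effect computation. Writing $G = \symm^n\circ\abelianization$, Proposition~\ref{prop:tensor-general} gives
\[
\ext^*(\tensor^m\circ\abelianization,G) \cong \pi_{-*}\left(S^m\otimes\ce_m\widehat G(S^0,\ldots,S^0)\right),
\]
and Corollary~\ref{cor:symmetric-power} identifies $\widehat G(X) = (\Omega^n\widetilde\Z[X^{\wedge n}])_{\Sigma_n}$. Since we work in the stable $\infty$-category $\ch$, both $\Omega^n$ and the homotopy $\Sigma_n$-coinvariants commute with finite limits, so the $m$-th cross-effect can be computed on $X\mapsto \widetilde\Z[X^{\wedge n}]$ first and then the operations $\Omega^n$ and $(-)_{\Sigma_n}$ applied afterward.

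First I would compute $\ce_m\widetilde\Z[(-)^{\wedge n}](S^0,\ldots,S^0)$ as a $\Sigma_n$-module using the same strategy as in part~\eqref{item:tensor-tensor} of Corollary~\ref{cor: tensor-others}. Evaluating on $U_+$ for $U\subseteq \{1,\ldots,m\}$, one has $\widetilde\Z[U_+^{\wedge n}] = \Z[U^n]$, whose $\Sigma_n$-orbits are in bijection with multi-indices $\vec n\colon U\to \Z_{\geq 0}$ of total weight $n$, each with stabiliser $\prod_{i\in U}\Sigma_{n_i}$; the cubical collapse maps identify, for $V\subset U$, the summands for $V$ with those multi-indices supported inside $V$. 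Taking total fibres kills all summands corresponding to $\vec n$ with some $n_i = 0$, and applying $\Omega^n(-)_{\Sigma_n}$ together with the Frobenius-type identity $(\Omega^n\Z[\Sigma_n/H])_{\Sigma_n}\simeq (\Omega^n\Z)_H$ yields
\[
\ce_m\widehat G(S^0,\ldots,S^0) \simeq \bigoplus_{\underset{n_1,\ldots,n_m>0}{n_1+\cdots+n_m = n}} (\Omega^n\Z)_{\Sigma_{n_1}\times\cdots\times\Sigma_{n_m}}.
\]

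It remains to rewrite $S^m\otimes\Omega^n\Z$ in a form suited to the product subgroup $\Sigma_{n_1}\times\cdots\times\Sigma_{n_m}$. As in the proof of Lemma~\ref{lem:ab-to-symm}, each factor $S^{n_i}$ splits $\Sigma_{n_i}$-equivariantly as $S^1\wedge \bar S^{n_i-1}$ with trivial action on $S^1$, so $S^n \simeq S^m\wedge \bar S^{n_1-1}\wedge\cdots\wedge\bar S^{n_m-1}$ equivariantly for the product subgroup. Dualisability of the finite spheres $\bar S^{n_i-1}$ then gives
\[
S^m\otimes \Omega^n \Z \simeq \bar\Omega^{n_1-1}\Z\otimes\cdots\otimes\bar\Omega^{n_m-1}\Z,
\]
and, since the factors $\Sigma_{n_i}$ act on disjoint tensor factors, the homotopy coinvariants factor as
\[
\bigl(\bar\Omega^{n_1-1}\Z\otimes\cdots\otimes\bar\Omega^{n_m-1}\Z\bigr)_{\Sigma_{n_1}\times\cdots\times\Sigma_{n_m}}\simeq (\bar\Omega^{n_1-1}\Z)_{\Sigma_{n_1}}\otimes\cdots\otimes(\bar\Omega^{n_m-1}\Z)_{\Sigma_{n_m}}.
\]
Taking $\pi_{-*}$ of the resulting direct sum yields the lemma. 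The one step that requires a bit of care is the equivariant cross-effect computation: one must track the $\Sigma_n$-equivariance through the orbit decomposition of $\Z[U^n]$ and the cubical collapse maps so that the Frobenius identity may be applied summand by summand; everything after that is bookkeeping built on Lemma~\ref{lem:ab-to-symm} and Corollary~\ref{cor: tensor-others}.
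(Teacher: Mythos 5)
Your route is the one the paper intends: Lemma~\ref{lem:tensor-to-symm} is stated as the generalization of Lemma~\ref{lem:ab-to-symm}, whose proof is exactly your combination of Proposition~\ref{prop:tensor-general} with Corollary~\ref{cor:symmetric-power}, an orbit/Frobenius decomposition of the cross-effect, and the equivariant splitting $S^{n_i}\cong S^1\wedge \bar S^{n_i-1}$; your bookkeeping with compositions $(n_1,\ldots,n_m)$ and Young stabilizers is the correct combinatorial core.

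There is, however, one step whose justification does not hold as written, and it is precisely the step you flag as delicate. The coinvariants $(-)_{\Sigma_n}$ in Corollary~\ref{cor:symmetric-power} are \emph{strict} (levelwise) coinvariants of the cosimplicial--simplicial model coming from Theorem~\ref{thm:formula}, not homotopy coinvariants. The distinction is essential here: replacing them by homotopy coinvariants changes the functor drastically (already for $m=1$, $n=2$ one would pick up the full homology of $\Sigma_2$ with sign coefficients, producing classes in infinitely many degrees, including negative $\ext$ degrees, contradicting Table~\ref{tab:ext}). Consequently your justification for commuting the $m$-th cross-effect past $\Omega^n(-)_{\Sigma_n}$ --- ``homotopy coinvariants commute with finite limits in a stable $\infty$-category'' --- does not apply to the functor you are actually manipulating. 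The correct argument is $1$-categorical: the cube $U\mapsto \Omega^n\widetilde\Z[(U_+)^{\wedge n}]$ is split, with summands indexed by the multi-indices supported on $V\subseteq U$ exactly as in your orbit decomposition; strict coinvariants and $\hom(\widetilde\Z[S^n_\bullet],-)$ are additive, hence preserve split cubes (Lemma~\ref{lemma:additive preserves split}), so the total fibre of the cube of coinvariants is the summand indexed by surjective multi-indices, giving
\[
\ce_m\widehat{\symm^n\circ\abelianization}(S^0,\ldots,S^0)\cong \bigoplus_{\substack{n_1+\cdots+n_m=n\\ n_i>0}} (\Omega^n\Z)_{\Sigma_{n_1}\times\cdots\times\Sigma_{n_m}}
\]
with strict coinvariants. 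From there your remaining identities do go through verbatim in the strict setting: the Frobenius identity $(M\otimes\Z[\Sigma_n/H])_{\Sigma_n}\cong M_H$ and the factorization $(A\otimes B)_{G\times H}\cong A_G\otimes B_H$ are $1$-categorical isomorphisms, and the desuspension $S^{n_i}\cong S^1\wedge\bar S^{n_i-1}$ must be (and can be, as in the proof of Lemma~\ref{lem:ab-to-symm}) implemented by an actual equivariant model rather than a mere equivariant homotopy equivalence, since strict coinvariants are not homotopy invariant. With the commuting step rejustified by splitness and the coinvariants read strictly throughout, your proof is complete and agrees with the intended one.
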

Note that the right hand side can be calculated with the Kunneth formula. Thus the case of general $m$ can be calculated given knowledge of the case $m=1$.
\begin{exmp}
Let us calculate $\ext^*\left(\tensor^2\circ \abelianization, \symm^4\circ\abelianization\right)$. By Lemma~\ref{lem:tensor-to-symm}, this graded group is isomorphic to the following
\[
\pi_{-*}\left( \Z\otimes(\bar\Omega^{2} \Z)_{\Sigma_{3}}\right)\oplus \pi_{-*}\left( (\bar\Omega \Z)_{\Sigma_{2}}\otimes(\bar\Omega \Z)_{\Sigma_{2}}\right)\oplus \pi_{-*}\left( (\bar\Omega^{2} \Z)_{\Sigma_{3}}\otimes \Z\right).
\]
Using Table~\ref{tab:ext} and the Kunneth formula, we obtain the following result
\[
\ext^i(\tensor^2\circ \abelianization, \symm^4 \circ\abelianization)=\left\{\begin{array}{cc}
    \Z/2\oplus \Z/3\oplus\Z/3 & i=1 \\
    \Z/2\oplus \Z/2 \oplus \Z/2 & i=2 \\
    0 & \mbox{otherwise}
\end{array}\right.
\]
\end{exmp}
\subsection{Ext from abelianization to Passi functors}\label{section:ab to Passi}
In this section we calculate $\ext^*(\abelianization, \Passi_n)$ (see Corollary~\ref{cor:ab to passi} below).
We hope that someone will extend this calculation to $\ext^*(\tensor^m \circ \abelianization, \Passi_n)$.

By case $m=1$ of Proposition~\ref{prop:tensor-general}, there is an isomorphism
\[
\ext^*(\abelianization, \Passi_n) \cong \pi_{-*}S^1\otimes \widehat\Passi_n(S^0).
\]
By Proposition~\ref{prop:passi}
There is an equivalence, 
\[
\widehat \Passi_n(S^0) \simeq \underset{i\in \epi^{\le n}}{\nat}(S^i, \widetilde\Z[{S^0}^{\wedge i}]).
\]
We have a canonical isomorphism $\widetilde\Z[{S^0}^{\wedge i}]\cong \Z$. In particular, this is a constant functor in the variable $i$. Let us introduce the notation
\[
R_n=\colim_{i\in \epi^{\le n}} S^i.
\]
It follows that there is an equivalence
\[
\widehat \Passi_n(S^0) \simeq \pointedmaps(R_n, \widetilde\Z).
\]
We will see shortly that $R_n$ is a suspension of another space, which we denote $\bar R_n$. We have the following consequence
\begin{lem}\label{lem:ab to passi R_n}
There are isomorphisms
\[
\ext^*(\abelianization, \Passi_n) \cong \pi_{-*} \pointedmaps(\bar R_n, \Z)\cong \widetilde\HH^*(\bar R_n).
\]
\end{lem}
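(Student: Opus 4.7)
The plan is a short chain of equivalences, deferring the construction of $\bar R_n$ (which the author signals will be carried out ``shortly''). From the case $m=1$ of Proposition~\ref{prop:tensor-general} and the equivalence $\widehat\Passi_n(S^0)\simeq \pointedmaps(R_n,\widetilde\Z)$ established immediately above, one gets
\[
\ext^*(\abelianization,\Passi_n) \;\cong\; \pi_{-*}\bigl(S^1\otimes \pointedmaps(R_n,\widetilde\Z)\bigr).
\]
Granting the promised suspension identification $R_n\simeq S^1\wedge \bar R_n$, the tensor--cotensor adjunction and the stability of $\ch$ give
\[
S^1\otimes \pointedmaps(S^1\wedge \bar R_n,\widetilde\Z) \;\simeq\; S^1\otimes \Omega\pointedmaps(\bar R_n,\widetilde\Z) \;\simeq\; \pointedmaps(\bar R_n,\widetilde\Z),
\]
the last step using $\Sigma\Omega\simeq \id$ in $\ch$. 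The second isomorphism of the lemma is then just the definition of reduced cohomology, $\pi_{-*}\pointedmaps(\bar R_n,\Z)\cong \widetilde\HH^*(\bar R_n)$.

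The real content of the argument is therefore the canonical suspension structure $R_n\simeq S^1\wedge \bar R_n$. The approach I would take is to lift the decomposition pointwise and check naturality: view each $S^i$ as the one-point compactification of $\R^i$ and use the $\Sigma_i$-equivariant orthogonal splitting $\R^i = \R\cdot\mathbf 1 \oplus \R^i_0$, where $\mathbf 1$ is the all-ones vector and $\R^i_0$ the kernel of the sum map. Both summands are functorial under the surjections indexing the diagram — a surjection pulls back the diagonal to the diagonal and the sum-zero hyperplane to the sum-zero hyperplane — so after one-point compactification one obtains an equivalence of functors $S^\bullet \simeq S^1\wedge \bar S^{\bullet-1}$ on $\epi^{\le n}_0$. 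Setting $\bar R_n := \colim_{i\in\epi^{\le n}_0}\bar S^{i-1}$ and pulling the constant factor $S^1$ through the colimit gives $R_n\simeq S^1\wedge \bar R_n$.

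The main obstacle is not in the lemma itself but in the construction of $\bar R_n$: one must pin down the variance convention on the diagram $i\mapsto S^i$ (which is contravariant on $\epi$ but appears here as a colimit, so implicitly is being viewed via the opposite) and verify that the diagonal splitting is natural for that variance. Once this is settled, the lemma is a formal adjunction calculation.
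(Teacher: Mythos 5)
The formal skeleton of your argument is exactly the paper's: the case $m=1$ of Proposition~\ref{prop:tensor-general}, the identification $\widehat\Passi_n(S^0)\simeq\pointedmaps(R_n,\widetilde\Z)$, the cancellation $S^1\otimes\pointedmaps(S^1\wedge\bar R_n,\Z)\simeq S^1\otimes\Omega\pointedmaps(\bar R_n,\Z)\simeq\pointedmaps(\bar R_n,\Z)$, and reading off $\pi_{-*}$ as reduced cohomology. The gap is in the step you yourself single out as the real content: your construction of the natural desuspension $R_n\simeq S^1\wedge\bar R_n$ fails. The structure maps of the diagram $i\mapsto S^i$ over $\epi^{\le n}$ are the coordinate-repetition (smash diagonal) maps: a surjection $\alpha\colon\underline{n}\twoheadrightarrow\underline{i}$ induces $S^i\to S^n$, $(x_1,\dots,x_i)\mapsto(x_{\alpha(1)},\dots,x_{\alpha(n)})$, so the $j$-th coordinate is repeated $|\alpha^{-1}(j)|$ times. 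This map does carry the diagonal line $\R\cdot\mathbf 1$ to the diagonal line, but it does \emph{not} carry the sum-zero hyperplane to the sum-zero hyperplane: if $\sum_j x_j=0$, the image has coordinate sum $\sum_j|\alpha^{-1}(j)|\,x_j$, which is nonzero whenever the fibers of $\alpha$ have unequal sizes (for instance $(1,-1)\in\R^2_0$ goes, under the surjection $\underline 3\to\underline 2$ with fibers $\{1,2\}$ and $\{3\}$, to $(1,1,-1)$). Switching variance does not help: the fold maps preserve the sum-zero hyperplane but not the diagonal. Moreover no linear splitting can be natural, since equivariance under the isomorphisms in $\epi$ forces the defining functional of the complement to be the symmetric (sum) functional, which is exactly what fails; so your approach cannot be repaired within linear splittings.

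The paper's fix is non-linear. Model $S^i$ as $I^i/\partial I^i$ and use the homeomorphism $\widehat\Delta^{i-1}\times I\to I^i$, $((x_1,\dots,x_i),t)\mapsto(x_1t,\dots,x_it)$, where $\widehat\Delta^{i-1}$ is the set of nonnegative vectors with $\max_j x_j=1$; this is natural in $i\in\epi$ precisely because the maximum of the coordinates is unchanged by repeating coordinates along a surjection. Quotienting by boundaries gives a natural homeomorphism $\widehat S^{i-1}\wedge S^1\cong S^i$, and one sets $\bar R_n=\colim_{i\in\epi^{\le n}}\widehat S^{i-1}$. With this (or any genuinely natural) desuspension substituted for your orthogonal-splitting step, the rest of your proof goes through and coincides with the paper's argument.
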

Let us analyze the (contravariant!) functor $\epi^{\le n}\to \spaces$ that sends $i$ to $S^i$, and its colimit $R_n$. Let us begin by showing that indeed this functor is a suspension of another functor. Following~\cite{Sphere-operad}, let us define the simplex $\widehat\Delta^{i-1}$ by the following formula
\[
\widehat\Delta^{i-1}=\{(x_1, \ldots, x_i)\in \R^i\mid 0\le x_1, \ldots, x_i,\ \underset{1\le j\le i}{\max}{x_j}=1\}.
\]
It is not hard to show that $\widehat\Delta^{i-1}$ is homeomorphic to the $i-1$-dimensional simplex, and furthermore its boundary consists of $i$-tuples $(x_1, \ldots, x_i)$ where at least one of the coordinates is zero. The reason for using this model of simplices is that it is functorial in $i$. More precisely, there is a contravariant functor
\[
\begin{array}{ccc}
    \epi & \to & \unpointed \\
     i & \mapsto & \widehat \Delta^{i-1}
\end{array}
\]
where functoriality on morphisms is given by repeating coordinates in the obvious way (see~\cite{Sphere-operad} for more details). Let $I=[0, 1]$. Consider the map
\begin{equation}\label{eq: sphere operad}
\begin{array}{ccc}
    \widehat\Delta^{i-1} \times I& \to & I^i \\
     ((x_1, \ldots, x_i), t) & \mapsto & (x_1t, \ldots, x_it).
\end{array}
\end{equation}
It is not hard to check that this map is a natural transformation of contravariant functors from $\epi$ to $\unpointed$, and that for each $i$ it gives a homeomorphism 
\[
(\widehat\Delta^{i-1} \times I, \partial (\widehat\Delta^{i-1} \times I))\xrightarrow{\cong} (I^i, \partial I^i).
\] 
Let us define $\widehat S^{i-1}=\widehat \Delta^{i-1}/\partial \widehat \Delta^{i-1}$. By quotiening the homeomorphism of~\eqref{eq: sphere operad} by the boundary, we obtain a homeomorphism of contravariant functors $\epi\to \spaces$
\[
\widehat S^{i-1} \wedge S^1\cong S^i.
\]
Thus, $S^i$ desuspends functorially in $i$. Let us define $\bar R_n=\underset{i\in \epi^{\le n}}{\colim}\widehat S^{i-1}$. We have a homeomorphism $R_n \cong S^1\wedge \bar R_n$, as promised.

Now we are ready to describe the homotopy type of $\bar R_n$
\begin{lem}\label{lem:R_n}
The space $\bar R_n$ has the structure of a pointed CW-complex, with two zero-dimensional cells (one of which is the basepoint), and a single cell of dimension $i$ for all $1\le i \le n-1$. When $n$ is odd, $\bar R_n\simeq S^{n-1}$. When $n$ is even, $\bar R_n \simeq *$.   
\end{lem}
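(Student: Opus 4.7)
The proof plan is to identify $\bar R_n$ concretely and equip it with an explicit CW structure, then compute the cellular chain complex and upgrade the homological computation to a homotopy equivalence.

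First I would identify the underlying set. Unwinding the colimit, a non-basepoint point of $\bar R_n$ is represented by a tuple $(x_1, \ldots, x_i) \in \widehat\Delta^{i-1}$ with all $x_k > 0$ and $\max x_k = 1$, modulo the relation generated by $(x_1, \ldots, x_i) \sim (x_{\sigma(1)}, \ldots, x_{\sigma(j)})$ for every surjection $\sigma\colon \underline{j} \to \underline{i}$. Decomposing any surjection as a permutation followed by elementary merging surjections, one checks that every equivalence class admits a unique representative with distinct coordinates, up to permutation. Hence $\bar R_n$ is identified with the pointed space whose non-basepoint points are the non-empty subsets $S \subset (0, 1]$ with $1 \in S$ and $|S| \le n$, where the basepoint corresponds to the limit as some element of $S$ tends to $0$.

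Next I would exhibit the CW structure. For each $1 \le i \le n$, the locus of size-$i$ subsets $S = \{1 = y_0 > y_1 > \cdots > y_{i-1} > 0\}$ is parametrized by the open $(i-1)$-simplex via the change of variables $z_1 = 1 - y_1$, $z_k = y_{k-1} - y_k$ for $2 \le k \le i-1$, and $z_i = y_{i-1}$. The closure of this cell corresponds to the closed simplex $\{z_k \ge 0,\ \sum z_k = 1\}$: the face $z_k = 0$ for $k < i$ encodes the coalescence $y_{k-1} = y_k$ and lands in the size-$(i-1)$ stratum, while $z_i = 0$ encodes $y_{i-1} = 0$ and collapses to the basepoint. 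The maps $\sigma^*$ in the colimit diagram are precisely these face inclusions, producing the CW structure claimed in the lemma.

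Then I would compute the cellular boundary. For each $k < i$, the restriction of the attaching map of the $(i-1)$-cell to the face $z_k = 0$ is identified, in the $z$-coordinates, with the standard parametrization of the $(i-1)$-cell by $(z_1, \ldots, \hat z_k, \ldots, z_i)$; this is order-preserving on vertices, so it contributes $(-1)^{k-1}$ to the cellular boundary. The face $z_i = 0$ contributes $0$. Therefore
\[
\partial_{i-1} = \sum_{k=1}^{i-1} (-1)^{k-1} = \begin{cases} 0 & i \text{ odd}, \\ 1 & i \text{ even}. \end{cases}
\]
The reduced cellular chain complex of $\bar R_n$ is thus an alternating sequence of boundary maps $0$ and $1$, whose homology is $\Z$ concentrated in degree $n-1$ when $n$ is odd and vanishes entirely when $n$ is even.

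Finally I would upgrade this to a homotopy equivalence by induction on $n$. The base case $\bar R_1 = S^0$ is immediate, and the filtration by cardinality gives $\bar R_n = \bar R_{n-1} \cup_\phi e^{n-1}$ for the attaching map $\phi$ computed above. If $n$ is odd, $\bar R_{n-1}$ is contractible by induction, so $\bar R_n \simeq \bar R_n / \bar R_{n-1} \simeq S^{n-1}$. If $n$ is even, then $\bar R_{n-1} \simeq S^{n-2}$ by induction and $\phi$ has degree $\pm 1$ on the top cell, hence is a homotopy equivalence, so $\bar R_n$ is contractible. The main technical obstacle lies in Steps 2–3: correctly matching the abstractly defined colimit with the concrete subset-of-$(0,1]$ description and then keeping careful track of the simplicial signs in the attaching maps, where the asymmetric role of the face $z_i = 0$ (which alone collapses to the basepoint) is what drives the alternation in $\partial_{i-1}$.
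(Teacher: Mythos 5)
Your argument is correct and, in its second half, coincides with the paper's: both proofs end with the same cardinality filtration, one new cell of dimension $n-1$ at each stage, an attaching map that collapses the face $x_1=0$ and runs over the remaining $n-1$ facets onto the top cell of $\bar R_{n-1}$ with alternating signs, and the same parity induction (your explicit cellular chain complex is a pleasant but logically redundant byproduct, since the induction alone gives the homotopy type). Where you differ is the first half. You identify the colimit point-set-theoretically as the space of finite subsets of $(0,1]$ containing $1$ of cardinality at most $n$, and build the cells by the consecutive-differences coordinates. The paper instead never unwinds the colimit as a set: it treats $i\mapsto \widehat S^{i-1}$ as a diagram over the generalized Reedy category $\epi^{\op}$, identifies the latching object at $n$ with a desuspended fat diagonal, checks that the latching maps are cofibrations, and then produces your filtration step as the colimit of a pushout square comparing the diagram with its Kan extension from $\epi^{\le n-1}$; the two corner terms $\widehat S^{n-1}/_{\Sigma_n}$ and its fat-diagonal analogue are identified with $\Delta^{n-1}/\Delta^{n-1}_0$ and $\partial\Delta^{n-1}/\Delta^{n-1}_0$ by sorting coordinates, which is exactly your parametrization.

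The one substantive caveat is what the Reedy argument buys that your route does not address: cofibrancy of the diagram is what guarantees that the strict colimit you compute has the homotopy type of the $\infty$-categorical colimit $\bar R_n$, which is what actually enters Lemma~\ref{lem:ab to passi R_n} via $\ext^*(\abelianization,\Passi_n)\cong \widetilde\HH^*(\bar R_n)$. Your point-set identification silently works with the $1$-categorical colimit, so to splice your proof into the paper you should add the (easy) observation that each latching map --- the inclusion of the fat diagonal into $\widehat S^{i-1}$ --- is a cofibration, or some equivalent justification that the strict and homotopy colimits agree; with that remark included, your proof is complete.
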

\begin{proof}
Let us note that $\epi^{\op}$ is a generalized Reedy category in the sense of~\cite{Berger-Moerdijk}, with all maps raising degree, and only isomorphisms lowering degree. If $\catname{D}$ is a cofibrantly generated Quillen model category, then the 1-category of functors $[\epi^{\op}, \catname{D}]$ has a Reedy model structure, which, since the matching objects are trivial, agrees with the projective model structure. 

Consider the functor $\widehat S^{*-1}\colon\epi^{\op}\to \spaces$, $i\mapsto \widehat S^{i-1}$. For each $n$, the latching object of this functor at $n$ is
\[
\underset{n\twoheadrightarrow i\in n\downarrow \epi^{\le n-1}}\colim \widehat S^{i-1}.
\] 
It is easy to see that this colimit is homeomorphic to the fat diagonal in $\widehat S^{n-1}$. More precisely the colimit is homeomorphic to the quotient of the following subspace of $\widehat \Delta^{n-1}$ by its intersection with the boundary of $\widehat \Delta^{n-1}$
\[
\{(x_1, \ldots, x_n) \in \widehat \Delta^{n-1}\mid x_a=x_b \mbox{ for some } a<b\}.
\]
Let us denote the latching object at $n$ by $\Sigma^{-1}\Delta^n S^1$, since it is a desuspension of the fat diagonal in $S^n$. The space $\Sigma^{-1}\Delta^n S^1$ is a $\Sigma_n$-equivariant subspace of $\widehat S^{n-1}$. It consists exactly of the points where $\Sigma_n$ does not act freely. It follows that the inclusion $\Sigma^{-1}\Delta^n S^1\hookrightarrow \widehat S^{n-1}$ is a $\Sigma_n$-cofibration for all $n$. This means that the functor $i\mapsto \widehat S^{i-1}$ is Reedy cofibrant (and therefore also projective cofibrant).

Now we restrict the domain of the functor $\widehat S^{*-1}$ to $\epi^{\le n}$. Let $\lkan_{n-1}^n \widehat S^{*-1}$ denote the functor obtained by restricting $\widehat S^{*-1}$ to $\epi^{\le n-1}$ and then left Kan extending it back to $\epi^{\le n}$. There is a natural transformation $\lkan_{n-1}^n \widehat S^{*-1}\to \widehat S^{*-1}$. It is a homeomorphism for $*<n$. For $*=n$ it is the inclusion $\Sigma^{-1}\Delta^n S^1\hookrightarrow \widehat S^{n-1}$. 

The spaces $\Sigma^{-1}\Delta^n S^1$ and $\widehat S^{n-1}$ are spaces with an action of $\Sigma_n$. We may consider them as contravariant functors from $\epi^{\le n}$ to $\spaces$, by letting the functor to be trivial at all $i<n$. There is a standard square diagram of contravariant functors from $\epi^{\le n}$ to $\spaces$, which is both a pushout and a homotopy pushout
\[\begin{tikzcd}
	{\widehat S^{n-1}} & {\widehat S^{*-1}} \\
	{\Sigma^{-1}\Delta^nS^1} & {\lkan_{n-1}^n\widehat S^{*-1}}
	\arrow[from=1-1, to=1-2]
	\arrow[from=2-1, to=1-1]
	\arrow[from=2-1, to=2-2]
	\arrow[from=2-2, to=1-2]
\end{tikzcd}\]
Passing to colimits over $\epi^{\le n}$ we get a pushout diagram of the following form
\[\begin{tikzcd}
	{\widehat S^{n-1}/_{\Sigma_n}} & {\bar R_n} \\
	{\Sigma^{-1}\Delta^nS^1/_{\Sigma_n}} & {\bar R_{n-1}}
	\arrow[from=1-1, to=1-2]
	\arrow[from=2-1, to=1-1]
	\arrow[from=2-1, to=2-2]
	\arrow[from=2-2, to=1-2]
\end{tikzcd}\]
Let us analyze the spaces in the left half of this square. We mentioned already that $S^n/_{\Sigma_n}$ is a contractible space for $n>1$. The space $\widehat S^{n-1}/_{\Sigma_n}$ is a desuspension of this contractible space, and indeed it is contractible as well. In fact, we can describe it very explicitly. Recall that $\widehat S^{n-1}$ is the quotient of the space of $n$-tuples $(x_1, \ldots, x_n)$ of non-negative real numbers with maximum $1$ by the subspace consisting of $n$-tuples where at least one coordinates is zero. Every such $n$-tuple of real numbers is equivalent, up to permutation, to a unique $n$-tuple that satisfies $0\le x_1 \le \cdots \le x_{n-1}\le x_n=1$. It follows that $\widehat S^{n-1}/_{\Sigma_n}$ is homeomorphic to the quotient of the space of $n$-tuples of the form $0 \le x_1 \le \cdots \le x_{n-1}\le x_n=1$ by the subspace of such $n$-tuples where $x_1=0$. This is the quotient of an $n-1$-dimensional simplex by the face determined by the equation $x_1=0$. In particular $\widehat S^{n-1}/_{\Sigma_n}$ is homeomorphic to an $n-1$-dimensional simplex.

Next let us analyze the space ${\Sigma^{-1}\Delta^nS^1/_{\Sigma_n}}$. By a similar analysis to the above, it can be identified with a quotient of the space of $n$-tuples that satisfy $0\le x_1 \le \ldots \le x_{n-1}\le x_n = 1$, where at least two of the $x_i$s are the same. The space is quotiented by the subspace that satisfies $x_1=0$. This is exactly the boundary of the $n-1$-simplex, quotiened by the face $x_1=0$.

Let $\Delta^{n-1}$ be the simplex defined by the inequalities $0\le x_1\le \cdots \le x_{n-1}\le 1$. Let $\Delta^{n-1}_0$ be the face defined by $x_1=0$. We have shown that there is a pushout square
\[\begin{tikzcd}
	\Delta^{n-1}/\Delta^{n-1}_0 & {\bar R_n} \\
	{\partial\Delta^{n-1}/\Delta^{n-1}_0} & {\bar R_{n-1}}
	\arrow[from=1-1, to=1-2]
	\arrow[from=2-1, to=1-1]
	\arrow[from=2-1, to=2-2]
	\arrow[from=2-2, to=1-2]
\end{tikzcd}\]
It follows that $\bar R_n$ is obtained from $\bar R_{n-1}$ by attaching a single cell of dimension $n-1$. It is clear that $\bar R_0=S^0$. It follows by induction that $\bar R_n$ has a single cell in each dimension between $1$ and $n-1$. Furhtermore, if $i<n$ then $\bar R_i$ is the $i$-th skeleton of $\bar R_n$.

It remains to prove the statement about the homotopy type of $\bar R_n$. We already mentioned that $\bar R_1=S^0$. It is easy to check directly that $\bar R_2$ is homeomorphic to an interval, and therefore is contractible. Assume by induction that the statement we want to prove holds for $\bar R_i$ for $i<n$. We need to show that if $\bar R_{n-1}$ is contractible (and $n$ is odd) then $\bar R_n$ is a sphere and if $\bar R_{n-1}$ is a sphere (and $n$ is even) then $\bar R_n$ is contractible.

Since $\bar R_n$ is obtained from $\bar R_{n-1}$ by attaching a single cell of dimension $n-1$, it is clear that if $\bar R_{n-1}$ is contractible then $\bar R_n\simeq S^{n-1}$. Suppose that $n$ is even, and $\bar R_{n-1}\simeq S^{n-2}$. To understand the homotopy type of $\bar R_n$ we need to understand the degree of the attaching map $\partial\Delta^{n-1}/\Delta^{n-1}_0\to \bar R_{n-1}$. $\bar R_{n-1}$ has a single top-dimensional cell (of dimension $n-2$), that can be identified with the simplex defined by the inequalities $0\le x_1\le \cdots \le x_{n-2} \le x_{n-1}=1$. The simplex $\partial \Delta^{n-1}$ has $n$ facets. The attaching map collapses the facet $x_1=0$ to a point, and sends the interior of each of the remaining $n-1$ facets of $\partial \Delta^{n-1}$ homeomorphically onto the interior of the top dimensional cell of $\bar R_{n-1}$, with alternating signs. Since $n$ is even, $n-1$ is odd, so the total degree of the attaching map is $\pm 1$. It follows that  $\bar R_n\simeq *$.
\end{proof}
From Lemmas~\ref{lem:ab to passi R_n} and~\ref{lem:R_n} we have the following consequence
\begin{cor}\label{cor:ab to passi}
There is an isomorphism of graded abelian groups
\[
\ext^*(\abelianization, \Passi_n)\cong \left\{\begin{array}{cl}
  \Sigma^{n-1}\Z   & n \ \mathrm{ odd}  \\
   0  &  n \ \mathrm{ even}
\end{array}\right\}
\]
\end{cor}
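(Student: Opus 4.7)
The plan is to deduce the corollary directly by splicing together the two immediately preceding lemmas. By Lemma~\ref{lem:ab to passi R_n} we already know that $\ext^*(\abelianization,\Passi_n)\cong \widetilde{\HH}^*(\bar R_n)$, so the task reduces purely to computing the reduced cohomology of the space $\bar R_n$. Lemma~\ref{lem:R_n} provides the homotopy type of $\bar R_n$: it is equivalent to $S^{n-1}$ when $n$ is odd and to a point when $n$ is even.

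First I would record the case $n$ odd. Since $\bar R_n\simeq S^{n-1}$, the reduced cohomology $\widetilde{\HH}^*(S^{n-1})$ is a single copy of $\Z$ concentrated in degree $n-1$, which in our grading conventions is exactly $\Sigma^{n-1}\Z$. Next I would handle the even case: since $\bar R_n$ is contractible, $\widetilde{\HH}^*(\bar R_n)=0$ in all degrees, yielding the vanishing claim. Putting the two cases together with the isomorphism from Lemma~\ref{lem:ab to passi R_n} finishes the proof.

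There is essentially no obstacle here — the genuine work was already absorbed into the two preceding lemmas. The only thing worth double-checking is the grading/sign conventions, namely that the isomorphism $\ext^*\cong\pi_{-*}\pointedmaps(\bar R_n,\Z)\cong\widetilde{\HH}^*(\bar R_n)$ puts the generator in cohomological degree $n-1$ rather than, say, $n$ or $n-2$; this is an easy bookkeeping check tracing the desuspension $R_n\cong S^1\wedge \bar R_n$ introduced just before Lemma~\ref{lem:ab to passi R_n}. Once that sanity check is done, the corollary follows in a single line.
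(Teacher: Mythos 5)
Your argument is exactly the paper's: the corollary is deduced in one line by combining Lemma~\ref{lem:ab to passi R_n} (which identifies $\ext^*(\abelianization,\Passi_n)$ with $\widetilde{\HH}^*(\bar R_n)$) with Lemma~\ref{lem:R_n} (which gives the homotopy type of $\bar R_n$). Your extra sanity check on the grading is reasonable and correct, but there is nothing to add beyond what the two lemmas already contain.
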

\section{$\ext^*(\Lambda^m\circ\abelianization, -)$}\label{section:ext from exterior} In this section we will investigate $\ext^*(\Lambda^m \circ \abelianization, G)$, where $G$ is a polynomial functor. As usual, our method is to express the answer in terms of $\widehat G$. By Proposition~\ref{prop:exterior and divided powers}~\eqref{eq:propexterior}, the extension of $\Lambda^n \circ \abelianization$ is the functor
\[
\widehat{\Lambda^m \circ \abelianization}(X)=\Sigma^{-m}\widetilde\Z[X^{\wedge m}_{\Sigma_m}].
\]
Which implies by Theorem~\ref{thm:fr-to-top_intro} that 
\begin{equation}\label{eq:ext from lambda}
\ext^*(\Lambda^m\circ \abelianization, G)\simeq \pi_{-*-m}\spectralNat_X(\widetilde\Z[X^{\wedge m}_{\Sigma_m}],\widehat G(X)).
\end{equation}
\subsection{Rational calculations}\label{section:rational} The formula above is especially easy to use over $\Q$, because in this case strict orbits are equivalent to homotopy orbits. We mention this chiefly to illustrate how our method can be used to recover some previous results from, e.g.,~\cite{Vespa2018}. To see this, consider the functor $\Q\otimes ({\Lambda^m \circ \abelianization})$. It is clear that its extension is given by 
\begin{equation}\label{eq:rational}
\Q\otimes\widehat{(\Lambda^m \circ \abelianization)}(X)=\Sigma^{-m}\widetilde\Q[X^{\wedge m}_{\Sigma_m}].
\end{equation}
Suppose $X$ is an object in some $\infty$-category $\catname D$ with an action of $\Sigma_n$. We may identify $X$ with a functor $X\colon B\Sigma_n\to \catname{D}$. Recall that $X^{h\Sigma_n}$ and $X_{h\Sigma_n}$ denote, respectively, the $\infty$-categorical limit and colimit of this functor.
\begin{lem}\label{lem:rational}
Suppose $G\colon \fr\to \ab$ is a polynomial functor that takes values in rational vector spaces. Let $\widehat G$ be, as usual, the corresponding functor from $\spaces$ to $\ch$. Then
\[
\ext^*(\Lambda^m\circ\abelianization, G)\cong \ext^*\left(\Q\otimes(\Lambda^m\circ\abelianization\right), G)\cong \pi_{-*-m}\ce_m\widehat G(S^0, \ldots, S^0)^{h\Sigma_m}.
\]
\end{lem}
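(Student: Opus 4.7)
\textit{Proof plan.} The first equivalence follows from the general principle that $\ext^*$ into a rational-valued functor $G$ depends on the source only through its rationalization. Concretely, tensoring with $\Q$ is exact on $\fun(\fr, \ab)$, and for any $\Q$-vector space valued functor $G$ the adjunction $\hom(F, G) \cong \hom(F\otimes\Q, G)$ extends to an isomorphism $\ext^*_{\fun(\fr, \ab)}(F, G) \cong \ext^*_{\fun(\fr, \ab)}(F\otimes\Q, G)$. One way to see this is to take an injective resolution $G \to I^\bullet$ in the category of rational-valued functors; each $I^i$ remains injective in $\fun(\fr, \ab)$, because $\hom(-, I^i)$ factors as the composition of the exact functor $-\otimes\Q$ with the exact functor $\hom_\Q(-, I^i)$ on rational functors. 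Both sides of the putative isomorphism are then computed as the cohomology of $\hom(-\otimes\Q, I^\bullet)$.

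For the second equivalence, I apply~\eqref{eq:ext from lambda} with $\Lambda^m\circ\abelianization$ replaced by $\Q\otimes(\Lambda^m\circ\abelianization)$, whose extension is $\Sigma^{-m}\widetilde\Q[X^{\wedge m}_{\Sigma_m}]$ by~\eqref{eq:rational}. This gives
\[
\ext^*(\Q\otimes(\Lambda^m\circ\abelianization), G) \cong \pi_{-*-m}\spectralNat_X(\widetilde\Q[X^{\wedge m}_{\Sigma_m}], \widehat G(X)).
\]
The key input is that for any finite group $H$ acting on a space $Y$, the natural map $Y_{hH} \to Y/H$ is a rational homology equivalence (equivalently, $\HH^*(BH; \Q)$ is concentrated in degree zero). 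Applied pointwise in $X$, and using that $\widetilde\Q[-]$ sends colimits of spaces to colimits in $\ch_\Q$, this produces a natural equivalence of functors of $X$:
\[
\widetilde\Q[X^{\wedge m}]_{h\Sigma_m} \xrightarrow{\simeq} \widetilde\Q[X^{\wedge m}_{\Sigma_m}].
\]

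Finally, since $\spectralNat$ converts a homotopy colimit (over $B\Sigma_m$) in its source into a homotopy limit, and by the free-forgetful adjunction between $\spaces$ and $\ch$ combined with Lemma~\ref{lem: smash power represents ce}, I obtain
\[
\spectralNat_X(\widetilde\Q[X^{\wedge m}_{\Sigma_m}], \widehat G) \simeq \spectralNat_X(\widetilde\Q[X^{\wedge m}], \widehat G)^{h\Sigma_m} \simeq \ce_m\widehat G(S^0, \ldots, S^0)^{h\Sigma_m},
\]
where the $\Sigma_m$-action is the natural one by permutation of arguments, inherited contravariantly from the permutation action on $X^{\wedge m}$. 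The main subtlety is tracking this $\Sigma_m$-action consistently through the chain of identifications; the rational equivalence between strict and homotopy orbits, while standard, also deserves some care since it must be checked as a \emph{natural} equivalence of functors of $X$, which follows because the stabilizers of $\Sigma_m$ acting on $X^{\wedge m}$ are all finite subgroups of $\Sigma_m$.
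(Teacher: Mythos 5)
Your proof is correct and follows essentially the same route as the paper: rationalize the extension of $\Lambda^m\circ\abelianization$ to $\Sigma^{-m}\widetilde\Q[X^{\wedge m}_{\Sigma_m}]$, replace strict orbits by homotopy orbits using the rational homology equivalence $X^{\wedge m}_{h\Sigma_m}\to X^{\wedge m}_{\Sigma_m}$, use that $\spectralNat$ converts homotopy colimits in the source into homotopy limits, and identify $\spectralNat_X(\widetilde\Q[X^{\wedge m}], \widehat G)$ with $\ce_m\widehat G(S^0,\ldots,S^0)$. The only notable difference is that you spell out the first isomorphism via injective resolutions where the paper simply appeals to rational chain complexes being a localization of integral ones; this is a harmless elaboration (though note the finiteness of stabilizers is what makes the orbit comparison an equivalence, while its naturality in $X$ is automatic from the construction).
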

\begin{proof}
The first isomorphism is obvious, because $G$ takes values in rational vector spaces, and the category of rational chain complexes is a localization of the category of integral chain complexes.

By rationalized version of~\eqref{eq:ext from lambda} we have an isomorphism
\[
\ext^*(\Q\otimes(\Lambda^m\circ\abelianization), G)\cong \pi_{-*-m}\spectralNat_X(\widetilde \Q[X^{\wedge m}_{\Sigma_m}], \widehat G(X))
\]
The canonical map $X^{\wedge m}_{h\Sigma_m}\to X^{\wedge m}_{\Sigma_m}$ induces isomorphism in rational homology. Thus we have equivalences
\[
\widetilde Q[X^{\wedge m}]_{h\Sigma_m}\simeq \widetilde Q[X^{\wedge m}_{h\Sigma_m}]\xrightarrow{\simeq}\widetilde \Q[X^{\wedge m}_{\Sigma_m}].
\]
Therefore, we obtain an isomorphism
\[
\ext^*(\Q\otimes(\Lambda^m\circ\abelianization), G)\cong \pi_{-*-m}\spectralNat_X(\widetilde \Q[X^{\wedge m}]_{h\Sigma_m}, \widehat G(X)).
\]
The functor $\spectralNat(F, G)$ converts $\infty$-categorical colimits in $F$ to $\infty$-categorical limits. Therefore we have an equivalence
\[
\spectralNat_X(\widetilde \Q[X^{\wedge m}]_{h\Sigma_m}, \widehat G(X))\simeq \spectralNat_X(\widetilde \Q[X^{\wedge m}], \widehat G(X))^{h\Sigma_m}.
\]
By a calculation similar to the one we did in the proof of Proposition~\ref{prop:tensor-general}, the right hand side is equivalent to $\left(\ce_m\widehat G(S^0, \ldots, S^0)^{h\Sigma_m}\right)$.
It follows that there is an isomorphism
\[
\ext^*\left(\Q\otimes(\Lambda^m\circ\abelianization\right), G)\cong \pi_{-*-m}\ce_m\widehat G(S^0, \ldots, S^0)^{h\Sigma_m}.
\]
\end{proof}
We can now recover some rational calculations done in~\cite{Vespa2018}. We remind the reader that $\Part(n, m)={_{\Sigma_n}\backslash}\sur(n, m)/_{\Sigma_m}$ is the set of partitions of the number $n$ into $m$ non-zero summands. Similarly let $\Stirling(n, m)=\sur(n, m)/_{\Sigma_m}$ be the Stirling partition number, i.e., the number of ways to partition a set with $n$ elements into $m$ non-empty parts.
\begin{prop}[\cite{Vespa2018}, Theorem 3] \label{prop:Vespa theorem 3}
There are isomorphisms of graded $\Q$-vector spaces.
\begin{enumerate}
\item\label{eq:rational lambda to lambda}
\[
\ext^*(\Q\otimes(\Lambda^m\circ \abelianization), \Q\otimes(\Lambda^n \circ \abelianization))\cong \Sigma^{n-m}\Q[\Part(n, m)].
\]
\item \label{eq: rational lambda to gamma}
If $m=n\le 1$ then 
\[
\ext^*(\Q\otimes(\Lambda^m\circ \abelianization), \Q\otimes(\symm^n \circ \abelianization))\cong \ext^*(\Q\otimes(\Lambda^m\circ \abelianization), \Q\otimes(\Gamma^n \circ \abelianization))\cong\Q.
\]
If $(m,n)$ is not $(0,0)$ or $(1,1)$ then 
\[
\ext^*(\Q\otimes(\Lambda^m\circ \abelianization), \Q\otimes(\symm^n \circ \abelianization))\cong \ext^*(\Q\otimes(\Lambda^m\circ \abelianization), \Q\otimes(\Gamma^n \circ \abelianization))\cong 0.
\]
\item \label{eq: rational lambda to tensor}
\[
\ext^*(\Q\otimes(\Lambda^m\circ \abelianization), \Q\otimes(\tensor^n \circ \abelianization))\cong \Sigma^{n-m}Q[\Stirling(n, m)].
\]
\end{enumerate}
\end{prop}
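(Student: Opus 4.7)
The plan is to reduce all three statements to computations of rational homotopy fixed points of cross-effects via Lemma~\ref{lem:rational}. That is, in each case we shall identify $\widehat{G}$ from our dictionary, compute $\ce_m\widehat{G}(S^0,\ldots,S^0)$ together with its $\Sigma_m$-action, and then compute its rational $h\Sigma_m$-fixed points. Throughout, for a bounded complex $V$ of $\Q$-vector spaces carrying a finite group $H$-action, the averaging map gives $V^{hH}\simeq V^H$, so we may work with strict fixed points. Moreover, for $H$ acting on a finite set $S$, the invariants $\Q[S]^H$ are canonically $\Q[S/H]$.

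For part~\eqref{eq:rational lambda to lambda}, by Proposition~\ref{prop:exterior and divided powers}\eqref{eq:propexterior}, $\widehat{\Lambda^n\circ\ab}(X)=\Sigma^{-n}\widetilde\Z[X^{\wedge n}_{\Sigma_n}]$. The calculation already carried out in the proof of Corollary~\ref{cor: tensor-others}\eqref{item:tensor-exterior} identifies the $m$-th cross-effect at $(S^0,\ldots,S^0)$ with $\Sigma^{-n}\Z[\Comp(n,m)]$, where $\Comp(n,m)=\sur(n,m)/\Sigma_n$ is the set of compositions of $n$ with $m$ positive parts. The residual $\Sigma_m$-action permutes the target $\{1,\ldots,m\}$, and the orbit set is precisely $\Part(n,m)$. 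So the rational fixed points give $\Sigma^{-n}\Q[\Part(n,m)]$, and applying $\pi_{-*-m}$ yields the group concentrated in degree $n-m$ stated in the proposition.

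For part~\eqref{eq: rational lambda to tensor}, by Lemma~\ref{lem:abelianization transform} we have $\widehat{\tensor^n\circ\ab}(X)=\Omega^n\widetilde\Z[X^{\wedge n}]$, whose $m$-th cross-effect at $(S^0,\ldots,S^0)$ is $\Omega^n\Z[\sur(n,m)]$, as computed in the proof of Corollary~\ref{cor: tensor-others}\eqref{item:tensor-tensor}. The $\Sigma_m$ here permutes only the target of the surjections (it acts trivially on the $\Omega^n$ factor, on which the $\Sigma_n$-sign action lives), and rationally its invariants yield $\Omega^n\Q[\Stirling(n,m)]$ with $\Stirling(n,m)=\sur(n,m)/\Sigma_m$. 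Applying $\pi_{-*-m}$ gives a single copy of $\Q[\Stirling(n,m)]$ in degree $n-m$.

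For part~\eqref{eq: rational lambda to gamma}, first note that rationally the norm map $\Gamma^n\to\symm^n$ is an equivalence of functors, so $\ext^*(\Lambda^m\circ\ab,\Q\otimes(\symm^n\circ\ab))\cong\ext^*(\Lambda^m\circ\ab,\Q\otimes(\Gamma^n\circ\ab))$ and it suffices to treat the divided-power case. By Proposition~\ref{prop:exterior and divided powers}\eqref{eq:dividedpower}, $\widehat{\Gamma^n\circ\ab}(X)=\Sigma^{-2n}\widetilde\Z[(SX)^{\wedge n}_{\Sigma_n}]$. The cross-effect computation in Corollary~\ref{cor: tensor-others}\eqref{item:tensor-dividedP} shows that $\ce_m\widehat{\Gamma^n\circ\ab}(S^0,\ldots,S^0)$ is contractible unless $m=n$, in which case it is equivalent to $\Sigma^{-2n}\widetilde\Z[S^n]$ with the permutation action of $\Sigma_n$ on the coordinates of $S^n$. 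When $m=n$, the rational $h\Sigma_n$-invariants reduce to the invariants of the sign representation of $\Sigma_n$ on $\widetilde H_n(S^n;\Q)$ in homological degree $n$. For $n\ge 2$ this vanishes, while for $n\in\{0,1\}$ it equals $\Q$ in degree $n$, giving $\pi_{-*-m}$ in degree $0$. This completes all three parts.

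The main obstacle is not conceptual but bookkeeping: one must carefully distinguish the $\Sigma_n$-action used to form the extensions (and its sign twists coming from $\Omega^n$) from the $\Sigma_m$-action appearing through the cross-effect, and verify that rationalization collapses homotopy orbits/fixed points to strict ones in each case of interest.
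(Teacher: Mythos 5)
Your proposal is correct and follows essentially the same route as the paper: Lemma~\ref{lem:rational} plus the cross-effect computations of Corollary~\ref{cor: tensor-others}, with the rational collapse of homotopy fixed points to strict invariants (the paper phrases this via the norm map and strict orbits, you via averaging and $\Q[S]^H\cong\Q[S/H]$, which is the same content), and the same reduction of $\symm^n$ to $\Gamma^n$ in part~\eqref{eq: rational lambda to gamma}. The only blemishes are cosmetic: the norm map goes $\symm^n\circ\abelianization\to\Gamma^n\circ\abelianization$ (coinvariants to invariants), and in part~\eqref{eq: rational lambda to gamma} the surviving class sits in homotopy degree $-n$ after the $\Sigma^{-2n}$ shift (not degree $n$), though your final answer in Ext-degree $0$ is right.
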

\begin{proof}
\eqref{eq:rational lambda to lambda} By a calculation virtually identical to the proof of Corollary~\ref{cor: tensor-others}~\eqref{item:tensor-exterior}, the value of the $m$-th cross-effect of the functor $X\mapsto S^{-n}\widetilde\Q[X^{\wedge n}_{\Sigma_n}]$ at $S^0, \ldots, S^0$ is $\Sigma^{-n}\Q[{_{\Sigma_n}}\backslash\sur(n, m)]$.
By equation~\eqref{eq:rational} and Lemma~\ref{lem:rational} we have an isomorphism
\[
\ext^*(\Q\otimes(\Lambda^m\circ \abelianization), \Q\otimes(\Lambda^n \circ \abelianization))\cong \pi_{-*}\left(S^{m-n}\otimes (\Q[{_{\Sigma_n}}\!\backslash\sur(n, m)])^{h\Sigma_m}.
\right)
\]
There are natural equivalences
\[
\Q[{_{\Sigma_n}}\!\backslash\sur(n, m)/_{\Sigma_m}])\xleftarrow{\simeq}\Q[{_{\Sigma_n}}\!\backslash\sur(n, m)])_{h\Sigma_m}\xrightarrow{\simeq} \Q[{_{\Sigma_n}}\!\backslash\sur(n, m)])^{h\Sigma_m}.
\]
Here the left homomorphism is the canonical map from homotopy orbits to strict orbits, which is rationally an equivalence. The right map is the ``norm'' map from homotopy orbits to homotopy fixed points, which also is a rational equivalence. Reminding ourselves that $\Part(n,m)={_{\Sigma_n}}\!\backslash\sur(n, m)/_{\Sigma_m}$ We conclude that
\[
\ext^*(\Q\otimes(\Lambda^m\circ \abelianization), \Q\otimes(\Lambda^n \circ \abelianization))\cong \pi_{-*}\left(\Sigma^{m-n} \Q[\Part(n,m)]
\right)\cong\Sigma^{n-m}\Q[\Part(n, m)].
\]

\eqref{eq: rational lambda to gamma} First of all, note that there is a natural transformation from co-invariants to invariants
\[
\symm^n\circ\abelianization\to \Gamma^n\circ \abelianization.
\]
This transformation becomes an isomorphism after tensoring with $\Q$. It follows that there is an isomorphism
\[
\ext^*(\Q\otimes(\Lambda^m\circ \abelianization), \Q\otimes(\symm^n \circ \abelianization))\cong \ext^*(\Q\otimes(\Lambda^m\circ \abelianization), \Q\otimes(\Gamma^n \circ \abelianization)).
\]
So it is enough to calculate one of these groups. We will calculate Ext into $\Gamma^n\circ \abelianization$. Using Lemma~\ref{lem:rational} and Proposition~\ref{prop:exterior and divided powers}~\eqref{eq:dividedpower}, the problem is reduced to calculating the $m$-th cross-effect of the functor  $X\mapsto \Sigma^{-2n}\widetilde\Q[(SX)^{\wedge n}_{\Sigma_n}]$.
By the usual calculation, we find that
\[
\ext^*(\Q\otimes(\Lambda^m\circ \abelianization), \Q\otimes(\Gamma^n \circ \abelianization))\cong \pi_{-*+2n-m}\left(\widetilde\Q[S^n\wedge_{\Sigma_n}\sur(n, m)_+]^{h\Sigma_m}\right).
\]
If $m=n=0$ or $1$, then clearly the right hand side is isomorphic to $\Q$. If $(m, n)=(1,0)$ or $(0,1)$, then $\sur(n,m)=\emptyset$ and we get zero. In general, if $n<m$ the $\sur(n,m)=\emptyset$, and we get zero. So, suppose $n>m>0$. Then, as we saw already, $S^n\wedge_{\Sigma_n}\sur(n, m)_+$ is contractible, and we get zero. Finally, suppose $n=m>1$. In this case the expression $\widetilde\Q[S^n\wedge_{\Sigma_n}\sur(n, m)_+]^{h\Sigma_m}$ simplifies to 
\[
\widetilde Q[S^n]^{h\Sigma_n}\simeq \widetilde Q[S^n_{\Sigma_n}].
\]
Once again, $S^n_{\Sigma_n}$ is contractible, so the above chain complex is zero in $\pi_*$, so we get zero.

Part~\eqref{eq: rational lambda to tensor} is proved similarly, and is left to the reader.
\end{proof}
\subsubsection*{Integral calculations} Next we will do some integral calculations of $\ext^*(\Lambda^m \circ \abelianization, G)$. This is where our methods really lead to some new results. 

We will first give a presentation of $\ext^*(\Lambda^m \circ \abelianization, G)$ as the homotopy groups of an explicit homotopy limit. We will then use the general model to give a full calculation of $\ext^*(\Lambda^m \circ \abelianization, \Lambda^n\circ \abelianization)$ when $m=2,3$.  
\begin{rem}
One can adapt our methods to do similar calculations involving the functors $\Gamma^m\circ \abelianization$ and, possibly, $\symm^m\circ \abelianization$, though the latter is likely to be harder. We leave to the interested reader to work this out.
\end{rem}
The starting point for calculating $\ext^*(\Lambda^m \circ \abelianization, G)$ is equation~\eqref{eq:ext from lambda} on page~\pageref{eq:ext from lambda}. Let us remark that $\spectralNat(F, G)$ is an $\infty$-categorical mapping object. As such, it converts $\infty$-categorical (a.k.a homotopy) colimits in $F$ to $\infty$-categorical limits. But it does not, in general, behave well on $1$-categorical limits. So it is not true that $\spectralNat(\widetilde Z[X^{\wedge n}_{\Sigma_n}], \widehat G)$ is equivalent to $\spectralNat(\widetilde Z[X^{\wedge n}], \widehat G)^{\Sigma_n}$. To obtain a ``formula'' for $\spectralNat(\widetilde Z[X^{\wedge n}_{\Sigma_n}], \widehat G)$ we need to present $Z[X^{\wedge n}_{\Sigma_n}]$ as a homotopy colimit. 

There is in fact a well-known way to present the strict orbit space of an action of a finite group as a homotopy colimit indexed by an orbit category. Let us introduce a few definitions
\begin{defn}
Suppose $G$ is a finite group. Let $\orbit_G$ denote the orbit category of $G$. Its objects are transitive $G$-sets, and morphisms are $G$-equivariant maps.

More generally, suppose $\mathcal C$ is a collection of subgroups of $G$, i.e., a set of subgroups closed under conjugation. Then let $\orbit_{\mathcal C}$ denote the full subcategory of $\orbit_G$ consisting of transitive sets whose stabilizers belong to $\mathcal C$.
\end{defn}
We remind the reader that every transitive $G$-set $O$ is (noncanonically) isomorphic to a set of the form $G/H$, where $H$ is a subgroup of $G$. This is in fact a bijection between isomorphism classes of transitive $G$-sets and conjugacy classes of subgroups of $G$. For any $G$-space $X$ there is a canonical homemorphism $\map(G/H, X)^G\cong X^H$. 

Let $X$ be a pointed $G$-space. Then $X$ defines a contravariant functor from $\orbit_G$ to $\spaces$, $O\mapsto \map(O, X)^G$. Note that there is a canonical map $\pointedmaps(O, X)^G\to X/_G$, given by evaluation at any point of $O$. This map induces a canonical map
\[
\underset{O\in \orbit_G}{\hocolim} \map(O, X)^G \to X/_G.
\]
Proposition~\ref{prop:orbits homotopy colimit} below says that this map is an equivalence. This is a well-known consequence of Elmendorf's theorem. As far as we know, it was first observed by Dror Farjoun. The statement for unpointed spaces is proved as Proposition 0.0(ii) in~\cite{Slominska}. A similar argument works for pointed spaces.
\begin{prop}\label{prop:orbits homotopy colimit}
Let $X$ be a pointed $G$-CW complex. Let $\mathcal C$ be a collection of subgroups of $G$ that contains all the non-basepoint isotropy groups of $X$. Then the following canonical map is an equivalence, where the homotopy colimit is taken in the pointed category:
\begin{equation}\label{eq:orbits hocolim}
\underset{O\in \orbit_{\mathcal C}}{\hocolim} \map(O, X)^G \to X/_G.
\end{equation}
\end{prop}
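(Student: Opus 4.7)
The plan is to argue by induction on the pointed $G$-CW structure of $X$. Both the target $Y \mapsto Y/G$ and the source $L(Y) := \hocolim_{O \in \orbit_{\mathcal{C}}} \map(O,Y)^G$ define functors from pointed $G$-CW complexes with non-basepoint isotropy in $\mathcal{C}$ to pointed spaces; both send the zero object to the zero object, preserve filtered colimits of closed inclusions, and send pushouts along $G$-cofibrations to homotopy pushouts. For the quotient functor this is standard. For $L$ it rests on two observations: first, since every transitive $G$-set $O$ has the form $G/K$ and $\map(G/K, Y)^G \cong Y^K$, and the functor $(-)^K$ on pointed $G$-CW complexes with non-basepoint isotropy in $\mathcal{C}$ preserves cofibrations and pushouts; second, $\hocolim$ preserves level-wise homotopy pushouts and equivalences. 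The evaluation-at-a-point map $L(Y) \to Y/G$ is compatible with each of these constructions, so by the usual CW induction the claim reduces to the case of a single cell.

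It therefore suffices to check that the canonical map is an equivalence when $X = (G/H)_+$ for some $H \in \mathcal{C}$ (attaching higher-dimensional cells $(G/H \times D^n)_+$ is equivalent via deformation retraction to this case). The right-hand side is $X/G = *_+ = S^0$. For the left-hand side, we have $\map(G/K, (G/H)_+)^G \simeq ((G/H)^K)_+$, and since adjoining a disjoint basepoint commutes with pointed homotopy colimits, we obtain
\[
L((G/H)_+) \simeq \bigl(\underset{G/K \in \orbit_{\mathcal{C}}}{\hocolim}\, (G/H)^K\bigr)_+
\]
where the inner hocolim is computed in unpointed spaces. Using $(G/H)^K = \hom_G(G/K, G/H) = \hom_{\orbit_G}(G/K, G/H)$, the unpointed diagram is the presheaf on $\orbit_{\mathcal{C}}$ corepresented by $G/H$. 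Because $H \in \mathcal{C}$, this is genuinely representable in $\orbit_{\mathcal{C}}$, so its homotopy colimit is the nerve of the slice category $\orbit_{\mathcal{C}}/(G/H)$, which has terminal object $\mathrm{id}_{G/H}$ and is thus contractible. Hence $L((G/H)_+) \simeq (*)_+ = S^0$, and a short check identifies the evaluation map with the canonical equivalence $S^0 \to S^0$.

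The main obstacle will be keeping bookkeeping straight at the interface between the pointed and unpointed worlds. The hypothesis only constrains the non-basepoint isotropy (so that the basepoint $*$ has isotropy $G$, which need not lie in $\mathcal{C}$), so one has to verify that the induction goes through in pointed $G$-CW complexes without needing $G \in \mathcal{C}$ and that the contribution of the basepoint to the pointed hocolim is accounted for correctly. A subsidiary point is cofinality: to use the representability argument in the base case one should verify that the forgetful functor $\orbit_{\mathcal{C}}/(G/H) \to \orbit_{\mathcal{C}}$ makes the hocolim of the representable into the nerve of the slice, which is the standard co-Yoneda calculation provided $H \in \mathcal{C}$; if $H \notin \mathcal{C}$ the representable is not corepresentable in $\orbit_{\mathcal{C}}$ and the argument fails, showing that the hypothesis on $\mathcal{C}$ is sharp.
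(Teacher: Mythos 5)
Your argument is correct and follows essentially the same strategy as the paper's proof: reduce to a single $G$-cell by CW induction (using that both sides take cell-attaching pushouts along equivariant cofibrations to homotopy pushouts), then compute the base case $X=(G/H)_+$ with $H\in\mathcal{C}$ by identifying the diagram $O\mapsto (G/H)^O$ with the presheaf represented by $G/H$ in $\orbit_{\mathcal{C}}$ and invoking co-Yoneda. The paper packages the induction slightly differently (via the class of ``good'' spaces and an explicit closure step for smashing with a trivial-action complex), but the key lemma and its proof coincide with yours.
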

Now let $X$ be a pointed CW-complex, and consider the space $X^{\wedge n}$, with the action of $\Sigma_n$. It is easy to see that the isotropy groups of $X^{\wedge n}$ are the Young subgroups of $\Sigma_n$. This means subgroups conjugate to $\Sigma_{n_1}\times \cdots \times \Sigma_{n_i}$, where $n_1, \ldots, n_i>0$, $n_1+\cdots +n_i=n$, and $\Sigma_{n_1}\times \cdots \times \Sigma_{n_i}$ consists of elements of $\Sigma_n$ that leave invariant the blocks of some partition of $\{1, \ldots, n\}$ with block sizes $n_1, \ldots, n_i$. Let ${\mathcal Y}_n$ be the collection of Young subgroups of $\Sigma_n$, and let $\orbit_{{\mathcal Y}_n}$ be the corresponding orbit category. Proposition~\ref{prop:orbits homotopy colimit} gives a presentation of $X^{\wedge n}_{\Sigma_n}$ as a homotopy colimit indexed on $\orbit_{{\mathcal Y}_n}$.

We find it convenient to use a different description of $\orbit_{{\mathcal Y}_n}$, as a category of multisets.
\begin{defn}
 For us, a multiset is a standard finite set together with a positive integer (multiplicity) associated with each element. A multiset has the form $(n_1, \ldots, n_i)$, where $\{1, \ldots, i\}$ is the underlying set, and $n_1, \ldots, n_i$ are the corresponding multiplicities. We refer to $i$ as the number of elements, and to the sum $n_1+\cdots+n_i$ as the total multiplicity of $(n_1, \ldots, n_i)$. A morphism between multisets $(m_1, \ldots, m_i)\to (n_1,\ldots, n_j)$ is a function $\alpha\colon\{1, \ldots, i\}\to \{1, \ldots, j\}$ such that for every $1\le s \le j$,
 \[
 \sum_{t\in \alpha^{-1}(s)}m_t=n_s.
 \]
Clearly morphisms are, in particular, surjective functions of underlying sets, and there can only exist morphisms between sets of same total multiplicity. Let $\mset_n$ denote the category of multisets of total multiplicity $n$.
\end{defn}
There is a functor $\mset_n\to \orbit_{{\mathcal Y}_n}$ defined as follows. Suppose $(n_1, \ldots, n_i)$ is an object of $\mset_n$. Define $\sur(n_1, \ldots, n_i; i)\subset \sur(n, i)$ to be the set of all surjective functions $\alpha\colon \{1, \ldots, n\}\twoheadrightarrow \{1, \ldots, i\}$ with the property that for each $1\le j\le i$, $\alpha^{-1}(j)$ has exactly $n_j$ elements.

The group $\Sigma_n$ acts naturally on $\sur(n, i)$ by pre-composition. It is clear that for any multi-set $(n_1, \ldots, n_i)$, $\sur(n_1, \ldots, n_i; i)$ is a transitive invariant subset, with stabilizer $\Sigma_{n_1}\times \cdots \times \Sigma_{n_i}$. Thus $\sur(n_1, \ldots, n_i; i)$ is a model for the $\Sigma_n$-orbit $\Sigma_n/\Sigma_{n_1}\times \cdots\times \Sigma_{n_i}$. 

Suppose that we have a morphism in $\mset_n$ $\alpha\colon (m_1, \ldots, m_i)\to (n_1, \ldots, n_j)$. It is clear that $\alpha$ induces a $\Sigma_n$-equivariant map $\sur(m_1, \ldots, m_i; n)\to \sur(n_1, \ldots, n_j; n)$. In this way we obtain a functor $\mset_n\to \orbit_{{\mathcal Y}_n}$. The following lemma is elementary
\begin{lem}\label{lem:multiset}
The assignment $(n_1, \ldots, n_i)\mapsto \sur(n_1, \ldots, n_i; n)$ defines an equivalence of categories
\[
\mset_n\xrightarrow{\simeq} \orbit_{{\mathcal Y}_n}.
\]
The functor sends $(n_1, \ldots, n_i)$ to a set isomorphic to $\Sigma_{n}/\Sigma_{n_1}\times\cdots\times\Sigma_{n_i}$.
\end{lem}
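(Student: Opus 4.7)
The plan is to verify the two properties characterising an equivalence of categories: essential surjectivity on objects and fully faithfulness on morphisms. Both are purely combinatorial, and the lemma is labelled ``elementary'', so I will only outline the identifications.

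First I would handle essential surjectivity. Every object of $\orbit_{{\mathcal Y}_n}$ is a transitive $\Sigma_n$-set whose stabiliser is a Young subgroup, and is therefore isomorphic to $\Sigma_n/(\Sigma_{n_1}\times\cdots\times\Sigma_{n_i})$ for some composition $(n_1, \ldots, n_i)$ with $n_1 + \cdots + n_i = n$. The set $\sur(n_1, \ldots, n_i; n)$ is, by construction, a transitive $\Sigma_n$-subset of $\sur(n, i)$; the stabiliser of the ``standard'' surjection sending the first $n_1$ elements to $1$, the next $n_2$ to $2$, and so on, is precisely $\Sigma_{n_1}\times\cdots\times\Sigma_{n_i}$. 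Hence $\sur(n_1, \ldots, n_i; n)\cong \Sigma_n/(\Sigma_{n_1}\times\cdots\times\Sigma_{n_i})$ as $\Sigma_n$-sets, which gives essential surjectivity.

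Next I would verify fully faithfulness. A $\Sigma_n$-equivariant map $\varphi\colon \sur(m_1, \ldots, m_i; n) \to \sur(n_1, \ldots, n_j; n)$ is determined by the image of the standard surjection $\alpha_0\colon \underline{n}\twoheadrightarrow\underline{i}$ with fibre sizes $(m_1, \ldots, m_i)$, and that image must be fixed by the stabiliser of $\alpha_0$, namely $\Sigma_{m_1}\times\cdots\times\Sigma_{m_i}$. A surjection $\beta\colon\underline{n}\twoheadrightarrow \underline{j}$ is invariant under $\Sigma_{m_1}\times\cdots\times\Sigma_{m_i}$ if and only if it is constant on each fibre of $\alpha_0$, i.e. factors as $\beta = \gamma \circ \alpha_0$ for a unique function $\gamma\colon \underline{i}\to\underline{j}$. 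The condition that $\beta$ have fibre sizes $(n_1, \ldots, n_j)$ translates into the identity $\sum_{t\in \gamma^{-1}(s)} m_t = n_s$ for every $s$, which is exactly the definition of a morphism $(m_1,\ldots,m_i)\to(n_1,\ldots,n_j)$ in $\mset_n$. Moreover $\gamma$ must be surjective, automatic from the fibre-size condition and the fact that each $n_s$ is positive. It remains to check that this bijection is induced by the functor of the lemma, which is immediate from the definition of $\sur(-;n)$ on morphisms.

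There isn't really a ``hard part'': both directions are bookkeeping. The only mild subtlety is keeping straight that a morphism of multisets is a function of underlying \emph{indexing} sets (not of elements counted with multiplicity), and that this corresponds on the orbit side to pre-composing surjections with a function $\gamma\colon \underline{i}\to\underline{j}$; once that dictionary is set up, the identification of morphism sets is immediate and the functoriality is automatic.
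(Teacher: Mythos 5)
Your argument is correct. The paper itself offers no proof of this lemma (it is simply declared elementary), and your verification supplies exactly the standard details: $\sur(n_1,\ldots,n_i;n)$ is transitive with the standard Young subgroup as stabilizer of the standard surjection, giving essential surjectivity, and the identification $\map_{\Sigma_n}(\Sigma_n/H, Y)\cong Y^H$ applied to the stabilizer of the standard surjection translates equivariant maps into functions $\gamma\colon\underline{i}\to\underline{j}$ satisfying $\sum_{t\in\gamma^{-1}(s)}m_t=n_s$, i.e.\ morphisms in $\mset_n$, with the bijection visibly induced by the post-composition functor of the paper. No gaps.
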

Let us go back to the space $X^{\wedge n}$ with the action of $\Sigma_n$. The contravariant functor $\orbit_{{\mathcal Y}_n}\to \spaces$ that sends $O \mapsto \map(O, X^{\wedge n})^{\Sigma_n}$ sends $\Sigma_n/\Sigma_{n_1}\times\cdots\times \Sigma_{n_i}$ to $(X^{\wedge n})^{\Sigma_{n_1}\times\cdots\times \Sigma_{n_i}}\cong X^{\wedge i}$. Composing this functor with the equivalence of Lemma~\ref{lem:multiset}, we obtain a contravariant functor $\mset_n\to \spaces$ that sends $(n_1, \ldots, n_i)$ to $X^{\wedge i}$. 
Combining Proposition~\ref{prop:orbits homotopy colimit} and Lemma~\ref{lem:multiset} we obtain the following lemma
\begin{lem}\label{lem:hocolim to orbit}
Let $X$ be a pointed CW complex. There is a natural equivalence
\[
\underset{(n_1, \ldots, n_i)\in \mset_n}{\hocolim}X^{\wedge i}\xrightarrow{\simeq} X^{\wedge n}/_{\Sigma_n}.
\]
\end{lem}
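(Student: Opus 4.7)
The plan is to assemble the lemma directly from Proposition~\ref{prop:orbits homotopy colimit} and Lemma~\ref{lem:multiset}, which have already done essentially all of the work. First, I would apply Proposition~\ref{prop:orbits homotopy colimit} to the $\Sigma_n$-CW complex $X^{\wedge n}$, taking $\mathcal{C} = \mathcal{Y}_n$, the collection of Young subgroups of $\Sigma_n$. As noted in the paragraph preceding Lemma~\ref{lem:multiset}, the (non-basepoint) isotropy groups of the $\Sigma_n$-action on $X^{\wedge n}$ are exactly the Young subgroups, so the hypothesis of the proposition is satisfied. This gives a natural equivalence
\[
\underset{O\in \orbit_{\mathcal{Y}_n}}{\hocolim}\, \map(O, X^{\wedge n})^{\Sigma_n} \xrightarrow{\simeq} X^{\wedge n}/_{\Sigma_n}.
\]

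Next I would restrict along the equivalence of categories $\mset_n \xrightarrow{\simeq} \orbit_{\mathcal{Y}_n}$ of Lemma~\ref{lem:multiset}. Since homotopy colimits are invariant under equivalence of indexing categories, this yields
\[
\underset{(n_1,\ldots,n_i)\in \mset_n}{\hocolim}\, \map(\sur(n_1,\ldots,n_i;n), X^{\wedge n})^{\Sigma_n} \xrightarrow{\simeq} X^{\wedge n}/_{\Sigma_n}.
\]

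Finally, I would identify the functor $(n_1,\ldots,n_i) \mapsto \map(\sur(n_1,\ldots,n_i;n), X^{\wedge n})^{\Sigma_n}$ with the functor $(n_1,\ldots,n_i) \mapsto X^{\wedge i}$. Using the canonical homeomorphism $\map(G/H, X^{\wedge n})^G \cong (X^{\wedge n})^H$ with $G=\Sigma_n$ and $H=\Sigma_{n_1}\times\cdots\times\Sigma_{n_i}$, the target becomes $(X^{\wedge n})^{\Sigma_{n_1}\times\cdots\times\Sigma_{n_i}} \cong X^{\wedge i}$, where the isomorphism is given by the diagonal inclusion grouping the smash factors into $i$ blocks of sizes $n_1,\ldots,n_i$. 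This identification is natural in morphisms of $\mset_n$, as such a morphism $\alpha\colon (m_1,\ldots,m_i)\to (n_1,\ldots,n_j)$ induces the corresponding ``block-collapse'' map $X^{\wedge i}\to X^{\wedge j}$ on fixed points. Substituting this identification into the previous equivalence produces the desired natural equivalence. There is no real obstacle here; the only point requiring a moment of care is checking that the identification of fixed points with $X^{\wedge i}$ is compatible with the functoriality on $\mset_n$, which is a straightforward unwinding of definitions.
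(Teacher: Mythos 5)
Your proposal is correct and is essentially the paper's own argument: the lemma is obtained by combining Proposition~\ref{prop:orbits homotopy colimit} (applied to $X^{\wedge n}$ with the collection $\mathcal{Y}_n$ of Young subgroups, which are exactly the isotropy groups) with the equivalence $\mset_n \simeq \orbit_{\mathcal{Y}_n}$ of Lemma~\ref{lem:multiset}, identifying $\map(O, X^{\wedge n})^{\Sigma_n}$ with $X^{\wedge i}$ via fixed points. Your extra remark on checking naturality of this identification is a fine (and correct) elaboration of what the paper leaves implicit.
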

The category $\mset_n$ is an EI category, i.e., a category where every endomorphism is an isomorphism. Using S{\l}omi\'nska's model for homotopy colimits over an EI category~\cite{Slominska}, together with some elementary manipulations of colimits, it is not difficult to derive a presentation of the homotopy colimit that occurrs in Lemma~\ref{lem:hocolim to orbit} as a homotopy pushout of a cubical diagram as follows. 

Suppose $1\le i_0< \cdots < i_p \le n$. Let us denote by $\operatorname{Iso}(i_0, \ldots, i_p)$ the groupoid of chains of the form $\underline{i_0}\leftarrow \cdots \leftarrow \underline{i_p}$ in $\mset_n$, where each $\underline{i_j}$ is a multiset with $i_j$ elements and total multiplicity $n$. Then we have a contravariant cubical diagram, indexed by non-empty subsets of $\{1, \ldots, n\}$. Suppose $i_0, \ldots, i_p$ are as above, with $0\le p\le n-1$. The cubical diagram is defined by the formula
\begin{equation}\label{eq:cubical model}
\{i_0, \ldots, i_p\}\mapsto \underset{\underline{i_0}\leftarrow \cdots \leftarrow \underline{i_p}\in \operatorname{Iso}(i_0, \ldots, i_p)}{\hocolim} X^{\wedge i_0}.
\end{equation}
Note that since $\operatorname{Iso}(i_0, \ldots, i_p)$ is a groupoid, the homotopy colimit above is equivalent to a wedge sum indexed by the connected components of this groupoid. 
\begin{equation}\label{eq: crude cubical model}
\underset{\underline{i_0}\leftarrow \cdots \leftarrow \underline{i_p}\in\operatorname{Iso}(i_0, \ldots, i_p)}{\hocolim} X^{\wedge i_0}\simeq \bigvee_{[\underline{i_0}\leftarrow \cdots \leftarrow \underline{i_p}]} X^{\wedge i_0}_{h\aut(\underline{i_0}\leftarrow \cdots \leftarrow \underline{i_p})}.
\end{equation}
Here the wedge sum is indexed by a set of representatives of isomorphism classes of chains. 
To obtain a commuting cubical diagram one may want to use the left hand side of~\eqref{eq: crude cubical model}, but for understanding the homotopy types of the terms, the right hand side may be preferable.

The point  is that $X^{\wedge n}_{\Sigma_n}$ is equivalent to the (homotopy) pushout of this punctured cubical diagram. The following lemma is obtained by applying the machinery of~\cite{Slominska} to the homotopy colimit in Lemma~\ref{lem:hocolim to orbit}.
\begin{lem}\label{lem: cubical orbits}
Let $X$ be a pointed CW complex. The space $X^{\wedge n}_{\Sigma_n}$ is naturally equivalent to the homotopy colimit of the cubical diagram~\eqref{eq:cubical model} (which up to homotopy is the same as the right hand side of~\eqref{eq: crude cubical model}). \end{lem}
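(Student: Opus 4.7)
The plan is to derive the cubical presentation directly from Lemma~\ref{lem:hocolim to orbit} by feeding it into S{\l}omi\'nska's decomposition of homotopy colimits over EI categories. By Lemma~\ref{lem:hocolim to orbit}, it suffices to exhibit the homotopy colimit of $F\colon \mset_n \to \spaces$, $F(\underline{i}) = X^{\wedge i}$, as the homotopy colimit of the cubical diagram~\eqref{eq:cubical model}. The category $\mset_n$ is an EI category, and its non-isomorphisms are precisely the morphisms that strictly decrease the number of elements of the multiset. Consequently, a chain of $p$ non-identity non-isomorphisms in $\mset_n$ has a cardinality profile which is a strictly increasing sequence $1 \le i_0 < i_1 < \cdots < i_p \le n$, i.e., a non-empty subset of $\{1,\ldots,n\}$ of size $p+1$.

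Next, I would invoke S{\l}omi\'nska's formula~\cite{Slominska}, which presents $\hocolim_{\mset_n} F$ as the geometric realization of a simplicial space whose $p$-th level is a disjoint union, indexed by isomorphism classes of chains of $p$ non-isomorphisms $\underline{i_0} \leftarrow \cdots \leftarrow \underline{i_p}$, of the homotopy orbit spaces $F(\underline{i_0})_{h\aut(\underline{i_0}\leftarrow\cdots\leftarrow\underline{i_p})}$. Grouping the nondegenerate part of this simplicial space by the cardinality profile, the contribution of each non-empty subset $\{i_0 < \cdots < i_p\} \subseteq \{1,\ldots,n\}$ is exactly the groupoid homotopy colimit
\[
\underset{\underline{i_0}\leftarrow \cdots \leftarrow \underline{i_p} \in \operatorname{Iso}(i_0,\ldots,i_p)}{\hocolim} X^{\wedge i_0},
\]
matching the formula in~\eqref{eq:cubical model} and~\eqref{eq: crude cubical model}.

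The face maps in S{\l}omi\'nska's simplicial model correspond to deleting an object from a chain, which at the level of cardinality profiles corresponds to deleting an element of $\{i_0 < \cdots < i_p\}$. Concretely, if an interior object $\underline{i_k}$ is deleted, two consecutive non-isos compose to a single non-iso and $F(\underline{i_0})$ is unchanged; if the terminal object $\underline{i_0}$ is deleted, one forgets it and uses the map $F(\underline{i_1}) \to F(\underline{i_0})$ coming from the chain. In either case the resulting map lands in the term attached to the smaller subset $\{i_0,\ldots,i_p\}\setminus\{i_k\}$. These are precisely the structure maps of a contravariant cubical diagram indexed by non-empty subsets of $\{1,\ldots,n\}$.

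The final step is to observe that the realization of this simplicial-type decomposition is equivalent to the homotopy colimit of the resulting cubical diagram. This follows from the general principle that a simplicial space built by attaching cells along face inclusions is canonically equivalent to the homotopy colimit of the diagram indexed by its non-degenerate cells under face relations, combined with the standard presentation of a $p$-simplex as an iterated pushout over its proper faces (indexed by non-empty subsets of its vertex set). I expect the main technical obstacle to be bookkeeping of the face maps: one must check carefully that after restricting to non-degenerate simplices the remaining structure maps assemble into an honest cubical diagram (i.e., face compositions commute on the nose up to coherent homotopy), and that passing to the homotopy colimit of the cube recovers the full geometric realization without losing the lower-dimensional degenerate contributions. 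Once this bookkeeping is verified, naturality in $X$ is manifest from the constructions.
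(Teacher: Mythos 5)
Your proposal is correct and follows essentially the same route as the paper: feed Lemma~\ref{lem:hocolim to orbit} into S{\l}omi\'nska's decomposition of homotopy colimits over EI categories, regroup the chains of non-isomorphisms in $\mset_n$ by their cardinality profiles $i_0<\cdots<i_p$ (these are exactly the non-empty subsets of $\{1,\ldots,n\}$), and identify the resulting terms and face maps with the cubical diagram~\eqref{eq:cubical model}; the paper offers no more detail than this either. One small correction: the functor of Lemma~\ref{lem:hocolim to orbit} is \emph{contravariant} on $\mset_n$, so the face map that deletes $\underline{i_0}$ from a chain is induced by the diagonal $X^{\wedge i_0}\to X^{\wedge i_1}$, i.e., it goes $F(\underline{i_0})\to F(\underline{i_1})$ rather than in the direction you wrote (compare the map $X\wedge\RP^\infty_+\to (X\wedge X)_{h\Sigma_2}$ in Examples~\ref{exmp:lambda 2 and 3}).
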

As a corolary, we now have the following general formula for $\ext^*(\Lambda^m \circ \abelianization, G)$.
\begin{thm}\label{thm:ext from exterior}
Let $G\colon\fr\to \ab$ be a polynomial functor. Then $\ext^*(\Lambda^m\circ \abelianization, G)$ is isomorphic to $\pi_{-*-m}$ of 
\[
\underset{(n_1, \ldots, n_i)\in \mset_n}{\operatorname{holim}}
 \ce_i\widehat G(S^0, \ldots, S^0).
\]
This is equivalent to the homotopy limit of a punctured cubical diagram of chain complexes, which on objects is defined by the formula
\[
(i_0, \ldots, i_p)\mapsto \prod_{[\underline{i_0}\leftarrow\cdots\leftarrow \underline{i_p}]}\ce_{i_0} \widehat G(S^0, \ldots, S^0)^{h\aut(\underline{i_0}\leftarrow\cdots\leftarrow \underline{i_p})}.
\]
Here the product is indexed by a set of representatives of isomorphism classes of chains $\underline{i_0}\leftarrow\cdots\leftarrow \underline{i_p}$ in $\mset_n$, where each $\underline{i_j}$ is a multiset with $i_j$ elements.
\end{thm}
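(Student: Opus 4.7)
The starting point is equation~\eqref{eq:ext from lambda}, which expresses
\[
\ext^*(\Lambda^m\circ\abelianization, G) \cong \pi_{-*-m}\spectralNat_X\bigl(\widetilde\Z[X^{\wedge m}_{\Sigma_m}],\, \widehat G(X)\bigr).
\]
The overall strategy is to rewrite the functor $X\mapsto \widetilde\Z[X^{\wedge m}_{\Sigma_m}]$ as a homotopy colimit of simpler functors of the form $X\mapsto \widetilde\Z[X^{\wedge i}]$, so that applying $\spectralNat(-, \widehat G)$ converts the homotopy colimit into the desired homotopy limit of cross-effects.

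The first step is to apply Lemma~\ref{lem:hocolim to orbit} (or equivalently the cubical refinement of Lemma~\ref{lem: cubical orbits}) to present $X^{\wedge m}_{\Sigma_m}$ as a homotopy colimit of $X^{\wedge i}$ indexed by the category $\mset_m$ of multisets. Since $\widetilde\Z[-]\colon\spaces\to \ch$ is a left adjoint, it preserves homotopy colimits, so we get a natural equivalence
\[
\widetilde\Z[X^{\wedge m}_{\Sigma_m}] \;\simeq\; \underset{(n_1,\ldots,n_i)\in\mset_m}{\hocolim}\, \widetilde\Z[X^{\wedge i}].
\]
For the cubical version one uses equations~\eqref{eq:cubical model} and~\eqref{eq: crude cubical model} and the fact that $\widetilde\Z[-]$ carries pointed wedges to direct sums.

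The second step is to invoke that $\spectralNat_X(-,\widehat G(X))$ is an $\infty$-categorical mapping object and therefore sends homotopy colimits in its first argument to homotopy limits in $\catname{Sp}$. Combined with the identification~\eqref{eq: nat from smash power is cross-effect},
\[
\spectralNat_X\bigl(\widetilde\Z[X^{\wedge i}],\, \widehat G(X)\bigr) \;\simeq\; \ce_i\widehat G(S^0,\ldots,S^0),
\]
this already gives the $\mset_m$-indexed homotopy limit formula. For the cubical reformulation, the wedge decomposition over isomorphism classes of chains becomes a product of cross-effects, and the $\aut(\underline{i_0}\leftarrow\cdots\leftarrow\underline{i_p})$-coinvariants appearing under $\hocolim$ dualize to homotopy fixed points under $\spectralNat(-,\widehat G)$.

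The main subtlety, and where I expect the real work to sit, is equivariance. The isomorphism in~\eqref{eq: nat from smash power is cross-effect} must be upgraded to a $\Sigma_i$-equivariant equivalence, so that the $\aut$-action induced from automorphisms of a chain of multisets (acting on $X^{\wedge i_0}$ via the stabilizer inclusion $\aut(\underline{i_0}\leftarrow\cdots\leftarrow\underline{i_p})\hookrightarrow\Sigma_{i_0}$) matches the intrinsic symmetric-group action on the cross-effect (compare Remark~\ref{rem:action}). This is the analogue, in our setting, of the standard statement that cross-effects are naturally symmetric multifunctors; it follows by tracing through the definition of $\ce_i$ as the total fiber of a cube of representables and using the Yoneda lemma on each vertex. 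Once this equivariance is established, assembling the homotopy limit from the cubical diagram is formal, and the shift by $m$ in the degree comes from the factor $\Sigma^{-m}$ in Proposition~\ref{prop:exterior and divided powers}\eqref{eq:propexterior} already accounted for in~\eqref{eq:ext from lambda}.
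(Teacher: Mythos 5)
Your proposal is correct and follows essentially the same route as the paper: start from equation~\eqref{eq:ext from lambda}, resolve $X^{\wedge m}_{\Sigma_m}$ via the $\mset$-indexed homotopy colimit of Lemma~\ref{lem:hocolim to orbit} (with Lemma~\ref{lem: cubical orbits} for the cubical form), use that $\widetilde\Z[-]$ preserves homotopy colimits and that $\spectralNat(-,\widehat G)$ turns them into homotopy limits, and identify the terms with cross-effects via~\eqref{eq: nat from smash power is cross-effect}. Your added remark on upgrading the cross-effect identification to a $\Sigma_i$-equivariant one is a reasonable point of care that the paper's proof leaves implicit, but it does not change the argument.
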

\begin{proof}
By~\eqref{eq:ext from lambda}, 
$\ext^*(\Lambda^m\circ \abelianization, G)$ is isomorphic to $\pi_{-*-m} $ of $ \spectralNat_X(\widetilde\Z[X^{\wedge m}_{\Sigma_m}],\widehat G(X))$. By Lemma~\ref{lem:hocolim to orbit}
$X^{\wedge n}/_{\Sigma_n}$ is equivalent to $
\underset{(n_1, \ldots, n_i)\in \mset_n}{\hocolim}X^{\wedge i}$. The functor $\widetilde Z$ preserve homotopy colimit, so there is an equivalence
\[
\widetilde Z[X^{\wedge n}/_{\Sigma_n}]\simeq \underset{(n_1, \ldots, n_i)\in \mset_n}{\hocolim}\widetilde Z[X^{\wedge i}].
\]
Since $\spectralNat(-, \widehat G)$ takes homotopy colimits to homotopy limits, we obtain an equivalence
\[
\spectralNat_X(\widetilde\Z[X^{\wedge m}_{\Sigma_m}],\widehat G(X))\simeq \underset{(n_1, \ldots, n_i)\in \mset_n}{\operatorname{holim}}\spectralNat_X(X^{\wedge i}, \widehat G).
\]
By the equivalence in~\eqref{eq: nat from smash power is cross-effect}, which was established in the proof of Proposition~\ref{prop:tensor-general}, the right hand side is equivalent to
$\underset{(n_1, \ldots, n_i)\in \mset_n}{\operatorname{holim}}\ce_i\widehat G(S^0, \ldots, S^0)$. This proves the first assertion of the theorem. The second assertion follows from the first because of Lemma~\ref{lem: cubical orbits}.
\end{proof}
\begin{exmps}\label{exmp:lambda 2 and 3}
We will now use the general results above to calculate  $\ext^*(\Lambda^m\circ \abelianization, G)$ for $m=2, 3$. We start with $m=2$. By equation~\eqref{eq:ext from lambda}, 
\begin{equation}\label{eq: from Lambda square}
\ext^*(\Lambda^2\circ \abelianization, G)\cong \pi_{-*-2}\spectralNat_X(\widetilde \Z[(X\wedge X)_{\Sigma_2}], \widehat G(X)).
\end{equation}
When $n=2$, Lemma~\ref{lem: cubical orbits} amounts to saying that there is a homotopy pushout square, natural in $X$
\[\begin{tikzcd}
	{X\wedge \RP^\infty_+} & {(X\wedge X)_{h\Sigma_2}} \\
	X & {(X\wedge X)_{\Sigma_2}}
	\arrow[from=1-1, to=1-2]
	\arrow[from=1-1, to=2-1]
	\arrow[from=1-2, to=2-2]
	\arrow[from=2-1, to=2-2]
\end{tikzcd}\]
There the horizontal maps are induced by the diagonal, and the vertical maps are the natural maps from homotopy orbits to strict orbits. The functor $\widetilde \Z[-]$ from $\spaces$ to $\ch$ preserves homotopy colimits. Applying $\widetilde\Z[-]$ to the diagram above, and then applying $\spectralNat(-, \widehat G)$, we obtain the following homotopy pullback square
\[\begin{tikzcd}
	{\spectralNat_X\left(\widetilde\Z[(X\wedge X)_{\Sigma_2}], \widehat G(X)\right)} & \spectralNat_X(\widetilde\Z[X], \widehat G(X)) \\
	{\spectralNat_X\left(\widetilde\Z[(X\wedge X)_{h\Sigma_2}], \widehat G(X)\right)} & {\spectralNat_X(\widetilde\Z[X\wedge \RP^\infty_+], \widehat G(X))}
	\arrow[from=1-1, to=1-2]
	\arrow[from=1-1, to=2-1]
	\arrow[from=1-2, to=2-2]
	\arrow[from=2-1, to=2-2]
\end{tikzcd}\]
This is equivalent to the following pullback diagram, which is (the second statement of) Theorem~\ref{thm:ext from exterior} in the case $m=2$.
\begin{equation}\label{eq:Lambda2 pullback}
\begin{tikzcd}
	{\spectralNat_X\left(\widetilde\Z[(X\wedge X)_{\Sigma_2}], \widehat G(X)\right)} & \widehat G(S^0) \\
	{\ce_2\widehat G(S^0, S^0)^{h\Sigma_2}} & {\pointedmaps(\RP^\infty_+, \widehat G(S^0))}
	\arrow[from=1-1, to=1-2]
	\arrow[from=1-1, to=2-1]
	\arrow[from=1-2, to=2-2]
	\arrow[from=2-1, to=2-2]
\end{tikzcd}
\end{equation}
So, $\ext^*(\Lambda^2\circ\abelianization, G)$ is isomorphic to $\pi_{-*-2}$ of the pullback above.

We will now use this to calculate $\ext^*(\Lambda^2\circ \abelianization, \Lambda^n\circ\abelianization)$ for all $n$.
\begin{lem}\label{lem:Lambda2 to Lambdan}
There is an isomorphism of graded groups:
\[
\ext^*(\Lambda^2\circ \abelianization, \Lambda^n\circ\abelianization)\cong \left\{ \begin{array}{cl}
\Sigma^{n-2}\, \Z^{\Part(n, 2)}    & n\ \mathrm{ even}  \\
\Sigma^{n-2}\, \Z^{\Part(n, 2)} \oplus \Sigma^{n-1}\, \widetilde \HH^*(\RP^\infty)   &  n\ \mathrm{ odd}
\end{array}\right.
\]   
\end{lem}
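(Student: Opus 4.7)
The plan is to substitute $G = \Lambda^n\circ\abelianization$ into the pullback square~\eqref{eq:Lambda2 pullback} and compute the homotopy groups of the resulting pullback via Mayer--Vietoris. By Proposition~\ref{prop:exterior and divided powers}\eqref{eq:propexterior}, $\widehat G(X) = \Sigma^{-n}\widetilde\Z[X^{\wedge n}_{\Sigma_n}]$, so the top-right and bottom-right corners of~\eqref{eq:Lambda2 pullback} are $\widehat G(S^0) = \Sigma^{-n}\Z$ and $\Sigma^{-n}\pointedmaps(\mathbb RP^\infty_+, \Z)$, with $\pi_{-n-k}$ of the latter equal to $\HH^k(\mathbb RP^\infty;\Z)$. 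For the bottom-left corner I would use the elementary wedge decomposition
\[
(A\vee B)^{\wedge n}_{\Sigma_n} \simeq \bigvee_{k=0}^n A^{\wedge k}_{\Sigma_k} \wedge B^{\wedge n-k}_{\Sigma_{n-k}},
\]
in which the $k = 0$ and $k = n$ summands are precisely the two corners killed when forming the total fiber of the cube defining $\ce_2$, yielding
\[
\ce_2\widehat G(A, B) \simeq \Sigma^{-n}\widetilde\Z\Bigl[\bigvee_{k=1}^{n-1} A^{\wedge k}_{\Sigma_k}\wedge B^{\wedge n-k}_{\Sigma_{n-k}}\Bigr].
\]
Specialising to $(A, B) = (S^0, S^0)$ gives $\Sigma^{-n}\Z[\Comp(n, 2)]$ concentrated in degree $-n$, with $\Sigma_2$ swapping the compositions $(k, n-k)\leftrightarrow(n-k, k)$.

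The next step is identifying the two structural maps in~\eqref{eq:Lambda2 pullback}. The right vertical map is the standard inclusion of constants induced by $\mathbb RP^\infty_+\to S^0$. The bottom map is $h\Sigma_2$ applied to the diagonal-induced map $\Delta^*\colon\ce_2\widehat G(S^0, S^0)\to \widehat G(S^0)$; here the key identification is that the generator of the $(k, n-k)$-summand corresponds to the natural transformation $\widetilde\Z[\eta^{(k)}]$, where $\eta^{(k)}\colon X\wedge X\to X^{\wedge n}_{\Sigma_n}$ sends $x_1\wedge x_2$ to the orbit of $x_1^{\wedge k}\wedge x_2^{\wedge n-k}$; precomposing with the smash diagonal $x\mapsto x\wedge x$ produces the orbit of $x^{\wedge n}$, which equals $1\in\widehat G(S^0)=\Sigma^{-n}\Z$ at $X = S^0$. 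Hence $\Delta^*$ is the sum map $\Sigma^{-n}\Z^{n-1}\to \Sigma^{-n}\Z$. Splitting by $\Sigma_2$-orbits: for $n$ odd all orbits on $\Comp(n, 2)$ are free, so homotopy fixed points coincide with strict ones $\Sigma^{-n}\Z^{(n-1)/2}$ and the map to $BR$ is multiplication by $2$ on $\pi_{-n}$; for $n$ even there is additionally the fixed composition $(n/2, n/2)$, whose trivial-$\Sigma_2$ summand contributes $\Sigma^{-n}\pointedmaps(\mathbb RP^\infty_+, \Z)$ mapping identically to $BR$.

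Finally I would run the Mayer--Vietoris long exact sequence for $P\to TR\oplus BL\to BR$. When $n$ is even, the identity component of $BL\to BR$ cancels all higher $\HH^k(\mathbb RP^\infty)$ contributions, and on $\pi_{-n}$ the surjection $(a, b, c_j)\mapsto a - b - 2\sum c_j$ has kernel $\Z^{n/2} = \Z^{\Part(n, 2)}$, giving the first summand of the stated formula. When $n$ is odd, on $\pi_{-n}$ the analogous kernel is $\Z^{(n-1)/2} = \Z^{\Part(n, 2)}$; in degrees $-n-k$ with $k\ge 1$, both $TR$ and $BL$ vanish while $\pi_{-n-k}(BR) = \HH^k(\mathbb RP^\infty)$, so the connecting homomorphism yields $\pi_{-n-k-1}(P)\cong \HH^k(\mathbb RP^\infty)$, and after the reindexing $\ext^i = \pi_{-i-2}(P)$ these assemble into the shifted summand $\Sigma^{n-1}\widetilde\HH^*(\mathbb RP^\infty)$. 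The main obstacle is the identification of $\Delta^*$ as the sum-of-inclusions map via the wedge decomposition of $(A\vee B)^{\wedge n}_{\Sigma_n}$; once that is in hand, the long exact sequence calculation is purely arithmetic.
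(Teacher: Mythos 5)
Your proposal is correct and follows essentially the same path as the paper: substitute $G = \Lambda^n\circ\abelianization$ into the pullback square~\eqref{eq:Lambda2 pullback}, identify all four corners (using the cross-effect computation $\ce_2\widehat G(S^0,S^0)\simeq\Sigma^{-n}\Z[\Comp(n,2)]$, which the paper draws from Corollary~\ref{cor: tensor-others}\eqref{item:tensor-exterior}), and identify the bottom map as the sum-of-coordinates map induced by $\Comp(n,2)\to\ast$. The only real difference is cosmetic: where the paper cleanly decomposes the pullback square (peeling off the fixed-point sub-square for $n$ even, and using the splitting $\pointedmaps(\mathbb RP^\infty_+,\Z)\simeq\Z\times\pointedmaps(\mathbb RP^\infty,\Z)$ for $n$ odd), you run the Mayer--Vietoris long exact sequence directly with an explicit description of the maps on $\pi_*$. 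Your explicit derivation of $\Delta^*$ as the sum map via the natural transformations $\eta^{(k)}$ and the wedge decomposition of $(A\vee B)^{\wedge n}_{\Sigma_n}$ is a worthwhile supplement, since the paper simply asserts the corresponding fact. One small bookkeeping point: in the odd case, $\pi_{-n-1}(BR)=H^1(\mathbb RP^\infty)=0$ and the cokernel of the $\pi_{-n}$ map vanishes, so the formula $\pi_{-n-k-1}(P)\cong\HH^k(\mathbb RP^\infty)$ is only valid for $k\geq 1$ (where $\HH^k=\widetilde\HH^k$); stated that way the reindexing correctly lands on $\Sigma^{n-1}\widetilde\HH^*(\mathbb RP^\infty)$, as you intend.
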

\begin{proof}
By combining equation~\eqref{eq: from Lambda square} with the pullback square~\eqref{eq:Lambda2 pullback}, together with the formula for the cross-effects of $\widehat{\Lambda^n\circ \abelianization}$ (Corollary~\ref{cor: tensor-others}~\eqref{item:tensor-exterior}) we conclude that $\ext^*(\Lambda^2\circ \abelianization, \Lambda^n\circ\abelianization)$ is isomorphic to $\pi_{-*-2+n}$ of the homotopy pullback of the following diagram
\begin{equation}\label{eq:pullback}
\left(\Z[\Comp(n, 2)]\right)^{h\Sigma_2}\to\pointedmaps(\RP^\infty_+, \Z) \leftarrow \Z.
\end{equation}
Recall that $\Comp(n, 2)$ is the set of ordered pairs of positive integers $(n_1, n_2)$ where $n_1+n_2=n$. The group $\Sigma_2$ acts on $\Comp(n, 2)$ by switching $n_1$ and $n_2$. The left side map in~\eqref{eq:pullback} is induced by taking $\Z[-]^{h\Sigma_2}$ of the map $\Comp(n,2)\to \Comp(n,1)=*$.

Suppose first that $n=2k$ is even. Then $\Comp(n, 2)$ has a fixed point $(k, k)$. The inclusion of this fixed point extends to a map from the following pullback square to~\eqref{eq:pullback}
\[
\pointedmaps(\RP^\infty_+, \Z) \to \pointedmaps(\RP^\infty_+, \Z) \leftarrow *.
\]
It is clear that the pullback of this diagram is $*$. It follows that the pullback of~{eq:pullback} is equivalent to the pullback of the quotient of~\eqref{eq:pullback} by the pullback above. The quotient is the following pullback
\[
\left(\Z[\Comp(n, 2)\setminus *]\right)^{h\Sigma_2}\to * \leftarrow \Z.
\]
Here $\Comp(n, 2)\setminus *$ denotes the set $\Comp(n, 2)$ with the fixed point removed. The group $\Sigma_2$ acts freely on $\Comp(n, 2)\setminus *$, and it follows that $$\left(\Z[\Comp(n, 2)\setminus *]\right)^{h\Sigma_2}\simeq \Z[(\Comp(n, 2)\setminus *)_{\Sigma_2}].$$
It follows that the last pullback is equivalent to 
\[
\Z[(\Comp(n, 2)\setminus *)_{\Sigma_2}]\times \Z\simeq \Z[(\Comp(n, 2))_{\Sigma_2}]=\Z[\Part(n, 2)].
\]
We conclude that when $n$ is even, $\ext^*(\Lambda^2\circ \abelianization, \Lambda^n \circ \abelianization)\cong \pi_{-*-2+n}\Z[\Part(n, 2)]\cong\Sigma^{n-2}\Z[\Part(n,2)].$

Now suppose $n$ is odd. Then $\Sigma_2$ acts freely on $\Comp(n, 2)$. In this case $\Z[\Comp(n, 2)]^{h\Sigma_2}\simeq \Z[\Comp(n, 2)_{\Sigma_2}]=\Z[\Part(n, 2)]$. Thus~\eqref{eq:pullback} takes the following form
\[
\Z[\Part(n, 2)]\to\pointedmaps(\RP^\infty_+, \Z) \leftarrow \Z.
\]
Note that the right hand map in this pullback is equivalent to the inclusion $\Z\to \Z\times \pointedmaps(\mathbb R P^\infty, \Z)$. It follows that the last pullback is equivalent to the pullback of the following diagram
\[
\Z[\Part(n, 2)]\to\pointedmaps(\RP^\infty, \Z) \leftarrow *.
\]
The left map in this diagram is zero for dimensional reasons. It follows that the above pullback is equivalent to $\Z[\Part(n, 2)]\times \Omega\pointedmaps(\RP^\infty, \Z)$. We conclude that when $n$ is odd
\begin{eqnarray*}
\ext^*(\Lambda^2\circ \abelianization, \Lambda^n \circ \abelianization) &\cong &\pi_{-*-2+n}\left(\Z[\Part(n, 2)]\times \Omega\pointedmaps(\RP^\infty, \Z)\right) \\ &\cong& \Sigma^{n-2}\Z[\Part(n, 2)]\times \Sigma^{n-1}\widetilde\HH^*(\RP^\infty).
\end{eqnarray*}
\end{proof}
Now let us consider the case $m=3$. Once again, equation~\eqref{eq:ext from lambda} together with Corollary~\ref{cor: tensor-others}~\eqref{item:tensor-exterior} tell us that there is an isomorphisms
\begin{equation}\label{eq: from Lambda cube}
\ext^*(\Lambda^3\circ \abelianization, \Lambda^n\circ\abelianization)\cong \pi_{-*-3+n}\spectralNat_X(\widetilde \Z[(X^{\wedge 3})_{\Sigma_3}], \widetilde \Z[(X^{\wedge n})_{\Sigma_n}]).
\end{equation}
Thus our next task is to understand $\spectralNat_X(\widetilde \Z[(X^{\wedge 3})_{\Sigma_3}], \widetilde \Z[(X^{\wedge n})_{\Sigma_n}])$.
\begin{prop}\label{prop:nat from sym3x}
$\spectralNat_X(\widetilde \Z[(X^{\wedge 3})_{\Sigma_3}], \widetilde \Z[(X^{\wedge n})_{\Sigma_n}])$ is equivalent to
\[ \begin{array}{lc}
   \Z[\Part(n, 3)]\times \Sigma^{-1}\pointedmaps(\RP^\infty, \Z)^{\lfloor\frac{n}{2}\rfloor} & \mbox{if }3\mid n \\[10pt]
   \Z[\Part(n, 3)]\times \Sigma^{-1}\pointedmaps(\RP^\infty, \Z)^{\lfloor\frac{n}{2}\rfloor} \times \Sigma^{-1}\pointedmaps(\classifying\Sigma_3/\classifying \Sigma_2 , \Z) & \mbox{if }3\nmid n 
   \end{array}
\]
\end{prop}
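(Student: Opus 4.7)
The plan is to apply the framework of the proof of Theorem~\ref{thm:ext from exterior} with $m = 3$ and $\widehat G(X) := \widetilde\Z[X^{\wedge n}_{\Sigma_n}]$, which up to a shift is the extension $\widehat{\Lambda^n \circ \ab}$. Combining Lemma~\ref{lem:hocolim to orbit}, Lemma~\ref{lem: cubical orbits}, and the representability identification~\eqref{eq: nat from smash power is cross-effect}, the desired mapping object is realised as the homotopy limit of an explicit dual punctured $3$-cube indexed by non-empty subsets of $\{1,2,3\}$, whose vertex at $\{i_0 < \cdots < i_p\}$ is a product over iso classes of chains $\underline{i_0}\leftarrow\cdots\leftarrow\underline{i_p}$ in $\mset_3$ of $\ce_{i_0}\widehat G(S^0,\ldots,S^0)^{h\aut(\mathrm{chain})}$. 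By a cross-effect computation analogous to Corollary~\ref{cor: tensor-others}~\eqref{item:tensor-exterior}, one gets $\ce_i\widehat G(S^0,\ldots,S^0) \simeq \Z[\Comp(n,i)]$ concentrated in degree zero, carrying the standard $\Sigma_i$-permutation action.

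The next step is to enumerate $\mset_3$ explicitly. Up to isomorphism it has three objects $(3), (1,2), (1,1,1)$ with automorphism groups $*, *, \Sigma_3$ respectively, and up to iso the non-trivial chains are $(1,2)\to(3)$, $(1,1,1)\to(3)$, $(1,1,1)\to(1,2)$, and $(1,1,1)\to(1,2)\to(3)$, with chain-automorphism groups $*, \Sigma_3, \Sigma_2, \Sigma_2$. The crucial observation is that whenever the smallest multiset $\underline{i_0}$ in a chain has fewer than three elements, the image of the chain-automorphism group in $\aut(\underline{i_0})$ is forced to be trivial, so its action on $\ce_{i_0}$ is trivial; only at the vertex $\{3\}$ does $\aut((1,1,1)) = \Sigma_3$ act non-trivially, via the full permutation action on $\Z[\Comp(n,3)]$. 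Putting this together, the seven vertices of the dual cube are: $\Z$ at $\{1\}$ and $\{1,2\}$; $\Z[\Comp(n,2)]$ at $\{2\}$; $\Z[\Comp(n,3)]^{h\Sigma_3}$ at $\{3\}$; $\map(\classifying\Sigma_3, \Z)$ at $\{1,3\}$; $\Z[\Comp(n,2)] \otimes \map(\RP^\infty, \Z)$ at $\{2,3\}$; and $\map(\RP^\infty, \Z)$ at $\{1,2,3\}$.

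Finally, I would compute the homotopy limit of this cube in stages. First, I would form the homotopy limit of the sub-diagram involving the four vertices $V_{\{2\}}, V_{\{1,2\}}, V_{\{2,3\}}, V_{\{1,2,3\}}$, which involves only the swap action of $\Sigma_2$ on $\Comp(n,2)$: each asymmetric swap-pair of compositions contributes (by the same fibre-sequence calculation that produces the $\Sigma^{-1}\widetilde\HH^*(\RP^\infty)$ torsion in Lemma~\ref{lem:Lambda2 to Lambdan}) a shifted copy of $\Sigma^{-1}\pointedmaps(\RP^\infty, \Z)$, while the diagonal composition $(n/2, n/2)$, present when $2\mid n$, contributes a single $\Z$ in degree zero. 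Then I would pull back against the sub-diagram involving $V_{\{1\}}, V_{\{3\}}, V_{\{1,3\}}$, using the orbit decomposition of $\Sigma_3 \curvearrowright \Comp(n,3)$ into orbits with stabilizers trivial, $\Sigma_2$, and $\Sigma_3$. Free orbits contribute one copy of $\Z$ each, combining with the first stage to account for the $\Z[\Part(n,3)]$ summand; orbits with stabilizer $\Sigma_2$ yield further $\Sigma^{-1}\pointedmaps(\RP^\infty, \Z)$ copies which add to the first-stage contributions to produce exactly $\lfloor n/2 \rfloor = \Part(n,2)$ copies in total. The single $\Sigma_3$-fixed orbit, present iff $3 \mid n$, contributes $\map(\classifying\Sigma_3, \Z)$ to $V_{\{3\}}$ which cancels against the entire vertex $V_{\{1,3\}}$ in the pullback; when $3 \nmid n$ no cancellation is available, and the residual pullback of the surviving $V_{\{1,3\}}$ against the restriction map to $V_{\{1,2,3\}}$ (encoding $\classifying\Sigma_2 \hookrightarrow \classifying\Sigma_3$) leaves exactly the claimed $\Sigma^{-1}\pointedmaps(\classifying\Sigma_3/\classifying\Sigma_2, \Z)$ term, by the cofibre sequence $\classifying\Sigma_2 \to \classifying\Sigma_3 \to \classifying\Sigma_3/\classifying\Sigma_2$. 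The main obstacle will be tracking the structure maps between vertices of the cube carefully, particularly the map $V_{\{1,3\}} \to V_{\{1,2,3\}}$ and its interaction with the orbit decompositions, to confirm these cancellation and shift identifications.
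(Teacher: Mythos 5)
Your setup coincides exactly with the paper's: the $\mset_3$-indexed cubical resolution of $(X^{\wedge 3})_{\Sigma_3}$, the dual punctured cube, the identification $\ce_i\widehat G(S^0,\ldots,S^0)\simeq\Z[\Comp(n,i)]$, and the observation that the chain automorphism groups act trivially except at the vertex $\{3\}$ are all precisely what the paper does (it then flattens the cube by taking cofibres along the ``add $3$'' direction, reducing to the total fibre of a square with corners $\Z[\Comp(n,3)]^{h\Sigma_3}$, $\pointedmaps(\RP^\infty,\Z[\Comp(n,2)])$, $\pointedmaps(\classifying\Sigma_3,\Z)$, $\pointedmaps(\RP^\infty,\Z)$, analysed orbitwise). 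The gap is in your evaluation of the homotopy limit. Your first stage rests on a swap action of $\Sigma_2$ on $\Comp(n,2)$ that occurs nowhere in the cube: by your own ``crucial observation'' the $\Sigma_2$'s at $V_{\{2,3\}}$ and $V_{\{1,2,3\}}$ act trivially on the coefficients, so $V_{\{2,3\}}\simeq\pointedmaps(\RP^\infty_+,\Z[\Comp(n,2)])$ with untwisted coefficients, and there is no decomposition of that face into asymmetric swap-pairs and a diagonal; nothing in the diagram yields ``one $\Sigma^{-1}\pointedmaps(\RP^\infty,\Z)$ per swap-pair plus a $\Z$ when $2\mid n$'' (note that even the $m=2$ computation of Lemma~\ref{lem:Lambda2 to Lambdan} you invoke produces a single $\RP^\infty$-term, not one per pair). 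Moreover the two-stage ``pullback'' is not yet a valid limit decomposition as stated: the homotopy limit of the full face $\{S: 2\in S\}$ is just its initial vertex $V_{\{2\}}$.

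Second, your assembly of the answer is numerically inconsistent, so the cancellations you hope to confirm cannot exist. Take $n=5$: $\Part(5,3)$ has two elements, but $\Comp(5,3)$ has no free $\Sigma_3$-orbits and $\Comp(5,2)$ has no $\Sigma_2$-fixed composition, so your proposed sources for $\Z[\Part(n,3)]$ give rank $0$; and adding the two swap-pairs of $\Comp(5,2)$ to the two stabiliser-$\Sigma_2$ orbits of $\Comp(5,3)$ gives four copies of $\Sigma^{-1}\pointedmaps(\RP^\infty,\Z)$ instead of $\lfloor 5/2\rfloor=2$. The correct mechanism, as in the paper, is different: every $\Sigma_3$-orbit of $\Comp(n,3)$, free or not, contributes the degree-zero $\Z$ of the homotopy fixed points of its stabiliser at the vertex $\{3\}$, and all of these survive because the maps out of them vanish for dimensional reasons --- this alone accounts for $\Z[\Part(n,3)]$; the unshifted $\pointedmaps(\RP^\infty,\Z)$-factors coming from orbits of type $(m,m,m')$ do not survive shifted but are cancelled against the factors of $\pointedmaps(\RP^\infty,\Z[\Comp(n,2)])$ indexed by $(n_1,n_2)$ with $n_1$ even, and it is the remaining factors with $n_1$ odd, exactly $\lfloor n/2\rfloor$ of them, that produce $\Sigma^{-1}\pointedmaps(\RP^\infty,\Z)^{\lfloor n/2\rfloor}$. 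Your picture of the $3\mid n$ dichotomy is essentially right, but splitting off $\Sigma^{-1}\pointedmaps(\classifying\Sigma_3/\classifying\Sigma_2,\Z)$ when $3\nmid n$ additionally requires showing the connecting map into that term is null; the paper does this via a dimension argument on $\Z[\Part(n,3)]$ together with the fact that the target is $3$-local while the $\RP^\infty$-factors are acyclic at $3$, a step absent from your sketch.
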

\begin{proof}
We want to use the case $n=3$ of Lemma~\ref{lem: cubical orbits} to resolve $(X^{\wedge 3})_{\Sigma_3}$ as a homotopy colimit of a cubical diagram. The cube is indexed by isomorphism classes of chains in $\mset_3$. Recall that $\mset_3$ is the category of multisets of total multiplicity $3$. The following diagram indicates the isomorphism classes of chains in $\mset_3$ together with the corresponding groups of automorphisms (when the group is non-trivial).

\tikzset{every picture/.style={line width=0.75pt}} 
\[
\begin{tikzpicture}[x=0.75pt,y=0.75pt,yscale=-1,xscale=1]

\draw [-stealth]   (45,91) -- (65,96) ;

\draw  [-stealth]  (45,101) -- (65,97) ;
 
\draw   [-stealth] (45,111) -- (65,107) ;

\draw  [-stealth]  (66,96) -- (85,101) ;

\draw  [-stealth]  (66,107) -- (85,102) ;

\draw (42,87) node [anchor=north west][inner sep=0.75pt]    {$\bullet $};
\draw (42,107) node [anchor=north west][inner sep=0.75pt]    {$\bullet $};
\draw (42,97) node [anchor=north west][inner sep=0.75pt]    {$\bullet $};
\draw (62,92) node [anchor=north west][inner sep=0.75pt]    {$\bullet $};
\draw (62,103) node [anchor=north west][inner sep=0.75pt]    {$\bullet $};
\draw (82,97) node [anchor=north west][inner sep=0.75pt]    {$\bullet $};

\draw (25,91) node [anchor=north west][inner sep=0.75pt]  [font=\tiny] [align=left] {$\Sigma_2$};

\draw (38,87) node [anchor=north west][inner sep=0.75pt]  [font=\tiny] [align=left] {$\displaystyle 1$};
\draw (38,108) node [anchor=north west][inner sep=0.75pt]  [font=\tiny] [align=left] {$\displaystyle 1$};
\draw (38,98) node [anchor=north west][inner sep=0.75pt]  [font=\tiny] [align=left] {$\displaystyle 1$};
\draw (67,109) node [anchor=north west][inner sep=0.75pt]  [font=\tiny] [align=left] {$\displaystyle 1$};
\draw (67,88) node [anchor=north west][inner sep=0.75pt]  [font=\tiny] [align=left] {$\displaystyle 2$};
\draw (89,100) node [anchor=north west][inner sep=0.75pt]  [font=\tiny] [align=left] {$\displaystyle 3$};

\draw [->] (90, 95)-- (120, 75);

\draw (125,65) node [anchor=north west][inner sep=0.75pt]    {$\bullet $};
\draw (125,75) node [anchor=north west][inner sep=0.75pt]    {$\bullet $};
\draw (145,70) node [anchor=north west][inner sep=0.75pt]    {$\bullet $};

\draw (120,65) node [anchor=north west][inner sep=0.75pt]  [font=\tiny] [align=left] {$\displaystyle 2$};
\draw (120,75) node [anchor=north west][inner sep=0.75pt]  [font=\tiny] [align=left] {$\displaystyle 1$};
\draw (152,70) node [anchor=north west][inner sep=0.75pt]  [font=\tiny] [align=left] {$\displaystyle 3$};
\draw[-stealth] (130,69)--(150,74);
\draw[-stealth] (130,79)--(150,75);

\draw [->] (100, 101)-- (178, 101);

\draw (195,87) node [anchor=north west][inner sep=0.75pt]    {$\bullet $};
\draw (195,107) node [anchor=north west][inner sep=0.75pt]    {$\bullet $};
\draw (195,97) node [anchor=north west][inner sep=0.75pt]    {$\bullet $};

\draw (190,87) node [anchor=north west][inner sep=0.75pt]  [font=\tiny] [align=left] {$\displaystyle 1$};
\draw (190,108) node [anchor=north west][inner sep=0.75pt]  [font=\tiny] [align=left] {$\displaystyle 1$};
\draw (190,98) node [anchor=north west][inner sep=0.75pt]  [font=\tiny] [align=left] {$\displaystyle 1$};

\draw (177,97) node [anchor=north west][inner sep=0.75pt]  [font=\tiny] [align=left] {$\Sigma_3$};

\draw (217,97) node [anchor=north west][inner sep=0.75pt]  [font=\tiny] [align=left] {$3$};

\draw (210,97) node [anchor=north west][inner sep=0.75pt]    {$\bullet $};

\draw[-stealth] (198,90)--(213,100);
\draw[-stealth] (198,101)--(213,101);
\draw[-stealth] (198,111)--(213,102);

\draw[->] (162,74)--(240,74);
\draw [->] (215, 95)-- (239, 79);

\draw (245,70) node [anchor=north west][inner sep=0.75pt]    {$\bullet $};

\draw (246,62) node [anchor=north west][inner sep=0.75pt]  [font=\tiny] [align=left] {$\displaystyle 3$};

\draw [->] (70, 120)-- (70, 160);
\draw [->] (209, 120)-- (209, 160);

\draw (48,170) node [anchor=north west][inner sep=0.75pt]    {$\bullet $};
\draw (48,180) node [anchor=north west][inner sep=0.75pt]    {$\bullet $};
\draw (48,190) node [anchor=north west][inner sep=0.75pt]    {$\bullet $};
\draw (70,175) node [anchor=north west][inner sep=0.75pt]    {$\bullet $};
\draw (70,185) node [anchor=north west][inner sep=0.75pt]    {$\bullet $};
\draw[-stealth] (52, 174)--(74,179);
\draw[-stealth] (52, 184)--(74,180);
\draw[-stealth] (52, 194)--(74,190);

\draw (29,175) node [anchor=north west][inner sep=0.75pt]  [font=\tiny] [align=left] {$\Sigma_2$};

\draw (42,170) node [anchor=north west][inner sep=0.75pt]  [font=\tiny] [align=left] {$\displaystyle 1$};
\draw (42,180) node [anchor=north west][inner sep=0.75pt]  [font=\tiny] [align=left] {$\displaystyle 1$};
\draw (42,190) node [anchor=north west][inner sep=0.75pt]  [font=\tiny] [align=left] {$\displaystyle 1$};
\draw (79,185) node [anchor=north west][inner sep=0.75pt]  [font=\tiny] [align=left] {$\displaystyle 1$};
\draw (79,175) node [anchor=north west][inner sep=0.75pt]  [font=\tiny] [align=left] {$\displaystyle 2$};

\draw[->] (100,184)--(178,184);

\draw (202,170) node [anchor=north west][inner sep=0.75pt]    {$\bullet $};
\draw (202,180) node [anchor=north west][inner sep=0.75pt]    {$\bullet $};
\draw (202,190) node [anchor=north west][inner sep=0.75pt]    {$\bullet $};

\draw (197,170) node [anchor=north west][inner sep=0.75pt]  [font=\tiny] [align=left] {$\displaystyle 1$};
\draw (197,180) node [anchor=north west][inner sep=0.75pt]  [font=\tiny] [align=left] {$\displaystyle 1$};
\draw (197,190) node [anchor=north west][inner sep=0.75pt]  [font=\tiny] [align=left] {$\displaystyle 1$};

\draw (180,180) node [anchor=north west][inner sep=0.75pt]  [font=\tiny] [align=left] {$\Sigma_3$};

\draw [->] (90, 170)-- (120, 150);

\draw (125,152) node [anchor=north west][inner sep=0.75pt]  [font=\tiny] [align=left] {$\displaystyle 1$};
\draw (125,142) node [anchor=north west][inner sep=0.75pt]  [font=\tiny] [align=left] {$\displaystyle 2$};
\draw (130,152) node [anchor=north west][inner sep=0.75pt]    {$\bullet $};
\draw (130,142) node [anchor=north west][inner sep=0.75pt]    {$\bullet $};

\draw (135, 90)--(135, 99);
\draw[->](135, 103)--(135, 130);

\end{tikzpicture}
\]
Recall that the cubical diagram in Lemma~\ref{lem: cubical orbits} that resolves $(X^{\wedge n})_{\Sigma_n}$ sends a chain $\underline {i_k}\to \cdots \to \underline{i_0}$ to $X^{\wedge i_0}_{h\aut(\underline {i_k}\to \cdots \to \underline{i_0})}$. It follows that there is a homotopy pushout cubes of functors
\[\begin{tikzcd}
	& X && X \\
	{X\wedge\RP^\infty_+} & {} & {X\wedge{\classifying \Sigma_3}_+} \\
	& {X\wedge X} & {} & {X^{\wedge 3}_{\Sigma_3}} \\
	{X\wedge X\wedge \RP^\infty_+} && {X^{\wedge 3}_{h\Sigma_3}}
	\arrow[from=1-2, to=1-4]
	\arrow[no head, from=1-2, to=2-2]
	\arrow[from=1-4, to=3-4]
	\arrow[from=2-1, to=1-2]
	\arrow[from=2-1, to=2-3]
	\arrow[from=2-1, to=4-1]
	\arrow[from=2-2, to=3-2]
	\arrow[from=2-3, to=1-4]
	\arrow[from=2-3, to=4-3]
	\arrow[no head, from=3-2, to=3-3]
	\arrow[from=3-3, to=3-4]
	\arrow[from=4-1, to=3-2]
	\arrow[from=4-1, to=4-3]
	\arrow[from=4-3, to=3-4]
\end{tikzcd}\]
Let $G\colon \fr\to \ch$ be a polynomial functor. Applying $\spectralNat_X(-, \widehat G)$ to the cubical diagram, we find that there a homotopy pullback diagram of the following form (for typographical reasons we write $\ce_i\widehat G$ to mean $\ce_i\widehat G(S^0, \ldots, S^0)$)
\[\begin{tikzcd}[sep = small]
	& {(\ce_3\widehat G)^{h\Sigma_3}} && {\pointedmaps(\RP^\infty_+, \ce_2\widehat G)} \\
	{\spectralNat_X(X^{\wedge 3}_{\Sigma_3}, \widehat G)} & {} & {\ce_2\widehat G} \\
	& {\pointedmaps({\classifying \Sigma_3}_+, \widehat G(S^0))} & {} & {\pointedmaps(\RP^\infty_+, \widehat G(S^0))} \\
	{\widehat G(S^0)} && {\widehat G(S^0)}
	\arrow[from=1-2, to=1-4]
	\arrow[no head, from=1-2, to=2-2]
	\arrow[from=1-4, to=3-4]
	\arrow[from=2-1, to=1-2]
	\arrow[from=2-1, to=2-3]
	\arrow[from=2-1, to=4-1]
	\arrow[from=2-2, to=3-2]
	\arrow[from=2-3, to=1-4]
	\arrow[""{name=0, anchor=center, inner sep=0}, from=2-3, to=4-3]
	\arrow[from=3-3, to=3-4]
	\arrow[from=4-1, to=3-2]
	\arrow[from=4-1, to=4-3]
	\arrow[from=4-3, to=3-4]
	\arrow[no head, from=3-2, to=3-3]
\end{tikzcd}\]
Now we substitute $\widehat G(X)=\widetilde\Z[X^{\wedge n}_{\Sigma_n}]$. In this case $\widehat G(S^0)=\Z$, $\ce_2\widehat G(S^0,S^0)=\Z[\Comp(n, 2)]$ and $\ce_3\widehat G(S^0, S^0, S^0)=\Z[\Comp(n, 3)]$. We obtain the following homotopy pullback cube.
\[\begin{tikzcd}[sep=small]
	& {(\Z[\Comp(n, 3)])^{h\Sigma_3}} && {\pointedmaps(\RP^\infty_+, \Z[\Comp(n, 2)])} \\
	{\spectralNat_X(X^{\wedge 3}_{\Sigma_3}, X^{\wedge n}_{\Sigma_n})} & {} & {\Z[\Comp(n, 2)]} \\
	& {\pointedmaps({\classifying \Sigma_3}_+, \Z)} & {} & {\pointedmaps(\RP^\infty_+, \Z)} \\
	{ Z} && {\Z}
	\arrow[from=1-2, to=1-4]
	\arrow[no head, from=1-2, to=2-2]
	\arrow[from=1-4, to=3-4]
	\arrow[from=2-1, to=1-2]
	\arrow[from=2-1, to=2-3]
	\arrow[from=2-1, to=4-1]
	\arrow[from=2-2, to=3-2]
	\arrow[swap, "\gamma", from=2-3, to=1-4]
	\arrow[""{name=0, anchor=center, inner sep=0}, from=2-3, to=4-3]
	\arrow[from=3-3, to=3-4]
	\arrow["\alpha", from=4-1, to=3-2]
	\arrow[from=4-1, to=4-3]
	\arrow["\beta", from=4-3, to=3-4]
	\arrow[no head, from=3-2, to=3-3]
\end{tikzcd}\]
Consider the arrows marked $\alpha$, $\beta$ and $\gamma$ in the diagram above. The cofibers of these maps are, respectively $\pointedmaps(\classifying\Sigma_3, \Z)$, $\pointedmaps(\RP^\infty, \Z)$, and $\pointedmaps(\RP^\infty, \Z[\Comp(n, 2)])$. It follows that $\spectralNat_X(X^{\wedge 3}_{\Sigma_3}, X^{\wedge n}_{\Sigma_n})$ is equivalent to the total fiber of the following square diagram
\begin{equation}\label{eq:flattened cube}
\begin{tikzcd}
	{\Z[\Comp(n,3)]^{h\Sigma_3}} & {\pointedmaps(\RP^\infty, \Z[\Comp(n,2)])} & {} \\
	{\pointedmaps({\classifying \Sigma_3}, \Z)} & {\pointedmaps(\RP^\infty, \Z)}
	\arrow[from=1-1, to=1-2]
	\arrow[from=1-1, to=2-1]
	\arrow[from=1-2, to=2-2]
	\arrow[from=2-1, to=2-2]
\end{tikzcd}
\end{equation}
Let us analyze $\Z[C(n, 3)]^{h\Sigma_3}$.
Recall that $\Comp(n, 3)$ is the set of ordered triples $(n_1, n_2, n_3)$ of positive integers whose sum is $n$. $\Sigma_3$ acts on this set by permuting the triples. The set of orbits of this action is $\Part(n, 3)$. For each point $x\in \Comp(n, 3)$ let $[x]\in \Part(n, 3)$ be the orbit of $x$, and let $G_x\subset \Sigma_3$ be the stabilizer of $x$. There is an equivalence
\[
\Z[C(n, 3)]^{h\Sigma_3} \simeq \prod_{[x]\in \Part(n, 3)}\pointedmaps (\classifying {G_x}_+,\Z).
\]
Furthermore, there is a natural equivalence $\pointedmaps(\classifying {G_x}_+, \Z)\simeq \Z\times \pointedmaps(\classifying G_x, \Z).$ Therefore we obtain an equivalence
\[
\Z[C(n, 3)]^{h\Sigma_3} \simeq \Z[\Part(n, 3)]\times \prod_{[x]\in \Part(n, 3)}\pointedmaps (\classifying {G_x},\Z).
\]
Suppose that $3\nmid n$. Then the action of $\Sigma_3$ does not have a fixed point. In this case the action has two types of orbits: the free orbits, consisting of triples where $n_1, n_2, n_3$ are distinct, and the orbits where two of the three are the same, but not all three. The stabilizer of such an orbit is conjugate to the group $\Sigma_2$ permuting the first two coordinates. Each orbit of this type has a unique representative of the form $(m,m,m')$ where $m\ne m'$ and $2m+m'=n$. 
It follows that when $3\nmid n$ there is an equivalence
\[
\Z[C(n, 3)]^{h\Sigma_3} \simeq \Z[\Part(n, 3)]\times \prod_{\begin{array}{c}{\{(m, m, m')\mid m, m'>0,} \\ {  2m+m'=n\}}\end{array}}\pointedmaps (\RP^\infty,\Z).
\]
Consider the top map in diagram~\eqref{eq:flattened cube}. With the splitting above, it takes the form
\[
\Z[\Part(n, 3)]\times \prod_{(m, m, m')}\pointedmaps (\RP^\infty,\Z)\to \prod_{(n_1, n_2)\in \Comp(n, 2)}\pointedmaps(\RP^\infty, \Z).
\]
The map is determined by what it does on each factor. The map $\Z[\Part(n, 3)]\to \pointedmaps(\RP^\infty, \Z)^{\Comp(n, 2)}$ can only be null-homotopic, for dimensional reasons. It is not difficult to check that the map 
\[\prod_{(m, m, m')}\pointedmaps (\RP^\infty,\Z) \to \prod_{(n_1, n_2)\in \Comp(n, 2)} \pointedmaps(\RP^\infty, \Z)
\] 
maps the factor indexed by $(m,m,m')$ isomorphically onto the factor indexed by the pair $(2m, m')\in \Comp(n, 2)$ (it also maps the same factor by multiplication by $2$ to the factor indexed by the pair $(m+m', m)$, but this does not affect the homotopy type of the homotopy fiber). It follows that the fiber of the last map is a product of copies of $\Omega  \pointedmaps(\RP^\infty, \Z)$ indexed by pairs $(n_1, n_2)\in \Comp(n, 2)$ where $n_1$ is odd. The number of such pairs is $\lfloor\frac{n}{2}\rfloor$. We conclude that when $3\nmid n$, the homotopy fiber of the top horizontal map in~\eqref{eq:flattened cube} is equivalent to 
\[
\Z[\Part(n, 3)]\times \Omega \pointedmaps(\RP^\infty, \Z)^{\lfloor\frac{n}{2}\rfloor}.
\]
Now consdier the bottom map in~\eqref{eq:flattened cube}. The fiber of this map is $\pointedmaps(\classifying\Sigma_3/\classifying\Sigma_2, \Z$. So the total fiber of~\eqref{eq:flattened cube} is the fiber of a map
\[
\Z[\Part(n, 3)]\times \Omega \pointedmaps(\RP^\infty, \Z)^{\lfloor\frac{n}{2}\rfloor}\to \pointedmaps(\classifying\Sigma_3/\classifying\Sigma_2, \Z).
\]
Once again, the restriction of the map to $\Z[\Part(n, 3)]$ can only be null, for dimensional reasons. Note that the targe of the map is $3$-local, while the factor $\Omega \pointedmaps(\RP^\infty, \Z)^{\lfloor\frac{n}{2}\rfloor}$ is acyclic at the prime $3$. It follows that the map can only be null, and therefore the total fiber of~\eqref{eq:flattened cube} is equivalent to
\[
\Z[\Part(n, 3)]\times \Omega \pointedmaps(\RP^\infty, \Z)^{\lfloor\frac{n}{2}\rfloor}\times  \Omega\pointedmaps(\classifying\Sigma_3/\classifying\Sigma_2, \Z).
\]
This proves the proposition when $3\nmid n$. 

Now suppose that $3\mid n$, so $n=3l$. Then the action of $\Sigma_3$ on $\Comp(n, 3)$ has a fixed point, namely $(l,l,l)$. In this case $\Z[C(n, 3)]^{h\Sigma_3}$ is equivalent to the following product
\[
 \Z[\Part(n, 3)]\times \prod_{\begin{array}{c}{\{(m, m, m')\mid m, m'>0,} \\ {  m\ne m', 2m+m'=n\}}\end{array}}\pointedmaps (\RP^\infty,\Z)\times \pointedmaps(\classifying\Sigma_3, \Z).
\]
And in this case the square diagram~\eqref{eq:flattened cube}
takes the following form
\[
\begin{tikzcd}
\Z[\Part(n, 3)]\times \underset{(m,m,m')}{\prod}\pointedmaps (\RP^\infty,\Z)\times \pointedmaps(\classifying\Sigma_3, \Z) & {\underset{(n_1, n_2)}{\prod}\pointedmaps(\RP^\infty, \Z)}  \\
	{\pointedmaps({\classifying \Sigma_3}, \Z)} & {\pointedmaps(\RP^\infty, \Z)}
	\arrow[from=1-1, to=1-2]
	\arrow[from=1-1, to=2-1]
	\arrow[from=1-2, to=2-2]
	\arrow[from=2-1, to=2-2]
\end{tikzcd}
\]
Here the triples $(m,m,m')$ satisfy, as before $m, m'>0$, $2m+m'=n$ and $m\ne m'$. The pairs $(n_1, n_2)$ range over all elements of $\Comp(n,2)$. Let us make the following observations about the maps in the last diagram: 
\begin{enumerate}
    \item The maps are zero on $\Z[\rho(n, 3)]$ for dimensional reasons. 
    \item The left vertical map restricts to an equivalence on $\pointedmaps(\classifying\Sigma_3, \Z)$, and it restricts to a transfer map (which is a split injection) on each copy of $\pointedmaps(\RP^\infty, \Z)$. 
    \item The right vertical map restricts to an equivalence on each copy of $\pointedmaps(\RP^\infty, \Z)$ 
    \item The top horizontal map is equivalent to a product of three maps:
    \begin{enumerate}
        \item The trivial map on $\Z[\Part(n,3)]$
        \item The map 
        \[
        \prod_{(m, m, m')}\pointedmaps(\RP^\infty, \Z) \to \prod_{(n_1, n_2)\ne(2l, l)} \pointedmaps(\RP^\infty, \Z)
        \]
        which maps the copy of $\pointedmaps(\RP^\infty, \Z)$ indexed by $(m,m,m')$ by an equivalence to the copy indexed by $(2m, m')$ and by multiplication by $2$ to the copy indexed by $(m+m', m)$.
        \item The restriction map $\pointedmaps(\classifying\Sigma_3, \Z)\to \pointedmaps(\RP^\infty, Z)$ where the latter copy of $\pointedmaps(\RP^\infty, Z)$ is indexed by $(2l, l)$.
    \end{enumerate}
    \item The bottom map is the restriction map.
\end{enumerate}  
It follows that the induced map of horizontal fibers of the latest diagram has the following form
\[
\Z[\Part(n, 3)]\times \Omega\pointedmaps (\RP^\infty,\Z)^{\lfloor\frac{n}{2}\rfloor}\times \pointedmaps(\classifying\Sigma_3/\classifying\Sigma_2, \Z)\to \pointedmaps(\classifying\Sigma_3/\classifying\Sigma_2, \Z).
\]
The map restricts to an equivalence of the factor $\pointedmaps(\classifying\Sigma_3/\classifying\Sigma_2, \Z)$. It follows that the fiber of the latest map, which is the total fiber of~\eqref{eq:flattened cube} is 
\[
\Z[\Part(n, 3)]\times \Omega\pointedmaps (\RP^\infty,\Z)^{\lfloor\frac{n}{2}\rfloor}.
\]
This proves the proposition in case $3\mid n$.
\end{proof}
\begin{cor}\label{cor:ext from lambda three to lambda n}
There is an isomorphism
\[
\ext^*(\Lambda^3\circ \abelianization, \Lambda^n\circ\abelianization)\cong \left\{\begin{array}{cc}
\Sigma^{n-3}\Z[\Part(n, 3)]\oplus\Sigma^{n-2}\widetilde\HH_*(\RP^\infty, \Z)^{\lfloor\frac{n}{2}\rfloor}     & \mbox{if } 3\mid n \\[10pt]
 \Sigma^{n-3}\Z[\Part(n, 3)]\oplus\Sigma^{n-2}\widetilde\HH_*(\RP^\infty, \Z)^{\lfloor\frac{n}{2}\rfloor}\oplus     & \mbox{if }3\nmid n\\ \oplus \Sigma^{n-2}\widetilde\HH^*(\classifying\Sigma_3/\classifying\Sigma_2) & 
\end{array}\right.
\]
\end{cor}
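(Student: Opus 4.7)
The plan is that this corollary is essentially an immediate consequence of Proposition~\ref{prop:nat from sym3x} combined with equation~\eqref{eq: from Lambda cube}. Recall that equation~\eqref{eq: from Lambda cube} gives the isomorphism
\[
\ext^*(\Lambda^3\circ \abelianization, \Lambda^n\circ\abelianization)\cong \pi_{-*-3+n}\spectralNat_X\bigl(\widetilde \Z[(X^{\wedge 3})_{\Sigma_3}],\, \widetilde \Z[(X^{\wedge n})_{\Sigma_n}]\bigr),
\]
so the task reduces to computing the homotopy groups of the spectral natural transformation object, applied with the shift by $n-3$. Proposition~\ref{prop:nat from sym3x} already describes this spectral object as a product of explicit chain complexes, with the case distinction governed by the divisibility of $n$ by $3$.

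I would therefore proceed by applying $\pi_{-*-3+n}$ piece by piece. The factor $\Z[\Part(n,3)]$ is concentrated in degree $0$, so it contributes $\Sigma^{n-3}\Z[\Part(n,3)]$ to the graded group, accounting for the leading summand in both cases of the corollary. Each factor of $\Sigma^{-1}\pointedmaps(\RP^\infty, \Z)$ has $\pi_{-*-3+n}\bigl(\Sigma^{-1}\pointedmaps(\RP^\infty, \Z)\bigr)\cong \widetilde\HH^{*+2-n}(\RP^\infty)$, which as a graded group is $\Sigma^{n-2}\widetilde\HH^*(\RP^\infty)$; since there are $\lfloor n/2\rfloor$ such factors, this yields the summand $\Sigma^{n-2}\widetilde\HH^*(\RP^\infty)^{\lfloor n/2\rfloor}$ in both cases. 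Finally, in the case $3\nmid n$, the additional factor $\Sigma^{-1}\pointedmaps(\classifying\Sigma_3/\classifying\Sigma_2, \Z)$ contributes, by the same shift calculation, $\Sigma^{n-2}\widetilde\HH^*(\classifying\Sigma_3/\classifying\Sigma_2)$.

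Assembling these three contributions according to the two cases of Proposition~\ref{prop:nat from sym3x} gives exactly the formula in Corollary~\ref{cor:ext from lambda three to lambda n}. There is no real obstacle here: all of the genuine work (resolving $(X^{\wedge 3})_{\Sigma_3}$ as the homotopy colimit of the cube described via Lemma~\ref{lem: cubical orbits}, computing the cross-effects of $\widehat{\Lambda^n\circ\abelianization}$ via Corollary~\ref{cor: tensor-others}(\ref{item:tensor-exterior}), and analyzing the transfer/restriction maps that appear in the resulting pullback diagram~\eqref{eq:flattened cube}) has already been absorbed into the statement of Proposition~\ref{prop:nat from sym3x}. The only subtlety worth pointing out in the write-up is the bookkeeping of the shifts, which is straightforward but must be done carefully to match the conventions that a graded group denoted $\Sigma^k A$ means $A$ placed in degree $k$.
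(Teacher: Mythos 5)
Your proposal is correct and follows exactly the route the paper takes: the paper's proof of this corollary is the one-sentence observation that it follows from equation~\eqref{eq: from Lambda cube} and Proposition~\ref{prop:nat from sym3x}, and you have simply written out the degree bookkeeping that justifies that sentence. One minor remark: your $\Sigma^{n-2}\widetilde\HH^*(\RP^\infty)$ (with a superscript, matching the convention of Lemma~\ref{lem:Lambda2 to Lambdan} and of the $\classifying\Sigma_3/\classifying\Sigma_2$ term) is the intended reading of the corollary; the subscript $\widetilde\HH_*$ in the paper's displayed statement appears to be a typo.
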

\begin{proof}
    This follows easily from Equation~\eqref{eq: from Lambda cube} and Proposition~\ref{prop:nat from sym3x}.
\end{proof}
\end{exmps}

\section{$\ext^*(\Passi_n, -)$}\label{section: ext from Passi}
Recall that $\Passi_n$ denotes the $n$-th Passi functor. In this section we study $\ext^*(\Passi_n, G)$. Recall from Proposition~\ref{prop:passi} that 
\[
\widehat{\Passi_n}=P_n \widetilde\Z[\Omega -].
\]
Recall from Remark~\ref{rem:excisive-sifted} that $P_n$ denotes Goodwillie's $n$-th Taylor approximation, and it is the left adjoint of the inclusion functor 
\[
i_n\colon \exc_n(\spaces, \ch)\to \fun_{\operatorname{ind}}(\spaces, \ch).
\]
Let us recall that the functor $i_n$ has a right adjoint as well. Furthermore, the right adjoint has a simple formula which we will now recall.

Recall that $\finset^{\le n}$ denotes the category of pointed sets with at most $n$ non-basepoint elements. Let $\catname{D}$ be a stable $\infty$-category. Let $\rho_n\colon\fun(\spaces, \catname{D})\to \fun(\finset^{\le n}, \catname{D})$ be the restriction functor. 
Recall from Lemma~\ref{lem: excisive finset} that
the restriction functor   
\[
\rho_n\colon\fun(\spaces, \catname{D})\to \fun(\finset^{\le n}, \catname{D})
\]
restricts to an equivalence, which we still denote by $\rho_n$
\[
\rho\colon\exc_n(\spaces, \catname{D})\xrightarrow{\simeq} \fun(\finset^{\le n}, \catname{D}),
\]
and the inverse equivalence is given by left Kan extension.


It follows that the left Kan extension functor
\[
\lkan_n\colon\fun(\finset^{\le n}, \catname{D}) \to \fun(\spaces, \catname{D})
\]
factors as a composition, where each functor is a fully faithful embedding
\[
\fun(\finset^{\le n}, \catname{D}) \xrightarrow[\simeq]{\lkan_n^0}\exc_n(\spaces, \catname{D}) \xrightarrow{i_n}\fun_{\operatorname{ind}}(\spaces, \catname{D})\xrightarrow{i} \fun(\spaces, \catname{D}).
\]
Since $\lkan_n=i\circ i_n \circ \lkan_n^0$ is left adjoint 
to the restriction functor $\rho_n\colon\fun(\spaces, \catname{D})\to \fun(\finset^{\le n}, \catname{D})$, and $i\colon \fun_{\operatorname{ind}}(\spaces, \catname{D})\to \fun(\spaces, \catname{D})$ is a fully faithful embedding, it follows that $i_n\circ \lkan_n^0$ is left adjoint to the composition $\rho_n\circ i$, which we will denote simply as $\rho_n\colon\fun_{\operatorname{ind}}(\spaces, \catname{D})\to \fun(\finset^{\le n}, \catname{D})$.

Since $\lkan_n^0$ is an equivalence, it follows that the composition of functors $\lkan_n^0\circ \rho_n$
\[
\fun_{\operatorname{ind}}(\spaces, \catname{D})\xrightarrow{\rho_n}\fun(\finset^{\le n}, \catname{D})\xrightarrow[\simeq]{\lkan_n^0}\exc_n(\spaces, \catname{D)}
\]
is a right adjoint to $i_n$. By slight abuse of notation, we denote the right adjoint of $i_n$ simply by $\lkan_n\rho_n.$ Given a functor $F\colon \spaces\to \catname{D}$, $\lkan_n\rho_n F$ is obtained by restricting $F$ to $\finset^{\le n}$ and then left Kan extending it back to $\spaces$.

The utility of the right adjoint is that it allows us to calculate maps from the left adjoint, as in the following proposition. To set up the notation, let $\catname{D}$ be a stable $\infty$-category, let $d$ be an object of $\catname{D}$ and let $K$ be a finite pointed CW complex. Consider the representable functor $d\otimes\pointedmaps(K, -)\colon \spaces\to \catname{D}$ that sends $X$ to $d\otimes \pointedmaps(K, X)$
\begin{prop}\label{prop: maps from Pn is Ln}
With notation above, let $H\colon \spaces\to \catname{D}$ be a functor. There is an equivalence
\[
\spectralNat(P_n (d\otimes \pointedmaps(K, -)), H)\simeq \spectralmaps_{\catname{D}}(d, \lkan_n\rho_n H(K)).
\]
\end{prop}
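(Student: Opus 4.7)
The plan is to chain together the two adjunctions involving $i_n$ established just before the proposition (namely $P_n \dashv i_n$ and $i_n \dashv \lkan_n\rho_n$) to rewrite the left-hand side in a form to which we can apply an enriched Yoneda lemma. Nothing here is deep; the proof is essentially a formal manipulation of adjoints.

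The first two steps rewrite $\spectralNat(P_n F, H)$, with $F = d\otimes \pointedmaps(K,-)$. Using the adjunction $i_n \dashv \lkan_n\rho_n$ (with $A = P_n F \in \exc_n$ and $B = H$) I get
\[
\spectralNat(P_n F, H) \simeq \spectralNat(P_n F, \lkan_n\rho_n H),
\]
so the target may be replaced by its $n$-excisive reflection. Next, using the adjunction $P_n \dashv i_n$ (and the fully faithfulness of $i_n$, which lets me drop notation for it), I get
\[
\spectralNat(P_n F, \lkan_n\rho_n H) \simeq \spectralNat(F, \lkan_n\rho_n H) = \spectralNat_X\bigl(d\otimes \pointedmaps(K, X),\, \lkan_n\rho_n H(X)\bigr).
\]

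The final step is to identify the last expression with $\spectralmaps_{\catname D}(d, \lkan_n\rho_n H(K))$ by a form of enriched Yoneda. Concretely, for any functor $G\colon \spaces \to \catname D$, the tensor-cotensor adjunction between $d\otimes - \colon \spaces \to \catname D$ and the mapping-space functor out of $d$ gives, levelwise in $X$, an equivalence
\[
\spectralmaps_{\catname D}\bigl(d\otimes \pointedmaps(K, X),\, G(X)\bigr) \simeq \pointedmaps\!\bigl(\pointedmaps(K, X),\, \spectralmaps_{\catname D}(d, G(X))\bigr).
\]
Passing to ends over $X$ and applying the classical (unenriched) Yoneda lemma to the representable functor $\pointedmaps(K,-)\colon\spaces\to\spaces$ collapses the right-hand end to the value at $K$, yielding
\[
\spectralNat_X\bigl(d\otimes \pointedmaps(K, X),\, G(X)\bigr) \simeq \spectralmaps_{\catname D}(d, G(K)).
\]
Specializing to $G = \lkan_n\rho_n H$ and combining with the previous two steps gives the stated equivalence.

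The only real obstacle is being careful about the ambient category in which $\spectralNat$ is computed: since $\exc_n$, $\fun_{\ind}$ and $\fun$ each sit as full subcategories of the next, their $\spectralNat$'s agree on objects from the smaller, so no ambiguity arises and both adjunction steps are legitimate. The Yoneda step is standard but worth phrasing as an enriched Yoneda for the $\catname D$-valued representable $d\otimes \pointedmaps(K,-)$; no additional hypotheses on $H$ are needed beyond what is already implicit in the existence of $\lkan_n\rho_n H$.
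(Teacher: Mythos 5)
Your argument is correct and is essentially the paper's own proof: the paper likewise chains the adjunction $i_n\dashv \lkan_n\rho_n$ with $P_n\dashv i_n$ to reduce to $\spectralNat(d\otimes\pointedmaps(K,-),\lkan_n\rho_n H)$, and then invokes the Yoneda identification $\spectralNat_X(d\otimes\pointedmaps(K,X),G(X))\simeq\spectralmaps_{\catname{D}}(d,G(K))$, which you merely spell out in slightly more detail.
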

\begin{proof}
Note that when we write $P_n$ it really is a shorthand for $i_nP_n$, where $i_n$ is the inclusion of  $\exc_n(\spaces, \catname{D})$ into $\fun_{\operatorname{ind}}(\spaces, \catname{D})$ and $P_n$ is the left adjoint of $i_n$. Since $i_n$ also has a right adjoint $\lkan_n\rho_n$, we have equivalences
\begin{multline*}
\spectralNat(i_nP_n (d\otimes \pointedmaps(K, -)), H)\simeq \map_{\exc_n(\spaces, \catname{D})}(P_n (d\otimes \pointedmaps(K, -)), \lkan_n\rho_n H) \simeq \\ \simeq
\spectralNat(d\otimes\pointedmaps(K, -), i_n\lkan_n\rho_n H)\simeq \spectralmaps(d, \lkan_n\rho_n H(K)).
\end{multline*}
\end{proof}
As a corollary we have the following description of $\ext^*(\Passi_n, G)$.
\begin{prop}\label{prop: ext from Passi}
Let $G\colon \fr\to \ch$ be a polynomial functor. There is an isomorphism
\[
\ext^*(\Passi_n, G)\cong \pi_{-*}\lkan_n\rho_n \widehat G(S^1).
\]
\end{prop}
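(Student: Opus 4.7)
The proof plan is to assemble the proposition as a direct consequence of three earlier results: Theorem~\ref{thm:fr-to-top_intro}, Proposition~\ref{prop:passi}, and Proposition~\ref{prop: maps from Pn is Ln}.

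First, by the main extension theorem (Theorem~\ref{thm:fr-to-top_intro}) applied to $F = \Passi_n$, there is a natural isomorphism
\[
\ext^*(\Passi_n, G) \cong \pi_{-*}\left(\spectralNat(\widehat{\Passi_n}, \widehat G)\right).
\]
Next, Proposition~\ref{prop:passi} identifies the extension of the Passi functor as $\widehat{\Passi_n} \simeq P_n(\widetilde\Z[\Omega -])$. So the computation reduces to understanding $\spectralNat(P_n(\widetilde\Z[\Omega -]), \widehat G)$.

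The key observation is that the functor $\widetilde \Z[\Omega -]\colon \spaces \to \ch$ is of the representable form that appears in Proposition~\ref{prop: maps from Pn is Ln}. Indeed, $\Omega X = \pointedmaps(S^1, X)$, and under the natural $\infty$-categorical tensoring of $\ch$ over $\spaces$, there is a natural equivalence $\widetilde\Z[\Omega X] \simeq \Z \otimes \pointedmaps(S^1, X)$, so we may take $d = \Z \in \ch$ and $K = S^1$. Applying Proposition~\ref{prop: maps from Pn is Ln} with $H = \widehat G$ then yields
\[
\spectralNat\bigl(P_n(\Z \otimes \pointedmaps(S^1, -)), \widehat G\bigr) \simeq \spectralmaps_{\ch}\bigl(\Z, \lkan_n\rho_n \widehat G(S^1)\bigr).
\]
Since $\Z$ is the monoidal unit of $\ch$, the right-hand side is equivalent to $\lkan_n\rho_n \widehat G(S^1)$. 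Taking $\pi_{-*}$ concludes the proof.

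There is essentially no obstacle; the only minor point to verify is the identification $\widetilde\Z[\Omega X] \simeq \Z \otimes \pointedmaps(S^1, X)$ as functors from $\spaces$ to $\ch$, which follows from the fact that $\widetilde\Z[-]$ is the free/tensoring functor $X \mapsto \Z \otimes X$ (see the introduction's bullet list on $\widetilde\Z[-]$) combined with the evident commutation of tensoring with cotensoring by a fixed finite pointed space. Once this identification is made, the result is a one-line consequence of chaining the three cited results together.
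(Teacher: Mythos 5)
Your proposal is correct and follows essentially the same route as the paper: it chains Theorem~\ref{thm:fr-to-top_intro}, Proposition~\ref{prop:passi}, and Proposition~\ref{prop: maps from Pn is Ln}, using the identification $\widetilde\Z[\Omega X]\simeq \Z\otimes\pointedmaps(S^1,X)$ and the fact that $\Z$ is the unit of $\ch$, exactly as the paper does. Your explicit remark on why $\widetilde\Z[\Omega-]$ has the representable form required by Proposition~\ref{prop: maps from Pn is Ln} is a fine (and slightly more careful) spelling-out of a step the paper treats as immediate.
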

\begin{proof}
We know that \[\widehat \Passi_n=P_n \widetilde \Z[\Omega-]\simeq\Z\otimes \pointedmaps(S^1, -).\] It follows that
\[
\ext^*(\Passi_n, G)\cong \pi_{-*}\spectralNat(P_n\left(\Z\otimes \pointedmaps(S^1, -)\right), \widehat G(-)).
\]
Apply Proposition~\ref{prop: maps from Pn is Ln} to the right hand side, with $\catname{D}=\ch$. Since $\Z$ is the  unit of $\ch$, it follows that the right hand side is equivalent to $\lkan_n\rho_n \widehat G(S^1).$
\end{proof}
The last proposition motivates us to find a formula for $\lkan_n\rho_n$. Let $H\colon \spaces\to \catname{D}$ be a functor. By definition $\lkan_n \rho_nH$ is obtained by restricting $H$ to $\finset^{\le n}$ and then homotopy left Kan extending it back. There is a standard description of it as an $\infty$-categorical coend
\begin{equation}\label{eq:coend kan extension}
\lkan_n\rho_n H(X)\simeq \Sigma^\infty X^i \otimes_{i_+\in \finset^{\le n}} H(i_+).
\end{equation}
There is also a (well-known) smaller model for $\lkan_n\rho_n$ that uses coend over the category $\epi^{\le n}$ rather than $\finset^{\le n}$. We continue letting $\catname{D}$ denote a stable (or more generally additive and idempotent complete) $\infty$-category. 
Recall from Proposition~\ref{prop:Pirashvili-Helmstutler-Walde} that there is a ``Morita'' equivalence between categories
\[
-\otimes_{i} X^{\wedge i} : \begin{tikzcd}
           \fun(\epi^{\le n}, \catname{D})
           \arrow[r, shift left=.75ex]
            \arrow[r, phantom, "\simeq"] 
           &  \fun(\finset^{\le n}, \catname{D})
           \arrow[l, shift left=.75ex]      
        \end{tikzcd}  : \nat_X(X^{\wedge i}, -)
\]
Notice that the hypothesis on $\catname{D}$ is self-dual. Thus the proposition applies both to covariant and contravariant functors. If $F$ is a contravariant functor from $\finset$ to $\catname{D}$ then the corresponding functor from $\epi$ to $\catname{D}$ is $i\mapsto X^{\wedge i}\otimes_X F(X)$.

The following lemma says that these equivalences of functor categories induce equivalence of coends.
\begin{lem}\label{lem:DK coend}
With same hypotheses on $\catname{D}$, suppose we have a pair of functors $F, G\colon \finset\to \catname{D}$. There is an equivalence
\[
G\otimes_{\finset} F\simeq (G(X)\otimes_{X\in \finset}X^{\wedge i})\otimes_{i\in \epi} \nat_{Y}(Y^{\wedge i}, F(Y)).
\]
The same holds when $\finset$ and $\epi$ are replaced with $\finset^{\le n}$ and $\epi^{\le n}$.
\end{lem}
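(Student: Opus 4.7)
The plan is to use the Morita equivalence of Proposition~\ref{prop:Pirashvili-Helmstutler-Walde} to resolve $F$ (and implicitly $G$) through $\epi$, and then swap the order of the resulting iterated coends by Fubini. First I would observe that the counit of the adjunction $(-\otimes_i X^{\wedge i})\dashv \nat_X(X^{\wedge i},-)$ is an equivalence, since the adjunction is an equivalence of categories. Applied to $F$, this gives a natural equivalence in $Y\in\finset$
\[
F(Y)\;\simeq\;\nat_Z(Z^{\wedge i},F(Z))\otimes_{i\in\epi}Y^{\wedge i}.
\]
Here the right-hand side makes sense because $Y^{\wedge i}$ is covariant in $Y\in\finset$ and contravariant in $i\in\epi$, while $i\mapsto \nat_Z(Z^{\wedge i},F(Z))$ is covariant in $i$.

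Next I would substitute this resolution into the coend defining $G\otimes_{\finset}F$. By definition,
\[
G\otimes_{\finset}F\;=\;\int^{Y\in\finset}G(Y)\otimes F(Y)\;\simeq\;\int^{Y\in\finset}G(Y)\otimes\!\int^{i\in\epi}\nat_Z(Z^{\wedge i},F(Z))\otimes Y^{\wedge i}.
\]
Because the tensor product in $\catname{D}$ preserves colimits in each variable (this is automatic when $\catname{D}$ is stable, and it is the relevant property in the additive idempotent complete case, where coends are finite direct sums of tensor products with the hom-objects of $\finset$ and $\epi$), the outer coend can be pulled inside past the $\otimes$. Applying Fubini for $\infty$-categorical colimits then lets me swap the order of the coends over $Y$ and $i$:
\[
\int^{Y}\!\int^{i}G(Y)\otimes \nat_Z(Z^{\wedge i},F(Z))\otimes Y^{\wedge i}\;\simeq\;\int^{i}\!\int^{Y}G(Y)\otimes Y^{\wedge i}\otimes \nat_Z(Z^{\wedge i},F(Z)).
\]
Recognising the inner coend as $G\otimes_{Y\in\finset}Y^{\wedge i}$ yields the desired equivalence
\[
G\otimes_{\finset}F\;\simeq\;(G\otimes_{Y\in\finset}Y^{\wedge i})\otimes_{i\in\epi}\nat_Z(Z^{\wedge i},F(Z)).
\]
The truncated version with $\finset^{\le n}$ and $\epi^{\le n}$ follows verbatim, using Proposition~\ref{prop:Pirashvili-Helmstutler-Walde} in its $n$-truncated form.

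The main potential obstacle is the Fubini/swap step: one must check that the iterated coend is genuinely a double coend over $\finset\times\epi$ rather than acquiring unexpected homotopy coherence issues. I would handle this by invoking the fact that $\infty$-categorical coends are particular colimits over twisted arrow categories, and colimits commute with colimits; this is a standard consequence of the presentability of the diagram categories involved, and in the additive/stable case it ultimately reduces to the fact that finite direct sums commute with colimits in each variable. Everything else is a formal manipulation and a direct application of the Morita equivalence.
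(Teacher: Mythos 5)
Your proposal is correct and follows essentially the same route as the paper: resolve $F$ via the Morita equivalence of Proposition~\ref{prop:Pirashvili-Helmstutler-Walde}, substitute into the coend $G\otimes_{\finset}F$, and conclude by Fubini/associativity of coends. Your write-up is somewhat more explicit about the colimit-preservation and coend-swapping justifications than the paper's admittedly sketchy argument, but the underlying idea is identical.
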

\begin{proof}
We give a sketch of proof, where we blithely manipulate $\infty$-categorical coends as if they were $1$-categorical coends without stopping to justify every step. 

By Proposition~\ref{prop:Pirashvili-Helmstutler-Walde} there is an equivalence, natural in $X$
\[
F(X)\simeq X^{\wedge i}\otimes_{i\in \epi}\nat_Y(Y^{\wedge i}, F(Y)).
\]
It follows that there is an equivalence
\[
G\otimes_{\finset} F\simeq G(X)\otimes_{X\in \finset}\left(X^{\wedge i}\otimes_{i\in \epi}\nat_Y(Y^{\wedge i}, F(Y))\right).
\]
This gives the desired result by associativity of coend (a.k.a the categorical Fubini theorem).
\end{proof}
\begin{rem}
Let $F\in \fun(\finset, \catname{D})$. We saw in Lemma~\ref{lem: smash power represents ce} that there is an equivalence
\[
\nat_X(X^{\wedge i}, F(X))\simeq \ce_iF(S^0, \ldots, S^0).
\]
In other words, the functor
\[
\nat_X(X^{\wedge -}, F(X))\colon \fun(\finset, \catname{D})\to \fun(\epi, \catname{D})
\]
is the functor that associates to $F$ the cross-effects of $F$ which themselves assemble into a functor $\epi\to\catname{D}$. The same statement in dualized form holds for contravariant functors. 
\end{rem}
We now can describe $\lkan_n\rho_n$ in terms of coend over $\epi$.
\begin{lem}\label{lem:epi model for left Kan}
Let $\catname{D}$ be a stable $\infty$-category, and let $H\colon \spaces\to \catname{D}$ be a functor. There is a natural equivalence
\[
\lkan_n\rho_n H(X) \simeq X^{\wedge j} \otimes_{j\in \epi^{\le n}} \ce_jH(S^0, \ldots, S^0).
\]
\end{lem}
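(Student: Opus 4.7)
The plan is to show that both sides of the claimed equivalence define $n$-excisive functors of $X$ that agree after restriction to $\finset^{\le n}$; by Lemma~\ref{lem: excisive finset} this forces them to agree on all of $\spaces$. The left-hand side $\lkan_n\rho_n H$ is $n$-excisive by Lemma~\ref{lem: excisive finset}, which states that $\lkan_n$ factors through $\exc_n(\spaces, \catname{D})$. For the right-hand side, each functor $X\mapsto X^{\wedge j}$ with $j\le n$ is $j$-homogeneous, hence $n$-excisive, so the coend $X^{\wedge j}\otimes_{j\in\epi^{\le n}}\ce_jH(S^0,\ldots,S^0)$ is a colimit of $n$-excisive functors with uniformly bounded degree, and remains $n$-excisive by the same argument used in the sketch proof of Lemma~\ref{lem: excisive finset}.

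Next, I will restrict both sides to $\finset^{\le n}$. The left-hand side restricts to $\rho_n\lkan_n\rho_n H\simeq \rho_n H$, using $\rho_n\lkan_n\simeq \id$ from the equivalence in Lemma~\ref{lem: excisive finset}. For the right-hand side, evaluation at $Y_+\in \finset^{\le n}$ gives $Y_+^{\wedge j}\otimes_{j\in\epi^{\le n}}\ce_jH(S^0,\ldots,S^0)$, which is precisely $L(G)(Y_+)$, where $L$ is the left adjoint of Proposition~\ref{prop:Pirashvili-Helmstutler-Walde} and $G\in \fun(\epi^{\le n}, \catname{D})$ denotes the functor $j\mapsto \ce_jH(S^0,\ldots,S^0)$. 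Since $L$ and its right adjoint $R$ are inverse equivalences, the restricted right-hand side will equal $\rho_nH$ as soon as one identifies $G$ with $R(\rho_nH)$.

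To make the identification $G\simeq R(\rho_n H)$, note that by definition $R(\rho_n H)(j) = \nat_{X_+\in\finset^{\le n}}(X_+^{\wedge j}, H(X_+))$. For $j\le n$ the functor $X\mapsto X^{\wedge j}$ on $\spaces$ is $n$-excisive, so by Lemma~\ref{lem: excisive finset} it is the left Kan extension along $\finset^{\le n}\hookrightarrow\spaces$ of its restriction; the adjunction $\lkan_n\dashv \rho_n$ then yields $\nat_{X_+\in\finset^{\le n}}(X_+^{\wedge j}, H(X_+))\simeq \nat_{X\in\spaces}(X^{\wedge j}, H(X))$, which is $\ce_jH(S^0,\ldots,S^0)$ by Lemma~\ref{lem: smash power represents ce}. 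The main point requiring care is this last identification of natural transformations over two different source categories; the remainder is a formal assembly of the equivalences from Lemma~\ref{lem: excisive finset} and Proposition~\ref{prop:Pirashvili-Helmstutler-Walde}.
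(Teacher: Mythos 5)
Your argument is correct, but it takes a different route from the paper. The paper proves the lemma by direct coend manipulation: it starts from the coend formula $\lkan_n\rho_n H(X)\simeq X^{i}\otimes_{i_+\in\finset^{\le n}}H(i_+)$, applies Lemma~\ref{lem:DK coend} (associativity of coends through the Pirashvili--Helmstutler--Walde bimodule) to convert this into a coend over $\epi^{\le n}$, identifies $\nat_{Y\in\finset^{\le n}}(Y^{\wedge j},H(Y))$ with $\ce_jH(S^0,\ldots,S^0)$, and finishes by the computation $X^{i}\otimes_{i_+\in\finset^{\le n}}(i_+)^{\wedge j}\simeq X^{\wedge j}$. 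You instead invoke the uniqueness of $n$-excisive extensions: both sides are $n$-excisive, the left side restricts to $\rho_nH$ on $\finset^{\le n}$, the right side restricts to $L(G)$ with $G(j)=\ce_jH(S^0,\ldots,S^0)$, and $G\simeq R(\rho_nH)$ identifies the two via Proposition~\ref{prop:Pirashvili-Helmstutler-Walde} and Lemma~\ref{lem: excisive finset}. This trades the paper's coend bookkeeping (Lemma~\ref{lem:DK coend} and the intermediate coend computation) for the verification that the coend on the right-hand side is $n$-excisive and for the comparison of natural transformations over $\finset^{\le n}$ versus over $\spaces$; both proofs ultimately rest on the same two inputs, namely Lemma~\ref{lem: excisive finset} and the identification of $\nat(Y^{\wedge j},H)$ with cross-effects.

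One point to tighten: you apply Lemma~\ref{lem: excisive finset} to the functor $X\mapsto X^{\wedge j}$, but that lemma concerns functors valued in a stable $\infty$-category, and $X^{\wedge j}$ is space-valued. Since the relevant $\nat$'s only see $X^{\wedge j}$ through the (co)tensoring of $\catname{D}$ over $\spaces$, you should either apply the lemma to $\unit\otimes X^{\wedge j}$ (which is $j$-excisive and finitary with values in $\catname{D}$), or prove the needed identification $\nat_{X_+\in\finset^{\le n}}(X_+^{\wedge j},H(X_+))\simeq\ce_jH(S^0,\ldots,S^0)$ directly by running the cube-of-representables argument of Lemma~\ref{lem: smash power represents ce} with source category $\finset^{\le n}$: for $j\le n$ all the representing objects $U_+$, $U\subseteq\underline{j}$, lie in $\finset^{\le n}$, so the Yoneda argument works verbatim there (this is essentially how the paper handles the same point, in the remark following Lemma~\ref{lem:DK coend}). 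With that repair, and noting that your identifications are natural in $j\in\epi^{\le n}$, the assembly goes through.
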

\begin{proof}
    We saw in~\eqref{eq:coend kan extension} that 
 \[
\lkan_n\rho_n H(X) \simeq X^{i} \otimes_{i_+\in \finset^{\le n}} H(i_+).
\]   
By Lemma~\ref{lem:DK coend} it follows that
 \[
\lkan_n\rho_n H(X) \simeq (X^{i} \otimes_{i_+\in \finset^{\le n}} (i_+)^{\wedge j}) \otimes_{j\in \epi^{\le n}} \left(\nat_{Y\in \finset^{\le n}}(Y^{\wedge j}, H(Y))\right).
\]   
(Note that the role of $X$ in Lemma~\ref{lem:DK coend} is played by $i_+$ here, the role of $i$ there is played by $j$ here, and $X$ here does not have a counterpart in Lemma~\ref{lem:DK coend}. We apologize to the reader for this change of variables and hope it does not cause too much confusion).
We just saw that $\nat_{Y\in \finset^{\le n}}(Y^{\wedge j}, H(Y))\simeq \ce_jH(S^0, \ldots,S^0)$. We claim that for $j\le n$ there is an equivalence
\[
X^{i} \otimes_{i_+\in \finset^{\le n}} (i_+)^{\wedge j}\simeq X^{\wedge j}
\]
by an easy direct calculation. The lemma follows.
\end{proof}
\begin{rem}\label{rem:strict coend}
We have a model for $\lkan_n\rho_n H$ as an $\infty$-categorical coend. It is worth remarking that in many cases it can be calculated as a $1$-categorical coend. We claim that that for a fixed pointed simplicial set $X$, the functor $X^{\wedge -}\colon\epi^{\op}\to \sset_*$ is cofibrant in the projective model structure. Indeed, it is easy to check that it is cofibrant in the generalized Reedy model structure of~\cite{Berger-Moerdijk}, and that the Reedy model structure coincides with the projective model structure in this case. Suppose that $\catname{D}$ is an $\infty$-category arising from a combinatorial model category. Then $\fun(\epi, \catname{D})$ has an injective, as well as projective model structure. A functor is cofibrant in the injective model structure if it is objectwise cofibrant. Suppose $\ce_jH(S^0, \ldots, S^0)$ is cofibrant for every $j$. Then it follows from~\cite[Remark A.2.9.27]{LurieHTT} that the coend in Lemma~\ref{lem:epi model for left Kan} can be computed 1-categorically.
\end{rem}
This gives us a combinatorial model for $\ext^*(\Passi_n, G)$.
\begin{lem}\label{lem:ext passi strict coend}
Let $G\colon \fr\to \ab$ be a polynomial functor. Assume that we have a model for $\widehat G\colon \spaces\to \ch$ such that for every pointed set $n_+$, $\widehat G(n_+)$ is a cofibrant chain complex. Then there is an isomorphism
\[
\ext^*(\Passi_n, G)\cong \pi_{-*}\left(S^i \otimes_{i\in \epi^{\le n}} \ce_i\widehat G(S^0, \ldots, S^0)\right).
\]
Here the coend can be calculated either $1$ or $\infty$-categorically.
\end{lem}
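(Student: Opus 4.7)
The plan is to chain together two results established in the preceding paragraphs and then verify that the coend can be computed strictly. By Proposition~\ref{prop: ext from Passi} we have the isomorphism
\[
\ext^*(\Passi_n, G)\cong \pi_{-*}\lkan_n\rho_n \widehat G(S^1).
\]
So the entire problem reduces to obtaining a usable description of $\lkan_n\rho_n \widehat G(S^1)$.

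Next, I would apply Lemma~\ref{lem:epi model for left Kan} to the functor $H=\widehat G$ and the space $X=S^1$. Since $(S^1)^{\wedge j}\simeq S^j$, this yields the $\infty$-categorical coend
\[
\lkan_n\rho_n \widehat G(S^1)\simeq S^j \otimes_{j\in \epi^{\le n}} \ce_j\widehat G(S^0, \ldots, S^0),
\]
which is exactly the desired formula, up to a change of dummy variable. Combining this with the display above gives the claimed isomorphism on the level of homotopy groups.

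The last thing to argue is that this $\infty$-categorical coend may in fact be computed as an ordinary $1$-categorical coend in $\ch$. This is the one step I expect to be slightly delicate, and it is addressed by Remark~\ref{rem:strict coend}: for fixed $X$, the contravariant functor $X^{\wedge -}\colon \epi^{\op}\to \sset_*$ is projectively cofibrant (indeed Reedy cofibrant, the two model structures agreeing here), and the covariant functor $\ce_{-}\widehat G(S^0,\ldots,S^0)\colon \epi\to \ch$ is objectwise cofibrant, hence injectively cofibrant, so the Quillen bifunctor computing the coend descends to a homotopy coend. Objectwise cofibrancy of the cross-effects is where the hypothesis on $\widehat G$ enters: each $\ce_j\widehat G(S^0,\ldots,S^0)$ is built from the values $\widehat G(k_+)$ for $k\le j$ (via the total fiber of a cube of such values) and in the projective model structure on $\ch$ every object is fibrant, so the strict total fiber agrees with the homotopy total fiber up to a standard cofibrant replacement which does not affect the coend. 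Thus the coend can be computed in either the strict or the derived sense, as claimed. The main obstacle, such as it is, is simply being careful about this last cofibrancy bookkeeping; once that is in place, the isomorphism is a two-step formal consequence of Proposition~\ref{prop: ext from Passi} and Lemma~\ref{lem:epi model for left Kan}.
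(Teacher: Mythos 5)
Your overall route is exactly the paper's: reduce via Proposition~\ref{prop: ext from Passi} to $\lkan_n\rho_n\widehat G(S^1)$, rewrite that term using Lemma~\ref{lem:epi model for left Kan} with $X=S^1$, and invoke Remark~\ref{rem:strict coend} to pass from the $\infty$-categorical to the $1$-categorical coend. The first two steps are correct and complete.

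The gap is in the last step, which is the only place where the hypothesis that each $\widehat G(n_+)$ is cofibrant actually does any work. Remark~\ref{rem:strict coend} requires the functor $j\mapsto \ce_j\widehat G(S^0,\ldots,S^0)$ to be objectwise cofibrant \emph{as given}, and your justification does not deliver this. Arguing that the cross-effect is a total fiber of a cube of cofibrant complexes, that every object of $\ch$ is fibrant, and that ``a standard cofibrant replacement does not affect the coend'' is circular: the strict coend is only invariant under replacing the second variable by a weakly equivalent functor when both functors are objectwise cofibrant, which is precisely the point at issue; moreover a strict total fiber (an iterated kernel) of cofibrant complexes need not be cofibrant, and objectwise fibrancy alone does not guarantee that the strict total fiber computes the homotopy total fiber (one would need the maps in the cube to be fibrations, which is not known at this stage). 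The paper's fix is one line: by Lemma~\ref{lemma:split} the cross-effect $\ce_j\widehat G(S^0,\ldots,S^0)$ is a direct summand of $\widehat G(j_+)$, and a retract of a cofibrant chain complex is cofibrant; with that, Remark~\ref{rem:strict coend} applies verbatim. Replacing your cofibrancy paragraph with this observation makes the proof complete.
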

\begin{proof}
The statement for $\infty$-categorical coend follows from Proposition~\ref{prop: ext from Passi} and Lemma~\ref{lem:epi model for left Kan}. Note that $\ce_j\widehat G(S^0, \ldots, S^0)$ is a direct summand of $\widehat G(i_+)$, and thus the cross-effects of $\widehat G$ take place in free abelian groups. The statement about $1$-categorical coend follows now from Remark~\ref{rem:strict coend}.
\end{proof}
Now we can calculate some examples. Let us consider $\ext^*(\Passi_m, \tensor^n\circ\abelianization)$. We describe these groups in terms of homology of a certain space that we will now define.
\begin{defn}
    Let $X$ be a pointed space. Suppose we are given positive integers $m, n$. Define
    \[
    \Delta^n_mX=\{x_1\wedge\ldots\wedge x_n\in X^{\wedge n}\mid \mbox{at most } m \mbox{ of the } x_i \mbox{ are distinct}\}
    \]
    We also define $\Delta^n_0 X=*$ when $0<n$ and $\Delta^0_0 X=S^0$.
\end{defn}
\begin{rem}
Note that $\Delta^n_{n-1}X=\Delta^n X$ is the fat diagonal in $X^{\wedge n}$. For $m<n$, $\Delta^n_mX$ is a subspace of $\Delta^n X$. For $m\ge n$, $\Delta^n_mX=X^{\wedge n}$.

Let $n/\epi^{\le m}$ be the slice category of arrows $n\twoheadrightarrow i$ where $n$ is fixed and $i\le m$. Then it is easy to see that there is a homeomorphism
\[
\Delta^n_mX\cong \underset{n\twoheadrightarrow i\in n/\epi^{\le m}}{\colim}X^{\wedge i}.
\]
Here colimit is taken $1$-categorically. The $1$-categorical colimit is homotopy equivalent to the $\infty$-categorical one because the functor $i\mapsto X^{\wedge i}$ is cofibrant.
\end{rem}
\begin{prop}\label{prop: ext from Passi to tensor}
  \[
\ext^*(\Passi_m, \tensor^n \circ\abelianization)\cong \widetilde \HH_{n-*}(\Delta^n_m S^1).
  \]  
\end{prop}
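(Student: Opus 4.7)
The plan is to substitute the known formulas for $\widehat{\tensor^n\circ\abelianization}$ and its cross-effects into the combinatorial model for $\ext^*(\Passi_m, -)$ supplied by Lemma~\ref{lem:ext passi strict coend}, and then recognize the resulting coend as the reduced chains on $\Delta^n_m S^1$.

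First I would rewrite
\[
\ext^*(\Passi_m,\tensor^n\circ\abelianization)\cong \pi_{-*}\bigl(S^i\otimes_{i\in\epi^{\le m}}\ce_i\widehat{\tensor^n\circ\abelianization}(S^0,\ldots,S^0)\bigr),
\]
using Lemma~\ref{lem:ext passi strict coend} (one should first check that $\widehat{\tensor^n\circ\abelianization}(i_+)=\Omega^n\Z[i^n]$ is a cofibrant chain complex, which is immediate from Lemma~\ref{lem:abelianization transform}). Next, essentially the same calculation as in the proof of Corollary~\ref{cor: tensor-others}\eqref{item:tensor-tensor} gives
\[
\ce_i\widehat{\tensor^n\circ\abelianization}(S^0,\ldots,S^0)\simeq \Omega^n\,\Z[\sur(n,i)],
\]
and one verifies that the functoriality of this cross-effect in $i\in\epi^{\le m}$ is the evident one: a surjection $\alpha\colon i\twoheadrightarrow i'$ induces post-composition $\Z[\sur(n,i)]\to\Z[\sur(n,i')]$, $f\mapsto \alpha\circ f$. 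This identification comes from the covariant-in-$i$ functoriality built into the equivalence $\fun(\finset^{\le n},\catname{D})\simeq\fun(\epi^{\le n},\catname{D})$, together with the fact that $\sur(n,-) = \hom_{\epi}(n,-)$ is the representable functor at $n$.

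The main step is then to identify the coend
\[
S^i\otimes_{i\in\epi^{\le m}}\Z[\sur(n,i)].
\]
Since $\widetilde{\Z}[-]\colon\spaces\to\ch$ preserves colimits and $S^i\otimes\widetilde\Z[K]\simeq\widetilde\Z[S^i\wedge K]$, this coend is equivalent to $\widetilde\Z$ applied to the pointed-space coend $\int^{i\in\epi^{\le m}}S^i\wedge\sur(n,i)_+$. Identifying this coend with the colimit over the Grothendieck construction/category of elements of $\sur(n,-)$, which is precisely the slice category $n/\epi^{\le m}$, gives
\[
\int^{i\in\epi^{\le m}}S^i\wedge\sur(n,i)_+\;\cong\;\underset{(i,f)\in n/\epi^{\le m}}{\colim}S^i,
\]
with the transition maps being the diagonal embeddings $S^{i'}\hookrightarrow S^i$ induced by refinements of surjections. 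By the remark after the definition of $\Delta^n_m X$, this colimit is exactly $\Delta^n_m S^1$. Therefore
\[
S^i\otimes_{i\in\epi^{\le m}}\ce_i\widehat{\tensor^n\circ\abelianization}(S^0,\ldots,S^0)\simeq\Omega^n\widetilde\Z[\Delta^n_m S^1]\simeq\Sigma^{-n}\widetilde\Z[\Delta^n_m S^1],
\]
and applying $\pi_{-*}$ yields $\widetilde\HH_{n-*}(\Delta^n_m S^1)$.

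The main obstacle is the bookkeeping around variance for the coend identification: $S^{(-)}$ is contravariant on $\epi$ while $\sur(n,-)$ is covariant, and one has to match the resulting category-of-elements conventions with the slice category $n/\epi^{\le m}$ so that the transition maps really are the diagonal embeddings. Everything else is essentially a repackaging of facts already established: Proposition~\ref{prop: ext from Passi}, Lemma~\ref{lem:ext passi strict coend}, Lemma~\ref{lem:abelianization transform}, Lemma~\ref{lem:epi model for left Kan}, and the cross-effect calculation in the proof of Corollary~\ref{cor: tensor-others}\eqref{item:tensor-tensor}.
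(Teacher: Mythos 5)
Your proposal is correct and follows essentially the same route as the paper: apply Lemma~\ref{lem:ext passi strict coend}, compute $\ce_i\widehat{\tensor^n\circ\abelianization}(S^0,\ldots,S^0)\simeq\Sigma^{-n}\Z[\sur(n,i)]$ (noting cofibrancy so the coend may be taken $1$-categorically), and identify $S^i\otimes_{i\in\epi^{\le m}}\Z[\sur(n,i)]$ with $\widetilde\Z\bigl[\colim_{n\twoheadrightarrow i\in n/\epi^{\le m}}S^i\bigr]\simeq\widetilde\Z[\Delta^n_m S^1]$. Your only addition is spelling out the co-Yoneda/category-of-elements bookkeeping behind the identification that the paper dismisses as "easy to see," which is a fine elaboration rather than a different argument.
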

\begin{proof}
 By Lemma~\ref{lem:abelianization transform} $\widehat{\tensor^n\circ \abelianization}(X)=\Omega^n \widetilde \Z[X^{\wedge n}]$. By a straightforward calculation, $\ce_i\widehat{\tensor^n\circ \abelianization}(S^0, \ldots, S^0)\simeq \Sigma^{-n}\Z[\sur(n, i)]$. Note that this is a cofibrant chain complex. Indeed, it is a chain complex concentrated in a single degree, in which it is free abelian. 
 It follows from Lemma~\ref{lem:ext passi strict coend} that 
 \[
 \ext^*(\Passi_m, \tensor^n \circ\abelianization)\cong
 \pi_{-*}\left(S^i\otimes_{i\in\epi^{\le m}}\Sigma^{-n}\Z[\sur(n, i)]\right)
 \]
 and the right hand side is isomorphic to $\pi_{n-*}\left(S^i\otimes_{i\in\epi^{\le m}}\Z[\sur(n, i)]\right)$. The coend can be calculated $1$-categorically. It is easy to see that the coend is equivalent to 
 \[\widetilde Z[\underset{n\twoheadrightarrow i\in n/\epi^{\le m}}{\colim}S^i]\simeq \widetilde Z[\Delta^n_mS^n]
 \]
 which proves the proposition.
\end{proof}
In~\cite{Vespa2018}, Proposition 5 it is proved that if $0< m\le n$ then $\ext^{n-m}(\Passi_m, \tensor^n\circ\abelianization)$ is non-trivial. The following corollary refines this statement.
\begin{cor}\label{cor: ext from Passi to tensor}   
If $m< n$, $\ext^*(\Passi_m, \tensor^n\circ \abelianization)$ is concentrated in degree $*=n-m$. Furthermore, $\ext^{n-m}(\Passi_m, \tensor^n\circ \abelianization)$ is a free abelian group, which is non-trivial for $m>0$. For $m\ge n$ $\ext^0(\Passi_m, \tensor^n\circ \abelianization)\cong\Z$ and $\ext^*(\Passi_m, \tensor^n\circ \abelianization)\cong0$ for $*>0$.
\end{cor}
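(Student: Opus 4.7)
The plan is to invoke Proposition~\ref{prop: ext from Passi to tensor}, which identifies $\ext^*(\Passi_m, \tensor^n\circ\abelianization)$ with $\widetilde\HH_{n-*}(\Delta^n_m S^1)$, and then to analyze this homology. The case $m \ge n$ is immediate: by definition $\Delta^n_m S^1 = (S^1)^{\wedge n}$, and the standard identification $(S^1)^{\wedge n}\cong S^n$ gives $\widetilde\HH_{n-*}(S^n)\cong \Z$ concentrated in $*=0$, producing the stated $\ext^*\cong \Z$. The interesting case is thus $0 < m < n$.

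For $m<n$, I would equip the sphere $(S^1)^{\wedge n}\cong (0,1)^n/\partial$ with a CW structure whose $k$-skeleton is exactly $\Delta^n_k S^1$. To each $\sigma\in \sur(n,k)$ with $1\le k\le n$ associate the open $k$-cell
\[
e_\sigma=\{(x_1,\ldots,x_n)\in (0,1)^n \mid x_i<x_j \iff \sigma(i)<\sigma(j)\},
\]
which is parametrized by $0<y_1<\cdots<y_k<1$ via $x_i=y_{\sigma(i)}$. The attaching map on $\partial \Delta^k$ sends a face of the form $\{y_l=y_{l+1}\}$ to the $(k-1)$-cell indexed by the surjection in $\sur(n,k-1)$ obtained by merging the $l$-th and $(l+1)$-th levels of $\sigma$, and sends the faces $\{y_1=0\}$ and $\{y_k=1\}$ to the basepoint. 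This gives a regular CW structure on $S^n$ with $\Delta^n_m S^1$ as its $m$-skeleton, and the number of $k$-cells is $|\sur(n,k)|$.

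With this in hand, the cellular chain complex of $S^n$ takes the form $C_*=\Z[\sur(n,*)]$, concentrated in degrees $1,\ldots,n$, and it computes $\widetilde\HH_*(S^n)$. Hence $C_*$ is acyclic away from degree $n$, and the cellular complex of $\Delta^n_m S^1$ is the naive truncation $C_{\le m}$. This truncation has vanishing homology in degrees below $m$ and, in degree $m$, the group $\ker(d_m)=\mathrm{Im}(d_{m+1})$, which is a subgroup of the free abelian group $C_m=\Z[\sur(n,m)]$, hence itself free abelian. Translating back via Proposition~\ref{prop: ext from Passi to tensor} places all of $\ext^*(\Passi_m, \tensor^n\circ\abelianization)$ in degree $*=n-m$, where it is a free abelian group.

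The remaining point, non-triviality for $0<m<n$, reduces to checking that $d_{m+1}\neq 0$. A direct inspection of the simplex charts shows that for $\tau\in \sur(n,m+1)$ the cellular boundary has the form $d_{m+1}(\tau)=\sum_{k=1}^m(-1)^k\tau_k$, where $\tau_k\in \sur(n,m)$ is obtained from $\tau$ by merging its $k$-th and $(k+1)$-th levels. Taking the explicit $\tau(i)=\min(i,m+1)$ makes $\tau_1,\ldots,\tau_m$ pairwise distinct surjections, so $d_{m+1}(\tau)$ is a non-zero alternating sum of basis elements. I expect the bookkeeping of the CW structure and the resulting cellular boundary to be the main source of care, but once that is set up the rest follows from the exactness of $C_*$.
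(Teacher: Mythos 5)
Your argument is correct, and it reaches the conclusion by a genuinely different route than the paper. Both proofs start from Proposition~\ref{prop: ext from Passi to tensor}, reducing everything to $\widetilde\HH_{n-*}(\Delta^n_m S^1)$, and both dispose of $m\ge n$ via $\Delta^n_m S^1\cong S^n$. For $0<m<n$ the paper views $\Delta^n_m S^1$ as the one-point compactification of a subspace arrangement in $\R^n$ whose maximal members have dimension $m$, and invokes the Goresky--MacPherson formula in the Ziegler--\v{Z}ivaljevi\'c form to conclude that the space is a nontrivial wedge of $m$-spheres. You instead build the explicit CW structure on $(S^1)^{\wedge n}\cong S^n$ with cells indexed by $\sur(n,k)$ and with $k$-skeleton equal to $\Delta^n_k S^1$, and then exploit exactness of the cellular complex of $S^n$ below degree $n$: the homology of the $m$-skeleton is $\ker(d_m)=\mathrm{Im}(d_{m+1})$, a subgroup of the free abelian group $\Z[\sur(n,m)]$, hence free and concentrated in degree $m$; your choice $\tau(i)=\min(i,m+1)$ with pairwise distinct merges $\tau_1,\dots,\tau_m$ cleanly shows $d_{m+1}\ne 0$. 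The trade-off: the paper's argument is shorter given the arrangement machinery and yields the stronger statement that $\Delta^n_m S^1$ is a wedge of $m$-spheres, while yours is elementary and self-contained, and as a bonus identifies the $\ext$ group concretely as a subgroup of $\Z[\sur(n,m)]$ whose rank can be read off from the Euler-characteristic relation $\sum_{k\le m}(-1)^{m-k}\lvert\sur(n,k)\rvert$. Two small cosmetic points: the CW structure is not regular (the faces $y_1=0$ and $y_k=1$ are collapsed to the basepoint), but regularity is never used, only the cellular boundary formula you verify directly; and the degenerate case $m=0$ is immediate since $\Delta^n_0 S^1=*$, matching the statement's restriction of non-triviality to $m>0$.
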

\begin{proof}
By Proposition~\ref{prop: ext from Passi to tensor}, the graded group $\ext^*(\Passi_m, \tensor^n\circ \abelianization)$ is isomorphic to $\widetilde{\HH}_{n-*}(\Delta^n_mS^1)$. For $m\ge n$ we have $\Delta^n_mS^1\cong S^n$, and we are done. For $0<m<n$,  $\Delta^n_mS^1$ is the one-point compactification of an arrangement of subspaces of $\R^n$. The maximal elements of this arrangement have dimension $m$, and for any subspace $H\subset \R^n$ that belongs to the arrangement, the rank of $H$ in the intersection poset is $m-\dim(H)$.

It follows readily from the Goresky-Macpherson formula, or better still from Ziegler-{\v Z}ivalievi{\'c}'s version of the formula~\cite[Theorem 2.2]{ZigZiv}, that $\Delta^n_m$ is equivalent to a wedge copies of $S^m$. Furthermore, the number of copies is non-zero whenever the arrangment is non-empty, i.e. whenever $0<m$. The case $m=0$ is trivial.
\end{proof}
\section{Stable cohomology of $\aut(\free{n})$}\label{section:stable cohomology}
One of the motivations for studying $\ext$ in the category of functors $\fr\to \ab$ comes from its application to the cohomology of groups of automorphisms of free groups with coefficients in a polynomial functor. The application is made possible thanks to remarkable work of Djament~\cite{Djament2019}, which itself builds on a famous theorem of Galatius~\cite{Galatius}. In this section we show how our methods can be combined with Djament's results. But first let us review the setup that we will work in.

Let $\free{n}$ be the free group on $n$ generators. Suppose $G$ is a functor $G\colon \fr\to \ab$. Then for each $n$, $G(\free{n})$ is a representation of $\aut(\free{n})$. Moreover, there are natural homomorphisms
\[
 \cdots \to \HH^*(\aut(\free{n}); G(\free{n}))\to \HH^*(\aut(\free{n-1}), G(\free{n-1}))\to \cdots
\] 
\begin{defn}
Define the stable cohomology of automorphisms of free groups with coefficients in $G$ to be the inverse limit of the sequence above
\[\HH^*_s(\aut; G):=\lim_{\infty \leftarrow n}\HH^*(\aut(\free{n}); G(\free{n})).
\] 
\end{defn}
The following stability result shows that the stable cohomology agrees with actual cohomology in a range.
\begin{thm}[Randal-Williams -- Wahl \cite{RW-Wahl}]
Suppose $G$ is a polynomial functor of degree $d$. The homomorphism $$\HH^*_s(\aut; G)\to \HH^*(\aut(\free{n}); G(\free{n}))$$ is $\frac{n-3}{2}-d$-connected.
\end{thm}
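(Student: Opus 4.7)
Since the result is cited as a theorem of Randal-Williams--Wahl, my plan would be to explain how one sets up their general machinery so as to deduce this particular statement. The strategy has three pieces: identify the relevant monoidal structure on $\fr$, check that ``polynomial'' in our sense matches ``polynomial'' in theirs, and input a sufficiently strong connectivity estimate for a natural semisimplicial space attached to $\aut(\free{n})$.

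First, I would equip $\fr$ (or rather its maximal subgroupoid) with the symmetric monoidal structure given by free product $*$. The unit is the trivial group, and the automorphism group of $\free{n} = \free{1}^{*n}$ is $\aut(\free{n})$. One then forms the homogeneous category $U\fr$ in the sense of Randal-Williams--Wahl, whose morphisms $\free{m}\to \free{n}$ are pairs $(\varphi, H)$ consisting of a free complement $H$ and an iso $H*\free{m}\xrightarrow{\varphi}\free{n}$; this is where the $\aut(\free{n})$-action on stabilization maps comes from. A functor $G\colon \fr\to \ab$ in our sense extends canonically to $U\fr$ because free-product stabilization is modeled by the inclusion of a wedge summand, and our polynomial functors transform nicely under such maps by Proposition~\ref{prop:polynomials}.

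Second, I would verify that the Randal-Williams--Wahl notion of polynomial degree agrees with the Eilenberg--Mac Lane notion used in this paper. In their framework, $G$ is polynomial of degree $\le d$ if the iterated ``suspension'' (really the cokernel of the stabilization natural transformation) becomes zero after $d{+}1$ applications. But this iterated cokernel is exactly the $(d{+}1)$-st cross-effect of $G$ evaluated on $(\free{1},\ldots,\free{1})$ up to splittings, so the equivalence of the two notions follows directly from the splitting in Lemma~\ref{lemma:split}. This identification is routine but must be done carefully to get the numerical degree right.

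Third, and this is the main obstacle, one needs connectivity of the semisimplicial space (the ``splitting complex'') whose $p$-simplices in the groupoid model are ordered free-product decompositions $\free{n}\cong \free{1}*\cdots*\free{1}*\free{n-p-1}$. The required high connectivity $\lfloor \frac{n-2}{2}\rfloor$ or so is precisely the content of results of Hatcher--Vogtmann and Hatcher--Vogtmann--Wahl about the complex of free factorizations. Plugging this slope-$\tfrac12$ connectivity bound into the general spectral sequence argument of Randal-Williams--Wahl (a double complex whose rows compute $\HH_*(\aut(\free{n});G(\free{n}))$ and whose columns collapse onto the stable value by polynomial degree $d$) yields a stability range of the form $\frac{n-c}{2}-d$ for an absolute constant $c$, and tracking the constants through the argument gives $c=3$. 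The delicate point, and the one I would spend most time on, is the compatibility of the polynomial-degree drop along the spectral sequence with the twisting by $G(\free{n})$; this is handled by the fact that the cross-effects commute with the relevant colimits, which in our setup is part of Lemma~\ref{lemma:additive preserves polynomial} applied to the restriction functors along summand inclusions.
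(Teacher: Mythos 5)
The paper gives no proof of this statement: it is quoted verbatim from Randal-Williams--Wahl, so there is no internal argument to compare against. Your sketch is an accurate summary of how the cited proof goes (the free-product monoidal groupoid and the category $U\fr$, the identification of Eilenberg--Mac Lane polynomial functors with finite-degree coefficient systems via the splitting of cross-effects, and the slope-$\tfrac12$ connectivity of the splitting complex coming from Hatcher--Vogtmann-type results), and it correctly locates where the real work lies; the only step you leave implicit beyond what the citation supplies is the routine translation of their twisted \emph{homological} stability range into the cohomological, inverse-limit formulation $\HH^*_s(\aut;G)\to \HH^*(\aut(\free{n});G(\free{n}))$ used here, which deserves one sentence (the tower stabilizes degreewise, so no $\lim^1$ issue arises).
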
 
The following theorem of Djament shows that one can calculate $\HH^*_s(\aut; G)$ in terms of mapping spaces in the $\infty$-category of polynomial functors $\fr\to \ab$.
\begin{thm}[\cite{Djament2019}, Theorem 3.3]\label{thm:Djament}
Let $G\colon \fr \to \ab$ be a polynomial functor. There is an isomorphism
\[
\HH^*_s(\aut, G)\cong \mathbb H^*\left(\Sigma_\infty; \bigoplus_{d\ge 0} \Sigma^d\mathbb H^*\left(\Sigma_d; \operatorname{R}\hom(\tensor^d \circ \abelianization, G)\otimes \mathbb Z[-1]\right)\right).
\]
Here \begin{enumerate}
\item $\mathbb H^*$ denotes hypercohomology.
\item $\operatorname{R}\hom(\tensor^d \circ \abelianization, G)$ is the derived $\hom$ in the category of functors, equipped with the action $\Sigma_d$ induced by the action on $\tensor^d$. To be really explicit, $\operatorname{R}\hom(\tensor^d \circ \abelianization, G)$ can be constructed as the cochain complex obtained by taking $\hom$ from a projective resolution of $\tensor^d \circ \abelianization$ to $G$.
\item $\Z[-1]$ is the sign representation of $\Sigma_d$.
\item $\Sigma_\infty$ acts trivially on the cochain complex $\bigoplus_{d\ge 0} \Sigma^d\mathbb H^* \cdots$. 
\end{enumerate}
\end{thm}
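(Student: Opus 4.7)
This is a quoted result of Djament, so my proposal is for a proof along the lines of Djament's argument, recast in the language developed earlier in the paper. The backbone is Galatius's theorem~\cite{Galatius}, which identifies $B\aut(\free{\infty})^+$ with the basepoint component $\Omega^\infty_0 \S$ of the sphere spectrum, and hence $\HH^*_s(\aut; \Z)$ with $\HH^*(\Sigma_\infty; \Z)$. The plan is to bootstrap from this the case of general polynomial coefficients.

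The first step I would take is to reduce to the basic case $G = \tensor^d \circ \ab$ by a resolution argument. Via the equivalence $\poly_n(\fr, \ch) \simeq \com\mathrm{-}\comod_{\le n}(\ch)$ of Corollary~\ref{cor:com comodule main}, the tensor powers correspond to the ``free'' generators of the comodule category, so every polynomial $G$ is built from them in a derived sense, with the $d$-th piece recorded by the $\Sigma_d$-equivariant complex $\operatorname{R}\hom(\tensor^d \circ \ab, G)$---precisely the object appearing on the right-hand side of Djament's formula.

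The second, and more delicate, step would be to compute $\HH^*_s(\aut; \tensor^d \circ \ab)$ itself. The $\aut(\free{n})$-action on $\tensor^d(\free{n}^{\mathrm{ab}})$ factors through $GL_n(\Z)$, and after stabilisation is detected by the permutation of the $d$ tensor factors. Combining this with Galatius's theorem, this contribution should assemble into $\Sigma^d \mathbb{H}^*(\Sigma_d; \Z[-1])$ together with the trivial outer $\Sigma_\infty$-cohomology. The appearance of the shift and the sign representation is precisely the reduced equivariant (co)homology of $S^d$ under the coordinate permutation action of $\Sigma_d$; this matches the shift-and-sign datum that features in Proposition~\ref{prop:tensor-general}, and this consistency is what will make the subsequent reinterpretation of the formula in terms of $\widehat G$ possible.

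The main obstacle I anticipate is the twisted version of Galatius's theorem required to handle non-trivial coefficients coherently in $n$: one cannot simply apply the trivial-coefficient Galatius separately for each $d$, since the $\aut(\free{n})$-module structure on $\tensor^d(\free{n}^{\mathrm{ab}})$ depends nontrivially on $n$. One needs either a parametrized/spectral-sequence argument compatible with the Galatius map, or (as in Djament's actual proof) a direct bar-construction description of $\bigoplus_d \tensor^d \circ \ab$ on the moduli side that is manifestly amenable to Galatius. Once this compatibility is in hand, the assembly into the stated formula---including the fact that the outer $\Sigma_\infty$ acts trivially---follows because all of the representation-theoretic content has already been absorbed into the inner $\Sigma_d$-hypercohomology.
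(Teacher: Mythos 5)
This statement is not proved in the paper at all: it is quoted verbatim as Djament's Theorem~3.3, with only the remark that the constant-coefficient case is Galatius's theorem and that Djament's proof relies on it. So there is no in-paper argument to compare against, and your proposal has to be judged as a free-standing sketch of Djament's proof. As such, it has a genuine gap: the entire mathematical content of the theorem is concentrated in the step you yourself flag as ``the main obstacle I anticipate,'' namely passing from Galatius's untwisted identification $\HH^*_s(\aut;\Z)\cong \HH^*(\Sigma_\infty;\Z)$ to coefficients in $\tensor^d\circ\abelianization$, whose $\aut(\free{n})$-module structure varies with $n$. Saying that the action ``factors through $GL_n(\Z)$ and is detected by the permutation of the $d$ tensor factors'' does not compute anything: $\tensor^d(\Z^n)$ is a genuinely nontrivial $GL_n(\Z)$-module, and identifying its stable cohomology (equivalently, establishing the functor-homology description of twisted stable cohomology) is exactly the theorem of Djament--Vespa type that the cited proof supplies via their comparison machinery together with Galatius. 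A proposal that defers this step to ``a parametrized/spectral-sequence argument or a direct bar-construction description, as in Djament's actual proof'' has not proved the statement; it has restated where the proof must go.

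Your first step also presupposes more than it acknowledges. Reducing general polynomial $G$ to tensor powers by a resolution in $\poly_n(\fr,\ch)$ (or in $\com\mathrm{-}\comod_{\le n}(\ch)$) only yields the stated formula if you already know that $G\mapsto \HH^*_s(\aut;G)$ is computed by a functor of $G$ that is exact/derived in the appropriate sense and commutes with the relevant (co)limits --- i.e.\ that stable cohomology with coefficients in $G$ is naturally expressed through a derived mapping object out of the tensor powers. That naturality-in-$G$ statement is again the heart of Djament's theorem, not an input one can assume before proving it; without it, the long-exact-sequence bookkeeping from a resolution does not assemble into the claimed hypercohomology expression, nor does it explain why the outer $\Sigma_\infty$ acts trivially. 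In short, the scaffolding (Galatius plus a reduction to tensor powers) is the right outline of Djament's strategy, but both load-bearing steps are left unproved, so the proposal does not constitute a proof; for the purposes of this paper the correct move is the one the paper makes, namely to cite \cite{Djament2019}.
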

\begin{rem}
    The case when $G$ is the constant functor is Galatius's theorem~\cite{Galatius}, which says that
    $\HH^*_s(\aut, \Z)\cong \HH^*(\Sigma_\infty;\Z)$. The proof of Theorem~\ref{thm:Djament} in~\cite{Djament2019} uses Galatius's result.
\end{rem}
We now can reinterpret Djament's result in terms of our construction.
\begin{thm}\label{thm:Djament reinterpreted}
Let $G\colon \fr\to \ab$ be a polynomial functor of degree $n$. Let $\widehat G\colon \spaces\to \ch$ be, as usual, the extension of $G$ given by Theorem~\ref{thm:fr-to-top_intro}. There is an isomorphism
\[
\HH^*_s(\aut;G)\cong \pi_{-*}\pointedmaps\left({\classifying\Sigma_\infty}_+, \prod_{d=0}^n \ce_d\widehat G(S^0, \ldots, S^0)^{h\Sigma_d}\right).
\]
\end{thm}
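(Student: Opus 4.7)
The plan is to start from Djament's Theorem~\ref{thm:Djament} and systematically reinterpret each ingredient using the extension $\widehat G$ and its cross-effects. The first step is to rewrite $\operatorname{R}\hom(\tensor^d\circ\abelianization, G)$, together with its $\Sigma_d$-action, in terms of $\ce_d \widehat G(S^0,\ldots,S^0)$. By Theorem~\ref{thm:fr-to-top} and Proposition~\ref{prop:tensor-general}, the derived Hom is equivalent, as an object of $\ch$ with $\Sigma_d$-action, to
\[
\spectralNat_X\!\left(\Omega^d \widetilde\Z[X^{\wedge d}],\ \widehat G(X)\right)\simeq \Omega^d \ce_d \widehat G(S^0,\ldots,S^0),
\]
where the $\Sigma_d$-action on the right comes from permutation of the $d$ desuspension coordinates of $\Omega^d$ combined with permutation of the $d$ arguments of $\ce_d \widehat G$. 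Tracking this action at the level of chain complexes, one has a $\Sigma_d$-equivariant equivalence $\Omega^d X \simeq \Sigma^{-d} X \otimes \Z[-1]$, where the $\Sigma_d$ on $\Omega^d$ acts on $\Z[-1]$ by the sign representation and trivially on the shift $\Sigma^{-d}$.

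The next step is the sign cancellation. Tensoring with $\Z[-1]$, as required by Djament's formula, trivialises the sign twist:
\[
\operatorname{R}\hom(\tensor^d\circ\abelianization, G)\otimes \Z[-1] \simeq \Sigma^{-d}\,\ce_d\widehat G(S^0,\ldots,S^0)\otimes \Z[-1]\otimes\Z[-1] \simeq \Sigma^{-d}\,\ce_d\widehat G(S^0,\ldots,S^0),
\]
with the natural, untwisted permutation action of $\Sigma_d$. Since the suspension $\Sigma^{-d}$ commutes with hypercohomology in $\Sigma_d$, the outer $\Sigma^d$ in Djament's formula cancels it, and we reduce the summand indexed by $d$ to $\mathbb H^*(\Sigma_d;\ \ce_d \widehat G(S^0,\ldots,S^0))$, which is precisely $\pi_{-*}$ of $\ce_d\widehat G(S^0,\ldots,S^0)^{h\Sigma_d}$.

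To finish, I would observe that because $G$ is polynomial of degree $n$, the functor $\widehat G$ is $n$-excisive, so $\ce_d \widehat G = 0$ for $d > n$ (here we use, as in Section~\ref{section:ext from tensor}, that $\ce_d$ in the sense of Goodwillie agrees with the cross-effect definition from Section~\ref{sec:polynomial}). Consequently, the direct sum $\bigoplus_{d\ge 0}$ in Djament's formula is finite, and agrees with the direct product $\prod_{d=0}^n$ appearing in the statement. Finally, since $\Sigma_\infty$ acts trivially on the coefficient chain complex, the outer hypercohomology with $\Sigma_\infty$ coefficients becomes
\[
\pi_{-*}\,\pointedmaps\!\left({\classifying\Sigma_\infty}_+,\ \prod_{d=0}^n \ce_d\widehat G(S^0,\ldots,S^0)^{h\Sigma_d}\right),
\]
which is the desired formula. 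The main obstacle, and the step that requires the most care, is the bookkeeping in the first paragraph: one must correctly keep track of the interaction between the $\Sigma_d$-action coming from the permutation of tensor factors in $\tensor^d\circ\abelianization$, the $\Sigma_d$-action induced on $\Omega^d$ by permutation of loop coordinates, and the sign representation appearing in Djament's formula. Everything else is a routine consequence of the adjunctions and vanishing we have already established.
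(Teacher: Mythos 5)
Your overall strategy is the same as the paper's: translate Djament's formula term by term, identify $\operatorname{R}\hom(\tensor^d\circ\abelianization, G)$ with a shifted, sign-twisted copy of $\ce_d\widehat G(S^0,\ldots,S^0)$, cancel the shift and sign against the $\Sigma^d$ and $\Z[-1]$ in Djament's statement, use $n$-excisiveness of $\widehat G$ to truncate the sum, and use the trivial $\Sigma_\infty$-action to land on $\pointedmaps({\classifying\Sigma_\infty}_+,-)$. The final formula is correct, but the key bookkeeping step --- the one you yourself single out as the delicate part --- is wrong as written. Since $\widehat{\tensor^d\circ\abelianization}(X)\simeq\Omega^d\widetilde\Z[X^{\wedge d}]$, mapping \emph{out} of a $d$-fold desuspension produces a $d$-fold \emph{suspension}:
\[
\spectralNat_X\bigl(\Omega^d\widetilde\Z[X^{\wedge d}],\,\widehat G(X)\bigr)\simeq S^d\otimes\ce_d\widehat G(S^0,\ldots,S^0)\simeq\Sigma^{d}\bigl(\ce_d\widehat G(S^0,\ldots,S^0)\otimes\Z[-1]\bigr),
\]
not $\Omega^d\ce_d\widehat G(S^0,\ldots,S^0)$. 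This is exactly what Proposition~\ref{prop:tensor-general}, which you cite, asserts ($\ext^*\cong\pi_{-*}(S^d\otimes\ce_d\widehat G)$), and it is confirmed by Corollary~\ref{cor: tensor-others}: $\ext^*(\tensor^d\circ\abelianization,\tensor^n\circ\abelianization)$ is concentrated in degree $n-d$, whereas your identification would place it in degree $n+d$.

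Your argument nevertheless reaches the right answer only because a second convention slip compensates: you cancel Djament's outer $\Sigma^d$ directly against your (erroneous) inner $\Sigma^{-d}$, whereas once Djament's cohomologically graded statement is rewritten in the homological grading in which $\ce_d\widehat G$ lives --- as the paper does at the start of its proof --- the outer shift becomes $\Sigma^{-d}$, and it is this that cancels the genuine $\Sigma^{+d}$ coming from the derived Hom. So two shift errors cancel, while your sign analysis ($\Z[-1]\otimes\Z[-1]\simeq\Z$, using that permutation of the loop/suspension coordinates gives the sign on homotopy, as in Remark~\ref{rem:action}) is the same as the paper's. The fix is purely local: state the identification as $S^d\otimes\ce_d\widehat G$ and convert Djament's formula to homological grading before cancelling; the remaining steps (vanishing of $\ce_d\widehat G$ for $d>n$, finiteness of the sum, trivial $\Sigma_\infty$-action) match the paper's proof.
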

\begin{proof}
Switching from cohomological to homological grading, Theorem~\ref{thm:Djament} says that $\HH^*_s(\aut;G)$ 
is isomorphic to 
\[
\pi_{-*}\pointedmaps\left({\classifying\Sigma_\infty}_+,\bigoplus_{d\ge 0} \Sigma^{-d}\left(\spectralNat_{\poly(\fr, \ch)}(\tensor^d\circ\abelianization, G)\otimes \Z[-1]\right)^{h\Sigma_d} \right)
\]
where $\spectralNat_{\poly(\fr, \ch)}(-,-)$ denotes the mapping object (that may be viewed as a chain complex, or a spectrum) in the $\infty$-catgory of polynomial functors from $\fr$ to $\ch$.

By the equivalence of categories $\poly(\fr, \ch)\simeq \exc(\spaces, \ch)$, we have an equivalence
\[
\spectralNat_{\poly(\fr, \ch)}(\tensor^d\circ\abelianization, G)\simeq \spectralNat_{\exc(\spaces, \ch)}(\widehat{\tensor^d\circ\abelianization}, \widehat G).
\]
By Lemma~\ref{lem:abelianization transform} $\widehat{\tensor^d\circ\abelianization}$ is the functor $X\mapsto \Omega^d\widetilde\Z[X^{\wedge d}]$, where $\Sigma_d$ acts by permuting both the $X$ coordinates and the loop coordinates (see Remark~\ref{rem:action}). This is equivalent to saying that the action of $\Sigma_d$ on $\widetilde\Z[X^{\wedge d}]$ is desuspended by $d$ and twisted by the sign. It follows that there is an equivalence
\[
\Sigma^{-d}\left(\spectralNat_{\exc(\spaces, \ch)}(\widehat{\tensor^d\circ\abelianization}, \widehat G)\otimes \Z[-1]\right)^{h\Sigma_d} \simeq \spectralNat_{X}(\widetilde\Z[X^{\wedge d}], \widehat G(X))
\]
where $\Sigma_d$ acts trivially on the suspension coordinate $\Sigma^d$ on the right hand side. It follows that $\HH^*_s(\aut;G)$ is isomorphic to 
\[
\pi_{-*}\pointedmaps\left({\classifying\Sigma_\infty}_+,\bigoplus_{d\ge 0} \left(\spectralNat_X(\widetilde\Z[X^{\wedge d]}], \widehat G)\right)^{h\Sigma_d} \right)
\]
By Lemma~\ref{lem: smash power represents ce} $\spectralNat_X(\widetilde\Z[X^{\wedge d]}], \widehat G)\simeq \ce_d\widehat G(S^0, \ldots, S^0)$. Since we assume that $G$ is polynomial of degree $n$, it follows that $\widehat G$ is $n$-excisive, and thus $\ce_d\widehat G(S^0, \ldots, S^0)\simeq *$ for $d> n$. We finally conclude that
\[
\HH^*_s(\aut;G)\cong \pi_{-*}\pointedmaps\left({\classifying\Sigma_\infty}_+, \prod_{d=0}^n \ce_d\widehat G(S^0, \ldots, S^0)^{h\Sigma_d}\right).
\]
\end{proof}

Let $R$ be a commutative ring with $1$. Define $\tensor^n_R\colon \fr\to R-\module$ to be the functor
\[
\tensor^n_R(G)=R\otimes \tensor^n(\abelianization(G)).
\]
Similarly define
\[
\Lambda^n_R(G)=R\otimes \Lambda^n (\abelianization(G))
\]
and also similarly define functors $\Gamma^n_R$, $\symm^n_R$, etc. The following proposition describes the stable cohomology of $\aut(\free{n})$ with coefficients in some of these functors in terms of cohomology of symmetric groups with coefficients in $R$. Let us note that while integral cohomology of symmetric groups may be known ``in principle'', cohomology with mod $p$ coefficients is known explicitly, and can be read for example from~\cite{Cohen-Lada-May}.
\begin{prop}\label{prop:stable cohomology calculations}
Let $n>0$. There are isomorphisms
\begin{equation}\label{item:stable tensor}
\HH^*_s(\aut; \tensor^n_R)\cong \HH^{*-n}\left({\classifying\Sigma_\infty} ; R\right)^{\Bell(n)}.
\end{equation}
Here $\Bell(n)$ is the total number of partitions of a set with $n$-elements.
\begin{eqnarray}\label{item:stable exterior}
 \HH^*_s(\aut; \Lambda^n_R) & \cong & \HH^{*-n}\left({\classifying\Sigma_\infty}\times \left(\coprod_{d=1}^n \Comp(n, d)\times_{\Sigma_d} E\Sigma_d \right); R\right) \\ \nonumber &\cong & \bigoplus_{d=1}^n\bigoplus_{\Lambda\in\Part(n, d)}\HH^{*-n}(\classifying\Sigma_\infty \times \classifying\aut(\Lambda);R).  
\end{eqnarray}
Here $\Lambda$ ranges over a set of representatives of orbits of the action of $\Sigma_d$ on $\Comp(n, d)$ and $\aut(\Lambda)$ denotes the stabilizer group of $\Lambda$.
\begin{equation} \label{item:stable Gamma}
  \HH^*_s(\aut; \Gamma^n_R) \cong  \HH^{*-n}\left({\classifying \Sigma_\infty}\times\classifying \Sigma_n;R[-1]\right) .
\end{equation}
Here $R[-1]$ denotes $R$ on which $\Sigma_\infty$ acts trivially, and $\Sigma_n$ acts by sign.
\end{prop}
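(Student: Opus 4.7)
The plan is to apply Theorem~\ref{thm:Djament reinterpreted_intro} in each case. Explicit formulas for $\widehat G$ have already been established in Lemma~\ref{lem:abelianization transform} and Proposition~\ref{prop:exterior and divided powers}, and their cross-effects at spheres were computed along the way in the proof of Corollary~\ref{cor: tensor-others}. The remaining work is, in each of the three cases, to pin down the $\Sigma_d$-equivariant structure on the cross-effect, take homotopy fixed points, and repackage the result as cohomology. For part~\eqref{item:stable tensor} I would start from $\widehat{\tensor^n_R}(X)\simeq \Omega^n R[X^{\wedge n}]$ and use the calculation in the proof of Corollary~\ref{cor: tensor-others}\eqref{item:tensor-tensor} that the $d$-th cross-effect at $(S^0,\ldots,S^0)$ is $\Omega^n R[\sur(n,d)]$. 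The $\Sigma_d$-action supplied by Djament's formula permutes the $d$ inputs, which translates into post-composition on $\sur(n,d)$ and is trivial on $\Omega^n$. Since post-composition acts freely, $R[\sur(n,d)]$ is free as an $R[\Sigma_d]$-module, so $R[\sur(n,d)]^{h\Sigma_d}\simeq R^{\Stirling(n,d)}$ is concentrated in degree zero. Summing over $d$ and using $\sum_d\Stirling(n,d)=\Bell(n)$ gives $\Sigma^{-n}R^{\Bell(n)}$, and mapping from $\classifying\Sigma_\infty{}_+$ yields~\eqref{item:stable tensor}.

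For part~\eqref{item:stable exterior} I would use $\widehat{\Lambda^n_R}(X)\simeq\Sigma^{-n}R[X^{\wedge n}_{\Sigma_n}]$ from Proposition~\ref{prop:exterior and divided powers}\eqref{eq:propexterior}, whose $d$-th cross-effect at spheres is $\Sigma^{-n}R[\Comp(n,d)]$ with $\Sigma_d$ permuting parts. Decomposing $\Comp(n,d)$ into $\Sigma_d$-orbits indexed by partitions $\Lambda\in\Part(n,d)$ with stabilizer $\aut(\Lambda)$, Shapiro's lemma identifies $(R[\Sigma_d/\aut(\Lambda)])^{h\Sigma_d}$ with $\pointedmaps(\classifying\aut(\Lambda)_+, R)$. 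Taking the finite direct sum over orbits and over $d$, and then mapping from $\classifying\Sigma_\infty{}_+$, yields the ``direct sum over partitions'' presentation; the equivalent ``single space'' presentation follows from the identification $\bigsqcup_{\Lambda}\classifying\aut(\Lambda)\simeq \coprod_{d=1}^n \Comp(n,d)\times_{\Sigma_d}\classifying\Sigma_d$.

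For part~\eqref{item:stable Gamma} I would start from $\widehat{\Gamma^n_R}(X)\simeq\Sigma^{-2n}R[(SX)^{\wedge n}_{\Sigma_n}]$ from Proposition~\ref{prop:exterior and divided powers}\eqref{eq:dividedpower}. The proof of Corollary~\ref{cor: tensor-others}\eqref{item:tensor-dividedP} already shows that $\ce_d$ at spheres is trivial for $d\ne n$, so only the $d=n$ term contributes; for $d=n$ it equals $\Sigma^{-2n}R[S^n]$ with $\Sigma_n$ acting by permutation on $S^n=(S^1)^{\wedge n}$. The key point here is the sign computation: the only non-trivial reduced homology group of $S^n$ with this action is the sign representation in degree $n$, because swapping two degree-one generators of $\widetilde H_*(S^1)^{\otimes n}$ introduces a Koszul sign. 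Thus $\Sigma^{-2n}R[S^n]\simeq \Sigma^{-n}R[-1]$ as $R[\Sigma_n]$-chain complexes, and after taking $\Sigma_n$-homotopy fixed points and mapping from $\classifying\Sigma_\infty{}_+$ one obtains $\HH^{*-n}(\classifying\Sigma_\infty\times\classifying\Sigma_n; R[-1])$ as required.

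The only real obstacles will be correctly identifying the $\Sigma_d$-equivariant structure on each cross-effect and keeping track of signs in the $\Gamma^n$ case. Once these are in hand, every remaining step is a direct application of Shapiro's lemma, freeness of a $\Sigma_d$-set, or the standard fact that $\pointedmaps(\classifying\Sigma_\infty{}_+, -)$ converts homotopy fixed points into cohomology of products of classifying spaces.
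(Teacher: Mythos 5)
Your proposal is correct and follows essentially the same route as the paper: apply Theorem~\ref{thm:Djament reinterpreted}, feed in the formulas for $\widehat{\tensor^n_R}$, $\widehat{\Lambda^n_R}$, $\widehat{\Gamma^n_R}$ and their cross-effects at $(S^0,\ldots,S^0)$, identify the $\Sigma_d$-equivariant structure (freeness of $\sur(n,d)$, orbit decomposition of $\Comp(n,d)$, the sign representation on $\widetilde\HH_n(S^n)$), take homotopy fixed points, and map in from ${\classifying\Sigma_\infty}_+$. The only difference is cosmetic: in the exterior-power case you derive the partition-indexed direct sum first via Shapiro's lemma and then repackage it as the Borel construction, while the paper does these two presentations in the opposite order.
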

\begin{proof}
\eqref{item:stable tensor}  By Lemma~\ref{lem:abelianization transform} we have $\widehat{\tensor^n\circ\abelianization}(X)=\Omega^n\widetilde\Z[X^{\wedge n}]$. It is clear that by the same argument one can prove that $\widehat{\tensor^n_R}(X)=\Omega^n\widetilde R[X^{\wedge n}]$. It follows that $\ce_d\widehat{\tensor^n_R}(S^0, \ldots, S^0)\cong \Omega^nR[\sur(n, d)]$. Note that the action of $\Sigma_d$ on $\sur(n, d)$ is free. It follows that $\Omega^nR[\sur(n, d)]^{h\Sigma_d}\simeq \Omega^n R[\sur(n, d)_{\Sigma_d}]$, and thus
\[
\prod_{d=0}^n \Omega^nR[\sur(n, d)]^{h\Sigma_d} \simeq \Omega^n R^{\Bell(n)}.
\]
By Theorem~\ref{thm:Djament reinterpreted} it follows that
\[
\HH^*_s(\aut, \tensor^n_R)\cong \pi_{-*}\pointedmaps\left({\classifying\Sigma_\infty}_+, \Omega^nR^{\Bell(n)}\right)\cong \HH^{*-n}(\classifying\Sigma_\infty; R)^{\Bell(n)}.
\]
\eqref{item:stable exterior} By tensoring with $R$ the result of Proposition~\ref{prop:exterior and divided powers}\eqref{eq:propexterior} we have $\widehat{\Lambda^n_R}(X)\simeq \Sigma^{-n}\widetilde R[X^{\wedge n}_{\Sigma_n}]$. It follows that $\ce_d\widehat{\Lambda^n_R}(S^0, \ldots, S^0)\simeq \Sigma^{-n}R^{\Comp(n, d)}$. Note that $\Comp(n, 0)=\emptyset$. And now by Theorem~\ref{thm:Djament reinterpreted} it follows that
\[
\HH^*_s(\aut, \Lambda^n_R)\cong \pi_{-*}\pointedmaps\left({\classifying\Sigma_\infty}_+, \Sigma^{-n}\prod_{d=1}^n (R^{C(n, d)})^{h\Sigma_d} \right)
\]
By some elementary manipulations, the right hand side is isomorphic to
\[\pi_{-*+n}\pointedmaps\left({\classifying\Sigma_\infty}_+\wedge \bigvee_{d=1}^n (\Comp(n, d)_+)_{h\Sigma_d}, R\right).
\]
This in turn is isomorphic to 
\[
\HH^{*-n}\left(\classifying\Sigma_\infty \times \left(\coprod_d \Comp(n, d)\times_{\Sigma_d} E\Sigma_d\right); R\right).
\]
This proves the first statement of~\eqref{item:stable exterior}. The second statement is just a reformulation of the first.

\eqref{item:stable Gamma} Once again, by tensoring with $R$ the result of Proposition~\ref{prop:exterior and divided powers}\eqref{eq:dividedpower} we find that $\widehat{\Gamma^n_R}(X)=\Sigma^{-2n}\widetilde R[(SX)^{\wedge n}_{\Sigma_n}]$. Therefore $\ce_d\widehat{\Gamma^n_R}(S^0, \ldots, S^0)=\Sigma^{-2n}R[S^n\wedge_{\Sigma_n} \sur(n, d)_+]$. We saw already in the proof of Corollary~\ref{cor: tensor-others}\eqref{item:tensor-dividedP} that $S^n\wedge_{\Sigma_n} \sur(n, d)_+\simeq *$ unless $n=d$. when $n=d$ we have $S^n\wedge_{\Sigma_n} \sur(n, d)_+\simeq S^d$, on which $\Sigma_d$ acts by permutation. It follows that 
\[
\prod_{d=0}^n \ce_d\widehat{\Gamma^n_R}(S^0, \ldots, S^0)^{h\Sigma_d} \simeq \Sigma^{-2n}\left(\widetilde R[S^n]\right)^{h\Sigma_n}\simeq \Sigma^{-n}R[-1]^{h\Sigma_n}.
\]
Applying Theorem~\ref{thm:Djament reinterpreted} one more time, we obtain the isomorphism
\[
\HH^*_s(\aut; \Gamma^n_R)\cong \pi_{-*}\pointedmaps({\classifying\Sigma_\infty}_+, \Sigma^{-n}R[-1]^{h\Sigma_n}).
\]
And it is easy to see that the right hand side is isomorphic to $\HH^{*-n}(\classifying \Sigma_\infty\times \classifying \Sigma_n; R[-1]).$
\end{proof}
When $R=\Q$ the formulas of Proposition~\ref{prop:stable cohomology calculations} simplify, unsurprisingly. The following corollary is originally due to Oscar Randal-Williams~\cite[Theorem A (ii) and Corollary D]{RW18}. See also~\cite[Theorem 4]{Vespa2018}, with the difference that we state the result in terms of stable cohomology rather than homology. 
\begin{cor}\label{cor:rational stable cohomology}
We have the following isomorphisms of graded groups
  \begin{enumerate}
      \item \[
      \HH^*_s(\aut; \tensor^n_{\Q})\cong \Sigma^n\Q^{\Bell(n)}.
      \] 
  \item\[
   \HH^*_s(\aut; \Lambda^n_{\Q}) \cong \Sigma^n \Q^{\Part(n)}  
  \]
  where $\Part(n)$ denotes the number of all partitions of the number $n$.
  \item
  \[
    \HH^*_s(\aut; \Gamma^n_{\Q}) \cong \HH^*_s(\aut; \symm^n_{\Q})\cong \left\{\begin{array}{cc}
        \Sigma\Q & n=1 \\
        0 & n>1
    \end{array}\right. .
  \]
   \end{enumerate} 
\end{cor}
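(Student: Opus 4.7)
The plan is to specialize Proposition~\ref{prop:stable cohomology calculations} to $R=\Q$ and exploit the rational triviality of the cohomology of symmetric groups. Recall that since each $\Sigma_k$ is finite and its order is invertible in $\Q$, for any $\Q[\Sigma_k]$-module $V$ we have $\HH^i(\Sigma_k;V)=0$ for $i>0$ and $\HH^0(\Sigma_k;V)=V^{\Sigma_k}$. In particular $\HH^*(\classifying\Sigma_\infty;\Q)\cong\Q$ is concentrated in degree zero, and more generally $\HH^*(\classifying H;\Q)\cong\Q$ for any finite product of symmetric groups $H$.

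Part (1) then follows immediately from~\eqref{item:stable tensor}: $\HH^{*-n}(\classifying\Sigma_\infty;\Q)^{\Bell(n)}$ collapses to $\Q^{\Bell(n)}$ sitting in degree $n$. For part (2) I would use the second form of~\eqref{item:stable exterior}. For each partition $\Lambda\in\Part(n,d)$ the stabilizer $\aut(\Lambda)$ is a product of symmetric groups (one $\Sigma_k$ for each multiplicity $k$ in $\Lambda$), so by the Künneth theorem and the remark above each summand $\HH^{*-n}(\classifying\Sigma_\infty\times\classifying\aut(\Lambda);\Q)$ contributes a single copy of $\Q$ in cohomological degree $n$. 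The total number of summands is $\sum_{d=1}^{n}|\Part(n,d)|=\Part(n)$, yielding $\Sigma^n\Q^{\Part(n)}$.

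For part (3), starting from~\eqref{item:stable Gamma} I would compute $\HH^*(\classifying\Sigma_\infty\times\classifying\Sigma_n;\Q[-1])$ by Künneth; the first factor is $\Q$ in degree zero, and the second factor $\HH^*(\Sigma_n;\Q[-1])$ is concentrated in degree zero and equals $(\Q[-1])^{\Sigma_n}$. This is $\Q$ when $n=1$ (trivial group), and vanishes for $n\geq 2$ since any transposition $\tau$ forces simultaneously $\tau v=v$ and $\tau v=-v$, hence $v=0$. This gives the claimed formula for $\Gamma^n_\Q$. To transfer the result to $\symm^n_\Q$ I would invoke the natural transformation $\symm^n\circ\abelianization\to\Gamma^n\circ\abelianization$ given by symmetrization (the norm map), which becomes an isomorphism after tensoring with $\Q$, as already used in the proof of Proposition~\ref{prop:Vespa theorem 3}\eqref{eq: rational lambda to gamma}. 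Since everything is a direct specialization plus well-known facts from rational group cohomology, I do not expect any real obstacle; the ``hard part'', if any, is merely the bookkeeping that identifies $\sum_{d=1}^n|\Part(n,d)|$ with $\Part(n)$ in part (2).
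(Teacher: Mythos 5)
Your proposal is correct and follows essentially the same route as the paper: specialize Proposition~\ref{prop:stable cohomology calculations} to $R=\Q$, use that symmetric groups have the rational cohomology of a point (and that $\HH^*(\Sigma_n;\Q[-1])=0$ for $n>1$), and handle $\symm^n_\Q$ via the rational isomorphism $\symm^n\simeq_\Q\Gamma^n$. The only difference is that you spell out the bookkeeping (Künneth, $\sum_d|\Part(n,d)|=\Part(n)$, the transposition argument for the sign representation) that the paper leaves implicit.
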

\begin{proof}
  The first two statements of the corollary follow from the corresponding statements of Proposition~\ref{prop:stable cohomology calculations} because rational cohomology of $\Sigma_n$, and also of $\Sigma_\infty$ is the same as of a point.

For the third statement of the corollary, first of all recall that there is a rational isomorphism of functors $\symm^n \xrightarrow{\simeq_{\Q}} \Gamma^n$. So it is enough to prove the claim for the functor $\Gamma^n_Q$. The third statement of the corollary follows from the third statement of the proposition because for $n>1$ the rational cohomology of $\Sigma_n$ with coeffitients in the sign representation is trivial in all dimensions. 
\end{proof}
\begin{exmps}
Let us use Proposition~\ref{prop:stable cohomology calculations} to calculate the stable cohomology of $\aut(\free{n})$ with coefficients in $\tensor^3\circ \abelianization$, $\Lambda^3\circ \abelianization$, and $\Gamma^3\circ \abelianization$. It is easy to see that 
\begin{enumerate}
\item $\Bell(3)=5$, 
\item $\Comp(3, 1)$ consists of a single element $(3)$, whose group of automorphisms is trivial.
\item $\Comp(3, 2)$ consists of two elements $(1, 2)$ and $(2,1)$, which lie in the same orbit of the action of $\Sigma_2$ on $\Comp(3, 2)$ and have trivial stabilizers.
\item $\Comp(3, 3)$ consists of a single element $(1, 1, 1)$, whose stabilizer is $\Sigma_3$
\end{enumerate}
We conclude that there are isomorphisms
\[
\HH^*_s(\aut; \tensor^3\circ \abelianization)\cong \HH^{*-3}(B\Sigma_\infty)^5
\]
\[
\HH^*_s(\aut; \Lambda^3\circ \abelianization)\cong \HH^{*-3}(\classifying\Sigma_\infty\sqcup \classifying\Sigma_\infty \sqcup \classifying\Sigma_\infty\times \classifying\Sigma_3 ).
\]
\[
\HH^*_s(\aut; \Gamma^3\circ \abelianization)\cong \HH^{*-3}(\classifying \Sigma_\infty \times \classifying \Sigma_3; \Z[-1]).
\]
\end{exmps}
\bibliographystyle{alpha}
\bibliography{92Biblio}

\end{document}